\documentclass[12pt,a4paper,leqno]{elsarticle}

\usepackage[utf8]{inputenc}
\usepackage[T1]{fontenc}
\usepackage[english]{babel}
\usepackage{amsthm}
\usepackage{amscd}
\usepackage{amsfonts}         
\usepackage{amsmath}
\usepackage{amssymb}
\usepackage{hyperref}
\usepackage{enumerate}
\usepackage{tikz}
\usepackage{tikz-cd}
\usepackage{mathrsfs}  

\newcommand{\Ab}{\mathbb{A}}
\newcommand{\Bb}{\mathbb{B}}

\newcommand{\Gb}{\mathbb{G}}

\newcommand{\Lb}{\mathbb{L}}

\newcommand{\Pb}{\mathbb{P}}
\newcommand{\Qb}{\mathbb{Q}}

\newcommand{\Zb}{\mathbb{Z}}

\newcommand{\Cc}{\mathcal{C}}

\newcommand{\Ec}{\mathcal{E}}
\newcommand{\Fc}{\mathcal{F}}

\newcommand{\Ic}{\mathcal{I}}

\newcommand{\Mc}{\mathcal{M}}
\newcommand{\Nc}{\mathcal{N}}
\newcommand{\Oc}{\mathcal{O}}
\newcommand{\Pc}{\mathcal{P}}

\newcommand{\Rc}{\mathcal{R}}

\newcommand{\Vc}{\mathcal{V}}

\newcommand{\Cs}{\mathscr{C}}

\newcommand{\Fs}{\mathscr{F}}

\newcommand{\Is}{\mathscr{I}}

\newcommand{\Ls}{\mathscr{L}}
\newcommand{\Ms}{\mathscr{M}}

\newcommand{\Ss}{\mathscr{S}}

\newcommand{\PCob}{{\underline\Omega}}

\newcommand{\coim}{\mathrm{\coim}}

\newcommand{\Spec}{\mathrm{Spec}}

\newcommand{\Fl}{\mathrm{Fl}}
\newcommand{\bl}{\mathrm{Bl}}
\newcommand{\Td}{\mathrm{Td}}
\newcommand{\wtil}{\widetilde}

\newcommand{\colim}{\mathrm{colim}}

\newcommand{\LSym}{\mathrm{LSym}}

\newcommand{\abs}[1]{\lvert #1 \rvert}

\newcommand{\rank}{\mathrm{rank}}

\newcommand{\hook}{\hookrightarrow}
\newcommand{\cl}{\mathrm{cl}}
\newcommand{\modmod}{/\!\!/}
\newcommand{\inv}{\mathrm{inv}}

\newtheorem{theo}{Tplottin ubuntuheorem}[section]
\theoremstyle{plain}
\newtheorem{thm}[theo]{Theorem}
\newtheorem{lem}[theo]{Lemma}
\newtheorem{prop}[theo]{Proposition}
\newtheorem{cor}[theo]{Corollary}
\newtheorem*{thm*}{Theorem}
\newtheorem*{lem*}{Lemma}
\newtheorem*{prop*}{Proposition}
\newtheorem*{cor*}{Corollary}

\theoremstyle{definition}
\newtheorem{defn}[theo]{Definition}

\newtheorem{ex}[theo]{Example}
\newtheorem{cons}[theo]{Construction}
\newtheorem{rem}[theo]{Remark}

\newtheorem{quest}[theo]{Question}
\newtheorem*{defn*}{Definition}

\pagestyle{plain}
\setcounter{page}{1}
\addtolength{\hoffset}{-1.15cm}
\addtolength{\textwidth}{2.3cm}
\addtolength{\voffset}{0.45cm}
\addtolength{\textheight}{-0.9cm}

\date{}
\setcounter{tocdepth}{3}

\begin{document}

\begin{frontmatter}

\title{Base independent algebraic cobordism}
\author{Toni Annala}
\affiliation{organization={Department of Mathematics, University of British Columbia},
            addressline={1984 Mathematics Rd}, 
            city={Vancouver},
            postcode={V6T1Z2}, 
            state={British Columbia},
            country={Canada}}

\begin{abstract}
The purpose of this article is to show that the bivariant algebraic $A$-cobordism groups considered previously by the author are independent of the chosen base ring $A$. This result is proven by analyzing the bivariant ideal generated by the so called snc relations, and, while the alternative characterization we obtain for this ideal is interesting by itself because of its simplicity, perhaps more importantly it allows us to easily extend the definition of bivariant algebraic cobordism to divisorial Noetherian derived schemes of finite Krull dimension. As an interesting corollary, we define the corresponding homology theory called algebraic bordism. We also generalize projective bundle formula, the theory of Chern classes, the Conner--Floyd theorem and the Grothendieck--Riemann--Roch theorem to this setting. The general definitions of bivariant cobordism is based on the careful study of ample line bundles and quasi-projective morphisms of Noetherian derived schemes, also undertaken in this work.
\end{abstract}

\begin{keyword}
algebraic cobordism \sep derived algebraic geometry \sep bivariant theories \sep algebraic $K$-theory
\end{keyword}

\end{frontmatter}

\tableofcontents

\section{Introduction}

Algebraic cobordism is supposed to be the universal oriented cohomology theory in algebraic geometry. While still largely hypothetical, it promises to be a valuable invariant in algebraic geometry: Voevodsky's original approach to proving Milnor conjecture was based on the algebraic cobordism spectrum $MGL$, Levine and Pandharipande used in \cite{levine-pandharipande} the classification of varieties up to cobordism over a field of characteristic 0 in their proof of degree 0 Donaldson--Thomas conjectures, and Sechin and Sechin--Semenov have found applications of algebraic Morava K-theories (defined using algebraic cobordism) to approximating torsion in Chow groups \cite{sechin-chern} and to arithmetic questions about algebraic groups \cite{sechin-semenov}. Moreover, finding the correct definition of algebraic cobordism is intimately connected with fundamental questions in algebraic geometry, such as the existence of a good Chow cohomology theory (and more generally a good bivariant intersection theory).

After the foundational treatise \cite{Levine:2007} by Levine and Morel, algebraic cobordism has attracted a lot of attention. The groups have gone through several redefinitions and generalizations in \cite{levine-pandharipande}, \cite{lowrey-schurg}, \cite{annala-cob}, \cite{annala-yokura}, \cite{annala-pre-and-cob}, which, starting from \cite{lowrey-schurg} make serious use of derived algebraic geometry. In the most recent work \cite{annala-pre-and-cob} it was shown that given a Noetherian ring $A$ of finite Krull dimension, one can construct bivariant $A$-cobordism groups $\Omega^*_A(X \to Y)$ for all morphisms $X \to Y$ of quasi-projective derived $A$-schemes, and that these groups generalize all the previous geometric models of algebraic cobordism on quasi-projective derived schemes. This still leaves a lot to hope for: first of all, it is not clear how these groups depend on the base ring $A$, neither is it clear how the corresponding homology theory --- algebraic bordism --- fits into this picture as the algebraic $A$-bordism groups $\Omega^A_*(X) := \Omega^{-*}_A\big(X \to \Spec(A) \big)$ are not expected to model algebraic bordism unless $A$ is regular. Moreover, it is conceivable that one would like to study the cobordism of schemes that are not quasi-projective over a base. The work presented in this article deals with these issues, among others.


\subsection{Summary of results}

Let us then summarize the contributions of this paper. Our first main result is the following base independence result.
\begin{thm*}[Corollary \ref{BivACobIndFromACor}]
Let $X \to Y$ be a morphism of derived schemes with $X$ and $Y$ quasi-projective over Noetherian rings $A$ and $B$ of finite Krull dimension. Then
$$\Omega^*_A(X \to Y) = \Omega^*_B(X \to Y).$$
\end{thm*}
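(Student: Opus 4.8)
The plan is to reduce the statement to a comparison of the defining relations for the two bivariant theories. Recall that $\Omega^*_A(X \to Y)$ is constructed as a quotient of a bivariant theory of cobordism cycles --- morphisms from derived schemes that are (iterated) projective bundles, blow-ups, and regular embeddings over the target, carrying line bundle data --- by the bivariant ideal generated by the \emph{snc relations}. The cycle-level theory depends on $A$ only through the ambient category of schemes over which the cycles live; once $X$ and $Y$ are fixed derived schemes, a cobordism cycle over $X \to Y$ is the same datum regardless of whether we remember a map to $\Spec(A)$ or to $\Spec(B)$, since all the auxiliary constructions (projective bundles, blow-ups in regularly embedded centers, line bundles) are absolute. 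So the first step is to check carefully that the presheaf of cobordism cycles $\mathcal{M}^*(X \to Y)$ is genuinely base-independent: the only place $A$ could intervene is in the quasi-projectivity hypotheses used to guarantee that enough projective bundles and blow-ups exist, and here one invokes the study of ample line bundles and quasi-projective morphisms of Noetherian derived schemes developed earlier in the paper to see that these hypotheses are intrinsic to the morphism $X \to Y$, not to the base.

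The second and main step is to show that the bivariant ideal generated by the snc relations is also base-independent. This is exactly the point the abstract flags: the author obtains an \emph{alternative characterization} of this ideal --- presumably describing it intrinsically in terms of the vanishing of the Euler class of a line bundle with a chosen section, or equivalently identifying $\Omega^*$ with the universal bivariant theory in which such Euler classes behave correctly --- and this alternative description makes no reference to $A$. So I would prove the base-independence of the ideal by citing that alternative characterization: both $\Omega^*_A(X \to Y)$ and $\Omega^*_B(X \to Y)$ are the quotient of the (already identified) base-independent cycle theory by the same intrinsically-defined subgroup, hence they coincide. The identification of the two ideals --- snc relations versus the simpler intrinsic relations --- is the technical heart, and it is presumably the content of an earlier theorem in the paper that I am entitled to assume here; given that, the corollary is formal.

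The step I expect to be the main obstacle is making the base-independence of the cycle theory and its relations uniform in a way that respects the bivariant structure: pushforward, pullback, and exterior product must all be seen to agree under the two descriptions, not merely the underlying groups. In particular one must check that a morphism $X \to Y$ with $X, Y$ quasi-projective over $A$ and also over $B$ admits compatible factorizations through projective bundles on \emph{both} sides, so that the operational data defining the bivariant theory is literally the same; this is where the earlier analysis of quasi-projective morphisms of derived schemes does the real work. Once that is in place, the equality $\Omega^*_A(X \to Y) = \Omega^*_B(X \to Y)$ follows by comparing both to a single base-free construction, and the corollary is immediate.
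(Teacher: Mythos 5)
Your overall architecture does match the paper's: the base independence of the cycle-level theory is indeed recorded there as a trivial lemma (a precobordism cycle over $X \to Y$ is a projective morphism $V \to X$ with $V \to Y$ quasi-smooth, and neither these conditions, nor the double point relations, nor the bivariant operations --- composition, derived pullback, derived fibre product --- ever mention the base, so your worry about compatible projective factorizations on both sides is unnecessary), and the paper's proof of Corollary \ref{BivACobIndFromACor} is exactly the one-liner you envisage: the snc ideal coincides with an intrinsically defined ideal, which is visibly the same for $A$ and $B$.

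The genuine gap is that the identification you propose to cite is not background you are entitled to assume: Theorem \ref{BivACobAltConsThm} is proved in the same subsection precisely in order to deduce this corollary, so deferring to it assumes away the substance of the statement. Moreover, your guess at what the intrinsic description says (vanishing or good behaviour of Euler classes of line bundles with sections, or a universal property phrased in those terms) is not the right one and would not suffice. The actual base-free ideal $\Ic_A(X \to Y)$ of Definition \ref{SNCIdealAltDef} is generated, for each projective $f: V \to X$ with $V \to Y$ quasi-smooth and each quasi-smooth $Y$-morphism $V \to \Vc^n_Y$ to the universal scheme of Construction \ref{VSchemeCons} (parametrizing two line bundles with sections whose product vanishes), by
$$[V \to X] - [V_1 \to X] - [V_2 \to X] - f^{12}_*\Big(\sum_{i,j \geq 1} a_{ij}\, e(\Nc_{V_{12}/V_1})^{i-1} \bullet e(\Nc_{V_{12}/V_2})^{j-1} \bullet 1_{V_{12}/Y}\Big),$$
where the $a_{ij}$ are the formal group law coefficients; even the ``clear'' final step $\Ic_A = \Ic_B$ needs the (easy) remark that these coefficients pull back compatibly under the canonical identification $\PCob^*_A = \PCob^*_B$. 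The technical heart you omit consists of both inclusions: $\langle \Rc^{\mathrm{snc}}_A \rangle \subset \Ic_A$ (Lemma \ref{LSRelationsContainedInILem}) is a genuine computation by double induction on the multiplicities and the number of components of the divisor, using quasi-projectivity over $A$ to produce maps to $(\Pb^d)^{\times 4}$ and hence quasi-smooth maps to $\Vc^d$, together with formal group law manipulations; and $\Ic_A \subset \langle \Rc^{\mathrm{snc}}_A \rangle$ (Lemma \ref{IContainedInLSRelationsLem}) comes from exhibiting the generators above as bivariant pushforwards of pullbacks of the universal relation on $\Vc^n_A$, which is itself an snc relation. Without supplying this, the proposal reduces the theorem to an unproved assertion that is logically the theorem itself.
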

\noindent In order to prove this result, we have to study the bivariant ideal of snc relations $\langle \Rc^\mathrm{snc}_A \rangle \subset \PCob^*_A$ as $\Omega^*_A := \PCob^*_A/\langle \Rc^\mathrm{snc}_A \rangle$, where $\PCob^*_A$ is the universal precobordism theory over $A$. In Definition \ref{SNCIdealAltDef} and Theorem \ref{BivACobAltConsThm} we find explicit descriptions of the groups $\langle \Rc^\mathrm{snc}_A \rangle(X \to Y)$ and Corollary \ref{BivACobIndFromACor} is an immediate consequence of this description. Besides helping us to prove the desired base independence, and being interesting for its own sake, the simplified construction of bivariant algebraic cobordism also generalizes easily to give rise to bivariant algebraic cobordism $\Omega^\bullet$, which is bivariant theory defined on the $\infty$-category of finite dimensional Noetherian derived schemes admitting an ample family of line bundles (also called divisorial derived schemes, see Definition \ref{AmpleFamDef}). Note that the groups $\PCob^\bullet(X \to Y)$ do not admit a grading in general, which is why we have adopted the notation $\PCob^\bullet$ instead of $\PCob^*$.  We also state pleasant universal properties for the bivariant theories $\PCob^\bullet$ and $\Omega^\bullet$ (Theorem \ref{UnivPropOfPCobThm} and Corollary \ref{UnivPropOfCobCor}) analogous to the universal properties of $\PCob^*_A$ and $\Omega^*_A$ proved in \cite{annala-pre-and-cob}.

The second main result of this article is the definition of algebraic bordism in general, which is the oriented Borel--Moore homology version of algebraic cobordism, and the generalization of Levine--Morel's algebraic bordism outside the realm of characteristic 0 geometry. Since the bivariant theory $\Omega^\bullet$ is defined so generally, we can simply use the following definition.
\begin{defn*}[Definition \ref{AlgBordDef}]
Let $X$ be a divisorial and finite dimensional Noetherian derived scheme. We define the \emph{algebraic bordism} of $X$ as the Abelian group
$$\Omega_\bullet(X) := \Omega^\bullet \big(X \to \Spec(\Zb)\big).$$
In other words $\Omega_\bullet(X)$ classifies complete intersection derived schemes (see Section \ref{DerComplIntSubSect}) mapping projectively to $X$ ``up to cobordism''. 
\end{defn*}
\noindent Of course, as hinted by the above definition, we could have defined $\Omega_\bullet(X)$ directly by itself, but realizing it as a part of a stably oriented bivariant theory automatically verifies that it has good formal properties and that it interacts well with the algebraic cobordism ring $\Omega^\bullet(X)$. Note that the above definition uses in crucial way the fact that the bivariant groups $\Omega^\bullet(X \to Y)$ are defined even when $X \to Y$ is not a morphism of finite type. If $A$ is a regular Noetherian ring and $X$ is quasi-projective over $A$, then $\Omega_\bullet(X)$ is isomorphic to $\Omega_*^A(X)$, but this not true in general. We note that it might be more correct to consider the modified bordism group $\Omega'_\bullet(X)$ (see the next paragraph) whenever $X$ fails to have an ample line bundle. 

The third main result of this paper is the generalization of the results of \cite{annala-chern} to divisorial derived schemes. The first obstacle is that the old proof of nilpotence of Euler classes of vector bundles does not seem to generalize to derived schemes without an ample line bundle, leading us in Section \ref{NilpEulerClassSubSect} to define the modified bivariant theories $\PCob'^\bullet$ and $\Omega'^\bullet$, for which the desired nilpotence can be proven. Most of Section \ref{GeneralPBFSect} is then dedicated to proving the projective bundle theorem.
\begin{thm*}[Theorem \ref{GeneralPBFThm}]
Let $\Bb^\bullet$ be a bivariant quotient theory of $\PCob'^\bullet$, let $X \to Y$ be a morphism of finite dimensional divisorial Noetherian derived schemes, and let $E$ be a vector bundle of rank $r$ on $X$. Then the morphism
$$\Pc roj: \bigoplus_{i=0}^{r-1} \Bb^\bullet(X \to Y) \to \Bb^\bullet(\Pb(E) \to Y)$$
given by
$$(\alpha_0, \alpha_1, ..., \alpha_r) \mapsto \sum_{i=0}^{r-1} e(\Oc(1))^i \bullet 1_{\Pb(E)/X} \bullet \alpha_i$$
is an isomorphism.
\end{thm*}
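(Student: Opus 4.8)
The plan is to reduce the projective bundle formula to the case of trivial bundles via a Jouanolou-type trick and then to prove it there by explicit computation, bootstrapping from the case $r=1$ (where $\Pb(E)=X$ and the statement is trivial). First I would reduce to the case where $E$ is a direct sum of line bundles: since we are working with a quotient theory of $\PCob'^\bullet$, the splitting principle applies after pulling back along the flag bundle $\Fl(E) \to X$, which is itself an iterated projective bundle, so the formula for $E$ follows from the formula for the successive projectivizations of the tautological filtration provided we know it for projective bundles of split vector bundles and provided the flag bundle pullback is injective on the relevant groups. The injectivity of flag bundle pullbacks is itself a consequence of the projective bundle formula, so the logical structure must be an induction on $r$: assuming the formula (hence the splitting principle and flag bundle injectivity) for ranks $<r$, prove it for rank $r$.

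Next I would treat the key geometric input, which is the computation of $\Bb^\bullet(\Pb^n_X \to Y)$, i.e.\ the case $E = \Oc^{\oplus r}$. The standard approach is to stratify $\Pb^n_X$ by the cells $\Pb^0_X \subset \Pb^1_X \subset \cdots \subset \Pb^n_X$, with open complements affine bundles over $X$, and to run the localization sequence for the modified precobordism theory $\PCob'^\bullet$; the Euler class $e(\Oc(1))$ implements the pushforward along each closed stratum $\Pb^{i-1}_X \hook \Pb^i_X$, being a virtual Cartier divisor whose associated line bundle is $\Oc(1)$. Surjectivity of $\Pc roj$ then follows from the right-exactness of localization together with homotopy invariance identifying $\Bb^\bullet$ of the affine-bundle complements with $\Bb^\bullet(X \to Y)$. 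For injectivity, one constructs a left inverse: project-formula manipulations together with the relation $\sum_{i=0}^{r} (-1)^i e(\Oc(1))^i \cdot c_{r-i}(\text{something}) = 0$, or more directly a Gysin-type argument integrating against powers of $e(\Oc(1))$ along the fibers, pins down each $\alpha_i$ from its image. This is where the nilpotence of Euler classes, guaranteed by passing to the modified theory $\PCob'^\bullet$, is used essentially — without it the relevant sums and the identification of the affine-bundle pieces are not controlled.

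The main obstacle I expect is making the localization/homotopy-invariance argument work at the level of the general bivariant theory over a divisorial base, rather than a quasi-projective one over a Noetherian ring: one needs the basic properties (existence of localization sequences, homotopy invariance, the deformation-to-the-normal-cone for virtual Cartier divisors) to hold for $\PCob'^\bullet$ in this generality, which is presumably where the preceding sections of the paper do their work and which I would cite. A secondary subtlety is bookkeeping the orientations and the bivariant product structure so that the claimed map $\Pc roj$ really is a map of $\Bb^\bullet(Y)$-modules (or at least compatible enough with products that the inductive splitting-principle step goes through); here one must be careful that $1_{\Pb(E)/X}$ denotes the orientation class of the (smooth, projective) structure morphism and that $e(\Oc(1)) \bullet (-)$ commutes appropriately with the pullbacks used in the splitting principle. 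Finally, in the induction one must check the base case and that the flag-bundle reduction does not secretly require the rank-$r$ statement one is trying to prove — organizing the induction so that only strictly smaller ranks are invoked, with the rank-$r$ split case handled directly by the cell decomposition above.
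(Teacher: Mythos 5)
There is a genuine gap: your computation of the split/trivial case rests on a localization (right-exact excision) sequence and on homotopy invariance for affine-bundle complements in $\Bb^\bullet$, and neither of these is available for the theories in question. The bivariant theories $\PCob'^\bullet$ and its quotients are defined by generators and double-point relations, and no localization sequence or $\Ab^1$-invariance is established for them anywhere in this paper or its predecessors; indeed the whole design of the paper's argument is to prove the projective bundle formula \emph{without} such tools. The paper instead constructs a ``fundamental embedding'' $\iota_E: \Bb^\bullet(\Pb(E)\to Y)\hookrightarrow \Bb^\bullet(\Pb^\infty\times X\to Y)$ by stabilizing $\Pb(E)\hookrightarrow\Pb(\Oc^{\oplus n}\oplus E)$ (Lemma \ref{JSurjLem}, which uses a blow-up deformation and nilpotence of Euler classes of line bundles), computes $\Bb^\bullet(\Pb^\infty\times X\to Y)$ as a free module on the classes $[\Pb^i\times X]$ by explicit power-series manipulations (Lemma \ref{GeneralTrivPBFLem}, following the arguments of the earlier papers rather than any cell/localization argument), and then proves both injectivity and surjectivity of $\Pc roj$ by showing the coefficient matrix $A(E)_{j,i}=u_{i+j}(1_{\Pb(E)/X})$ has units on the anti-diagonal and nilpotents elsewhere, hence is invertible (Lemmas \ref{CoefficientLem1}--\ref{CoefficientLem3}).

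A second problem is the structure of your splitting-principle induction. The full flag bundle $\Fl(E)\to X$ of a rank $r$ bundle has $\Pb(E)\to X$ itself as its first stage, so injectivity of the flag-bundle pullback cannot be deduced from the projective bundle formula in ranks strictly less than $r$; the circularity you flag is not removed by inducting on $r$. The paper breaks this circle differently: the modified theory $\PCob'^\bullet$ is constructed precisely so that for every vector bundle there is a class $\eta\in\Bb^*(\Pb(E))$ with $\pi_!(\eta)=1_X$ (Lemma \ref{BlowUpLiftLem}, Lemma \ref{EquivalentModificationsLem}), and Lemma \ref{BivInjPullbackLem} then gives injectivity of pullbacks along flag bundles with no appeal to any projective bundle formula. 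So the role of the primed theory is not merely to make Euler classes nilpotent for bookkeeping in a localization argument, but to supply exactly this split-injectivity, which is the load-bearing input both in your reduction step and in the paper's Lemmas \ref{ChernClassFormulaLem} and \ref{CoefficientLem1}. As written, your proposal would need these missing inputs (localization, homotopy invariance, a non-circular splitting reduction) to be supplied, and the first two are not theorems in this setting.
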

\noindent Even though the statement of the theorem is a straightforward generalization of the corresponding result in \cite{annala-chern}, the result itself does not come for free: the basic idea of the proof is still the same, but the details have to be considerably altered for derived schemes without an ample line bundle. Moreover, our modified proof has the advantage that we do not have to define and study Chern classes before proving the projective bundle formula; rather, we construct a good theory of Chern classes in $\PCob'^\bullet$ (see Definition \ref{GeneralChernClassDefn} and Theorem \ref{GeneralChernClassThm}) using the argument of Grothendieck based on the projective bundle formula. The following result is a standard consequence of a good theory of Chern classes.
\begin{thm*}[Theorem \ref{GeneralCFThm}]
Suppose $X$ is a finite dimensional Noetherian derived scheme admitting an ample family of line bundles. Then the morphism
$$\Zb_m \otimes_\Lb \PCob'^\bullet(X) \to K^0(X)$$
given by
$$[V \stackrel f \to X] \mapsto [Rf_* \Oc_V]$$
is an isomorphisms of rings. The theorem remains true if we replace $\PCob'^\bullet$ by $\Omega'^\bullet$.
\end{thm*}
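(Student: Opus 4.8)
The plan is to deduce this Conner--Floyd-type statement from the projective bundle formula (Theorem \ref{GeneralPBFThm}) and the resulting theory of Chern classes (Theorem \ref{GeneralChernClassThm}) in exactly the same way as the classical argument, so the bulk of the work is bookkeeping rather than new ideas. First I would observe that the assignment $[V \stackrel f \to X] \mapsto [Rf_* \Oc_V]$ makes sense: for a projective lci (complete intersection) morphism $f$ the derived pushforward $Rf_* \Oc_V$ is a perfect complex on $X$, so its class lives in $K^0(X)$, and one checks it kills the precobordism relations and is multiplicative, so it descends to a ring homomorphism $\PCob'^\bullet(X) \to K^0(X)$. The homomorphism factors through $\Zb_m \otimes_\Lb \PCob'^\bullet(X)$ because $K^0$, with its multiplicative formal group law $x + y - xy$ coming from $[\Oc(1)]$ and the unit $[\Oc]$, is an $\Lb$-algebra via the classifying map $\Lb \to \Zb_m$ of the multiplicative law, and the two $\Lb$-module structures on $K^0(X)$ agree by naturality of Euler classes (the first Chern class in $K^0$ of a line bundle $L$ is $1 - [L^\vee]$, matching the Euler class coming from $\PCob'^\bullet$).

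The heart of the matter is surjectivity and injectivity. For \textbf{surjectivity}, I would use that $K^0(X)$ of a divisorial Noetherian derived scheme of finite Krull dimension is generated by classes of vector bundles (this is where the ample family hypothesis enters, giving resolutions by sums of line bundles), and then express the class of a vector bundle $E$ in terms of the image of the theory: a rank-$r$ bundle $E$ has a projectivization $\Pb(E) \to X$, and the pushforward of suitable powers of $\Oc_{\Pb(E)}(1)$ recovers, via the projective bundle formula in $K^0$ (or directly), the Chern classes / exterior powers of $E$, hence $[E]$ itself after inverting the relevant integers $m$. For \textbf{injectivity}, the standard move is to produce an inverse using the theory of Chern classes built in Theorem \ref{GeneralChernClassThm}: one shows that both $\Zb_m \otimes_\Lb \PCob'^\bullet(X)$ and $K^0(X)$ are, functorially, the free modules prescribed by the projective bundle formula, and that the map in question respects these descriptions. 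Concretely, one checks the map is an isomorphism on the ``building blocks'' (projective bundles over points, or over $X$ itself) using Theorem \ref{GeneralPBFThm} for $\Bb^\bullet = \PCob'^\bullet$ together with the analogous — and classical — decomposition of $K^0(\Pb(E))$, and then a dévissage / induction on a stratification (resolving an arbitrary class by vector bundle data, again using the ample family) promotes this to an isomorphism for all $X$.

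The last sentence, that one may replace $\PCob'^\bullet$ by $\Omega'^\bullet$, should then be essentially formal: $\Omega'^\bullet$ is a bivariant quotient of $\PCob'^\bullet$, the Conner--Floyd map obviously factors through it (the snc relations, being algebraic-cobordism relations, are visibly killed in $K^0$), and since the map out of $\PCob'^\bullet$ is already an isomorphism after the base change $\Zb_m \otimes_\Lb (-)$, the further quotient to $\Omega'^\bullet$ must be an isomorphism on $\Zb_m \otimes_\Lb (-)$ as well — i.e. the kernel of $\PCob'^\bullet(X) \to \Omega'^\bullet(X)$ already lies in the kernel after inverting the $m$'s.

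The step I expect to be the main obstacle is \textbf{surjectivity together with the dévissage}, i.e. genuinely using the ample family of line bundles on a possibly non-quasi-projective, derived, finite-dimensional Noetherian scheme to resolve $K^0$-classes and to run the inductive comparison; in the quasi-projective-over-a-field setting of \cite{annala-chern} one has much more room (a single ample line bundle, well-behaved Chow-type arguments), and here one must be careful that the resolutions, the projective bundle formula, and the class of $[Rf_*\Oc_V]$ all interact correctly in the derived, relative, divisorial setting. Everything else — that the map is a well-defined $\Lb$-algebra homomorphism, multiplicativity, the formal comparison of the two $\Lb$-structures, and passing to the quotient $\Omega'^\bullet$ — I expect to be routine given the machinery already developed in the earlier sections.
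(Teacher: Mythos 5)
Your overall strategy---invert the Conner--Floyd map by a Chern character built from the Chern classes of Theorem \ref{GeneralChernClassThm}---is the route the paper takes, and your handling of the $\Omega'^\bullet$ statement (the map factors through the quotient, and since the composite out of $\Zb_m \otimes_\Lb \PCob'^\bullet(X)$ is an isomorphism, the quotient map is injective as well as surjective, hence both maps are isomorphisms) is exactly the intended formal argument. But the heart of the proof is missing from your proposal. The inverse is the explicit map $ch([E]) = \rank(E) - c_1(E^\vee)$, well defined on $K^0(X)$ by the resolution property (Proposition \ref{AmpleFamCharProp}) and the Whitney sum formula, and the entire difficulty is to show that $ch$ commutes with Gysin pushforwards along projective quasi-smooth morphisms: this is what gives, on the cycle generators, $ch\big([Rf_*\Oc_V]\big) = f_!\big(ch(1)\big) = [V \stackrel{f}{\to} X]$, i.e.\ that the composite $\PCob'^\bullet(X) \to K^0(X) \to \Zb_m\otimes_\Lb\PCob'^\bullet(X)$ is the identity. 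With only an ample family available one factors $f$ through $\Pb(\Ls_1 \oplus \cdots \oplus \Ls_r) \to X$ (Theorem \ref{ExistenceOfFactorizationsThm}, part 2 --- this is where the divisorial hypothesis really enters); the regular embedding case generalizes from earlier work, and the genuinely new content is the computation $\pi_!\big(c_1(\Oc(1))^i\big) = 1_X$ for $i < r$, which rests on Lemma \ref{ProjBundlesPushToUnitLem} ($\pi_!(1_{\Pb(E)}) = 1_X$ in $\Zb_m\otimes_\Lb\PCob'^*$, proved by a flag-bundle/splitting computation). Your proposal never identifies this Riemann--Roch-type compatibility.

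In its place you propose to promote a projective-bundle comparison to general $X$ by ``d\'evissage / induction on a stratification,'' and that step would fail: $\PCob'^\bullet$ has no localization sequence or stratification-compatible structure one could induct over, and no such induction appears (or is needed) in the actual argument once the pushforward compatibility is established. A smaller but real misreading: $\Zb_m$ is not a localization of $\Zb$ with integers $m$ inverted; it is $\Zb$ regarded as an $\Lb$-algebra via the multiplicative formal group law $x + y - xy$, so nothing is being inverted anywhere in the statement. Your well-definedness and surjectivity remarks, and the comparison of the two $\Lb$-structures via $1 - [\Ls^\vee]$, are fine and agree with the paper.
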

\noindent We also generalize the Grothendieck--Riemann--Roch theorem in Theorem \ref{GeneralGRRThm}. Since having an ample family of line bundles is a much weaker condition that being quasi-projective over an affine base (for instance, all regular schemes are divisorial), the results of Section \ref{GeneralPBFSect} of this paper considerably generalize those of \cite{annala-chern}. Note that this is not just abstract nonsense either: if $X$ is a divisorial (e.g. regular) non-quasi-projective variety over a field $k$ of positive characteristic, then none of the previous work on algebraic cobordism proves such a Conner--Floyd theorem above for $X$. We also note that divisorial derived schemes is almost as far as one could conceivably generalize the current proof of Conner--Floyd theorem: it might be possible to generalize everything to a derived scheme $X$ having the resolution property, but without this assumption, $K^0(X)$ is no longer generated by vector bundles on $X$, completely breaking down the construction of the Chern character.

Finally, in order to define our bivariant theories in this generality, we had to carefully define and study ample line bundles an quasi-projective morphisms of derived schemes (see Sections \ref{AmpleSubSect} and \ref{QProjSubSect}). The results obtained are of course very similar to the classical story, but generalizing them to derived geometry proved to be very useful for us, and hopefully will prove to be useful for others as well. Such results for derived schemes, as far as the author is aware, have not appeared anywhere else (except weaker versions in the unpublished note \cite{annala-qpnote}).

\subsection{Conventions}

We will freely use the language of $\infty$-categories and derived schemes whenever necessary. Given an additive sheaf $\Fc$ on a derived scheme $X$, we will denote its \emph{global sections} by $\Gamma(X; \Fc)$: this is naturally regarded as a spectrum object in simplicial Abelian groups or equivalently as a chain complex of Abelian groups. We will denote by $\abs{\Gamma(X; \Fc)}$ the underlying (connective) simplicial set of $\Gamma(X; \Fc)$. The cohomology groups are defined as
$$H^i(X; \Fc) := \pi_{-i} \big( \Gamma(X; \Fc)\big);$$
if $\Fc$ is a discrete sheaf, then these recover the usual cohomology groups of sheaves, while in general they are more closely related to hypercohomology. If $\Fc$ is discrete, we denote by $\Gamma_\cl(X; \Fc)$ the classical global sections of $\Fc$. 

A derived scheme $X$ has an underlying classical scheme $X_\cl$ which is also called the \emph{truncation}. A morphism $f: X \to Y$ of derived schemes is \emph{of finite type} (\emph{proper}) if $f_\cl$ is of finite type (proper) in the classical sense. Given a vector bundle $E$ on a derived scheme $X$, the projective bundle $\Pb(E) \to X$ classifies all the subbundles of $E$ of rank $1$, while, if $\Fc$ is a connective quasi-coherent sheaf on $X$, $\mathbf{P}(\Fc) := \Pc roj(\LSym^*_X(\Fc))$ classifies all surjections (maps inducing a surjection of sheaves on $\pi_0$) from $\Fc$ to line bundles. If $A$ is a Noetherian ring of finite Krull dimension, then a quasi-projective derived $A$-scheme $X$ is implicitly assumed to be Noetherian. All derived schemes are assumed to be separated, and to have finite Krull-dimensional Noetherian underlying space. Given a morphism $X \to Y$ of derived schemes, we will denote by $\Lb_{X/Y}$ its \emph{relative cotangent complex}. In the case where $Y \simeq \Spec(\Zb)$ this is denoted by $\Lb_{X}$ and called the \emph{absolute cotangent complex}.

Given an object $X$ in an $\infty$-category $\Cc$ with a distinguished final object $pt$, we will use the shorthand notation $\pi_X$ to denote the essentially unique morphism $X \to pt$. Bivariant orientations are denoted by $1_{X/Y}$ instead of $\theta(X \to Y)$ used in previous work. We will denote by $\Lb$ the Lazard ring. Given a formal group law $F(x,y)$, we will denote by $\inv_F(x)$ the formal inverse power series.

\subsection*{Acknowledgements}
The author is supported by Vilho, Yrj\"o and Kalle V\"ais\"al\"a Foundation of the Finnish Academy of Science and Letters.

\section{Background}
\subsection{Derived algebraic geometry}

The construction of our cohomology theories makes use of derived algebraic geometry. The main references for the basic results in derived algebraic geometry are the work of Toën--Vezzosi \cite{HAG1,HAG2} and the work of Lurie \cite{HTT,HA,SAG}, but the reader can also consult the background section of \cite{annala-yokura} where a concise summary of useful results is presented. Derived schemes are Zariski locally modeled by homotopy types of simplicial commutative algebras. Since we only consider derived schemes whose underlying topological space is Noetherian and of finite Krull dimension, we can use the following vanishing result.

\begin{lem}[Grothendieck vanishing]\label{GrothendieckVanishingLem}
Let $X$ be a Noetherian topological space with Krull dimension $n < \infty$, and let $\Fc$ be a sheaf of simplicial Abelian groups on $X$. Then, for all $j \in \Zb$, 
$$\pi_j\big(\Gamma(X; \Fc)\big) = \pi_j\big(\Gamma(X; \tau_{\leq i}(\Fc))\big)$$
as soon as $i \geq n+j$.
\end{lem}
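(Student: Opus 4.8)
The plan is to reduce the statement to the classical Grothendieck vanishing theorem for sheaf cohomology, applied to the homotopy sheaves $\pi_k(\Fc)$, via the Postnikov/hypercohomology spectral sequence. First I would recall that for a sheaf of simplicial Abelian groups $\Fc$ on $X$, the truncation map $\Fc \to \tau_{\leq i}(\Fc)$ has fibre $\tau_{\geq i+1}(\Fc)$, and the homotopy groups of global sections sit in long exact sequences relating $\Gamma(X;\Fc)$, $\Gamma(X;\tau_{\leq i}\Fc)$ and $\Gamma(X;\tau_{\geq i+1}\Fc)$. So it suffices to show that $\pi_j\big(\Gamma(X;\tau_{\geq i+1}\Fc)\big) = 0$ for $i \geq n+j$, i.e.\ that a highly connected sheaf of simplicial Abelian groups has highly connected global sections, with the connectivity bound shifted by exactly the Krull dimension $n$.

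The key input is the descent/hypercohomology spectral sequence
$$E_2^{s,t} = H^s\big(X; \pi_t(\Fc)\big) \Longrightarrow \pi_{t-s}\big(\Gamma(X;\Fc)\big),$$
where $\pi_t(\Fc)$ denotes the (discrete) homotopy sheaf, which is an ordinary sheaf of Abelian groups on $X$. By the classical Grothendieck vanishing theorem (see Hartshorne III.2.7, or \cite{SAG}), since $X$ is Noetherian of Krull dimension $n$, one has $H^s(X; \Gc) = 0$ for every sheaf of Abelian groups $\Gc$ and every $s > n$. Thus the $E_2$-page is concentrated in the strip $0 \leq s \leq n$. For the sheaf $\tau_{\geq i+1}(\Fc)$ we moreover have $\pi_t = 0$ for $t \leq i$, so the only possibly nonzero entries contributing to $\pi_j\big(\Gamma(X;\tau_{\geq i+1}\Fc)\big)$ are those with $t - s = j$, $0 \leq s \leq n$ and $t \geq i+1$; but $t - s = j$ and $s \leq n$ force $t \leq n + j$, which is incompatible with $t \geq i+1$ once $i \geq n+j$. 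Hence all relevant terms vanish and $\pi_j\big(\Gamma(X;\tau_{\geq i+1}\Fc)\big) = 0$, giving the claim.

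The main obstacle is the convergence of this spectral sequence: for an unbounded sheaf of simplicial Abelian groups on an infinite-dimensional space convergence can fail, so one genuinely needs the finite Krull dimension hypothesis to guarantee that only finitely many columns ($0 \leq s \leq n$) contribute, which forces the descent spectral sequence to converge conditionally and in fact strongly (the filtration on each $\pi_{t-s}$ is finite of length $\leq n+1$). I would therefore be careful to either cite a precise convergence statement (e.g.\ Lurie's results on Postnikov-complete $\infty$-topoi and the descent spectral sequence in \cite{SAG}, which apply because a Noetherian space of finite Krull dimension has finite cohomological dimension), or else argue directly by induction on the Postnikov tower: truncating at successively higher stages, each stage changes $\pi_j\big(\Gamma(X;-)\big)$ only through cohomology groups $H^s(X;\pi_t(\Fc))$ with $s \leq n$, so after passing above stage $n+j$ the value of $\pi_j\big(\Gamma(X;\tau_{\leq i}\Fc)\big)$ stabilizes and agrees with $\pi_j\big(\Gamma(X;\Fc)\big)$. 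This inductive formulation sidesteps convergence subtleties entirely and is probably the cleanest route to write up.
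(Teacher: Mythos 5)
Your argument is correct, and its primary route is genuinely different from the paper's. The paper argues directly up the Postnikov tower: it uses the cofibre sequences $\Gamma(X;\pi_{i+1}(\Fc)[i+1]) \to \Gamma(X;\tau_{\leq i+1}\Fc) \to \Gamma(X;\tau_{\leq i}\Fc)$ together with classical Grothendieck vanishing to see that the groups $\pi_j(\Gamma(X;\tau_{\leq i}\Fc))$ stabilize for $i \geq n+j$, and then identifies the stable value with $\pi_j(\Gamma(X;\Fc))$ via the Milnor $\lim^1$-sequence for $\lim_i \Gamma(X;\tau_{\leq i}\Fc)$. You instead pass to the fibre $\tau_{\geq i+1}\Fc$ of $\Fc \to \tau_{\leq i}\Fc$ and kill its sections in the relevant degrees using the descent spectral sequence $E_2^{s,t} = H^s(X;\pi_t(\Fc)) \Rightarrow \pi_{t-s}(\Gamma(X;\Fc))$, with classical Grothendieck vanishing confining the $E_2$-page to the strip $0 \leq s \leq n$; the numerology ($t-s=j$, $s\leq n$, $t\geq i+1$ is impossible for $i\geq n+j$, and the degree $j-1$ term needed for the long exact sequence only requires the weaker bound $i \geq n+j-1$) does give exactly the stated bound. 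What your route buys is a one-line vanishing argument once strong convergence of the spectral sequence is in place; what it costs is precisely that convergence input, which you correctly flag must come from finite cohomological dimension (equivalently, hypercompleteness/Postnikov completeness of the topos of a Noetherian space of finite Krull dimension). Your fallback ``induction on the Postnikov tower'' is essentially the paper's proof, but be careful with the claim that it ``sidesteps convergence subtleties entirely'': stabilization of $\pi_j(\Gamma(X;\tau_{\leq i}\Fc))$ alone does not identify the stable value with $\pi_j(\Gamma(X;\Fc))$ --- one still needs $\Fc \simeq \lim_i \tau_{\leq i}\Fc$ (Postnikov convergence, again from finite Krull dimension) and then the Milnor sequence, which is exactly how the paper closes the argument. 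So both proofs ultimately rest on the same two inputs, classical Grothendieck vanishing and finite cohomological dimension; they only differ in whether the bookkeeping is done by a spectral sequence or by hand.
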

\begin{proof}
Using the fundamental cofibre sequences
$$\Gamma\big(X; \pi_{i+1}(\Fc)[i+1] \big) \to \Gamma\big(X; \tau_{\leq i+1}(\Fc) \big) \to \Gamma\big(X; \tau_{\leq i}(\Fc) \big)$$
and the classical Grothendieck vanishing, we conclude that $\pi_j\big( \Gamma(X; \tau_{\leq i}(\Fc)) \big)$
stabilize for $i \gg 0$, and therefore (by Milnor sequence)
$$\pi_j \big( \Gamma(X; \Fc) \big) = \lim_i \pi_j\big(\Gamma(X; \tau_{\leq i}(\Fc))\big).$$
The bound for $i$ follows from the fact that a Noetherian topological space of Krull dimension $n$ has cohomological dimension $n$.
\end{proof}

Moreover, for technical reasons it is often useful to restrict our attention to Noetherian schemes:

\begin{defn}\label{NoetherianDef}
A derived scheme $X$ is \emph{Noetherian} if $X_\cl$ is Noetherian as a classical scheme and all the homotopy sheaves $\pi_i(\Oc_X)$ are coherent $\Oc_{X_\cl}$-modules. 
\end{defn}

\begin{defn}\label{CoherentDef}
A quasi-coherent sheaf $\Fc$ on a Noetherian derived scheme $X$ is called \emph{almost perfect} if the homotopy sheaves $\pi_i(\Fc)$ are coherent sheaves on $X_\cl$, and they vanish for $i \ll 0$. An almost perfect sheaf $\Fc$ is \emph{coherent} if all but finitely many homotopy sheaves $\pi_i(\Fc)$ vanish.
\end{defn}

\subsubsection{Ample line bundles and ample families}\label{AmpleSubSect}

The purpose of this section to define and to study ample line bundles and schemes admitting an ample family of line bundles in the context of derived algebraic geometry. The results in this section generalize the well known classical results as well as the (weaker) results of the unpublished note \cite{annala-qpnote}. Let us start with the fundamental definitions. 

\begin{defn}\label{AmpleDef}
Let $X$ be a Noetherian derived scheme and let $\Ls$ be a line bundle on $X$. We say that $\Ls$ is \emph{ample} if for any point $x \in X$ there exists $n \geq 0$ and a section $s \in \Gamma(X; \Ls^{\otimes n})$ such that $X_s := X \backslash V(s)$ is affine and contains $x$.
\end{defn}

\begin{defn}\label{AmpleFamDef}
Let $X$ be a Noetherian derived scheme. We say that $X$ is \emph{divisorial} (or $X$ \emph{has an ample family of line bundles}) if for any point $x \in X$ there exists a line bundle $\Ls$ and a global section $s \in \Gamma(X; \Ls)$ such that $X_s$ is affine and contains $x$.
\end{defn}

The following Lemma will be important when analyzing the ramifications of the above definitions.

\begin{lem}\label{LocLem}
Let $X$ be a Noetherian derived scheme, let $\Ls$ be a line bundle on $X$ and let $s$ be a global section of $\Ls$. Let $\Fc$ be a quasi-coherent sheaf on $X$, and consider the sequences
$$H^i(X; \Fc) \xrightarrow{s \cdot} H^i(X; \Ls \otimes \Fc) \xrightarrow{s \cdot} H^i(X; \Ls^{\otimes 2} \otimes \Fc) \xrightarrow{s \cdot} \cdots$$
Then there are natural isomorphisms
$$\colim_n H^i \big(X; \Ls^{\otimes n} \otimes \Fc \big) \xrightarrow{\cong} H^i(X_s; \Fc)$$
given by sending $f \in H^i(X; \Ls^{\otimes n} \otimes \Fc)$ to $f/s^n \in H^i(X_s; \Fc)$.
\end{lem}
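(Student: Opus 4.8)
The plan is to reduce to the affine case by a Čech-type argument on an affine cover and then identify the colimit with sections over $X_s$ directly. First I would note that since $X$ is Noetherian of finite Krull dimension, and since $X_s$ is quasi-compact quasi-separated (it is an open subscheme of a Noetherian scheme), both sides of the asserted isomorphism commute with the relevant constructions. The key geometric input is the elementary observation that for an affine derived scheme $U = \Spec(R)$ together with the restriction of $\Ls$ and $s$, writing $t \in \pi_0(R)$ for the image of $s$ under a local trivialization, the localization $U_s$ is $\Spec(R[t^{-1}])$, and for a quasi-coherent sheaf $\Fc$ corresponding to an $R$-module $M$ one has $\Gamma(U_s; \Fc) \simeq M[t^{-1}] \simeq \colim_n (M \xrightarrow{t} M \xrightarrow{t} \cdots)$, where the transition maps are identified with multiplication by $s$ after untwisting by the trivialization. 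Taking $\pi_i$ and using that filtered colimits are exact, this yields the claim on each affine piece.

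Next I would globalize. Choose a finite affine open cover $\{U_\alpha\}$ of $X$ on which $\Ls$ is trivial; this exists because $X$ is quasi-compact. The global sections $\Gamma(X; \Ls^{\otimes n} \otimes \Fc)$ are computed by the Čech complex associated to this cover with terms $\Gamma(U_{\alpha_0 \cdots \alpha_p}; \Ls^{\otimes n} \otimes \Fc)$, and similarly $\Gamma(X_s; \Fc)$ is computed by the Čech complex for the cover $\{U_\alpha \cap X_s\} = \{(U_\alpha)_s\}$ of $X_s$, with terms $\Gamma((U_{\alpha_0 \cdots \alpha_p})_s; \Fc)$. The multiplication-by-$s$ maps are compatible with the Čech differentials, so the whole tower of Čech complexes maps to the Čech complex for $X_s$. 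By the affine case applied on each intersection $U_{\alpha_0 \cdots \alpha_p}$ (which is affine since $X$ is separated), the colimit over $n$ of the $n$-th Čech complex agrees termwise with the Čech complex of $X_s$. Since filtered colimits of chain complexes are computed termwise and are exact, and since all these complexes are bounded (Lemma \ref{GrothendieckVanishingLem} together with the finite cover gives a uniform bound on the length of the Čech complexes), passing to cohomology commutes with the colimit, giving the desired natural isomorphism $\colim_n H^i(X; \Ls^{\otimes n} \otimes \Fc) \xrightarrow{\cong} H^i(X_s; \Fc)$. Tracking an element through the affine identification shows that the isomorphism sends $f$ to $f/s^n$, as asserted.

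The main obstacle I anticipate is the bookkeeping needed to handle the derived/hypercohomology setting cleanly: $\Fc$ is an arbitrary quasi-coherent sheaf, not discrete, so "the Čech complex" must be interpreted as a totalization (limit) of a cosimplicial spectrum, and one must argue that this totalization commutes with the filtered colimit in $n$. This is where finite Krull dimension is essential — by Lemma \ref{GrothendieckVanishingLem} the cohomological dimension is bounded, so the totalization is effectively a finite limit in each degree, which does commute with filtered colimits. A secondary technical point is checking that the local trivializations of $\Ls$ can be chosen compatibly enough that the transition maps in the colimit are genuinely "multiplication by $s$"; this is handled by the standard observation that different trivializations differ by units, which become invertible in the localization and hence do not affect the colimit. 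Neither point is conceptually difficult, but both require care to state precisely in the derived framework.
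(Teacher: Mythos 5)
Your proof is correct, but it takes a genuinely different route from the paper. The paper first reduces to $i=0$, replaces $\Fc$ by a connective truncation, uses Grothendieck vanishing (Lemma \ref{GrothendieckVanishingLem}) to bound the homotopy amplitude of $\Fc$, and then inducts on that amplitude via the cofibre sequence $\Fc' \to \Fc \to \pi_0(\Fc)$: the base case is exactly the classical statement for discrete sheaves (\cite{Stacks} Tag 09MR), and the inductive step is the 5-lemma applied to the colimit of long exact sequences. You instead prove the affine case directly as the module-localization identity $\Gamma(U_s;\Fc) \simeq M[t^{-1}] \simeq \colim_n M$, and then globalize by Zariski descent along a finite affine cover trivializing $\Ls$, commuting the filtered colimit in $n$ past the Čech limit. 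The paper's dévissage is shorter because it outsources all geometric content to the classical result and never has to discuss totalizations or their convergence; your argument is more self-contained, works at the level of global-section spectra (so it gives the statement before ever taking $\pi_{-i}$), and in fact does not really need finite Krull dimension. One small correction on the justification: the place where you invoke Lemma \ref{GrothendieckVanishingLem} to bound the ``length'' of the Čech complexes is not quite the right reason --- the length is already bounded by the cardinality of the finite cover, and the clean statement is that descent along a finite open cover of a separated scheme is a finite limit (e.g. by iterated Mayer--Vietoris, or as a limit over the finite poset of nonempty subsets of the index set), and finite limits commute with filtered colimits; in the paper, Grothendieck vanishing plays a different role, namely truncating the sheaf itself. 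With that substitution your argument is complete.
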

\begin{proof}
It is enough to prove this for $i=0$. Since applying the truncation $\tau_{\geq 0}$ does not change the zeroth cohomology, we can assume that $\Fc$ is connective. Finally, we can use Grothendieck vanishing (Lemma \ref{GrothendieckVanishingLem}) to reduce to the situation where the nontrivial homotopy sheaves of $\Fc$ lie between degrees $0$ and $n$, where $n$ is at most the Krull dimension of $X$. Our argument proceeds by induction on $n$. Notice that the base case $n=0$ is classical (see e.g. \cite{Stacks} Tag 09MR). To prove the general case, we consider the cofibre sequence
$$\Fc' \to \Fc \to \pi_0(\Fc).$$
Since the nontrivial homotopy sheaves of $\Fc'$ lie between degrees $1$ and $n$, we can apply the induction assumption to $\Fc'$ and $\pi_0(\Fc)$, and the claim follows from 5-lemma applied to the diagram comparing the cohomology long exact sequences (note that sequential colimits preserve exact sequences).
\end{proof}

We can now prove the following useful characterization of ampleness.

\begin{prop}\label{AmpleCharProp1}
Let $X$ be a Noetherian derived scheme, and let $\Ls$ be a line bundle on $X$. Then the following are equivalent
\begin{enumerate}
\item $\Ls$ is ample;
\item for every connective almost perfect sheaf $\Fc$ on $X$, the sheaves $\Ls^{\otimes n} \otimes \Fc$ are globally generated for $n \gg 0$.
\end{enumerate}
\end{prop}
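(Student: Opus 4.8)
The plan is to prove the equivalence $(1) \Leftrightarrow (2)$ by mimicking the classical argument (as in \cite{Stacks} Tag 01Q3 or EGA II), but being careful to work with almost perfect sheaves and to use Lemma \ref{LocLem} as the substitute for the classical fact that cohomology commutes with localization. The key observation enabling the derived argument is that a coherent sheaf on a Noetherian derived scheme of finite Krull dimension has only finitely many nonvanishing homotopy sheaves, all coherent on the truncation, so one can induct on the ``length'' of the sheaf.

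For the direction $(2) \Rightarrow (1)$: given a point $x \in X$, apply the hypothesis to $\Fc = \Oc_X$ to find $n$ with $\Ls^{\otimes n}$ globally generated; lifting a basis of the fiber at $x$, one obtains a section $s$ of some $\Ls^{\otimes n}$ that is nonvanishing at $x$. The remaining work is to check that one can choose $s$ so that $X_s$ is affine. Here I would argue as in the classical case: since $\Ls$ is (to be shown) ample, the non-vanishing loci $X_s$ as $s$ ranges over sections of powers form a basis of the topology; more directly, one uses that $\Oc_X$ being globally generated after twisting gives a closed immersion into a (possibly infinite, but locally finite) projective space over which $\Oc(1)$ pulls back to a power of $\Ls$, reducing affineness of $X_s$ to affineness of a standard open in projective space. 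For the derived scheme this closed immersion statement needs the section excerpt's conventions on $\mathbf{P}(\Fc)$, but on truncations it is classical, and a closed immersion of derived schemes is affine iff it is so on truncations.

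For the direction $(1) \Rightarrow (2)$, which I expect to be the main obstacle: let $\Fc$ be connective almost perfect. By Definition \ref{AmpleDef}, cover $X$ by finitely many affine opens of the form $X_{s_i}$ with $s_i \in \Gamma(X; \Ls^{\otimes n_i})$ (finiteness uses that $X$ is Noetherian, hence quasi-compact). On each affine $X_{s_i}$, the restriction $\Fc|_{X_{s_i}}$ is globally generated (connective modules over a simplicial commutative ring are generated by $\pi_0$-generators, which exist by almost perfectness), say by finitely many sections. The task is to show each such local section, after multiplying by a large power of $s_i$, extends to a global section of $\Ls^{\otimes N} \otimes \Fc$ and that finitely many such global sections suffice to globally generate $\Ls^{\otimes N} \otimes \Fc$ for $N \gg 0$. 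The extension statement is exactly Lemma \ref{LocLem} applied with $i = 0$: a section over $X_{s_i} = X \backslash V(s_i)$ is, in the colimit, of the form $f/s_i^m$ for some global $f \in H^0(X; \Ls^{\otimes m n_i} \otimes \Fc)$. One then takes $N$ larger than all the exponents that appear; global generation at $\pi_0$ over each $X_{s_i}$ is preserved, and since the $X_{s_i}$ cover $X$, the finitely many chosen global sections generate $\pi_0(\Ls^{\otimes N} \otimes \Fc)$ everywhere, which is what ``globally generated'' means. The subtlety to handle carefully is uniformity: one must twist by a single $N$ that works simultaneously for all generators over all the finitely many charts, and must check that multiplying a section by $s_i^{k}$ does not destroy the generation property on the overlaps --- this is fine because $s_i$ is invertible on $X_{s_i}$, so on $X_{s_i}$ multiplication by $s_i^k$ is an isomorphism $\Ls^{\otimes N}\otimes\Fc \cong \Ls^{\otimes(N + k n_i)}\otimes\Fc$. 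Assembling these gives $(2)$.
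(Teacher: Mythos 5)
Your direction $(1) \Rightarrow (2)$ is essentially the paper's argument: reduce to coherent $\Fc$ via Grothendieck vanishing, cover $X$ by affine opens $X_{s_i}$ with $s_i \in \Gamma(X; \Ls^{\otimes n_i})$, and use Lemma \ref{LocLem} to extend local $\pi_0$-generators to global sections after twisting. One small point you gloss over: the twists you produce are of the special form $m n_i + (\text{fixed exponents})$, so getting global generation for \emph{all} $n \gg 0$ (not just one convenient $N$) needs an extra step, e.g.\ the paper's trick of running the argument for each of the finitely many sheaves $\Ls^{\otimes j} \otimes \Fc$, $j = 0, \ldots, N-1$, with $N = n_1 \cdots n_r$. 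That is a fixable bookkeeping issue.

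The genuine gap is in $(2) \Rightarrow (1)$. You apply hypothesis $(2)$ only to $\Fc = \Oc_X$, obtaining a section of $\Ls^{\otimes n}$ nonvanishing at $x$, and then try to force $X_s$ to be affine. This cannot work: global generation of all powers of $\Ls$ is far weaker than ampleness (take $\Ls = \Oc_X$ on any Noetherian scheme that is not quasi-affine; every power is globally generated, yet no section has affine nonvanishing locus in general), so no argument using only the globally generated structure of $\Ls^{\otimes n}$ itself can conclude. Your two suggested repairs do not close this: the first (``the $X_s$ form a basis since $\Ls$ is (to be shown) ample'') is circular, and the second rests on the false claim that global generation yields a \emph{closed immersion} into projective space --- it only yields a morphism, and affineness of preimages of standard opens is exactly what fails for non-ample bundles. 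The missing idea, which is the heart of the paper's proof, is to use hypothesis $(2)$ for a sheaf other than $\Oc_X$: pick an affine open $U \ni x$ on which $\Ls$ trivializes, let $Z$ be its reduced complement, and apply $(2)$ to the fibre $\Ic$ of $\Oc_X \to \Oc_Z$. A global section of $\Ls^{\otimes n} \otimes \Ic$ not vanishing at $x$ maps to a section $s' \in \Gamma(X; \Ls^{\otimes n})$ that vanishes along $Z$, so $X_{s'} \subset U$; since $U$ is affine and $\Ls\vert_U$ is trivial, $X_{s'}$ is a principal open of $U$, hence affine and contains $x$, which is precisely Definition \ref{AmpleDef}.
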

\begin{rem}
We recall that a connective quasi-coherent sheaf $\Fc$ on $X$ is \emph{globally generated} if there exists a set $I$ and a morphism
$$\Oc_X^{\oplus I} \to \Fc$$
inducing a surjection on $\pi_0$.
\end{rem}
\begin{proof}
Using Grothendieck vanishing (Lemma \ref{GrothendieckVanishingLem}) we can assume that $\Fc$ is coherent. Let us first assume that $\Ls$ is ample, and let us begin by choosing $s_i \in \Gamma(X; \Ls^{\otimes n_i})$, $i=1..r$, so that $X_{s_i}$ form an affine open cover of $X$. As 
$$\pi_0\big(\Gamma(X_{s_i}; \Fc)\big) \cong \Gamma\big(X_{s_i}; \pi_0(\Fc) \big),$$
we can use Lemma \ref{LocLem} to conclude that for $d_i \gg 0$ there exist global sections of $\Ls^{\otimes d_i n_i} \otimes \Fc$ generating it at the points of $X_{s_i}.$ Letting $N = n_1 \cdots n_r$, it then follows that $\Ls^{\otimes iN} \otimes \Fc$ is globally generated for $i \gg 0$. Since the last conclusion is also true for the sheaves $\Ls \otimes \Fc, ..., \Ls^{\otimes N-1} \otimes \Fc$, it follows that $\Ls^{\otimes n} \otimes \Fc$ is globally generated for $n \gg 0$.

Suppose then that for all coherent sheaves $\Fc$, $\Ls^{\otimes n} \otimes \Fc$ is globally generated for $n \gg 0$. Let $x \in X$ be a point, and let $U$ be an affine open neighbourhood of $x$ s.t. $\Ls \vert_U$ is trivial. Let $Z \hook X$ be the complement of $U$ with the reduced scheme structure, and consider the cofibre sequence
$$\Ic \to \Oc_X \to \Oc_Z.$$
Supposing that $\Ls^{\otimes n} \otimes \Ic$ is globally generated, it is possible to find a global section $s$ so that its image in $s' \in \Gamma(X; \Ls^{\otimes n})$ does not vanish at $x$. It follows that $X_{s'} \subset U$ is an affine open subset containing $x$.
\end{proof}

Admitting an ample family of line bundles has an alternative characterization as well.

\begin{prop}\label{AmpleFamCharProp}
Let $X$ be a Noetherian derived scheme. Then the following are equivalent
\begin{enumerate}
\item $X$ is divisorial;
\item for any connective almost perfect sheaf $\Fc$ on $X$, there exists a surjection
$$\Ls_1 \oplus \Ls_2 \oplus \cdots \oplus \Ls_r \to \Fc$$
from a direct sum of line bundles. 
\end{enumerate}
\end{prop}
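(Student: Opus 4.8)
The plan is to prove the two implications separately, with the bulk of the work going into $(1) \Rightarrow (2)$, since $(2) \Rightarrow (1)$ is essentially a matter of unwinding the definitions. For $(2) \Rightarrow (1)$, given a point $x \in X$, I would apply the hypothesis to a suitable connective almost perfect sheaf whose vanishing locus avoids $x$ — concretely, letting $Z \hook X$ be the reduced closed complement of an affine open neighbourhood $U$ of $x$ on which enough line bundles trivialize, and taking $\Fc = \Ic$ the fibre of $\Oc_X \to \Oc_Z$, exactly as in the last paragraph of the proof of Proposition \ref{AmpleCharProp1}. A surjection $\bigoplus_i \Ls_i \to \Ic$ then produces, after composing with $\Ic \to \Oc_X$ and projecting to a summand, a global section of some $\Ls_i$ that is nonvanishing at $x$; its non-vanishing locus is contained in $U$, hence affine, which is what Definition \ref{AmpleFamDef} demands. (One small point to check: that $\Ic$ is connective almost perfect — connectivity because $\Oc_Z$ is discrete and the map $\Oc_X \to \Oc_Z$ is surjective on $\pi_0$, and almost perfection because $X$ is Noetherian and $\Oc_Z$ is coherent.)

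For the harder direction $(1) \Rightarrow (2)$, the strategy mirrors the ampleness argument: cover $X$ by finitely many affine opens $X_{s_1}, \dots, X_{s_r}$ with $s_j \in \Gamma(X; \Ls_j)$ for line bundles $\Ls_j$ (possible by the definition of divisorial together with quasi-compactness of $X$). Using Grothendieck vanishing (Lemma \ref{GrothendieckVanishingLem}) reduce to $\Fc$ coherent. Now on each affine piece $X_{s_j}$, the restriction $\Fc|_{X_{s_j}}$ is globally generated as a module over $\Gamma(X_{s_j}; \Oc_X)$ by finitely many sections, and by Lemma \ref{LocLem} each such section, after multiplying by a suitable power $s_j^{d}$, extends to a global section of $\Ls_j^{\otimes d} \otimes \Fc$. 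This yields finitely many global sections of sheaves of the form $\Ls_j^{\otimes d_j} \otimes \Fc$ that together generate $\Fc$ everywhere on $X$ — i.e. a surjection $\bigoplus_j \Ls_j^{\otimes d_j} \to \Fc$ — and the summands $\Ls_j^{\otimes d_j}$ are line bundles, so this is already of the desired form.

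The main obstacle I anticipate is the passage from "$\Fc|_{X_{s_j}}$ is finitely generated" to genuinely global sections: a section over the affine open $X_{s_j}$ only extends to a global section of $\Ls_j^{\otimes d} \otimes \Fc$ after clearing denominators, and one must simultaneously (i) invoke Lemma \ref{LocLem} correctly in the derived setting, noting that it identifies $H^0(X_{s_j}; \Fc)$ with the colimit $\colim_n H^0(X; \Ls_j^{\otimes n} \otimes \Fc)$ so that the extension exists for $d \gg 0$, and (ii) ensure the finitely many sections chosen across the finitely many charts $j = 1, \dots, r$ generate $\Fc$ at \emph{every} point, not just on the chart they came from — but since generation is a closed condition and the $X_{s_j}$ cover $X$, a section generating $\Fc$ at the points of $X_{s_j}$ suffices there, and the union over $j$ covers $X$. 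A secondary technical point is the reduction to coherent $\Fc$: Grothendieck vanishing lets us replace $\Fc$ by a truncation $\tau_{\leq i}\Fc$ without changing $H^0$, and then dévissage along the Postnikov tower (cofibre sequences $\Fc' \to \Fc \to \pi_0(\Fc)$, as in Lemma \ref{LocLem}) reduces generation statements to the coherent case — one should state this carefully but it is routine.
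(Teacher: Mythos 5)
Your proposal is correct and takes essentially the same route as the paper: for $(1)\Rightarrow(2)$ you cover $X$ by affine charts $X_{s_j}$, reduce to coherent $\Fc$ via Grothendieck vanishing, and use Lemma \ref{LocLem} to extend local generators to global sections of twists, while for $(2)\Rightarrow(1)$ you apply the hypothesis to the fibre of $\Oc_X \to \Oc_Z$ exactly as in the paper's proof. The only slips are cosmetic dualizations: the surjection you obtain is from $\bigoplus_j \Ls_j^{\otimes -d_j}$ (not $\bigoplus_j \Ls_j^{\otimes d_j}$), and the section nonvanishing at $x$ is a section of $\Ls_i^\vee$ rather than of $\Ls_i$ --- in both cases the objects are still line bundles, so the argument is unaffected.
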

\begin{proof}
Suppose first that $X$ admits an ample family of line bundles. Choose global sections $s_1, ... , s_r$ of line bundles $\Ls_1, ..., \Ls_r$ such that $X_{s_i}$ form an affine open covering for $X$. Given a connective almost perfect sheaf $\Fc$ on $X$, we can use Lemma \ref{LocLem} to find $n_i \geq 0$ and global sections $s_{i1},...,s_{id_i}$ which generate $\Ls^{\otimes n_i}_i \otimes \Fc$ at the points of $X_{s_i}$. Said otherwise, we find a surjective morphism
$$\big( \Ls^{\otimes - n_1}_1 \big)^{\oplus d_1} \oplus \cdots \oplus \big( \Ls^{\otimes - n_r}_r \big)^{\oplus d_r} \to \Fc,$$
which is exactly what we wanted.

To prove the converse, let $U$ be an affine open neighborhood of $x$ with reduced complement $Z$, and consider the cofibre sequence
$$\Ic \to \Oc_X \to \Oc_Z$$
of almost perfect sheaves on $X$. Given a surjective morphism
$$\Ls_1 \oplus \cdots \oplus \Ls_r \to \Ic,$$
then for some $1 \leq i \leq r$ the induced map $\Ls_i \to \Oc_X$ is surjective at $x$. In other words $\Ls_i^\vee$ has a global section $s$ such that $x \in X_s$. As $X_s \subset U$ and $U$ is affine, also $X_s$ is affine.
\end{proof}

Next we consider the relative version of ampleness.

\begin{defn}\label{RelAmpleDef}
Let $f: X \to Y$ be a morphism of derived schemes. We say that a line bundle $\Ls$ on $X$ is \emph{relatively ample} (or \emph{$f$-ample}) if for every affine open set $\Spec(B) \to Y$, $\Ls$ restricts to an ample line bundle on the inverse image $f^{-1} \Spec(B)$.
\end{defn}

if $Y$ is affine, then a line bundle $\Ls$ on $X$ is relatively ample over $Y$ if and only if $\Ls$ is ample. Combining this observation with the following lemma allows us to check relative ampleness on an affine cover of $Y$.

\begin{lem}\label{RelAmpleLocalLem}
Let $f: X \to Y$ be a morphism of Noetherian derived schemes, let $(U_i)_{i \in I}$ be an open cover of $Y$, and let $\Ls$ be a line bundle on $X$.  Then $\Ls$ is $f$-ample if and only if for all $i \in I$, we have that $\Ls_i := \Ls \vert_{f^{-1} U_i}$ is $f_i$-ample, where $f_i: f^{-1} U_i \to U_i$ is the restriction of $f$.
\end{lem}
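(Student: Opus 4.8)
The plan is to prove both implications of the equivalence, the forward direction being essentially trivial and the content residing in showing that relative ampleness can be checked locally on the target.

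First I would dispose of the ``only if'' direction: if $\Ls$ is $f$-ample, then for every affine open $\Spec(B) \to Y$ the restriction of $\Ls$ to $f^{-1}\Spec(B)$ is ample by definition. Since any affine open $\Spec(B) \to U_i \to Y$ is in particular an affine open over $Y$, the restriction $\Ls_i$ satisfies the defining condition of $f_i$-ampleness; no work is needed here.

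For the ``if'' direction, suppose $\Ls_i$ is $f_i$-ample for all $i \in I$. We must show that for an arbitrary affine open $j\colon \Spec(B) \hook Y$, the line bundle $\Ls$ restricts to an ample line bundle on $W := f^{-1}\Spec(B)$. The sets $\Spec(B) \cap U_i$ form an open cover of the affine scheme $\Spec(B)$, and since $\Spec(B)$ is quasi-compact we may pass to a finite subcover and even refine it to a cover by basic open sets $\Spec(B_{g_k})$, each of which is contained in some $U_{i(k)}$. Now $f^{-1}\Spec(B_{g_k})$ is an open subset of $f^{-1}U_{i(k)}$, and the restriction of an ample line bundle to an open subscheme that happens to itself sit over an affine (namely $\Spec(B_{g_k})$) is ample --- this is the observation recorded just before the lemma, applied to the restricted morphism $f^{-1}U_{i(k)} \to U_{i(k)}$ together with the affine open $\Spec(B_{g_k}) \subset U_{i(k)}$. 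Hence $\Ls$ restricts to an ample line bundle on each $f^{-1}\Spec(B_{g_k})$. It therefore remains to prove the key claim: \emph{if $g\colon W \to \Spec(B)$ is a morphism of Noetherian derived schemes, $(\Spec(B_{g_k}))_{k=1}^m$ is a finite cover of $\Spec(B)$ by basic opens, and $\Ls$ restricts to an ample line bundle on each $g^{-1}\Spec(B_{g_k})$, then $\Ls$ is ample on $W$.}

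The key claim is where the real argument lies, and I expect it to be the main obstacle, though it is a standard ``affine descent for ampleness'' statement. To prove it, take a point $x \in W$; it lies in some $W_k := g^{-1}\Spec(B_{g_k})$, on which $\Ls|_{W_k}$ is ample, so there exist $n \geq 0$ and $t \in \Gamma(W_k; \Ls^{\otimes n}|_{W_k})$ with $(W_k)_t$ affine and containing $x$. The issue is to promote the local section $t$ to a global section of some power of $\Ls$ on all of $W$ without destroying the affineness of the distinguished open. The standard move: $W_k = W_{g_k}$ where by abuse $g_k \in \Gamma(W; \Oc_W)$ denotes the pullback of $g_k \in B$; since $W$ is Noetherian (hence $\Ls^{\otimes n}$ is coherent and $W$ is quasi-compact), Lemma \ref{LocLem} shows $\Gamma(W_k; \Ls^{\otimes n}|_{W_k}) = \colim_\ell H^0(W; \Ls^{\otimes n})$ along multiplication by $g_k$, so for $\ell \gg 0$ the section $g_k^\ell t$ extends to a global section $s \in \Gamma(W; \Ls^{\otimes n})$. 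Then $W_s \cap W_k = (W_k)_{g_k^\ell t} = (W_k)_t$ is affine (as $g_k$ is already invertible on $W_k$), and $W_s \subset W_{g_k} = W_k$ because $s$ restricted outside $W_k$ is divisible appropriately --- more carefully, $s$ vanishes on $W \setminus W_k$ up to the factor $g_k^\ell$, so $W_s \subseteq W_k$ and thus $W_s = (W_k)_t$ is affine and contains $x$. This verifies Definition \ref{AmpleDef} for $\Ls$ at $x$, and since $x$ was arbitrary, $\Ls$ is ample on $W$. Combining this with the reduction in the previous paragraph completes the proof.
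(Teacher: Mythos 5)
Your reduction steps (the trivial ``only if'' direction, passing to an arbitrary affine $\Spec(B) \subset Y$, refining $\Spec(B)\cap U_i$ by finitely many basic opens $\Spec(B_{g_k})$ contained in some $U_{i(k)}$, and using the observation that relatively ample over an affine base means ample) are all correct and follow the same route as the paper, which likewise reduces to: $W \to \Spec(B)$ with $(W_{g_k})_k$ covering $W$ and $\Ls|_{W_{g_k}}$ ample implies $\Ls$ ample, proved via Lemma \ref{LocLem}. However, the last step of your key claim has a genuine gap. From Lemma \ref{LocLem} you only get \emph{some} global section $s \in \Gamma(W;\Ls^{\otimes n})$ with $s|_{W_k} = g_k^\ell t$; the lemma gives no control whatsoever over $s$ on the closed complement $V(g_k)$, so the assertion ``$s$ vanishes on $W\setminus W_k$ up to the factor $g_k^\ell$, hence $W_s \subseteq W_k$'' is unjustified and in fact false in general. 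For instance, take $W = \Spec(\Zb\times\Zb)$, $\Ls=\Oc$, $g_k=(1,0)$ and $t=1$ on $W_k=\Spec(\Zb)$: then $s=(1,1)$ is a legitimate extension of $g_k^\ell t$ for every $\ell$, and $W_s = W \not\subseteq W_k$. Consequently you cannot conclude that $W_s = (W_k)_t$ is affine, which is exactly what the definition of ampleness requires.

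The repair is the standard one and is a one-liner: instead of $s$, use $s' := g_k\cdot s$ (the product of $s$ with the pulled-back function $g_k$), which is again a global section of $\Ls^{\otimes n}$. Its non-vanishing locus is $W_{s'} = W_{g_k}\cap W_s = (W_k)_{s|_{W_k}} = (W_k)_{g_k^\ell t} = (W_k)_t$, since $g_k$ is invertible on $W_k$; this is affine and contains $x$, verifying Definition \ref{AmpleDef}. With this correction your argument is complete and coincides with the paper's proof, which is terse at precisely this point but implicitly relies on the same trick of cutting the extended section down by $g_k$ so that its non-vanishing locus lands inside $W_k$.
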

\begin{proof}
The only if direction is clear, so let us prove that $\Ls_i$ being $f_i$-ample implies the relative ampleness of $\Ls$. In other words we have to prove the following: if $Y = \Spec(B)$ is affine, $b_1, ..., b_r \in B$ are functions such that $(Y_{b_i})_{i=1}^r$ is an affine open cover of $Y$ with $\Ls \vert_{X_{b_i}}$ ample for all $i$, then $\Ls$ is ample. But proving this is easy: if $x \in X$, then $x$ lies inside one of the open sets $X_{b_i}$, and we can use Lemma \ref{LocLem} to find a global section $s$ of $\Ls^{\otimes n}$ with $X_{s} \subset X_{b_i}$ affine and containing $x$.
\end{proof}

As a consequence, we prove that relatively ample line bundles behave well in compositions and in derived pullbacks.

\begin{prop}\label{AmpleLineBundlesInCompositionsLem}
Let $f: X \to Y$ be a morphism of Noetherian derived schemes having a relatively ample line bundle $\Ls$. Then
\begin{enumerate}
\item if $g: Y \to Z$ and $\Ls$ is $(g \circ f)$-ample, then $\Ls$ is $f$-ample;
\item if $g: Y \to Z$ admits a relatively ample line bundle $\Ms$, then $\Ls \otimes f^* \Ms^{\otimes n}$ is $(g \circ f)$-ample for $n \gg 0$;
\item if 
$$
\begin{tikzcd}
X' \arrow[]{r}{f'} \arrow[]{d}{h'} & Y' \arrow[]{d}{h} \\
X \arrow[]{r}{f} & Y
\end{tikzcd}
$$
is derived Cartesian, then $h'^* \Ls$ is $f'$-ample.
\end{enumerate}
\end{prop}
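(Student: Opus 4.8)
The plan is to prove the three assertions in order, using Lemma \ref{RelAmpleLocalLem} to reduce everything to the affine base case and Proposition \ref{AmpleCharProp1} to convert between ``ampleness'' and the global generation of twists of almost perfect sheaves. By Lemma \ref{RelAmpleLocalLem} we may assume throughout that the relevant base ($Z$ in (1) and (2), $Y$ in (3)) is affine, and for (3) we may further assume $Y$ is affine, so $\Ls$ is honestly ample and the claim is that $h'^*\Ls$ is ample on $X' = X\times_Y Y'$.

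For (1): assuming $\Ls$ is $(g\circ f)$-ample with $Z = \Spec(C)$ affine, $\Ls$ is ample on $X$, and we want $\Ls$ ample on $f^{-1}\Spec(B)$ for each affine open $\Spec(B)\to Y$. But $f^{-1}\Spec(B)$ is an open subscheme of $X$, and the restriction of an ample line bundle to an open subscheme is ample --- this is immediate from Definition \ref{AmpleDef}, since the affine opens $X_s$ witnessing ampleness that happen to meet $f^{-1}\Spec(B)$ can be shrunk (using Lemma \ref{LocLem} on a principal affine inside the intersection, exactly as in the proof of Lemma \ref{RelAmpleLocalLem}) to affine opens contained in $f^{-1}\Spec(B)$. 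So (1) is essentially the statement that ampleness is local on the target in the trivial way, plus the openness of $f^{-1}\Spec(B)$ in $X$.

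For (2): reduce to $Z = \Spec(C)$ affine, so $\Ms$ is ample on $Y$ and $\Ls$ is $f$-ample; we must show $\Ls\otimes f^*\Ms^{\otimes n}$ is ample on $X$ for $n\gg 0$. I would use the criterion of Proposition \ref{AmpleCharProp1}(2): given a connective almost perfect $\Fc$ on $X$, we want $\Ls^{\otimes m}\otimes f^*\Ms^{\otimes mn}\otimes\Fc$ globally generated for suitable $m$. Since $\Ls$ is $f$-ample and $f^{-1}$ of each affine open of $Y$ is affine, the pushforward-type argument shows $f_*(\Ls^{\otimes m}\otimes\Fc)$ is (almost perfect and) such that $\Ls^{\otimes m}\otimes\Fc$ is generated by sections pulled back along $f$ after a further twist; concretely, choose $m$ so that $\Ls^{\otimes m}\otimes\Fc$ is globally generated relative to $Y$ (i.e. the counit $f^*f_*(\Ls^{\otimes m}\otimes\Fc)\to \Ls^{\otimes m}\otimes\Fc$ is surjective on $\pi_0$), then apply ampleness of $\Ms$ on $Y$ to the almost perfect sheaf $f_*(\Ls^{\otimes m}\otimes\Fc)$ to find $n$ with $\Ms^{\otimes n}\otimes f_*(\Ls^{\otimes m}\otimes\Fc)$ globally generated, and pull back. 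One then clears the dependence of $n$ on $m$ by a standard argument (first handle a single $m_0$, then note the conclusion for $\Ls^{\otimes m_0}\otimes f^*\Ms^{\otimes m_0 n}\otimes\Fc$ for all $\Fc$ gives it for all larger exponents, as in the proof of Proposition \ref{AmpleCharProp1}). I expect (2) to be the main obstacle, since controlling the relative global generation and the behaviour of $f_*$ on almost perfect sheaves for a merely $f$-ample morphism (not assumed of finite type or proper) requires care; the cleanest route may be to work Zariski-locally on $Y$ and use a Čech/finiteness argument together with Lemma \ref{LocLem}.

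For (3): with $Y = \Spec(B)$ affine, $X' = X\times_Y Y'$, and $\Ls$ ample on $X$, we verify Definition \ref{AmpleDef} directly for $h'^*\Ls$. Given a point $x'\in X'$ lying over $x\in X$, ampleness of $\Ls$ gives $n\geq 0$ and $s\in\Gamma(X;\Ls^{\otimes n})$ with $X_s$ affine containing $x$; then $h'^{-1}(X_s) = X_s\times_Y Y'$ is affine (base change of the affine morphism $X_s\to Y = \Spec(B)$ along $Y'\to Y$, noting $X_s\to\Spec(B)$ is affine because $X_s$ is affine and $Y$ is affine), it contains $x'$, and it equals $(X')_{h'^*s}$ where $h'^*s\in\Gamma(X';h'^*\Ls^{\otimes n})$ is the pullback section. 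Hence $h'^*\Ls$ is ample. The only point needing a remark is the compatibility of the non-vanishing locus with derived pullback, i.e. that $V(h'^*s)$ is the derived preimage of $V(s)$, which holds because forming the vanishing locus of a section of a line bundle is a derived-pullback-stable construction (it is cut out by the Koszul complex on $s$).
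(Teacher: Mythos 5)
Your parts (1) and (3) follow the paper's route, but part (2) as proposed has a genuine gap. Your plan is to apply the ampleness of $\Ms$ on $Y$ (via the global-generation criterion of Proposition \ref{AmpleCharProp1}) to the pushforward $f_*(\Ls^{\otimes m}\otimes\Fc)$; but that criterion only applies to connective almost perfect sheaves, and $f_*$ does not preserve these here, since $f$ is neither proper nor even of finite type. For instance, if $X$ is the total space of $\Ms$ over $Y$ with $\Ls=\Oc_X$ (which is $f$-ample, as preimages of affines are affine), then $f_*\Oc_X\simeq\bigoplus_{k\geq 0}\Ms^{\otimes -k}$, and for $Y=\Pb^1$, $\Ms=\Oc(1)$ there is no single $n$ for which $\Ms^{\otimes n}\otimes f_*\Oc_X$ is globally generated; moreover $f_*$ of a connective sheaf need not be connective. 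So the key step of your argument for (2) actually fails, even though the statement is true. The argument the paper invokes (citing \cite{Stacks} Tags 0C4K and 0892) is the elementary section-juggling one, which does adapt via Lemma \ref{LocLem}: for $x\in X$ choose $t\in\Gamma(Y;\Ms^{\otimes a})$ with $Y_t$ affine containing $f(x)$, choose a section $s$ of a power of $\Ls$ on $f^{-1}(Y_t)=X_{f^*t}$ with affine non-vanishing locus containing $x$, use Lemma \ref{LocLem} to extend $s\cdot(f^*t)^k$ to a global section of $\Ls^{\otimes b}\otimes f^*\Ms^{\otimes N}$ whose non-vanishing locus is the affine $\big(f^{-1}(Y_t)\big)_s$, and finally, after passing to a finite subcover (Noetherianness), multiply by further powers of the $f^*t$'s to equalize exponents so that all these become sections of powers of the single bundle $\Ls\otimes f^*\Ms^{\otimes n}$ for one $n\gg 0$. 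Your closing remark about working Zariski-locally with Lemma \ref{LocLem} gestures toward this, but it is not the argument you actually wrote down.

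Part (1) is essentially the paper's proof: locality of relative ampleness on the base (Lemma \ref{RelAmpleLocalLem}) plus stability of ampleness under restriction to open subschemes, and your shrinking argument via Lemma \ref{LocLem} correctly supplies the latter. In part (3) there is a small slip in the reduction: you only arrange $Y$ to be affine, but your conclusion that $X_s\times_Y Y'$ is affine requires $Y'$ to be affine as well --- base change along $Y'\to Y$ only shows that $X_s\times_Y Y'\to Y'$ is an affine \emph{morphism}. Since $f'$-ampleness of $h'^*\Ls$ is, by Definition \ref{RelAmpleDef}, tested on affine opens of $Y'$, you may (and should) also assume $Y'$ affine; with both $Y$ and $Y'$ affine your argument is exactly the paper's: $h'$ is an affine morphism and ample line bundles pull back to ample line bundles along affine morphisms.
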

\begin{proof}
\begin{enumerate}
\item The proof is a trivial consequence of Lemma \ref{RelAmpleLocalLem} and the fact that ample line bundles are relatively ample and stable under restrictions to open subschemes.

\item The proof is the same as the classical proof, see \cite{Stacks} Tags 0C4K (1) and 0892.

\item Without loss of generality we can assume that both $Y$ and $Y'$ are affine. It follows that $h'$ is affine, and as ample line bundles are stable under affine pullbacks, the claim follows. \qedhere
\end{enumerate}
\end{proof}

We also record the following useful fact. 

\begin{prop}\label{AmpleFamilyInQuasiProjProp}
Let $f: X \to Y$ be a morphism of Noetherian derived schemes having a relatively ample line bundle. If $Y$ has an ample family of line bundles, then so does $X$.
\end{prop}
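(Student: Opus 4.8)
The plan is to reduce the statement to a local computation on $Y$, then combine an ample family on $Y$ with the relative ampleness of $\Ls$ on $X$ to produce enough line bundles with affine non-vanishing loci to cover $X$. First I would fix a point $x \in X$ and set $y = f(x) \in Y$. Since $Y$ is divisorial, Definition \ref{AmpleFamDef} gives a line bundle $\Ms$ on $Y$ and a section $t \in \Gamma(Y; \Ms)$ such that $Y_t$ is affine and contains $y$; replacing $Y$ by $Y_t$ and $X$ by $f^{-1}(Y_t)$ (and $\Ls$ by its restriction, which is still $f$-ample by Lemma \ref{RelAmpleLocalLem} together with the fact that ample line bundles restrict to ample line bundles on opens), we may assume $Y = \Spec(B)$ is affine. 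But if $Y$ is affine, then by the remark following Definition \ref{RelAmpleDef} the $f$-ample line bundle $\Ls$ is actually ample on $X$, so there is an $n \geq 0$ and a section $s \in \Gamma(X; \Ls^{\otimes n})$ with $X_s$ affine and containing $x$. Since $x$ was arbitrary, this produces the required ample family on $X$.

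The one subtlety to be careful about is bookkeeping of line bundles on $X$ versus $Y$. After localizing to $Y_t$ affine we exhibited an ample \emph{line bundle} $\Ls|_{f^{-1}(Y_t)}$ on $f^{-1}(Y_t)$, hence a section $s$ of some tensor power whose non-vanishing locus is affine; to package this as a section of a line bundle on all of $X$ (as Definition \ref{AmpleFamDef} literally requires), note that $f^*\Ms$ is a line bundle on $X$ with $(f^*\Ms)_{f^*t} = f^{-1}(Y_t)$, and $\Ls^{\otimes n} \otimes (f^*\Ms)^{\otimes m}$ for suitable $m$ has a section restricting to $s \otimes (f^*t)^{\otimes m}$ whose non-vanishing locus is $X_s \cap f^{-1}(Y_t) = X_s$ — still affine and containing $x$. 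Alternatively, and more cleanly, one observes that Definition \ref{AmpleFamDef} is evidently local on $X$: if every point of $X$ has an open neighbourhood which, as a scheme in its own right, is divisorial, then $X$ is divisorial, because an affine open of the neighbourhood pulled from a section of a line bundle on the neighbourhood is still affine open in $X$. With that observation the proof reduces immediately to the affine-base case with no tensoring gymnastics.

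The main (and only real) obstacle is making sure the two local statements compose correctly: namely that ampleness of a line bundle is local on the base in the relative sense (this is exactly Lemma \ref{RelAmpleLocalLem}), and that ``divisorial'' can be checked on an open cover of $X$. Neither is deep — the first is already proved above and the second is a one-line unwinding of Definition \ref{AmpleFamDef} using that an open subscheme of an affine open is computed the same way whether one works in $X$ or in the intermediate open — but stating them cleanly is what turns the argument into two lines. Everything else (Noetherianness of $X$, which is needed even to apply Definition \ref{AmpleFamDef} to $X$) follows from the standing conventions and the hypothesis that $f$ is a morphism of Noetherian derived schemes.
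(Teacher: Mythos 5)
Your first route is, in substance, the paper's own proof: restrict to $f^{-1}(Y_t)$ where $\Ls$ becomes ample, then globalize the resulting section by twisting with powers of $f^*\Ms$ --- the existence of such a globalization ``for suitable $m$'' is not free, it is exactly Lemma \ref{LocLem} applied to $f^*\Ms$ and $f^*t$, and you should invoke it. But as written the key step has a slip: Lemma \ref{LocLem} produces $\sigma \in \Gamma\big(X; \Ls^{\otimes n} \otimes (f^*\Ms)^{\otimes m}\big)$ restricting to $s \otimes (f^*t)^{\otimes m}$ on $f^{-1}(Y_t) = X_{f^*t}$, but the non-vanishing locus of $\sigma$ is \emph{not} $X_s$; you only know $X_\sigma \cap f^{-1}(Y_t) = X_s$, and $\sigma$ may be non-vanishing at points outside $f^{-1}(Y_t)$, where you have no control, so there is no reason for $X_\sigma$ to be affine. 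The repair is one more twist: replace $\sigma$ by $\sigma \otimes f^*t \in \Gamma\big(X; \Ls^{\otimes n} \otimes (f^*\Ms)^{\otimes m+1}\big)$, whose non-vanishing locus is $X_\sigma \cap X_{f^*t} = X_s$, affine and containing $x$. With that correction your argument matches the proof in the text.

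The ``more cleanly'' alternative, however, is genuinely wrong: divisoriality is \emph{not} local on $X$. Definition \ref{AmpleFamDef} requires the line bundle and its section to be defined on all of $X$, and a line bundle (with section) on an open neighbourhood need not extend to $X$; your one-line justification only shows the affine open stays affine in $X$, which is not the issue. If the locality claim were true, every scheme would be divisorial (every scheme is locally affine, and affine schemes are divisorial via $\Oc_X$ and the section $1$), which is false and would make both the definition and this proposition vacuous. So the ``tensoring gymnastics'' you hoped to avoid --- i.e.\ Lemma \ref{LocLem} plus the extra factor of $f^*t$ --- is precisely the content of the proof and cannot be bypassed by a locality argument.
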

\begin{proof}
Indeed, if $\Ls$ is a relatively ample line bundle, and $s$ is a global section of a line bundle $\Ms$ on $Y$ with $Y_s$ affine, then by Lemma \ref{LocLem} and the definition of relatively ample bundles, we conclude that we can find an affine open cover of $f^{-1} Y_s$ using global sections of $f^*(\Ms)^{\otimes n} \otimes \Ls^{\otimes m}$ for $n,m \gg 0$. Therefore, if $Y$ has an ample family, then so does $X$.
\end{proof}

For completeness' sake,  we also prove that relative ampleness can be checked on the truncation, at least for proper morphisms. 

\begin{prop}\label{AmpleCharProp2}
Let $A$ be a Noetherian simplicial ring, and let $X \to \Spec(A)$ be a proper morphism of derived schemes. Then, for a line bundle $\Ls$ on $X$, the following are equivalent:
\begin{enumerate}
\item $\Ls_\cl$ is ample on $X_\cl$;
\item given an almost perfect sheaf $\Fc$ on $X$ and $i \in \Zb$, we have that
$$\pi_i \big( \Gamma(X; \Fc \otimes \Ls^{\otimes n}) \big) \cong \Gamma_\cl(X; \pi_i(\Fc \otimes \Ls^{\otimes n}))$$
for $n \gg 0$;
\item $\Ls$ is ample.
\end{enumerate}
\end{prop}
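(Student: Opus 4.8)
The plan is to prove the cyclic chain of implications $(1) \Rightarrow (2) \Rightarrow (3) \Rightarrow (1)$. The implication $(3) \Rightarrow (1)$ is essentially immediate: if $\Ls$ is ample, then on an affine $X_s$ we are in the classical situation, and the truncation $(X_s)_\cl = (X_\cl)_{s_\cl}$ is affine, so $\Ls_\cl$ satisfies Definition \ref{AmpleDef} on $X_\cl$. For $(1) \Rightarrow (2)$, the key point is to reduce the computation of $\pi_i\big(\Gamma(X; \Fc \otimes \Ls^{\otimes n})\big)$ to a classical statement about the truncation. First, using the truncation $\tau_{\geq i}$ and Grothendieck vanishing (Lemma \ref{GrothendieckVanishingLem}), I would reduce to the case where $\Fc$ is coherent with nontrivial homotopy sheaves concentrated in finitely many degrees, and then, by shifting, to $\Fc$ connective; the cofibre sequences $\Fc' \to \Fc \to \pi_0(\Fc)$ together with induction on the number of nonzero homotopy sheaves (as in the proof of Lemma \ref{LocLem}) let me reduce to $\Fc$ discrete, i.e. a coherent sheaf on $X_\cl$ pushed forward along the closed immersion $X_\cl \hook X$. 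At that point $\Gamma(X; \Fc \otimes \Ls^{\otimes n}) = \Gamma_\cl(X_\cl; \Fc \otimes \Ls_\cl^{\otimes n})$, which is concentrated in degree $0$ for $n \gg 0$ by the classical Serre vanishing theorem applied to the ample line bundle $\Ls_\cl$ on the proper $A_\cl$-scheme $X_\cl$. I should double-check that the bookkeeping of ``$n \gg 0$'' survives the inductive dévissage (each step only excludes finitely many $n$) and the passage between $\Fc$ and its finitely many Postnikov truncations.

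For the harder implication $(2) \Rightarrow (3)$, the strategy is to upgrade the cohomological vanishing in (2) into the generation statement of Definition \ref{AmpleDef}, i.e. to produce, near any point $x \in X$, a section of some power $\Ls^{\otimes n}$ whose nonvanishing locus is affine and contains $x$. Mimicking the second half of the proof of Proposition \ref{AmpleCharProp1}: pick an affine open $U \ni x$ on which $\Ls$ is trivial, let $Z = X \setminus U$ with its reduced structure, and consider the cofibre sequence $\Ic \to \Oc_X \to \Oc_Z$ of almost perfect sheaves. Hypothesis (2) applied to $\Fc = \Ic$ and to $\Fc = \Oc_X$, for $n \gg 0$, says $\pi_0\big(\Gamma(X; \Ic \otimes \Ls^{\otimes n})\big) = \Gamma_\cl(X_\cl; \pi_0(\Ic)_\cl \otimes \Ls_\cl^{\otimes n})$ and similarly for $\Oc_X$; using that $\Ls_\cl$ is \emph{not yet known to be ample}, I instead need (2) to directly give me enough global sections. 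Concretely, the classical fact (Stacks, Tag 01Q3 / 09MR) is that a line bundle $\Nc$ on a proper scheme is ample iff for every coherent $\Gc$ there is $n_0$ with $\Gc \otimes \Nc^{\otimes n}$ globally generated for $n \geq n_0$; and the latter follows from the vanishing in (2): surjectivity of $\Gamma_\cl(X_\cl; \Ic_\cl \otimes \Ls_\cl^{\otimes n}) \to \Gamma_\cl(X_\cl; \pi_0(\Oc_X/\Ic)_\cl \otimes \cdots)$ at $x$, obtained from the long exact sequence once the relevant higher $\pi_i$'s of global sections vanish, yields a section of $\Ls_\cl^{\otimes n}$ nonvanishing at $x$ inside $U$. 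Thus (2) forces $\Ls_\cl$ to be ample on $X_\cl$; combined with $(1) \Rightarrow (3)$-type reasoning I then promote ampleness of $\Ls_\cl$ to ampleness of $\Ls$: a section $s_\cl$ of $\Ls_\cl^{\otimes n}$ with $(X_\cl)_{s_\cl}$ affine lifts (after possibly raising the power, using that $\pi_0\big(\Gamma(X;\Ls^{\otimes n})\big) \twoheadrightarrow \Gamma_\cl(X_\cl; \Ls_\cl^{\otimes n})$ for $n \gg 0$ by part (2) itself applied to $\Fc = \Oc_X$) to a section $s$ of $\Ls^{\otimes n}$ with $X_s$ having affine truncation, hence affine since a derived scheme is affine iff its truncation is.

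The main obstacle I anticipate is the $(2) \Rightarrow (3)$ step, specifically the interplay between ``checking ampleness of $\Ls_\cl$'' and ``lifting sections from $X_\cl$ to $X$''. One has to be careful that (2) is a statement about \emph{almost perfect} sheaves on $X$ (whose $\pi_0$ need not be all coherent sheaves on $X_\cl$ — though pushforwards of coherent sheaves from $X_\cl$ are almost perfect on $X$, which is what I use), and that the classical ampleness criterion is invoked with the correct properness hypothesis: $X_\cl \to \Spec(A_\cl)$ is proper because $X \to \Spec(A)$ is, by definition of properness for derived schemes. A secondary technical point is ensuring Serre vanishing is available: $X_\cl$ is a proper scheme over the Noetherian ring $A_\cl$ (Noetherian since $A$ is a Noetherian simplicial ring, so $\pi_0(A)$ is Noetherian), so the usual coherent cohomology theory and its finiteness/vanishing theorems apply. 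Modulo these checks, everything reduces to classical results cited from the Stacks project plus the dévissage already used for Lemma \ref{LocLem} and Proposition \ref{AmpleCharProp1}.
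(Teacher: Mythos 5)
Your proposal is correct and follows essentially the same route as the paper's proof: Postnikov dévissage plus classical Serre vanishing on $X_\cl$ for $(1)\Rightarrow(2)$, applying condition (2) to discrete (truncated) coherent sheaves together with the classical cohomological criterion of ampleness to conclude that $\Ls_\cl$ is ample, and then lifting sections of $\Ls_\cl^{\otimes n}$ to $\Ls^{\otimes n}$ (using (2) for $\Fc=\Oc_X$, raising the power if needed, and the fact that a derived scheme is affine iff its truncation is) to get ampleness of $\Ls$, with the easy $(3)\Rightarrow(1)$ closing the cycle. The only point to tidy is your inline long-exact-sequence sketch in $(2)\Rightarrow(3)$: the vanishing of $H^1(X;\Ic\otimes\Ls^{\otimes n})$ gives surjectivity of $\Gamma_\cl(X_\cl;\Ls_\cl^{\otimes n})\to\Gamma_\cl(Z;\Oc_Z\otimes\Ls_\cl^{\otimes n})$, which by itself does not yield a section nonvanishing at $x$ and vanishing on $Z$ — but since you ultimately invoke the standard classical criterion (exactly as the paper does, citing Serre), this is a citation-level fix rather than a gap.
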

\begin{proof}
Let us first assume that the truncation $\Ls_\cl$ is ample. By Grothendieck vanishing, it is enough to check that $2$ holds for $\Fc$ coherent. We will proceed by induction on $n$, where $n$ is the largest $i$ with $\pi_i(\Fc)$ nontrivial. Notice that the base case $n=0$ is easy: then
$$\Fc \otimes_{\Oc_X} \Ls^{\otimes m} \simeq \Fc \otimes_{\Oc_{X_\cl}} \Ls_\cl^{\otimes m},$$
and the claim follows from the classical result of Serre on the vanishing of sheaf cohomology (see, e.g., \cite{Stacks} Tag 0B5U). In general we have a cofibre sequence
$$\Fc' \to \Fc \to \pi_0(\Fc).$$
Applying the induction assumption to $\Fc'$ and $\pi_0(\Fc)$, the result holds for them for $m$ large enough. Then for any such $m$, it follows from the associated cohomology long exact sequence that
$$\pi_i\big(\Gamma(X; \Fc \otimes \Ls^{\otimes n})\big) \cong 
\begin{cases}
\pi_i\big(\Gamma(X; \Fc' \otimes \Ls^{\otimes n})\big) \cong \Gamma_\cl(X; \pi_i(\Fc \otimes \Ls^{\otimes n})) & \text{for $i > 0$;} \\
\Gamma_\cl(X; \pi_0(\Fc \otimes \Ls^{\otimes n}))\big) & \text{for $i = 0$;} \\
0 & \text{for $i < 0$.}
\end{cases}
$$
This shows that $\Ls$ satisfies condition 2.

Suppose then that $\Ls$ satisfies condition 2. Applying this to all truncated coherent sheaves, we can immediately conclude that $\Ls_\cl$ is ample using the result of Serre cited above. As global sections of $\Ls^{\otimes n}_\cl$ lift to global sections of $\Ls^{\otimes n}$ for $n \gg 0$, the ampleness of $\Ls_\cl$ implies that of $\Ls$.
\end{proof}

\subsubsection{Quasi-projective morphisms}\label{QProjSubSect}
 
In this section, we define and study the basic properties of quasi-projective morphisms. Like in the previous section, the results generalize some of those in the unpublished note \cite{annala-qpnote}. Let us begin with explaining what we mean by a quasi-projective morphism.

\begin{defn}\label{QProjDef}
A finite type morphism $f: X \to Y$ of Noetherian derived schemes is \emph{quasi-projective} if it admits a relatively ample line bundle. A proper quasi-projective morphism is called \emph{projective}.
\end{defn}

We have chosen to use the above definition for quasi-projectivity as it has the following good properties.

\begin{prop}\label{StabilityOfQProjProp}
Quasi-projective and projective morphisms between Noetherian derived schemes are stable under compositions and derived pullbacks.
\end{prop}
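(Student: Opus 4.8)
The plan is to reduce both statements (stability under composition and under derived pullback) to the corresponding properties of (a) finite type morphisms, (b) proper morphisms, and (c) the existence of relatively ample line bundles, the last of which is already handled by Proposition \ref{AmpleLineBundlesInCompositionsLem}. Since quasi-projectivity is by Definition \ref{QProjDef} the conjunction ``finite type $+$ relatively ample line bundle'' and projectivity adds ``proper'', the work is to check these three ingredients separately in each of the two situations.

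First I would dispose of the finite type and proper conditions. A morphism $f \colon X \to Y$ of derived schemes is finite type (resp. proper) precisely when $f_\cl$ is, by the conventions of the paper, and classically both finite type and proper morphisms are stable under composition and under base change; derived pullback commutes with truncation on the level of underlying classical schemes (the truncation of a derived fiber product is the classical fiber product of the truncations, with the reduced-structure subtleties irrelevant here since we only test finite type/properness of $(-)_\cl$). So these conditions present no difficulty and I would state this in a sentence or two, citing the classical facts.

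Next, for the relatively ample line bundle. For composition: if $f \colon X \to Y$ and $g \colon Y \to Z$ are quasi-projective with relatively ample bundles $\Ls$ (for $f$) and $\Ms$ (for $g$), then Proposition \ref{AmpleLineBundlesInCompositionsLem}(2) gives that $\Ls \otimes f^* \Ms^{\otimes n}$ is $(g \circ f)$-ample for $n \gg 0$; hence $g \circ f$ is quasi-projective, and if in addition $f$ and $g$ are proper then so is $g \circ f$, giving stability of projective morphisms under composition. For derived pullback: given $f \colon X \to Y$ quasi-projective with relatively ample $\Ls$ and an arbitrary morphism $h \colon Y' \to Y$, form the derived Cartesian square; Proposition \ref{AmpleLineBundlesInCompositionsLem}(3) shows $h'^*\Ls$ is $f'$-ample, where $f' \colon X' \to Y'$ is the base-changed morphism, so $f'$ is quasi-projective, and properness is preserved under base change, giving the projective case.

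I do not expect a serious obstacle here: the substantive content has been front-loaded into Proposition \ref{AmpleLineBundlesInCompositionsLem}, and what remains is assembling it with the (easy, classical) stability of finite type and proper morphisms. The one point requiring a moment's care is making sure that in the composition case the bundle produced by part (2) of that proposition — which lives on $X$ and involves pulling back $\Ms$ along $f$ — is genuinely a line bundle and that the ``$n \gg 0$'' is uniform enough for the statement (it is, since we only need existence of one such $n$). If anything, the mild subtlety is purely bookkeeping: tracking that ``quasi-projective'' unwinds to exactly the two conditions one needs and invoking the right clause of the earlier proposition for each of the two stability claims.
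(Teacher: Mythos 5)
Your proposal is correct and is exactly the paper's argument: the paper's proof is a one-line reduction to Proposition \ref{AmpleLineBundlesInCompositionsLem}, whose clauses (2) and (3) supply the relatively ample bundle for compositions and derived pullbacks respectively, with the finite type and properness conditions handled by the classical facts on truncations just as you describe. You have merely spelled out the routine bookkeeping that the paper leaves implicit.
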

\begin{proof}
This follows immediately from Proposition \ref{AmpleLineBundlesInCompositionsLem}.
\end{proof}

Besides the good properties verified by the above result, projective morphisms have at least one additional benefit over the general proper morphisms: they often admit nice global factorizations. 

\begin{thm}\label{ExistenceOfFactorizationsThm}
Let $f: X \to Y$ be a projective morphism of Noetherian derived schemes.
\begin{enumerate}
\item If $Y$ has the resolution property, then there exists a vector bundle $E$ on $Y$ and a factorization of $f$ as 
$$X \stackrel i \hook \Pb(E) \stackrel \pi \to Y,$$
where $\pi$ is the canonical projection and $i$ is a closed embedding.

\item If $Y$ has an ample family of line bundles, then there exists line bundles $\Ls_1, ..., \Ls_r$ on $Y$ and a factorization of $f$ as 
$$X \stackrel i \hook \Pb(\Ls_1 \oplus \cdots \oplus \Ls_r) \stackrel \pi \to Y,$$
where $\pi$ is the canonical projection and $i$ is a closed embedding.

\item If $Y$ has an ample line bundle, then it is possible to find a similar factorization of form 
$$X \stackrel i \hook \Pb^n_Y \stackrel \pi \to Y.$$
\end{enumerate}
\end{thm}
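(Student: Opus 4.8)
The plan is to reduce all three cases to a single construction: choose a relatively ample line bundle $\Ls$ on $X$, show that a high enough twist produces enough global sections to separate points and embed $X$ fibrewise into a projective bundle over $Y$, and then identify the ambient projective bundle with the appropriate model coming from the hypotheses on $Y$. For the first step, since $f$ is projective it is in particular proper and of finite type, so after passing to an affine cover of $Y$ we may apply Proposition \ref{AmpleCharProp2}: the truncation $\Ls_\cl$ is ample on $X_\cl$, and hence for $n \gg 0$ the sheaf $f_* \Ls^{\otimes n}$ is (by properness and Grothendieck vanishing) almost perfect and its formation commutes with base change on $Y$, while $\Ls^{\otimes n}$ is globally generated by $\pi_X^* f_* \Ls^{\otimes n} \to \Ls^{\otimes n}$. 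This gives a closed embedding $X \hook \mathbf{P}(f_* \Ls^{\otimes n})$ over $Y$, where $f_* \Ls^{\otimes n}$ is a connective almost perfect sheaf on $Y$; the fact that the map is a closed embedding (not merely a monomorphism) follows, as in the classical case, from separating tangent vectors and points after a further twist, using Lemma \ref{LocLem} to lift local data to global sections.

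\textbf{Case (1).} If $Y$ has the resolution property, choose a surjection $E \to f_* \Ls^{\otimes n}$ from a vector bundle $E$ on $Y$. This induces a closed embedding $\mathbf{P}(f_* \Ls^{\otimes n}) \hook \Pb(E)$ over $Y$, and composing with the embedding constructed above yields the desired factorization $X \hook \Pb(E) \to Y$.

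\textbf{Case (2).} If $Y$ is divisorial, apply Proposition \ref{AmpleFamCharProp}(2) to the connective almost perfect sheaf $f_* \Ls^{\otimes n}$ to obtain a surjection $\Ls_1 \oplus \cdots \oplus \Ls_r \to f_* \Ls^{\otimes n}$ from a finite direct sum of line bundles on $Y$, and proceed exactly as in Case (1), replacing $E$ by $\Ls_1 \oplus \cdots \oplus \Ls_r$.

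\textbf{Case (3).} If $Y$ has an ample line bundle $\Ms$, then by Proposition \ref{AmpleCharProp1}(2) the twist $\Ms^{\otimes k} \otimes f_* \Ls^{\otimes n}$ is globally generated for $k \gg 0$, giving a surjection $\Oc_Y^{\oplus (n+1)} \to \Ms^{\otimes k} \otimes f_* \Ls^{\otimes n}$ from a finite free sheaf (finiteness of the rank since $f_* \Ls^{\otimes n}$ is coherent and $Y$ is quasi-compact). Equivalently there is a surjection $(\Ms^{\otimes -k})^{\oplus(n+1)} \to f_*\Ls^{\otimes n}$, and since replacing $\Ls$ by $\Ls \otimes f^* \Ms^{\otimes k}$ changes neither relative ampleness (by Proposition \ref{AmpleLineBundlesInCompositionsLem}) nor the projectivity of $f$, we may absorb the twist and obtain a closed embedding $X \hook \Pb^{n}_Y \to Y$.

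\textbf{Main obstacle.} The delicate point is not the reduction to surjections onto $f_*\Ls^{\otimes n}$ — that is formal given Propositions \ref{AmpleCharProp1}, \ref{AmpleFamCharProp} and the resolution property — but rather establishing in the derived setting that the natural map $X \to \mathbf{P}(f_* \Ls^{\otimes n})$ is a \emph{closed} embedding for $n \gg 0$, i.e. that high twists of a relatively ample bundle give a locally closed embedding whose image is closed. One must check this on an affine base $\Spec(B)$, where it amounts to: the graded $B$-algebra $\bigoplus_m \pi_0\Gamma(X;\Ls^{\otimes m})$ (or rather the derived version) generates enough sections that the induced map to derived $\mathrm{Proj}$ is a closed immersion. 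The truncated statement is classical (Serre), and one bootstraps from $X_\cl \hook \mathbf{P}(f_*\Ls^{\otimes n})_\cl$ to the derived statement by noting that a morphism of derived schemes which is a closed embedding on truncations and is of finite type is itself a closed embedding — using that $\mathbf{P}(f_*\Ls^{\otimes n}) \to Y$ is proper (hence $X \to \mathbf{P}(f_*\Ls^{\otimes n})$ is proper, being a map between proper $Y$-schemes), so the image is closed, and properness plus monomorphism forces a closed immersion. Verifying the monomorphism/unramifiedness on the derived level, via the cotangent complex and Lemma \ref{LocLem}, is where the real work lies.
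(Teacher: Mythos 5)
Your construction coincides with the paper's (twist a relatively ample $\Ls$, embed $X$ into $\mathbf{P}(f_*\Ls^{\otimes n})$, then dominate $f_*\Ls^{\otimes n}$ by a vector bundle, a sum of line bundles, or a twist of a trivial bundle according to the hypothesis on $Y$), but the one step you explicitly single out as ``where the real work lies'' is the step you do not carry out, and the plan you sketch for it is the wrong track. No derived-level verification of ``monomorphism/unramifiedness via the cotangent complex'' is needed, or even meaningful here: being a closed immersion of derived schemes is a condition on the truncation alone (the underlying topological embedding and the surjectivity of $\pi_0(\Oc_Y) \to \pi_0(f_*\Oc_X)$ only see $f_\cl$), so $X \to \mathbf{P}(f_*\Ls^{\otimes n})$ is a closed embedding as soon as its truncation is --- which is exactly how the paper argues. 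Indeed, derived closed immersions are generally \emph{not} $\infty$-categorical monomorphisms and have nonvanishing cotangent complex (e.g.\ quasi-smooth embeddings, where $\Lb_{X/Y}[-1]$ is a vector bundle), so a criterion along the lines you propose would be testing the wrong property. The genuine content at this step, which your write-up leaves implicit, is the identification of that truncation with the classical Serre-type embedding: one needs $\pi_0\big(f_*(\Ls^{\otimes n})\big) \cong f_{\cl *}(\Ls_\cl^{\otimes n})$ for $n \gg 0$, which is precisely Proposition \ref{AmpleCharProp2}(2) in degree $0$, and then the statement that $X_\cl \to \mathbf{P}\big(f_{\cl *}(\Ls_\cl^{\otimes n})\big)$ is a closed immersion for $n \gg 0$ is the classical relative result (\cite{Stacks} Tags 01VR and 01VU), not an argument about separating points and tangent vectors, which is a very-ampleness criterion over a field and does not directly apply over a general Noetherian base.

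Two smaller points of bookkeeping. A surjection $E \to f_*(\Ls^{\otimes n})$ induces a closed embedding $\mathbf{P}\big(f_*(\Ls^{\otimes n})\big) \hook \mathbf{P}(E) = \Pb(E^\vee)$, not into $\Pb(E)$ as written; this is harmless for the statement (rename $E^\vee$ as $E$; duals of sums of line bundles and of trivial bundles are again of that form), but the dual should be tracked since the theorem is phrased in terms of $\Pb$. In case (3) your absorption of the twist $\Ms^{\otimes k}$ into $\Ls$ via the projection formula does work (twisting by $f^*\Ms^{\otimes k}$ preserves relative ampleness), though it is simpler to observe, as the paper implicitly does, that a surjection $(\Ms^{\otimes -k})^{\oplus N} \to f_*(\Ls^{\otimes n})$ already gives an embedding into $\mathbf{P}\big((\Ms^{\otimes -k})^{\oplus N}\big) \simeq \Pb^{N-1}_Y$, since projectivization is insensitive to twisting by a line bundle.
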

\begin{rem}
We recall that a derived scheme $X$ is said to have the \emph{resolution property} if every connective coherent sheaf $\Fc$ admits a surjection $E \to \Fc$ (meaning that this induces a surjection on $\pi_0$) with $E$ a vector bundle on $X$. Every divisorial derived scheme has the resolution property by Proposition \ref{AmpleFamCharProp}.
\end{rem}
\begin{proof}
Let $\Ls$ be an $f$-ample line bundle on $X$. Then, by Proposition \ref{AmpleCharProp2}, we have that $f_*(\Ls^{\otimes n})$ is connective and that the natural morphism
$$f_*(\Ls^{\otimes n}) \to f_* \big( \pi_0(\Ls^{\otimes n}) \big)$$
identifies the right hand side as the $\pi_0$ of the left hand side for $n \gg 0$.  In particular, the $\pi_0$ of the natural map $f^*f_* (\Ls^{\otimes n}) \to \Ls^{\otimes n}$ can be identified with the classical morphism $f_\cl^* f_{\cl *} (\Ls_\cl^{\otimes n}) \to \Ls_\cl^{\otimes n}$. On the other hand, by \cite{Stacks} Tags 01VR and 01VU, the classical map is a surjection of sheaves and the induced morphism
$$X \hookrightarrow \mathbf{P}\big(f_{*} (\Ls^{\otimes n}) \big)$$
is a closed embedding over $Y$ for $n \gg 0$ (since the truncation is a closed embedding). As $f_* \Ls$ is coherent and connective, and as $Y$ has the resolution property, we can choose a surjection $E \to f_* \Ls$ from a vector bundle $E$, and the composition
$$i: X \hook \mathbf{P}\big(f_{*} (\Ls^{\otimes n}) \big) \hook \mathbf{P}(E) = \Pb(E^\vee)$$
is a closed embedding of $Y$-schemes, proving the first claim.

If $Y$ has an ample family of line bundles, then we may choose $E$ to be a sum of line bundles, proving the second claim. In the special case where $Y$ has an ample line bundle $\Ls$, we can choose $E$ to be a direct sum of copies of $\Ls^{\otimes -n}$ for some $n \geq 0$, proving the third claim.
\end{proof}

\subsubsection{Derived regular embeddings and quasi-smooth morphisms}

\begin{defn}\label{RegEmbDef}
Let $i: X \hookrightarrow Y$ be a closed embedding of derived schemes. Then it is called a \emph{derived regular embedding} of \emph{codimension $r$} if, Zariski locally on the target, it is equivalent to
$$\Spec\big(A \modmod (a_1, ..., a_r)\big) \hookrightarrow \Spec(A),$$
where $\modmod$ denotes the derived quotient ring construction (see \cite{khan-rydh} 2.3.1). In other words, $Z$ locally the derived vanishing locus of $r$ regular functions on $Y$.
\end{defn}

The following differential characterization is essentially just Proposition 2.3.8 of \cite{khan-rydh}.

\begin{prop}\label{ModifiedKhanRydh}
Let $f: X \hook Y$ be a closed embedding of Noetherian derived schemes with $X$ connected. Then the following are equivalent
\begin{enumerate}
\item $f$ is a derived regular embedding;
\item the shifted cotangent complex $\Lb_{X/Y}[-1]$ is a vector bundle;
\item the shifted cotangent complex $\Lb_{X/Y}[-1]$ has Tor-amplitude 0. 
\end{enumerate}
Henceforward we will denote $\Nc^\vee_{X/Y} := \Lb_{X/Y}[-1]$ and call it the \emph{conormal bundle}.
\end{prop}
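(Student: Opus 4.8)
The plan is to prove the cycle of implications $(1) \Rightarrow (2) \Rightarrow (3) \Rightarrow (1)$, reducing the statement to the local model $\Spec(A \modmod (a_1, \ldots, a_r)) \hook \Spec(A)$ and invoking Proposition 2.3.8 of \cite{khan-rydh}. Since being a derived regular embedding is a Zariski-local condition on $Y$, and since $X$ is connected so that the codimension $r$ is well-defined, everything can be checked on affine opens. The implication $(2) \Rightarrow (3)$ is trivial, since a vector bundle has Tor-amplitude $0$ by definition. The implication $(3) \Rightarrow (1)$ is essentially the content of \cite{khan-rydh} Proposition 2.3.8: if $\Lb_{X/Y}[-1]$ has Tor-amplitude $0$, then $\Lb_{X/Y}$ is, Zariski locally, concentrated in degree $1$ and there represented by a free module of some finite rank $r$; choosing functions $a_1, \ldots, a_r$ on (an affine open of) $Y$ whose differentials give a basis of this module, one checks that the natural map $\Spec(A \modmod (a_1, \ldots, a_r)) \to X$ is an equivalence by comparing cotangent complexes and using that a morphism of Noetherian derived schemes inducing an equivalence on truncations and on cotangent complexes is an equivalence (after noting both sides have the same truncation, namely the classical closed subscheme cut out by the $a_i$ in $A_\cl$).

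For $(1) \Rightarrow (2)$, I would work in the local model: for the derived quotient $B := A \modmod (a_1, \ldots, a_r)$, the Koszul-type presentation gives $\Lb_{B/A} \simeq \big(\bigoplus_{i=1}^r B \cdot da_i\big)[1]$, i.e. a free $B$-module placed in cohomological degree $1$; hence $\Lb_{X/Y}[-1]$ is a free module of rank $r$, in particular a vector bundle. This is the standard computation of the cotangent complex of a derived complete intersection and can be cited from \cite{khan-rydh} or \cite{SAG}. Gluing these local trivializations over an affine cover of $Y$, and using connectedness of $X$ to see the rank is globally constant, yields that $\Lb_{X/Y}[-1]$ is a vector bundle on all of $X$.

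The main obstacle — or rather the only non-formal point — is the implication $(3) \Rightarrow (1)$: passing from the \emph{a priori} weak hypothesis that $\Lb_{X/Y}[-1]$ has Tor-amplitude $0$ (so that, locally, after picking generators, it \emph{looks like} the cotangent complex of a complete intersection) to the \emph{conclusion} that $f$ genuinely is such an embedding, one must produce the functions $a_i$ and verify that the comparison map is an equivalence rather than merely a map inducing an isomorphism on cotangent complexes. This is exactly where one invokes the infinitesimal criterion / deformation-theoretic rigidity (a closed immersion of Noetherian derived schemes with the same truncation and an equivalence of relative cotangent complexes is an equivalence), which is the substance of \cite{khan-rydh} Proposition 2.3.8; I would simply cite it, since the excerpt already advertises this proposition as "essentially" the source of the result. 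The remaining bookkeeping — that Tor-amplitude $0$ for an almost perfect complex over a Noetherian base means locally free of finite rank, and the passage between the three formulations — is routine.
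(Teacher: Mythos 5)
Your overall skeleton (reduce to the local model and to \cite{khan-rydh} Proposition 2.3.8, Koszul computation for $(1) \Rightarrow (2)$, triviality of $(2) \Rightarrow (3)$) is the same as the paper's, and the $(1)\Rightarrow(2)$ part is fine. But you gloss over exactly the step that the paper's proof is about. The morphism $f$ is only assumed to be a closed embedding of Noetherian derived schemes; it is \emph{not} assumed to be (locally) of finite presentation, which is the setting in which \cite{khan-rydh} prove their proposition, so one cannot simply cite it wholesale. Concretely, in your $(3) \Rightarrow (1)$ you pass from ``$\Lb_{X/Y}[-1]$ has Tor-amplitude $0$'' to ``Zariski locally $\Lb_{X/Y}[-1]$ is free of some finite rank $r$''. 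Tor-amplitude $0$ only gives flatness; to get local freeness of finite rank, and to run the Nakayama argument that produces the functions $a_1, \dots, a_r$, one needs to know that the conormal module $\pi_0\big(\Lb_{X/Y}[-1]\big)$ is coherent, equivalently that $\Lb_{X/Y}$ is almost perfect. In your closing sentence you do assume almost perfectness (``Tor-amplitude $0$ for an almost perfect complex over a Noetherian base\dots''), but nowhere do you establish it, and it is not automatic from the definitions: it is precisely the finiteness input that the Noetherian hypotheses have to supply, and a flat but non-finitely-generated module shows the inference fails without it.

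The paper fills this hole explicitly: locally $f$ is $\Spec$ of a map $A \to B$ of Noetherian simplicial rings surjective on $\pi_0$; writing $F$ for the homotopy fibre, the exact sequence $\pi_1(B) \to \pi_0(F) \to \pi_0(A) \to \pi_0(B) \to 0$ together with the coherence of $\pi_1(B)$ (derived Noetherianity of $X$, Definition \ref{NoetherianDef}) shows that $\pi_0(F)$ is finitely generated over $\pi_0(A)$. This is what legitimizes the Nakayama step in the Khan--Rydh argument (giving $1 \Leftrightarrow 2$) and, since it yields coherence of $\pi_0\big(\Lb_{X/Y}[-1]\big)$, also the implication ``flat with coherent $\pi_0$ implies vector bundle'', i.e.\ $3 \Rightarrow 2$. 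Your proposal becomes complete once you add this verification (or an equivalent appeal to the Noetherian criterion for almost finite presentation); as written, the key finiteness claim is assumed rather than proved.
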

\begin{proof}
The proof of \emph{loc. cit.} shows the equivalence of 1 and 2, but we have to pay some attention to one key step. Zariski locally on $Y$ the morphism $f$ is represented by a morphism $A \to B$ of simplicial commutative rings inducing a surjection on $\pi_0$. Let $F$ denote the homotopy fibre of this morphism considered as an $A$-module. The argument of \emph{loc. cit.} goes through once we show that $\pi_0(F)$ is finitely generated over $\pi_0(A)$ (otherwise one can not use Nakayama). But this is easy: we have an exact sequence
$$\pi_1(B) \to \pi_0(F) \to \pi_0(A) \to \pi_0(B) \to 0$$
and as $\pi_1(B)$ is a finitely generated $\pi_0(A)$-module, the claim follows.

Notice that 3 is equivalent to $\Lb_{X/Y}[-1]$ being flat. Since, we have already shown that $\pi_0(\Lb_{X/Y}[-1])$ is coherent, we conclude that 3 implies 2. Since vector bundles are flat, we are done. 
\end{proof}

\begin{defn}\label{StrSmQSmDef}
Let $f: X \to Y$ be a morphism of derived schemes. Then
\begin{enumerate}
\item $f$ is called \emph{strong} if the natural maps $f^*_\cl \pi_i(\Oc_Y) \to \pi_i(\Oc_X)$ are isomorphisms for all $i \geq 0$;
\item $f$ is called \emph{smooth} if it is strong and the truncation $f_\cl$ is smooth in the classical sense;
\item $f$ is called \emph{quasi-smooth} if, Zariski locally on $X$, it admits a factorization
$$X \stackrel i \hook P \stackrel p \to Y$$
with $i$ a derived regular embedding and $p$ a smooth morphism.
\end{enumerate}
\end{defn}

The following alternative characterization of smooth morphisms will be useful for us.

\begin{lem}\label{SmCharLem}
A morphism $f: X \to Y$ of Noetherian derived schemes is smooth if and only if $f_\cl$ is of finite type and $\Lb_{X/Y}$ is a vector bundle.
\end{lem}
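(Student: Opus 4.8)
The plan is to prove both implications by reducing to the corresponding classical characterization of smoothness via Zariski-local arguments on $X$ and $Y$, together with the behaviour of the cotangent complex in the fundamental triangle. First I would treat the ``only if'' direction. Suppose $f$ is smooth, i.e. strong with $f_\cl$ smooth. Then $f_\cl$ of finite type is immediate from the classical definition. For the cotangent complex, I would use that the relative cotangent complex is insensitive to strong base change in the appropriate sense: since $f$ is strong, we have a natural equivalence $\Lb_{X/Y} \simeq f_\cl^* \Lb_{X_\cl/Y_\cl}$ (this uses that $X \simeq X_\cl \times_{Y_\cl}^{\mathrm{der}} Y$ when $f$ is strong, which is essentially the content of strongness), and for $f_\cl$ classically smooth $\Lb_{X_\cl/Y_\cl}$ is a vector bundle (a finite locally free sheaf placed in degree $0$) by the classical theory. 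Pulling back a vector bundle along any morphism gives a vector bundle, so $\Lb_{X/Y}$ is a vector bundle.

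For the ``if'' direction, assume $f_\cl$ is of finite type and $\Lb_{X/Y}$ is a vector bundle; I want to conclude $f$ is strong and $f_\cl$ is smooth. The strategy is to factor the question. Working Zariski-locally on $X$ and $Y$, since $f_\cl$ is of finite type I can choose a factorization of $f_\cl$ through a closed embedding into an affine space over $Y_\cl$; lifting to the derived world, I get a factorization $X \to \Ab^n_Y \to Y$ where the second map is smooth. It then suffices to analyze the first map $g: X \to \Ab^n_Y$, which is of finite type and (after shrinking) a closed-immersion-like map on truncations. From the cotangent triangle $g^* \Lb_{\Ab^n_Y/Y} \to \Lb_{X/Y} \to \Lb_{X/\Ab^n_Y}$, since the first two terms are vector bundles, $\Lb_{X/\Ab^n_Y}$ is perfect with Tor-amplitude in $[0,1]$ (indeed $[-1,0]$ after the sign conventions here), hence — if $g$ is a closed immersion — $\Lb_{X/\Ab^n_Y}[-1]$ has Tor-amplitude $0$, so by Proposition \ref{ModifiedKhanRydh}, $g$ is a derived regular embedding. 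But a derived regular embedding whose conormal bundle $\Nc^\vee_{X/\Ab^n_Y}$ has rank $0$ is an equivalence onto an open-and-closed piece; combined with the rank count from the cotangent triangle ($\rank \Lb_{X/Y} = n - \rank \Nc^\vee$), the hypothesis that $\Lb_{X/Y}$ has the expected rank forces the conormal bundle to vanish, so $g$ is an open immersion, whence $f$ is (locally) smooth and in particular strong. Finally, strongness plus $\Lb_{X_\cl/Y_\cl}$ being a vector bundle recovers that $f_\cl$ is smooth classically.

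The main obstacle I anticipate is the passage from ``$\Lb_{X/Y}$ is a vector bundle'' to an actual geometric factorization: I need to ensure that the local closed-embedding factorization of $f_\cl$ can be chosen compatibly so that $g: X \to \Ab^n_Y$ really is a closed immersion of derived schemes (not merely on truncations), and I need the rank bookkeeping in the cotangent triangle to be tight enough to kill the conormal bundle. A clean way around the embedding subtlety is to instead argue that $\Lb_{X/Y}$ being a vector bundle already forces $f$ to be quasi-smooth (this is close to the definition: Zariski-locally $f$ factors as a derived regular embedding followed by a smooth map, and the cotangent complex being concentrated in degree $0$ kills the regular-embedding part up to an open immersion), and then observe that a quasi-smooth morphism with cotangent complex a vector bundle in degree $0$ has a smooth truncation and is strong — this last point being where one invokes Proposition \ref{ModifiedKhanRydh} and the classical smoothness criterion. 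I would also double-check the degree conventions (the excerpt uses $\Lb_{X/Y}[-1]$ for conormal bundles of regular embeddings, so ``vector bundle'' for $\Lb_{X/Y}$ itself means concentrated in homological degree $0$), since the whole argument hinges on reading off Tor-amplitudes correctly.
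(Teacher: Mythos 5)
The ``if'' direction of your plan contains a genuine error at its crucial step. After factoring $f$ locally as $X \stackrel{g}{\hookrightarrow} \Ab^n_Y \to Y$, you argue that the hypothesis ``$\Lb_{X/Y}$ has the expected rank'' forces $\Nc^\vee_{X/\Ab^n_Y}$ to vanish, so that $g$ is an open immersion. But there is no such rank hypothesis: $\Lb_{X/Y}$ is merely assumed to be a vector bundle of some rank $d$, and in the typical case $d < n$ the conormal bundle has rank $n-d \neq 0$ (a smooth map of relative dimension $d$ factors through $\Ab^n_Y$ with a genuinely nontrivial regular embedding), so $g$ is never an open immersion and this route collapses. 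Your fallback --- ``a quasi-smooth morphism with cotangent complex a vector bundle in degree $0$ has smooth truncation and is strong, by Proposition \ref{ModifiedKhanRydh} and the classical smoothness criterion'' --- simply asserts the nontrivial content of the lemma: neither of those inputs explains why the higher homotopy sheaves of $\Oc_X$ vanish relative to $Y$. What is actually needed (and what the paper does) is: reduce by derived base change along $Y_\cl \to Y$ to the case $Y$ classical, write $X$ locally as $\Spec\big(A[x_1,\dots,x_n]\modmod(f_1,\dots,f_r)\big)$, and observe that since $\Lb_{X/Y}$ is a vector bundle the long exact sequence of the cotangent triangle makes $\langle df_1,\dots,df_r\rangle \to \pi_0(B)\otimes \Omega_{A[x]/A}$ a split injection; the classical Jacobian criterion then shows the $f_i$ form a regular sequence near $V(f_1,\dots,f_r)$ with smooth quotient, so the derived quotient is discrete, i.e.\ $X$ is classical and $f_\cl$ is smooth. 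Nothing in your proposal produces this discreteness.

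Two smaller points. In the ``only if'' direction your identification is stated backwards: there is no morphism $X \to X_\cl$, so you cannot realize $\Lb_{X/Y}$ as a pullback of $\Lb_{X_\cl/Y_\cl}$; the correct statement is $\iota_X^*\Lb_{X/Y} \simeq \Lb_{X_\cl/Y_\cl}$ (obtained from the derived Cartesian square with $X\times_Y Y_\cl \simeq X_\cl$, which uses flatness of $f$, i.e.\ strongness \emph{together with} flatness of $f_\cl$, not strongness alone), after which one must still argue that a connective almost perfect sheaf whose restriction to the truncation is locally free is itself locally free. Also, the reduction to $Y$ classical in the converse is not automatic bookkeeping and should be stated explicitly, as the paper does.
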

\begin{proof}
This is well known if $X$ and $Y$ are classical (finite type morphisms of Noetherian schemes are of finite presentation). To prove the equivalence in general, consider the derived Cartesian square
$$
\begin{tikzcd}
X' \arrow[]{r}{f'} \arrow[]{d}{\iota_X} & Y_\cl \arrow[]{d}{\iota_Y} \\
X \arrow[]{r}{f} & Y.
\end{tikzcd}
$$
If $f$ is smooth, then it is strong and flat, and therefore $X'$ is classical and identified with $X_\cl$, which proves that $\iota_X^* \Lb_{X/Y}$, and hence $\Lb_{X/Y}$, is a vector bundle. To prove the converse implication, it is enough to show that if $\Lb_{X/Y}$ is a vector bundle and $Y$ is classical, then $X$ is classical as well. Locally we have
$$A \to A [x_1,...,x_n] \to B := A [x_1,...,x_n] \modmod (f_1,...,f_r)$$
and the assumptions imply that we have a split injection of $\pi_0(B)$-modules
$$\langle df_1,...,df_r \rangle_{\pi_0(B)} \hook \pi_0(B) \otimes_{A[x_1,...,x_n]} \Omega_{A[x_1,...,x_n] / A}.$$
But this implies that 
$$A \to A [x_1,...,x_n] \to B := A [x_1,...,x_n] / (f_1,...,f_r)$$
is a relative global complete intersection (in the sense of \cite{Stacks} Tag 00SP) with smooth fibres (differential criterion for smoothness). Hence $f_1,..,f_r$ is a regular sequence around the points of $V(f_1,...,f_r)$ and $B \simeq A [x_1,...,x_n] / (f_1,...,f_r)$ is discrete.
\end{proof}

The following result is then an easy consequence of Proposition \ref{ModifiedKhanRydh}. 

\begin{prop}\label{QSmCharProp}
Let $f: X \to Y$ be a morphism of Noetherian derived schemes. Then the following are equivalent:
\begin{enumerate}
\item $f$ is quasi-smooth;
\item the truncation $f_\cl$ is of finite type and the cotangent complex $\Lb_{X/Y}$ is a perfect complex of Tor-dimension 1;
\item the truncation $f_\cl$ is of finite type and the cotangent complex $\Lb_{X/Y}$ has Tor-dimension 1.
\end{enumerate}
\end{prop}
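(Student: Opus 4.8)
The plan is to prove the equivalence of the three characterizations of quasi-smoothness by the obvious cycle of implications, reducing everything to the regular-embedding case handled by Proposition \ref{ModifiedKhanRydh} and to the smooth case handled by Lemma \ref{SmCharLem}. First I would show that (1) implies (2). Working Zariski-locally on $X$, a quasi-smooth $f$ factors as $X \stackrel i \hook P \stackrel p \to Y$ with $i$ a derived regular embedding and $p$ smooth. Then $f_\cl$ is of finite type since it is a composite of a closed embedding and a finite type morphism. For the cotangent complex, I would use the fundamental cofibre sequence $i^* \Lb_{P/Y} \to \Lb_{X/Y} \to \Lb_{X/P}$. By Lemma \ref{SmCharLem}, $\Lb_{P/Y}$ is a vector bundle, so $i^*\Lb_{P/Y}$ is a vector bundle on $X$; by Proposition \ref{ModifiedKhanRydh}, $\Lb_{X/P} \simeq \Nc^\vee_{X/P}[1]$ with $\Nc^\vee_{X/P}$ a vector bundle. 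Hence $\Lb_{X/Y}$ sits in a cofibre sequence between a vector bundle and a shifted vector bundle, so it is perfect of Tor-amplitude in $[0,1]$, i.e. Tor-dimension $1$ (locality of Tor-amplitude lets us upgrade the local statement to the global one).

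Next, (2) implies (3) is immediate, since a perfect complex of Tor-dimension $1$ in particular has Tor-dimension $1$. The substantive direction is (3) implies (1). Here I would argue locally on $X$: since $f_\cl$ is of finite type, we may choose locally a factorization $X \stackrel i \hook P \stackrel p \to Y$ with $p$ smooth (e.g. $P$ an affine space over an open of $Y$) and $i$ a closed embedding inducing a surjection on $\pi_0$. Applying the cofibre sequence $i^*\Lb_{P/Y} \to \Lb_{X/Y} \to \Lb_{X/P}$ and rotating, we get $\Lb_{X/P} \simeq \Cofib(i^*\Lb_{P/Y} \to \Lb_{X/Y})$. Since $i^*\Lb_{P/Y}$ is a vector bundle (Tor-amplitude $0$) and $\Lb_{X/Y}$ has Tor-dimension $\le 1$, the cofibre $\Lb_{X/P}$ has Tor-amplitude in $[-1,1]$; but $\Lb_{X/P}$ is connective up to a shift — more precisely, for a closed embedding $\Lb_{X/P}$ is $(-1)$-connective and its $\pi_0$ vanishes (as $i$ is a monomorphism on $\pi_0$, the relative Kähler differentials $\pi_0(\Lb_{X/P}) = \Omega_{\pi_0(X)/\pi_0(P)}$ vanish). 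Combining these, $\Lb_{X/P}$ is concentrated in degree $1$ and has Tor-amplitude $0$ there, so $\Lb_{X/P}[-1]$ has Tor-amplitude $0$; by Proposition \ref{ModifiedKhanRydh}, $i$ is a derived regular embedding. This exhibits the required local factorization, so $f$ is quasi-smooth.

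The main obstacle I expect is the bookkeeping in the (3) implies (1) step: one must be careful that "Tor-dimension $1$" (without assuming perfectness a priori) still forces the conormal complex $\Lb_{X/P}[-1]$ to be an honest flat — hence, by the coherence input from Proposition \ref{ModifiedKhanRydh}'s proof, a vector bundle — module. The key point is that $\Lb_{X/P}$ is automatically almost perfect (the cotangent complex of a finite type morphism of Noetherian derived schemes is almost perfect), so once we know it has Tor-amplitude concentrated in a single degree, flatness plus almost perfectness gives a vector bundle, and we can invoke Proposition \ref{ModifiedKhanRydh} directly as stated rather than re-deriving its Nakayama argument. A secondary subtlety is making sure all Tor-amplitude manipulations (stability under pullback, two-out-of-three in cofibre sequences, locality) are invoked correctly; these are standard facts about perfect and almost perfect complexes that I would cite from \cite{SAG} rather than prove.
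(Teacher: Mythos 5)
Your proposal is correct and is essentially the paper's own (very terse) proof: a local factorization through an affine bundle over $Y$, the fundamental triangle $i^*\Lb_{P/Y} \to \Lb_{X/Y} \to \Lb_{X/P}$, and then Lemma \ref{SmCharLem} together with condition (3) of Proposition \ref{ModifiedKhanRydh} to recognize $i$ as a derived regular embedding. One tiny correction: $\Lb_{X/P}$ for a closed immersion is connective with vanishing $\pi_0$, i.e. $1$-connective (not merely $(-1)$-connective as you wrote), and it is this connectivity that, combined with the Tor-amplitude bound from the triangle, forces $\Lb_{X/P}[-1]$ to have Tor-amplitude $0$.
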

\begin{proof}
Since the truncation is of finite type, $f$ admits a local factorization
$$X \stackrel i \hook P \stackrel p \to Y$$
with $p$ smooth (take $P = \Ab^n \times Y$ for $n$ large enough). The claim then follows by investigating the fundamental triangle
$$i^* \Lb_{P / Y} \to \Lb_{X/Y} \to \Lb_{X/P}$$
and using Proposition \ref{ModifiedKhanRydh} and Lemma \ref{SmCharLem}.
\end{proof}

If $f: X \to Y$ is quasi-smooth and $\Lb_{X/Y}$ has constant virtual rank $d$ at the points of $x$, then we say that $f$ has \emph{relative virtual dimension $d$}. It is then easy to prove the following basic result.

\begin{prop}
Quasi-smooth morphisms of Noetherian derived schemes are stable under compositions and derived pullbacks. Moreover, relative virtual dimension is additive under composition, and is preserved under derived pullback. \qed
\end{prop}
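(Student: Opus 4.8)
The plan is to reduce every assertion to the cohomological characterization of Proposition \ref{QSmCharProp}: a morphism is quasi-smooth precisely when its truncation is of finite type and its relative cotangent complex is a perfect complex of Tor-dimension $1$ (equivalently, of Tor-amplitude contained in $[0,1]$). The key input is the transitivity cofibre sequence for cotangent complexes together with base change for the cotangent complex along derived Cartesian squares, plus the fact that perfectness and the Tor-amplitude bound $[0,1]$ are preserved by pullback and by extensions in the stable $\infty$-category of quasi-coherent sheaves.

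\emph{Stability under composition, with additivity of virtual dimension.} Let $X \stackrel f \to Y \stackrel g \to Z$ be quasi-smooth. Since $(g \circ f)_\cl = g_\cl \circ f_\cl$ and finite type morphisms of Noetherian schemes are closed under composition, $(g \circ f)_\cl$ is of finite type. Consider the fundamental cofibre sequence
$$f^* \Lb_{Y/Z} \to \Lb_{X/Z} \to \Lb_{X/Y}.$$
By Proposition \ref{QSmCharProp}, $\Lb_{X/Y}$ and $\Lb_{Y/Z}$ are perfect of Tor-amplitude $[0,1]$, hence so is the pullback $f^* \Lb_{Y/Z}$. A complex fitting into a cofibre sequence between two perfect complexes of Tor-amplitude $[0,1]$ is again perfect of Tor-amplitude $[0,1]$ (tensor with a discrete sheaf and chase the long exact sequence). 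Thus $\Lb_{X/Z}$ is perfect of Tor-amplitude $[0,1]$ and $g \circ f$ is quasi-smooth. Evaluating the same cofibre sequence at a point $x \in X$ and taking Euler characteristics gives $\rank_x(\Lb_{X/Z}) = \rank_{f(x)}(\Lb_{Y/Z}) + \rank_x(\Lb_{X/Y})$, which is exactly additivity of relative virtual dimension.

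\emph{Stability under derived pullback, with invariance of virtual dimension.} Let $f: X \to Y$ be quasi-smooth and form a derived Cartesian square
$$
\begin{tikzcd}
X' \arrow[]{r}{g'} \arrow[]{d}{f'} & X \arrow[]{d}{f} \\
Y' \arrow[]{r}{g} & Y.
\end{tikzcd}
$$
Then $X'_\cl \simeq X_\cl \times_{Y_\cl} Y'_\cl$, so $f'_\cl$ is of finite type because finite type morphisms are stable under base change. Base change for the cotangent complex along derived Cartesian squares gives $\Lb_{X'/Y'} \simeq g'^* \Lb_{X/Y}$, which is therefore perfect of Tor-amplitude $[0,1]$; hence $f'$ is quasi-smooth by Proposition \ref{QSmCharProp}, and $\rank_{x'}(\Lb_{X'/Y'}) = \rank_{g'(x')}(\Lb_{X/Y})$, giving preservation of relative virtual dimension. (Alternatively one may argue directly from Definition \ref{StrSmQSmDef}: a local factorization $X \hook P \to Y$ with $P \to Y$ smooth and $X \hook P$ a derived regular embedding pulls back to a factorization of the same type, since the derived vanishing-locus construction of Definition \ref{RegEmbDef} and smoothness are both stable under derived base change.)

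The argument is essentially formal; the only points requiring care — and hence the ``main obstacle'' — are the two standard facts about derived schemes used above: that the truncation of a derived fibre product is the classical fibre product of truncations, and that the upper Tor-amplitude bound of a perfect complex is preserved under arbitrary (not necessarily flat) pullback. Both are available in the references cited in the excerpt, and once they are in hand the proposition follows as outlined. \qed
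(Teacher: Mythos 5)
Your argument is correct and follows exactly the route the paper sets up: the proposition is stated there without proof (the author calls it an easy consequence of the preceding results), and your reduction to Proposition \ref{QSmCharProp} via the transitivity cofibre sequence $f^*\Lb_{Y/Z} \to \Lb_{X/Z} \to \Lb_{X/Y}$ and base change $\Lb_{X'/Y'} \simeq g'^*\Lb_{X/Y}$, with additivity/invariance of virtual dimension read off from ranks, is the intended standard argument. The auxiliary facts you flag --- that $(X\times^h_Y Y')_\cl \cong X_\cl \times_{Y_\cl} Y'_\cl$, and that perfectness and the Tor-amplitude bound $[0,1]$ are preserved under extensions and arbitrary derived pullback --- are indeed the only inputs needed and are standard, so no gap remains.
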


\subsubsection{Derived complete intersection rings}\label{DerComplIntSubSect}

The purpose of this section is to introduce an absolute version of quasi-smoothness, and verify it has desirable properties. A simplicial commutative ring is called \emph{local} if its truncation is local.

\begin{defn}
A local Noetherian simplicial ring $A$ is called \emph{derived complete intersection} if only finitely many $\pi_i(A)$ are nontrivial and the cotangent complex $\Lb_{\kappa / A}$, where $\kappa$ is the residue field of $A$, is perfect and concentrated in degrees less than or equal to 2.

A Noetherian derived scheme is called \emph{derived complete intersection} if its local rings are derived complete intersection rings. We also call such derived schemes \emph{derived regular}.
\end{defn}

\begin{ex}
If $A$ is a regular local ring and $a_1,...,a_r$ are elements of $A$, then the derived quotient ring $A \modmod (a_1, ..., a_r)$ is a derived complete intersection ring.
\end{ex}

The following result explains what we want to capture with this definition.

\begin{prop}\label{AbsoluteQSmProp}
Suppose $B$ is a derived complete intersection local ring, and let $f: A \to B$ be a morphism from a regular local ring $A$ which is a surjection on $\pi_0$. Then there exist elements $a_1,..., a_r$ and an equivalence 
$$B \simeq A \modmod (a_1,...,a_r)$$
of $A$-algebras. In other words, a closed immersion from a derived regular scheme to a regular scheme is a derived regular embedding.
\end{prop}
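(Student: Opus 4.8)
The plan is to work locally and reduce the statement to a purely algebraic computation with cotangent complexes. Since $A$ is a regular local ring and $f\colon A \to B$ is a surjection on $\pi_0$, we may write $\pi_0(B) = \pi_0(A)/I$ for an ideal $I$; choosing generators $a_1,\dots,a_r$ of $I$ (lifted to $A$, which is discrete) gives a map $g\colon A \modmod (a_1,\dots,a_r) \to B$ of $A$-algebras which is an isomorphism on $\pi_0$. The goal is to choose the $a_i$ well enough that $g$ is an equivalence. The key point is that by minimality we should take $a_1,\dots,a_r$ to be a \emph{minimal} generating set of $I$, i.e. $r = \dim_\kappa I/\mathfrak{m}_A I$ where $\mathfrak{m}_A$ is the maximal ideal and $\kappa$ the residue field.

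First I would analyze the cotangent complex $\Lb_{\kappa/A}$. Since $A$ is regular, $\Lb_{\kappa/A}$ can be computed from the Koszul resolution: one has $H_0(\Lb_{\kappa/A}) = \Omega_{\kappa/A} = 0$ (as $A \to \kappa$ is surjective), $H_1(\Lb_{\kappa/A}) = \mathfrak{m}_A/\mathfrak{m}_A^2$, which has dimension $\dim A$, and $H_2(\Lb_{\kappa/A}) = 0$ because $A$ is regular (the second André--Quillen homology of a regular quotient vanishes — indeed $\kappa$ is, locally, a regular quotient of $A$, cut out by a regular sequence of length $\dim A$, so $\Lb_{\kappa/A} \simeq (\mathfrak m_A/\mathfrak m_A^2)[1]$ concentrated in degree $1$). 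Next I would use the derived complete intersection hypothesis on $B$: the fundamental cofibre sequence
$$\kappa \otimes_B \Lb_{B/A} \to \Lb_{\kappa/A} \to \Lb_{\kappa/B}$$
together with the fact that $\Lb_{\kappa/B}$ is perfect concentrated in degrees $\leq 2$, and the computation of $\Lb_{\kappa/A}$ above, forces $\kappa \otimes_B \Lb_{B/A}$ to be perfect and concentrated in degrees $\leq 2$ as well; a closer look at the long exact sequence of homotopy groups shows $H_i(\kappa\otimes_B\Lb_{B/A}) = 0$ for $i \neq 1$ — here one uses that $H_2(\Lb_{\kappa/A}) = 0$ kills the potential degree-$2$ part, and that $H_0$ of a relative cotangent complex for a $\pi_0$-surjection vanishes. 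Hence $\kappa \otimes_B \Lb_{B/A}$ is a shifted vector space, in degree $1$, of some dimension; call it $s$.

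Then I would pin down $s$. Comparing with $g\colon A\modmod(a_1,\dots,a_r) \to B$, which is an equivalence on $\pi_0$ and hence (by Proposition \ref{ModifiedKhanRydh}, or rather its proof via Nakayama) induces an isomorphism $\pi_0(F_g)$ finitely generated, one checks that $\Lb_{B/A}$ has $H_1(\kappa\otimes_B\Lb_{B/A}) \cong I/\mathfrak{m}_A I$, which has dimension exactly $r$ when the $a_i$ are chosen minimally — this is the standard identification of the first relative cotangent homology with the conormal module $I/I^2$ reduced mod $\mathfrak m$. So with a minimal generating set, $\kappa\otimes_B\Lb_{B/A} \simeq \kappa^r[1]$ has Tor-amplitude $[1,1]$, whence by Nakayama $\Lb_{B/A}[-1]$ is a vector bundle of rank $r$ near the closed point. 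By Proposition \ref{ModifiedKhanRydh} this says precisely that $\Spec(B) \to \Spec(A)$ is a derived regular embedding of codimension $r$. Finally, a derived regular embedding is Zariski-locally of the form $\Spec(A'\modmod(a_1',\dots,a_r')) \hookrightarrow \Spec(A')$; localizing so that everything is defined over $\Spec(A)$ and matching up the $\pi_0$-level ideals, I would conclude $B \simeq A\modmod(a_1,\dots,a_r)$ as $A$-algebras for a suitable minimal system $a_1,\dots,a_r$.

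The main obstacle I anticipate is the bookkeeping in the cotangent-complex long exact sequence needed to show $\kappa\otimes_B\Lb_{B/A}$ is concentrated in a single degree: one has to rule out a surviving $H_2$ term, and this relies essentially on the regularity of $A$ (so that $\Lb_{\kappa/A}$ has no $H_2$) and on the derived-complete-intersection bound on $\Lb_{\kappa/B}$; getting the indices and the connecting maps right, and then correctly translating "$\Lb_{B/A}[-1]$ is a vector bundle after base change to $\kappa$'' into the statement that $g$ itself is an equivalence (rather than merely that the target is abstractly a derived complete intersection), is where the care is required. A secondary subtlety is ensuring that the locally-defined equivalence from Proposition \ref{ModifiedKhanRydh} can be promoted to one of $A$-algebras globally on $\Spec(B)$ — but since $B$ is local this is automatic once the codimension and the $\pi_0$-ideal match.
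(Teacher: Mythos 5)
Your proposal is correct and follows essentially the same route as the paper: the transitivity triangle for $A \to B \to \kappa$, with $\Lb_{\kappa/A}$ concentrated in degree $1$ (regularity of $A$) and $\Lb_{\kappa/B}$ perfect in degrees $\leq 2$ (the derived complete intersection hypothesis), forces $\kappa \otimes_B \Lb_{B/A}$ into degree $1$, and Proposition \ref{ModifiedKhanRydh} then gives the derived regular embedding; your extra bookkeeping with a minimal generating set of $I$ merely pins down the codimension and is not needed for the statement. The only slight imprecision is at the very end: what promotes the Zariski-local presentation to a global one is the locality of the \emph{target} $A$ (the only open subset of $\Spec(A)$ containing its closed point is all of $\Spec(A)$, and derived regular embeddings are defined locally on the target), not the locality of $B$.
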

\begin{proof}
Let $\kappa$ be the residue field of $A$, and consider the sequence $A \to B \to \kappa$ inducing the fundamental triangle
$$\kappa \otimes_B^L \Lb_{B/A} \to \Lb_{\kappa / A} \to \Lb_{\kappa / B}.$$
As $A$ is regular, $\Lb_{\kappa / A}$ is a $\kappa$-vector space in degree 1, and $B$ is a derived complete intersection, it follows that $\kappa \otimes_B^L \Lb_{B/A}$ is concentrated in degree 1, and the claim follows from Proposition \ref{ModifiedKhanRydh}.
\end{proof}

The following alternative characterization of derived complete intersection schemes is also useful.

\begin{prop}\label{DCompIntAltCharProp}
Let $X$ be a Noetherian derived scheme. Then the following are equivalent:
\begin{enumerate}
\item $X$ is a derived complete intersection scheme;
\item the absolute cotangent complex $\Lb_X$ has Tor-dimension 1;
\item for all maps $X \to Y$ with $Y$ a regular Noetherian scheme, $\Lb_{X/A}$ has Tor-dimension 1.
\end{enumerate}
\end{prop}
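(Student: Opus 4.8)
The plan is to prove the equivalence of the three conditions by a cycle of implications, leaning heavily on the local characterization in Proposition~\ref{ModifiedKhanRydh} and the cotangent complex computations already assembled above. Note first that condition (3) is really a single condition: if $X \to Y$ is any map to a regular Noetherian scheme, the fundamental triangle $f^*\Lb_Y \to \Lb_X \to \Lb_{X/Y}$ together with the fact that $\Lb_Y$ is a vector bundle (regularity, via Lemma~\ref{SmCharLem} applied to $Y \to \Spec(\Zb)$ locally) shows that $\Lb_X$ has Tor-dimension $1$ if and only if $\Lb_{X/Y}$ does; this simultaneously gives (2)$\Leftrightarrow$(3) once we know at least one such map exists, which it does Zariski-locally since any local ring of $X$ receives a surjection on $\pi_0$ from a polynomial ring over $\Zb$ localized appropriately, hence from a regular local ring.

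For (1)$\Rightarrow$(2), I would argue locally: let $B$ be a local ring of $X$, which by hypothesis is a derived complete intersection. Choose a surjection on $\pi_0$ from a regular local ring $A \to B$ (e.g. a suitable localization of a polynomial ring over $\Zb$, noting $B$ is Noetherian with finitely many nontrivial homotopy groups). By Proposition~\ref{AbsoluteQSmProp}, $B \simeq A \modmod(a_1,\dots,a_r)$, so $\Spec(B) \hookrightarrow \Spec(A)$ is a derived regular embedding, whence $\Lb_{B/A}[-1] = \Nc^\vee$ is a vector bundle by Proposition~\ref{ModifiedKhanRydh}. Then $\Lb_{B/A}$ is perfect of Tor-amplitude $[0,1]$, and combining with $\Lb_A$ being a vector bundle (Lemma~\ref{SmCharLem}) via the fundamental triangle $\Lb_A \otimes_A B \to \Lb_B \to \Lb_{B/A}$ shows $\Lb_B$ has Tor-dimension $1$. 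Since Tor-dimension can be checked locally, $\Lb_X$ has Tor-dimension $1$.

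For (2)$\Rightarrow$(1), I again work at a local ring $B$ of $X$ with residue field $\kappa$, assuming $\Lb_B$ has Tor-dimension $1$. First, finiteness of the nontrivial $\pi_i(B)$: since $B$ is Noetherian of finite Krull dimension and $\Lb_B$ is perfect (Tor-dimension $1$ implies perfect here because $B$ is Noetherian and almost of finite presentation over $\Zb$), one deduces $B$ is eventually coconnective — this is the one step I expect to require genuine care, as it amounts to a boundedness statement extracted from perfection of the cotangent complex, and may need an argument via the transitivity triangle for $\Zb \to B \to \pi_0(B)$ or a Postnikov-tower induction. Granting that, choose a surjection $A \to B$ on $\pi_0$ from a regular local ring; the transitivity triangle $\kappa \otimes_B \Lb_{B/A} \to \Lb_{\kappa/A} \to \Lb_{\kappa/B}$ together with $\Lb_{\kappa/A}$ concentrated in degree $1$ (regularity of $A$) and $\Lb_{B/A}$ of Tor-amplitude $[0,1]$ (which follows from $\Lb_B$ having Tor-dimension $1$ and $\Lb_A$ a vector bundle, again by the fundamental triangle) forces $\Lb_{\kappa/B}$ to be concentrated in degrees $\leq 2$ and perfect. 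That is exactly the condition for $B$ to be a derived complete intersection ring, completing the cycle.

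The main obstacle, as flagged, is the boundedness claim in (2)$\Rightarrow$(1): condition (1) explicitly demands that only finitely many $\pi_i(B)$ are nontrivial, whereas (2) only constrains the cotangent complex, so one must show perfection (or Tor-finiteness) of $\Lb_B$ propagates back to coconnectivity of $B$ itself. I would handle this by the standard device: $\pi_i(\Lb_{\pi_0(B)/B})$ detects the lowest nonzero homotopy group of $B$ above degree $0$ (the first nonvanishing $\pi_n(B)$, $n \geq 1$, contributes $\pi_{n+1}(\Lb_{\pi_0(B)/B})$), so if $B$ had infinitely many nontrivial homotopy groups one could derive a contradiction with perfectness of $\Lb_B$ via the transitivity triangle for $\Zb \to B \to \pi_0(B)$, using that $\pi_0(B)$, being a complete intersection quotient of a regular ring in the classical sense once we know (2) restricted to the truncation, has bounded $\Lb_{\pi_0(B)}$. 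The remaining steps are all routine manipulations of fundamental triangles of cotangent complexes together with the already-proven Propositions~\ref{ModifiedKhanRydh} and~\ref{AbsoluteQSmProp} and Lemma~\ref{SmCharLem}.
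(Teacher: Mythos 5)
Your (2)$\Rightarrow$(1) step is built around the same transitivity triangle $\kappa \otimes_B \Lb_{B/A} \to \Lb_{\kappa/A} \to \Lb_{\kappa/B}$ at a local ring $B$ that constitutes the paper's entire proof, but several of the claims you lean on fail at this level of generality, essentially because a Noetherian derived scheme need not be of finite type over $\Zb$. First, a regular Noetherian scheme $Y$ is in general neither of finite type nor smooth over $\Spec(\Zb)$, so Lemma \ref{SmCharLem} does not apply and $\Lb_Y$ is not a vector bundle: already $\Lb_{\Fb_p/\Zb} \simeq \Fb_p[1]$, and $\Lb_{A/\Zb}$ has nonzero $\pi_1$ for $A = \Fb_p[x]_{(x)}$. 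Since $f^*\Lb_Y$ is then only of Tor-amplitude $[0,1]$ rather than flat, your (2)$\Leftrightarrow$(3) argument breaks: Tor-amplitude $[0,1]$ of $\Lb_X$ does not formally pass to the cofibre $\Lb_{X/Y}$, because $\pi_2(M \otimes \Lb_{X/Y})$ can receive contributions from $\pi_1(M \otimes f^*\Lb_Y)$. The paper sidesteps this by comparing only after base change to the residue field: the sole input about a regular $A$ is that $\Lb_{\kappa/A}$ has Tor-amplitude $[0,1]$, and the condition on the almost perfect complex $\Lb_{B/A}$ is recovered from $\kappa \otimes_B \Lb_{B/A}$ by a Nakayama argument. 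Second, your repeated choice of a regular local ring surjecting onto $\pi_0(B)$, ``e.g.\ a localization of a polynomial ring over $\Zb$'', is not available here: local rings of $X$ need not be essentially of finite type over $\Zb$, and a Noetherian local ring need not be a quotient of \emph{any} regular local ring (such quotients are universally catenary, and Nagata's examples are not). This removes the foundation both of your (1)$\Rightarrow$(2) via Proposition \ref{AbsoluteQSmProp} and of your (2)$\Rightarrow$(1) as written; note that the paper only ever uses the maps actually given, namely $\Zb \to B$ for (2) and a local ring of $Y$ for (3), and needs no Cohen-type presentation.

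On the boundedness point: you are right that (2)$\Rightarrow$(1) also requires showing that only finitely many $\pi_i(B)$ are nonzero, and right to flag it as the delicate step (the paper's own proof is silent about it). But your sketch does not establish it: it hinges on ``$\Lb_B$ is perfect because $B$ is Noetherian and almost of finite presentation over $\Zb$'', and the latter is false for general Noetherian $B$ (already for a large field $\Lb_{B/\Zb}$ fails to be almost perfect); Tor-amplitude $[0,1]$ alone gives no perfectness, so the intended contradiction with unbounded homotopy never materializes, and the auxiliary claim that $\pi_0(B)$ has bounded cotangent complex ``once we know (2) restricted to the truncation'' is itself unjustified. So, concretely, the proposal has genuine gaps at (2)$\Leftrightarrow$(3), at (1)$\Rightarrow$(2), and at the coconnectivity claim inside (2)$\Rightarrow$(1).
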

\begin{proof}
Let $B$ be a derived local ring of $X$ with residue field $\kappa$, let $A$ be a regular ring and consider the sequence $A \to B \to \kappa$ inducing the cofibre sequence
$$\kappa \otimes_B \Lb_{B/A} \to \Lb_{\kappa/A} \to \Lb_{\kappa/B}.$$
Since $\Lb_{\kappa/A}$ has Tor dimension 1, we conclude that $\Lb_{\kappa/B}$ has Tor-dimension 2 if and only if $\Lb_{B/A}$ has Tor-dimension 1, proving the equivalence of the three conditions.
\end{proof}

Finally we need to be able to talk about the dimension of a derived complete intersection scheme.

\begin{defn}\label{VDimDef}
Let $A$ be a derived complete intersection local ring with residue field $\kappa$. Then the \emph{virtual dimension} of $A$ is defined as
$$\dim(A) := \dim_\kappa \pi_1(\Lb_{\kappa / A}) - \dim_\kappa \pi_2(\Lb_{\kappa / A}).$$
If $X$ is a derived complete intersection scheme and $x \in X$ is a point, then we define the \emph{virtual dimension of $X$ at $x$}, $\dim_x(X)$, as the virtual dimension of the derived local ring $\Oc_{X,x}$. Finally, the \emph{virtual dimension} of such an $X$, $\dim(X)$, is defined as the supremum of $\dim_x(X)$, where $x$ ranges over all the points of $X$.
\end{defn}

\begin{rem}
Notice that if $A$ is a classical complete intersection ring, then the virtual dimension of $A$ agrees with the Krull dimension of $A$. Indeed, for regular local rings this is easy, and for more general complete intersection local rings it follows from the stability of cotangent complex in derived pullbacks that
$$\dim(A) = \dim(\hat A),$$
where $\hat A$ denotes the completion of $A$ with respect to its maximal ideal. On the other hand, since $\hat A$ is a quotient of a regular local ring by a regular sequence, one observes easily that the virtual dimension of $\hat A$ agrees with its Krull dimension. Since also the Krull dimensions of $A$ and $\hat A$ are the same, we conclude that the virtual dimension of $A$ agrees with its Krull dimension. 
\end{rem}

\begin{rem}\label{DimProblemRem}
One would expect that, at least under suitable hypotheses, a connected derived complete intersection scheme is equicodimensional in the sense that $\dim_x(X) = \dim(X)$ for all closed points $x$ of $X$. It is not hard to see that this is true for $X$ finite type over a field $k$, but we won't try to pursue this in any greater generality (doing so would take us too far afield). This means that we don't know if virtual dimension is a good notion: for instance, consider a derived regular embedding $Z \hook X$ of codimension $r$, and suppose $X$ is a derived complete intersection scheme of virtual dimension $d$. Although $Z$ is clearly a derived complete intersection scheme, it is not clear at all that it has virtual dimension $d-r$ (and in fact we would expect this to be false in general). 

This potential for ill behavior will have ramifications later in this article. Indeed, we will construct base independent homological bordism groups $\Omega_\bullet(X)$ but we won't be able to prove that these groups admit a grading by the virtual dimension of cycles.
\end{rem}

\subsubsection{Derived blow ups}

One of the main technical tools we are going to need in this article is the construction of derived blow ups and derived deformation to normal cone from \cite{khan-rydh}. Let us recall the definitions and the results we are going to use:

\begin{defn}\label{DivisorOverZDef}
Let $Z \hookrightarrow X$ be a derived regular embedding. Then, for any $X$-scheme $S$, a \emph{virtual Cartier divisor} on $S$ \emph{lying over $Z$} is the datum of a commutative diagram
\begin{center}
\begin{tikzcd}
D \arrow[hook]{r}{i_D} \arrow[]{d}{g} & S \arrow[]{d}{} \\
Z \arrow[hook]{r}{} & X
\end{tikzcd}
\end{center}
such that
\begin{enumerate}
\item $i_D$ is a quasi-smooth closed embedding of codimension 1;
\item the truncation is a Cartesian square;
\item the canonical morphism
$$g^* \Nc^\vee_{Z/X} \to \Nc^\vee_{D/S}$$
induces a surjection on $\pi_0$.
\end{enumerate}
\end{defn}

It is now possible to define derived blow ups via a universal property.

\begin{defn}\label{DerivedBlowUpDef}
Let $Z \hookrightarrow X$ be a derived regular embedding. Then the \emph{derived blow up} $\bl_Z(X)$ is the $X$-scheme representing virtual Cartier divisors lying over $Z$. In other words, given an $X$-scheme $S$, the space of $X$-morphisms 
$$S \to \bl_Z(X)$$
is naturally identified with the maximal sub $\infty$-groupoid of the $\infty$-category of virtual Cartier divisors of $S$ that lie over $Z$.
\end{defn}

\begin{thm}\label{BlowUpPropertiesThm}
Let $i: Z \hookrightarrow X$ be a derived regular embedding with $X$ Noetherian. Then
\begin{enumerate}
\item the derived blow up $\bl_Z(X)$ exists and is unique up to contractible space of choices;

\item the structure morphism $\pi: \bl_Z(X) \to X$ is projective, quasi-smooth, and induces an equivalence 
$$\bl_Z(X) - \Ec \to X - Z,$$
where $\Ec$ is the universal virtual Cartier divisor on $\bl_Z(X)$ lying over $Z$ (also called the \emph{exceptional divisor});

\item the derived blow up $\bl_Z(X) \to X$ is stable under derived base change;

\item the exceptional divisor $\Ec$ is naturally identified with $\Pb_Z(\Nc_{Z/X})$;

\item if $Z \stackrel i \hookrightarrow X \stackrel j \hookrightarrow Y$ is a sequence of quasi-smooth closed embeddings, then there is a natural derived regular embedding $\tilde j: \bl_Z(X) \hookrightarrow \bl_Z(Y)$ called the \emph{strict transform};

\item given derived regular embeddings $i: Z \hook X$ and $j: Y \hook X$, the strict transforms $\tilde i$ and $\tilde j$ do not meet in $\bl_{Z \cap Y}(X)$;

\item if $Z$ and $X$ are classical schemes (so that $Z \hookrightarrow X$ is lci), there is a natural equivalence 
$$\bl_Z(X) \simeq \bl_{Z}^\mathrm{cl}(X),$$
where the right hand side is the classical blow up.
\end{enumerate}
\end{thm}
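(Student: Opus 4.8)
The plan is to deduce all seven assertions from the corresponding results of Khan--Rydh \cite{khan-rydh}, whose notions of virtual Cartier divisor (Definition \ref{DivisorOverZDef}) and derived blow up (Definition \ref{DerivedBlowUpDef}) are exactly the ones used here; strictly speaking, then, this theorem is a summary of \emph{loc.\ cit.}, and what I would write is a road map indicating where each clause lives together with the one-line idea behind it.

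Parts (1)--(4) are the foundational facts. For (1), existence is the main representability theorem of \cite{khan-rydh}: one reduces to $X$ affine with $Z = V(f_1, \dots, f_r) \hook X$, produces an explicit local model for $\bl_Z(X)$ inside $X \times \Pb^{r-1}$, checks that it represents the functor of virtual Cartier divisors lying over $Z$, and glues; uniqueness is then formal from the universal property, and $\bl_Z(X)$ is Noetherian since $\pi$ is of finite type. For (2), quasi-smoothness and projectivity are local and visible from the local model, which carries the tautological line bundle $\Oc(1) = \Oc(-\Ec)$ that is relatively ample over $X$; and $\bl_Z(X) - \Ec \to X - Z$ is an equivalence because over $X - Z$ the empty divisor is the unique virtual Cartier divisor lying over $Z$, so the universal property pins $\pi$ down there. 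Part (3) is immediate from the universal property: pulling a virtual Cartier divisor lying over $Z$ back along a derived base change $X' \to X$ yields one lying over $Z \times_X X'$, using that quasi-smooth closed embeddings of codimension $1$ and the $\pi_0$-surjectivity condition of Definition \ref{DivisorOverZDef} are stable under derived pullback. For (4), restrict the universal property along $i \colon Z \hook X$: a virtual Cartier divisor over $Z$ whose ambient scheme is a $Z$-scheme is precisely a line-bundle quotient of $\Nc^\vee_{Z/X}$, i.e.\ a $Z$-map to $\Pb_Z(\Nc_{Z/X})$, and $\Ec \simeq \bl_Z(X) \times_X Z$ by the truncation-Cartesian condition, whence $\Ec \simeq \Pb_Z(\Nc_{Z/X})$.

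For (5)--(7) I would again follow \cite{khan-rydh}. Given $Z \stackrel{i}{\hook} X \stackrel{j}{\hook} Y$ as in (5), the universal virtual Cartier divisor $\Ec$ on $\bl_Z(X)$, viewed over the composite $Z \hook Y$, induces by the universal property of $\bl_Z(Y)$ a canonical $Y$-morphism $\bl_Z(X) \to \bl_Z(Y)$, which one checks is a derived regular embedding (the strict transform $\tilde j$) of the same codimension as $j$: locally it is cut out by the equations defining $X$ in $Y$ after saturating against the exceptional equation. For (6), disjointness of the strict transforms of $Z$ and $Y$ inside $\bl_{Z \cap Y}(X)$ is checked on the standard affine charts of the blow up: on the chart where, up to a unit, the exceptional equation is one of the functions cutting out $Z$, the strict transform of $Z$ is empty, and symmetrically for $Y$, so the two never appear on the same chart. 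For (7), when $Z \hook X$ is a classical regular (hence lci) embedding, the classical blow up is already quasi-smooth over $X$ --- its charts are classical relative complete intersections --- and it represents the same functor of virtual Cartier divisors, so $\bl_Z(X) \simeq \bl_Z^{\mathrm{cl}}(X)$ by the uniqueness in (1).

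The main obstacle is not any single step but the bookkeeping concentrated in (5)--(6): making ``strict transform'' precise in the derived setting --- verifying that it is a derived regular embedding of the expected codimension and that it is compatible with the universal property on the nose --- genuinely requires the explicit local models and careful handling of the $\pi_0$-surjectivity conditions, which is why the honest thing to do is to invoke the detailed treatment in \cite{khan-rydh} rather than reprove Theorem \ref{BlowUpPropertiesThm} here.
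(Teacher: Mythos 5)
Your proposal is correct and matches the paper's own treatment, which likewise dispatches parts (1)--(5) and (7) by citing Khan--Rydh (Theorem 4.1.5 of \cite{khan-rydh}) and, exactly as you do, supplements part (2) with the observation that $\Oc(-\Ec)$ is $\pi$-ample to upgrade properness to projectivity. The only divergence is part (6), where the paper simply cites \cite{annala-cob} Lemma 4.5 while you sketch the standard chart argument; both are fine.
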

\begin{proof}
The statements 1, 3, 4, 5 and 7 are directly from \cite{khan-rydh} Theorem 4.1.5. The second claim is otherwise from \emph{loc. cit.}, except that we claim $\pi$ to be projective and not just proper, which follows from the fact that $\Oc(-\Ec)$ is $\pi$-ample. For a proof of 6, see for example \cite{annala-cob} Lemma 4.5.
\end{proof}

We will also make use of the following results. 

\begin{prop}\label{BlowUpOfVectBundleSectionProp}
Let $X$ be a derived scheme, $E$ a vector bundle on $X$, $s$ a global section of $E$ and $Z$ the derived vanishing locus of $s$ on $X$. Then the natural short exact sequence
$$0 \to \Oc(-1) \to E \to Q \to 0$$
induces a natural equivalence 
$$\abs{\Gamma(X; E)} \simeq \abs{\Gamma(\Pb(E); Q)}$$
of spaces of global sections sending $s$ to $\tilde s$, and the derived vanishing locus of $\tilde s$ inside $\Pb(E)$ is equivalent to $\bl_Z(X)$ as an $X$-scheme. Moreover, $\Oc(-1)$ restricts to $\Oc(\Ec)$ on $\bl_Z(X)$.
\end{prop}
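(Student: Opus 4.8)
The plan is to identify both sides via their universal properties as functors of $X$-schemes, since $\bl_Z(X)$ is characterized (Definition \ref{DerivedBlowUpDef}) by representing virtual Cartier divisors lying over $Z$. First I would set up the equivalence of global sections: the short exact sequence $0 \to \Oc(-1) \to E \to Q \to 0$ on $\Pb(E)$ gives, upon pushing forward along $\pi: \Pb(E) \to X$, a fibre sequence $\pi_* \Oc(-1) \to \pi_* \pi^* E \to \pi_* Q$; since $\pi_* \Oc(-1) \simeq 0$ and $\pi_* \pi^* E \simeq E$ (projection formula plus $\pi_* \Oc_{\Pb(E)} \simeq \Oc_X$, valid because these computations reduce to the classical ones on truncations and commute with base change), we get $\pi_* Q \simeq E$ and hence $\abs{\Gamma(\Pb(E); Q)} \simeq \abs{\Gamma(X; E)}$. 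The section $s$ corresponds to a section $\tilde s$ of $Q$; I would note that the composite $\Oc_X \to E \to Q$ pulled back to $\Pb(E)$ is exactly $\tilde s$, and that the derived vanishing locus $V(\tilde s) \subset \Pb(E)$ is the derived vanishing locus of the composite $\Oc_{\Pb(E)} \to \pi^* E \to Q$.

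The heart of the argument is to show $V(\tilde s)$ represents the same functor as $\bl_Z(X)$. Given an $X$-scheme $g: S \to X$, a map $S \to V(\tilde s)$ over $X$ is the datum of a map $S \to \Pb(E)$ over $X$ — equivalently a line subbundle $\Ls \hook g^* E$, i.e. a quotient $g^* E \twoheadrightarrow \Mc$ to a line bundle $\Mc = \Ls^\vee$ — together with a nullhomotopy of the composite $\Oc_S \xrightarrow{g^*s} g^* E \to \Mc^\vee$... more precisely of the image of $\tilde s$, which is the composite $\Oc_S \xrightarrow{g^* s} g^* E \to g^* E/\Ls$. Unwinding, this says: the section $g^* s$ of $g^* E$ factors through the subbundle $\Ls \hook g^* E$, together with the data of such a factorization. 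Write $D := V(g^* s|_\Ls) \hook S$ for the derived vanishing locus of the resulting section of the line bundle $\Ls$; this is a quasi-smooth closed embedding of codimension $1$ with conormal bundle $\Ls|_D$. I would then check the three conditions of Definition \ref{DivisorOverZDef}: condition (1) is immediate; condition (2), that the truncation of $D \to Z$, $D \to S$ is Cartesian, reduces to the classical statement comparing the classical blow up — here I would invoke Theorem \ref{BlowUpPropertiesThm}(7) and the classical description of $\bl_Z(X)$ as a vanishing locus in $\Pb(E)$ (the truncation of $\tilde s$ cuts out the classical blow up); condition (3), surjectivity on $\pi_0$ of $g^* \Nc^\vee_{Z/X} \to \Nc^\vee_{D/S}$, amounts to the observation that $\Nc^\vee_{Z/X} \simeq E^\vee|_Z$ maps onto $\Ls|_D = \Nc^\vee_{D/S}$ via the surjection $E^\vee \twoheadrightarrow \Ls$ restricted appropriately. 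Conversely, a virtual Cartier divisor $D \hook S$ over $Z$ produces, via the surjection $g^*\Nc^\vee_{Z/X} \to \Nc^\vee_{D/S}$ on conormal bundles together with the Cartesian truncation, a line bundle quotient of $g^* E$ and the required factorization of $g^* s$; I would spell out that these two constructions are mutually inverse equivalences of $\infty$-groupoids, functorially in $S$. By Yoneda this gives $V(\tilde s) \simeq \bl_Z(X)$ as $X$-schemes.

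Finally, for the last sentence: under this identification the line subbundle $\Ls \hook E$ pulled back to $\bl_Z(X)$ is $\Oc(-1)$, and the exceptional divisor $\Ec$ is precisely $V(\tilde s|_{\text{universal}})$, the vanishing locus of the tautological section of $Q$ restricted — equivalently the divisor where the universal factorization forces the section into the subbundle trivially — so $\Oc(\Ec)$ is the line bundle associated to this codimension-$1$ divisor, which by construction is $\Ls|_{\bl_Z(X)} = \Oc(-1)$. Concretely, $\Oc(-\Ec) \simeq \Ls$ since $\Ec$ is cut out by a section of $\Ls^\vee = \Oc(1)$; I would confirm the sign by comparison with the classical blow up via Theorem \ref{BlowUpPropertiesThm}(7).

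The main obstacle I anticipate is condition (2) of Definition \ref{DivisorOverZDef} — checking that the relevant truncations form a Cartesian square. This is where one genuinely needs to reduce to the classical picture and invoke that the truncation of the derived vanishing locus $V(\tilde s) \subset \Pb(E)$ agrees with the classical blow up $\bl^{\mathrm{cl}}_{Z_\cl}(X_\cl)$; the derived enhancements of the other two conditions and the inverse construction are comparatively formal manipulations with conormal bundles and the projective bundle's universal property. A secondary subtlety is bookkeeping the duals correctly so that $\Oc(-1)$ (subbundles of $E$) matches $\Oc(\Ec)$ rather than $\Oc(-\Ec)$.
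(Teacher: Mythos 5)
The paper itself offers no internal argument here --- it simply cites \cite{annala-chern} Proposition 2.7 --- so you are supplying a proof, and your overall strategy (identify the derived vanishing locus of $\tilde s$ with $\bl_Z(X)$ by showing it represents virtual Cartier divisors lying over $Z$, via the correspondence between rank-one subbundles $\Ls \hook g^*E$ with a factorization of $g^*s$ and divisors $D \hook S$ over $Z$) is the right one. But as written there are genuine gaps. The most serious is the converse direction, which you call ``comparatively formal'': given a virtual Cartier divisor $D \hook S$ over $Z$, the conormal surjection $g^*\Nc^\vee_{Z/X} \simeq g^*E^\vee\vert_D \to \Nc^\vee_{D/S}$ is only defined on $D$, whereas an $S$-point of $\Pb(E)$ requires a rank-one subbundle of $g^*E$ on all of $S$. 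The actual content is to use the map $D \to Z$ to get a nullhomotopy of $g^*s\vert_D$, hence a lift of $g^*s$ along $g^*E \otimes \Oc_S(-D) \to g^*E$, i.e.\ a map $\Oc_S(D) \to g^*E$ defined on $S$; one then checks this is a subbundle inclusion using condition (2) away from $D$ (there $\pi_0(g^*s)$ generates) and condition (3) together with Nakayama along $D$. None of this extension step appears in your sketch. Secondly, your final paragraph as argued contradicts the statement: the exceptional divisor is not $V(\tilde s)$ (that is the whole blow up), but the vanishing locus of the universal lift $t$ of $s$, which is a section of $\Oc(-1)$ itself, not of $\Oc(1)$; hence $\Oc(\Ec) \simeq \Oc(-1)$, whereas your ``$\Ec$ is cut out by a section of $\Ls^\vee = \Oc(1)$'' would give $\Oc(-\Ec) \simeq \Oc(-1)$. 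The same dualization slip occurs in your condition (3): $\Nc^\vee_{D/S} \simeq \Ls^\vee\vert_D$ and the relevant surjection is $g^*E^\vee \to \Ls^\vee$ (likewise, a subbundle $\Ls \hook g^*E$ corresponds to a line-bundle quotient of $g^*E^\vee$, not of $g^*E$).

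A smaller point: your proposed treatment of condition (2) via Theorem \ref{BlowUpPropertiesThm}(7) and a classical description of $\bl_Z(X)$ inside $\Pb(E)$ is off target. That theorem requires $Z$ and $X$ classical with $Z \hook X$ lci, which is not the situation here, and condition (2) is a statement about the arbitrary test scheme $S$, not about the blow up. What is actually needed is elementary: on truncations $\Ls_\cl$ is locally a direct summand of $(g^*E)_\cl$, so the ideal generated by the components of $\pi_0(g^*s)$ coincides with the ideal generated by the lifted section $\pi_0(t)$, whence $D_\cl \cong (S \times_X Z)_\cl$ compatibly with the natural map. (You should also note that $\pi_*\Oc(-1) \simeq 0$, and hence the identification $\Gamma(\Pb(E);Q) \simeq \Gamma(X;E)$, requires $E$ of rank at least $2$.) With the extension argument supplied, the dualizations fixed, and condition (2) handled by the local computation above, the proof goes through along the lines you indicate.
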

\begin{proof}
This is \cite{annala-chern} Proposition 2.7.
\end{proof}

\begin{prop}\label{BlowUpIsResolutionSchemeProp}
Let $X$ be a derived scheme, $E$ a vector bundle on $X$, $s$ a global section of $E$ and $Z$ the derived vanishing locus on $X$. Then $s: \Oc \to E$ factors through the natural injection
$$\Oc(\Ec) \hook E$$
of vector bundles on $\bl_Z(X)$.
\end{prop}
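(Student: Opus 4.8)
The plan is to read this off Proposition \ref{BlowUpOfVectBundleSectionProp}. That result provides, on $\Pb(E)$, the tautological short exact sequence $0 \to \Oc(-1) \to E \to Q \to 0$; an identification of the pullback of $s$ to $\Pb(E)$ with a section $\tilde s$ of $Q$ via the equivalence $\abs{\Gamma(X;E)} \simeq \abs{\Gamma(\Pb(E);Q)}$; an equivalence of $X$-schemes between $\bl_Z(X)$ and the derived vanishing locus $V(\tilde s) \hook \Pb(E)$; and the fact that $\Oc(-1)$ restricts to $\Oc(\Ec)$ on $\bl_Z(X)$. The key observation is that $\tilde s$ is, by the very construction of that equivalence, the image of the pullback of $s$ under the surjection $E \to Q$. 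Hence, after pulling everything back along $\pi\colon \bl_Z(X) \to X$ and using $\bl_Z(X) \simeq V(\tilde s)$, the composite $\Oc \xrightarrow{\pi^* s} \pi^* E \to Q\vert_{\bl_Z(X)}$ is precisely the restriction of $\tilde s$ to its own derived vanishing locus, and therefore carries a canonical nullhomotopy.

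From here the argument is formal. Applying $\abs{\Gamma(\bl_Z(X); -)}$ to the pullback of the sequence $0 \to \Oc(-1) \to E \to Q \to 0$, regarded as a fibre sequence $\Oc(-1)\vert_{\bl_Z(X)} \to \pi^* E \to Q\vert_{\bl_Z(X)}$ of quasi-coherent sheaves, yields a fibre sequence of spaces (the functor $\abs{\Gamma(\bl_Z(X);-)}$ being a limit-preserving functor). The point $\pi^* s$ of $\abs{\Gamma(\bl_Z(X);\pi^* E)}$ together with the canonical nullhomotopy of its image in $\abs{\Gamma(\bl_Z(X);Q\vert_{\bl_Z(X)})}$ then determines a point of $\abs{\Gamma(\bl_Z(X);\Oc(-1)\vert_{\bl_Z(X)})}$ lifting it. Unwinding, this is exactly a factorization
$$\Oc \to \Oc(-1)\vert_{\bl_Z(X)} \hook \pi^* E$$
of $\pi^* s$ whose second arrow is the restriction of the tautological inclusion; identifying $\Oc(-1)\vert_{\bl_Z(X)}$ with $\Oc(\Ec)$ gives the asserted factorization through the natural injection $\Oc(\Ec)\hook E$.

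I expect the only delicate point to be bookkeeping: one must check that the identifications furnished by Proposition \ref{BlowUpOfVectBundleSectionProp} — $s\leftrightarrow\tilde s$, $\bl_Z(X)\simeq V(\tilde s)$ as $X$-schemes, and $\Oc(-1)\vert_{\bl_Z(X)}\simeq\Oc(\Ec)$ — are mutually compatible, so that the section of $\Oc(-1)\vert_{\bl_Z(X)}$ extracted above really maps to $\pi^* s$ under the tautological inclusion and not merely to some other section with the same image in $Q$. Since this compatibility is built into the setup of that proposition, no new input is needed; alternatively, one can verify the factorization Zariski-locally on $X$, where $E$ is trivial and $s$ becomes an $r$-tuple of functions, reducing it to a direct inspection of the blow-up chart.
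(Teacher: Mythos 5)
Your argument is correct and is essentially the paper's own proof: the paper likewise uses Proposition \ref{BlowUpOfVectBundleSectionProp} to identify $\bl_Z(X)$ with the derived vanishing locus of $\tilde s$, observes that the nullhomotopy of $\tilde s$ there together with the cofibre sequence $\Oc(-1) \to E \to Q$ produces the dashed lift of $s$ through $\Oc(-1)\vert_{\bl_Z(X)} \simeq \Oc(\Ec)$. Your version merely spells out the passage to spaces of global sections, which the paper leaves implicit.
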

\begin{proof}
Indeed, since the lower row of the diagram
$$
\begin{tikzcd}
 & \Oc \arrow[dashed]{ld}{} \arrow[]{d}{s} \arrow[]{rd}{\tilde s} & \\
\Oc(-1) \arrow[hook]{r} & E \arrow[twoheadrightarrow]{r} & Q
\end{tikzcd}
$$
is a cofibre sequence, a homotopy $\tilde s \sim 0$ induces in a natural way the dashed arrow, and therefore the result follows from Proposition \ref{BlowUpOfVectBundleSectionProp}.
\end{proof}

\subsection{Bivariant algebraic cobordism over $A$}

Let $A$ be a Noetherian ring of finite Krull dimension. The purpose of this section is to recall the bivariant algebraic $A$-cobordism from \cite{annala-pre-and-cob}. Let us recall the following definition.

\begin{defn}\label{FunctorialityDef}
A \emph{functoriality} is a tuple $\Fs = (\Cc, \Cs, \Is, \Ss)$, where 
\begin{enumerate}
\item $\Cc$ is an $\infty$-category with a (distinguished) final object $pt$ and all fibre products;

\item $\Cs$ is a class of morphisms in $\Cc$ called \emph{confined morphisms} which contains all equivalences and is closed under compositions, pullbacks and homotopy equivalences of morphisms;

\item $\Is$ is a class of Cartesian squares in $\Cc$ called \emph{independent squares} which contains all squares of form
\begin{center}
\begin{tikzcd}
X \arrow[]{r}{f} \arrow[]{d}{\mathrm{Id}_X} & Y \arrow[]{d}{\mathrm{Id}_Y} \\
X \arrow[]{r}{f} & Y
\end{tikzcd}
\ \ and \ \ \
\begin{tikzcd}
X \arrow[]{r}{\mathrm{Id}_X} \arrow[]{d}{f} & X \arrow[]{d}{f} \\
Y \arrow[]{r}{\mathrm{Id}_Y} & Y,
\end{tikzcd}
\end{center}
and is closed under vertical and horizontal compositions in the obvious sense as well as equivalences of Cartesian squares

\item $\Ss$ is a class of morphisms in $\Cc$ called \emph{specialized morphisms} which contains all equivalences and is closed under compositions and homotopy equivalences of morphisms.
\end{enumerate}
\end{defn}

Given a functoriality $\Fs$, one defines the notion of a bivariant theory with functoriality $\Fs$ as in \cite{annala-pre-and-cob} Section 2.1. Let us recall what it means for a bivariant theory to be additive.

\begin{defn}\label{AdditivityDef}
Suppose $\Fs = (\Cc, \Cs, \Is, \Ss)$ is such a functoriality that $\Cc$ has all finite coproducts and the canonical inclusions are confined. We say that a bivariant theory $\Bb^*$ with functoriality $\Fs$ is \emph{additive} if the morphisms 
$$\Bb^*(X_1 \to Y) \oplus \Bb^*(X_2 \to Y) \xrightarrow{\iota_{1*} + \iota_{2*}} \Bb^*\Bigr(X_1 \coprod X_2 \to Y\Bigr)$$
are isomorphisms for all $X_1, X_2$ and $Y$, where $\iota_i$ is the canonical inclusion $X_i \to X_1 \coprod X_2$.
\end{defn}

We will denote by $\Fs_A$ the functoriality $(\Cc_A, \Cs_A, \Is_A, \Ss_A)$, where $\Cc_A$ is the $\infty$-category of quasi-projective derived $A$-schemes, $\Cs_A$ consists of the projective morphisms, $\Is_A$ consists of all derived Cartesian squares and $\Ss_A$ consists of quasi-smooth morphisms. Let us start by recalling the universal $A$-precobordism.

\begin{defn}\label{UnivAPrecobDef}
The \emph{universal $A$-precobordism theory} $\PCob^*_A$ is the additive stably oriented bivariant theory with functoriality $\Fs_A$ defined as follows: given a homotopy class of morphisms $X \to Y$ in $\Cc$, the Abelian group $\PCob^d_A(X \to Y)$ is generated by symbols $[V \to X]$ of equivalence classes of such projective morphisms $V \to X$ that the composition $V \to Y$ is quasi-smooth and of relative virtual dimension $-d$. These symbols are subjected to two sets of relations. First of all, taking disjoint unions is identified with summation (this enforces additivity). Secondly, given a projective morphism $W \to \Pb^1 \times X$ so that the composition $W \to \Pb^1 \times Y$ is quasi-smooth and of relative virtual dimension $-d$, and supposing that the fibre $W_\infty$ of $W \to \Pb^1$ over $\infty$ is a sum of virtual Cartier divisors $D_1$ and $D_2$, then
$$[W_0 \to X] = [D_1 \to X] + [D_1 \to X] - [\Pb_{D_1 \cap D_2}(\Oc(D_1) \oplus \Oc) \to X] \in \PCob^d_A(X \to Y),$$
where $W_0$ is the fibre of $W \to \Pb^1$ over $0$, and where $\cap$ denotes the derived intersection.

Bivariant pushforward and pullback are defined in the obvious way using compositions of morphisms and derived pullbacks. The bivariant product is defined by bilinearly extending the following formula: given $[V \to X] \in \PCob^a_A(X \to Y)$ and $[W \to Y] \in \PCob^b_A(Y \to Z)$, form the derived Cartesian square
$$
\begin{tikzcd}
V' \arrow[]{d} \arrow[]{r} & X' \arrow[]{d} \arrow[]{r} & W \arrow[]{d} & \\
V \arrow[]{r} & X \arrow[]{r} & Y \arrow[]{r} & Z,
\end{tikzcd}
$$
and define 
$$[V \to X] \bullet [W \to Y] := [V' \to X] \in \PCob^{a + b}_A(X \to Z).$$
The orientation is given as follows: if $X \to Y$ is a quasi-smooth morphism, then $1_{X/Y} := [X \to X] \in \PCob^*(X \to Y)$.

We will denote by $\PCob^*_A(X)$ the universal $A$-precobordism rings $\PCob^*_A(X \to X)$. Notice that the bivariant product has a simpler formula in this case: 
$$[V \to X] \bullet [W \to X] = [V \times_X W \to X] \in \PCob^*_A(X).$$
We will use shorthand notation for the unit $1_X := 1_{X/X} \in \PCob^*(X)$. Let us also denote by $\PCob^A_*(X)$ the universal $A$-prebordism groups $\PCob^{-*}_A(X \to pt)$. Note that the bivariant product gives $\PCob^A_*(X)$ the structure of an $\PCob^*_A(X)$-module.
\end{defn}

Recall that given a line bundle $\Ls$ on $X \in \Cc_A$, we define the \emph{Euler class} (usually called the \emph{first Chern class}, but we refrain from using this terminology until we have constructed Chern classes in Section \ref{GenChernClassSubSect}) of $\Ls$ as
$$e(\Ls) := [Z_s \hook X] \in \PCob^*_A(X),$$
where $s$ is a global section of $\Ls$, and $Z_s$ is the derived vanishing locus of $s$. We recall that by \cite{annala-chern} Theorem 3.4 there is a formal group law 
$$F_A(x,y) \in \PCob^*_A(pt)[[x,y]]$$
such that for all line bundles $\Ls_1$ and $\Ls_2$ on $X \in \Cc_A$, the equality
$$e(\Ls_1 \otimes \Ls_2) = F_A\big(e(\Ls_1), e(\Ls_2)\big) \in \PCob^*(X)$$
holds. Following \cite{annala-pre-and-cob}, we will define bivariant algebraic $A$-cobordism as a quotient of $\PCob^*_A$. However, before doing so, we have to recall some terminology.

Suppose we are given the data of a virtual Cartier divisor $D \hook W$, and an equivalence
$$D \simeq n_1 D_1 + \cdots + n_r D_r$$
of virtual Cartier divisors on $W$ with $n_i > 0$. Then, denoting by $+_{F_A}$ the formal addition given by the formal group law $F_A$ and by $[n]_{F_A} \cdot$ the formal multiplication (iterated formal addition), the formal power series 
$$[n_1]_{F_A} \cdot x_1 +_{F_A} \cdots +_{F_A} [n_r]_{F_A} \cdot x_r$$ 
in $r$ variables has a unique expression of form 
$$\sum_{I \subset \{1,2,...,r \}} \textbf{x}^I F^{n_1,...,n_r}_{A,I}(x_1,...,x_r),$$
where 
$$\textbf{x}^I = \prod_{i \in I} x_i$$
and $F_{A, I}^{n_1,...,n_r}(x_1,...,x_r)$ contains only variables $x_i$ such that $i \in I$. Note that $F_{A, \emptyset}^{n_1,...,n_r}(x_1,...,x_r) = 0$. Using this notation, we make the following definition.

\begin{defn}\label{ZetaClassDef}
Let everything be as above. We define
$$
\zeta_{W,D,D_1,...,D_r} := \sum_{I \subset \{1,2,...,r\}} \iota^I_*\Biggl(F^{n_1,...,n_r}_{A,I} \Bigl( e\bigl(\Oc(D_1)\bigr), ...,e\bigl(\Oc(D_r)\bigr) \Bigr) \bullet 1_{D_I/pt} \Biggr) \in \PCob^A_*(D),
$$
where $\iota^I$ is the inclusion $D_I \hookrightarrow D$ of the derived intersection $\cap_{i \in I} D_i$ inside $W$.
\end{defn}

In the most important special case, the composition is provided by the data of an $A$-snc divisor.

\begin{defn}\label{ASNCDivDef}
Let $W$ be a smooth quasi-projective $A$-scheme. Then an \emph{$A$-snc divisor} on $W$ is the data of an effective Cartier divisor $D \hookrightarrow W$, Zariski connected effective Cartier divisors $D_1,...,D_r \hookrightarrow W$ with 
$$D_I := \cap_{i \in I} D_i$$
smooth and of the expected relative dimension over $A$ for all $I \subset \{1,2,...,r\}$, so that there exist positive integers $n_1,...,n_r$ such that
$$D \simeq n_1 D_1 + \cdots + n_r D_r$$
as effective Cartier divisors on $W$. Somewhat abusively, we will use the shorthand notation
$$\zeta_{W,D} := \zeta_{W,D, D_1, ..., D_r}$$
whenever $D \hook W$ and $D_i$ form an $A$-snc divisor. 
\end{defn}

We are now ready to recall the definition of bivariant algebraic cobordism over $A$.

\begin{defn}\label{BivACobDef}
\emph{Bivariant algebraic $A$-cobordism theory} is the additive stably oriented bivariant theory with functoriality $\Fs_A$ defined as the quotient
$$\Omega^*_A := \PCob^*_A / \langle \Rc^\mathrm{snc}_A \rangle,$$
where $\Rc^\mathrm{snc}_A$ is the bivariant subset of \emph{snc relations} consisting of 
$$\zeta_{W,D} - \theta(\pi_D) \in \PCob^*_A(D \to pt),$$ 
where $D \hook W$ ranges over all $A$-snc divisors, and $\langle \Rc^\mathrm{snc}_A \rangle$ is the bivariant ideal generated by $\Rc^\mathrm{snc}_A$.
\end{defn}

We recall that $\PCob^*_A$ and $\Omega^*_A$ were proven to have many good properties in \cite{annala-cob, annala-yokura, annala-chern, annala-pre-and-cob}: if $k$ is a field of characteristic 0, the associated homology theory of $\Omega^*_k$ recovers Levine--Morel's algebraic bordism, the Grothendieck ring of vector bundles can be recovered from $\PCob^*_A$ analogously to Conner--Floyd theorem, projective bundle formula holds for $\PCob^*_A$, and $\PCob^*_A$ can be used to construct a candidate for Chow-cohomology theory satisfying a generalization of Grothendieck--Riemann--Roch theorem. We will not recall the statements here, as we are going to give more general versions in Section \ref{GenChernClassSubSect}.

\section{Bivariant algebraic cobordism}\label{BivAlgCobSect}

The purpose of this section is to prove that bivariant algebraic $A$-cobordism groups are independent from the base ring $A$, and to define these groups more generally for morphisms of finite dimensional divisorial Noetherian derived schemes. In particular, we will not need to assume quasi-projectivity over a Noetherian base ring $A$ anymore. 

The structure of this section is as follows: in Section \ref{BaseIndSubSect} we prove that $\Omega^*_A(X \to Y)$ is independent from the base ring $A$ by giving an alternative characterization for the bivariant ideal of $\PCob^*_A$ generated by the snc relations. Using a slightly modified version of this alternative characterization, we define in Section \ref{BivCobConsSubsect} bivariant theories $\PCob^\bullet$ and $\Omega^\bullet$ on the $\infty$-category of divisorial and finite dimensional Noetherian derived schemes which generalize the theories $\PCob^*_A$ and $\Omega^*_A$ respectively. In Section \ref{NilpEulerClassSubSect} we analyze the potential failure of nilpotence for Euler classes of vector bundles on $X \in \Cc_a$, and define modified theories $\PCob'^\bullet$ and $\Omega'^\bullet$ where Euler classes of vector bundles are provably nilpotent. Note that nilpotence of Euler classes is a fundamental ingredient in the proofs of Section  \ref{GeneralPBFSect}, explaining why we go to such lengths to make sure that this property holds.

\subsection{Base independence of bivariant algebraic $A$-cobordism}\label{BaseIndSubSect}

The purpose of this section is to prove that if $X \to Y$ is a morphism of derived schemes with $X$ and $Y$ quasi-projective over Noetherian rings $A$ and $B$ of finite Krull dimension, then $\Omega^*_A(X \to Y)$ and $\Omega^*_B(X \to Y)$ are canonically isomorphic. While proving this, we also obtain an explicit construction of $\Omega^*_A(X \to Y)$ as a quotient of $\PCob^*_A(X \to Y)$, without the need of considering either of these groups as parts of a bivariant theory.

We begin with the following trivial observation.

\begin{lem}
Suppose $X \to Y$ is a morphism of derived schemes with $X$ and $Y$ quasi-projective over Noetherian rings $A$ and $B$ of finite Krull dimension. Then the groups $\PCob^*_A(X \to Y)$ and $\PCob^*_B(X \to Y)$ are canonically isomorphic. \qed
\end{lem}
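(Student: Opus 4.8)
The plan is to exhibit the isomorphism $\PCob^*_A(X \to Y) \cong \PCob^*_B(X \to Y)$ by observing that the generators and relations defining the universal precobordism group $\PCob^*_A(X \to Y)$ depend only on the intrinsic geometry of the morphism $X \to Y$ of derived schemes, not on the base ring. Concretely, I would first recall from Definition \ref{UnivAPrecobDef} that $\PCob^d_A(X \to Y)$ is generated by equivalence classes of projective morphisms $[V \to X]$ such that the composition $V \to Y$ is quasi-smooth of relative virtual dimension $-d$, modulo the additivity relations (disjoint union equals sum) and the double-point degeneration relations coming from projective morphisms $W \to \Pb^1 \times X$ with $W \to \Pb^1 \times Y$ quasi-smooth of the correct relative virtual dimension. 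The key point is that each of these notions --- projectivity of a morphism, quasi-smoothness, relative virtual dimension, virtual Cartier divisor, and the formation of $\Pb_{D_1 \cap D_2}(\Oc(D_1) \oplus \Oc)$ --- was defined earlier purely in terms of the derived schemes and the morphisms between them (see Definitions \ref{QProjDef}, \ref{StrSmQSmDef}, \ref{RegEmbDef}, \ref{VDimDef}, \ref{DivisorOverZDef} and Proposition \ref{QSmCharProp}), with no reference to a base ring.

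The one thing requiring a small argument is that the functoriality class $\Cc_A$ is not the same as $\Cc_B$: an $A$-scheme need not be a $B$-scheme. However, for the purpose of forming the single group $\PCob^*_A(X \to Y)$, only the following data enter: the fixed morphism $f \colon X \to Y$, the collection of projective $X$-schemes $V$ with $V \to Y$ quasi-smooth of a given relative virtual dimension, and the collection of projective $(\Pb^1 \times X)$-schemes $W$ with $W \to \Pb^1 \times Y$ quasi-smooth. So I would argue that it suffices to check that these collections, and the relations among the generators, are the same whether we work relative to $A$ or relative to $B$. For the generators: a projective morphism $V \to X$ with $V \to Y$ quasi-smooth automatically has $V$ quasi-projective over $A$ (since $X$ is, and projective morphisms compose, by Proposition \ref{StabilityOfQProjProp}), and likewise quasi-projective over $B$; hence $V \in \Cc_A \cap \Cc_B$, and the two generating sets literally coincide. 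The same reasoning applies to the double-point relations, since $W$ projective over $\Pb^1 \times X$ forces $W \in \Cc_A \cap \Cc_B$, and the virtual Cartier divisors $D_1, D_2$, their derived intersection, the $\Pb^1$-bundle correction term, and the fibres $W_0, W_\infty$ are all formed by operations internal to derived algebraic geometry. Thus the presentation of $\PCob^*_A(X \to Y)$ as an abelian group is identical to that of $\PCob^*_B(X \to Y)$, giving a canonical (indeed, identity-on-generators) isomorphism.

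I do not anticipate a genuine obstacle here --- this is flagged as a ``trivial observation'' in the text --- but if there is a subtle point it is making sure that ``equivalence classes'' of projective $X$-schemes means the same thing in both settings, i.e. that two such $V \to X$ and $V' \to X$ are identified in $\PCob^*_A$ exactly when they are equivalent as derived $X$-schemes (not merely as derived $A$-schemes over $X$), which is immediate from the definition since the relevant $\infty$-category is that of $X$-schemes. One should also note that quasi-smoothness and relative virtual dimension of $V \to Y$ are properties of that morphism alone (Proposition \ref{QSmCharProp} and Definition \ref{VDimDef}), so the grading matches degree by degree. Assembling these remarks, the canonical isomorphism $\PCob^*_A(X \to Y) \xrightarrow{\ \cong\ } \PCob^*_B(X \to Y)$ follows, and one checks with no extra effort that it is compatible with bivariant pushforward, pullback, and products whenever these are simultaneously defined, which is why we may safely suppress the base from the notation in what follows.
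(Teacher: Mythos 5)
Your argument is correct and is exactly the justification the paper has in mind: the paper records this lemma as a trivial observation with no written proof, and your expansion --- that the generators $[V \to X]$ and the double point relations involve only intrinsic notions (projectivity, quasi-smoothness, relative virtual dimension), with $V$ and $W$ automatically landing in $\Cc_A \cap \Cc_B$ by stability of projective morphisms under composition (Proposition \ref{StabilityOfQProjProp}) --- is precisely why it is trivial. Nothing is missing.
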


In order to obtain the desired result, we need to analyze the bivariant ideal $\langle \Rc^\mathrm{snc}_A \rangle$ used in Definition \ref{BivACobDef}. The main tool we are going to use are the following schemes.

\begin{cons}\label{VSchemeCons}
Let $n \geq 0$, and consider the scheme $\Pb^n \times \Pb^n \times \Pb^n \times \Pb^n$, where the projective spaces are over $\Spec(\Zb)$. Consider the vector bundle $E_n := \Oc(1,-1,0,0) \oplus \Oc(0,0,1,-1)$ over $(\Pb^n)^{\times 4}$, and notice that there are tautological sections
$$s_1: \Oc_{E_n} \to \Oc(1,-1,0,0)$$
and 
$$s_2: \Oc_{E_n} \to \Oc(0,0,1,-1)$$
on $E_n$, so that the (derived) vanishing locus of $s_1$ is $\Oc(0,0,1,-1)$ and the vanishing locus of $s_2$ is $\Oc(1,-1,0,0)$. We define $\Vc^n$ as the vanishing locus of $s_1 s_2$ inside $E_n$. We also define $\Vc^n_i$ as the vanishing loci of $s_i$, and $\Vc^n_{12}$ as the vanishing locus of $s_1, s_2$; notice that there are (non-regular) closed embeddings $\iota^i: \Vc^n_i \hook \Vc^n$  which give a closed cover of $\Vc^n$, and a (non-regular) closed embedding $\iota^{12}: \Vc^n_{12} \hook \Vc^n$. We will also denote
$$\Nc_1 := \Nc_{\Vc^n_{12} / \Vc^n_1} \cong \Oc(0,0,1,-1) \vert_{\Vc^n_{12}}$$
and
$$\Nc_2 := \Nc_{\Vc^n_{12} / \Vc^n_2} \cong \Oc(1,-1,0,0) \vert_{\Vc^n_{12}}.$$
If $Y$ is a derived scheme, we define $\Vc^n_Y, \Vc^n_{Y,i}$ and $\Vc^n_{Y,12}$ as $Y \times \Vc^n$, $Y \times \Vc^n_i$ and $Y \times \Vc^n_{12}$ respectively. 
\end{cons}

Before moving on, let us record the following easy observation.

\begin{lem}
Let $V$ and $Y$ be Noetherian derived schemes, and suppose $f: V \to \Vc^n_Y$ is a morphism. Then the following are equivalent
\begin{enumerate}
\item $f$ is quasi-smooth;
\item the maps $V_i := \Vc^n_{Y,i} \times_{\Vc^n_Y} V \to Y$ are quasi-smooth for $i=1,2$. 
\end{enumerate}
\end{lem}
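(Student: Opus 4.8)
The plan is to reduce everything to the cotangent-complex characterization of quasi-smoothness (Proposition \ref{QSmCharProp}) and then verify the relevant Tor-amplitude bound fibrewise along the set-theoretic decomposition $\Vc^n = V(s_1) \cup V(s_2)$.

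First I would record that, since $\Vc^n_i$ is the total space of a line bundle over $(\Pb^n)^{\times 4}$ (Construction \ref{VSchemeCons}), the projection $\Vc^n_{Y,i} \to Y$ is smooth; hence, using the cotangent triangle of $V_i \to \Vc^n_{Y,i} \to Y$ together with the fact that $\Lb_{\Vc^n_{Y,i}/Y}$ is a vector bundle, Proposition \ref{QSmCharProp} shows that $V_i \to Y$ is quasi-smooth if and only if the base change $f_i \colon V_i \to \Vc^n_{Y,i}$ of $f$ along $\Vc^n_{Y,i} \hookrightarrow \Vc^n_Y$ is quasi-smooth. So condition (2) is equivalent to the statement that $f_1$ and $f_2$ are quasi-smooth. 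Since quasi-smoothness is stable under derived base change, $(1) \Rightarrow (2)$ is then immediate, and the content is the converse.

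For $(2) \Rightarrow (1)$ the assertion is Zariski-local on $V$, so I may assume $V$ and $Y$ affine. Let $M := \Lb_{V/\Vc^n_Y}$; base change for the cotangent complex identifies the pullback of $M$ along the closed embedding $V_i \hookrightarrow V$ with $\Lb_{V_i/\Vc^n_{Y,i}}$, which has Tor-amplitude $[0,1]$ by Proposition \ref{QSmCharProp}. Since $\Vc^n = V(s_1 s_2)$ equals $V(s_1) \cup V(s_2)$ set-theoretically, we get $|V| = |V_1| \cup |V_2|$, and because the embeddings $V_i \hookrightarrow V$ are closed, the residue field of a point of $V$ coincides with that of its preimage in the relevant $V_i$. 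Therefore every fibre $M \otimes^L_{\Oc_V} \kappa(x)$ coincides with a fibre of $\Lb_{V_1/\Vc^n_{Y,1}}$ or of $\Lb_{V_2/\Vc^n_{Y,2}}$, hence is concentrated in degrees $0$ and $1$. Invoking the fibrewise criterion for Tor-amplitude of connective almost perfect complexes over Noetherian rings, it follows that $M$ is perfect of Tor-amplitude $[0,1]$ — provided we know $M$ is almost perfect, i.e. that $f_\cl$ is of finite type — and then Proposition \ref{QSmCharProp} gives that $f$ is quasi-smooth.

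Finally I would dispatch the finite type point, which is elementary and local: it reduces to showing that an algebra $D$ over $\pi_0(\Oc_Y)$ equipped with elements $w_1, w_2$ satisfying $w_1 w_2 = 0$ (the pullbacks along $f_\cl$ of local equations of $\Vc^n_1$ and $\Vc^n_2$) is of finite type as soon as $D/(w_1)$ and $D/(w_2)$ are. For this one identifies the image of $D \to D/(w_1) \times D/(w_2)$ with the fibre product $D/(w_1) \times_{D/(w_1,w_2)} D/(w_2)$, which is of finite type (it is an extension of $D/(w_2)$ by the principal ideal generated by the image of $w_1$), and notes that $D$ is an extension of this image by the ideal $(w_1) \cap (w_2)$, which is finitely generated, being a submodule of $(w_1) \cong D/\mathrm{Ann}_D(w_1)$ — a Noetherian ring, as a quotient of $D/(w_2)$. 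The hard part is the fibrewise passage from ``all fibres in degrees $0,1$'' to ``perfect of Tor-amplitude $[0,1]$'': one must invoke the correct form of the fibrewise Tor-amplitude criterion for almost perfect complexes and ensure the almost-perfectness hypothesis is in force, which is exactly what the finite type bookkeeping secures.
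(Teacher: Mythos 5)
Your route is the same as the paper's: reduce to the cotangent-complex criterion of Proposition \ref{QSmCharProp}, use smoothness of $\Vc^n_{Y,i} \to Y$ to translate condition (2) into quasi-smoothness of the base changes $V_i \to \Vc^n_{Y,i}$, and then descend along the closed cover $\abs{V} = \abs{V_1} \cup \abs{V_2}$ --- this is exactly what the paper compresses into ``it is easy to check''. Your fibrewise part is correct: base change for the cotangent complex, invariance of residue fields along closed immersions, and the fibrewise Tor-amplitude criterion for connective almost perfect modules over Noetherian rings; and you are right that the criterion needs almost perfectness, i.e.\ that $f_\cl$ is of finite type, a point the paper passes over in silence.

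The genuine gap is in the finite-typeness bookkeeping: twice you infer ``finite type'' from ``the quotient by a finitely generated (even principal) ideal is of finite type'', and that principle is false in general. For instance the subring $R = k[x] \oplus \bigoplus_{j \geq 1} k[x,x^{-1}]\, w^j$ of $k[x,x^{-1},w]$ satisfies $R/(x) \cong k$, yet $R$ is not of finite type over $k$. So neither your claim that the fibre product $D/(w_1) \times_{D/(w_1,w_2)} D/(w_2)$ is of finite type (``an extension of $D/(w_2)$ by a principal ideal'' --- note also that the kernel of that projection is generated by the image of $w_2$ in $D/(w_1)$, not of $w_1$), nor the final passage from $D/\big((w_1)\cap(w_2)\big)$ to $D$, is justified as written. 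Both are easily repaired, and in fact the fibre-product detour can be skipped: since $w_1 w_2 = 0$, multiplication on $(w_2)$ factors through $D/(w_1)$; so if $a_1,\dots,a_m \in D$ lift algebra generators of $D/(w_2)$ and $b_1,\dots,b_n \in D$ lift algebra generators of $D/(w_1)$, any $d \in D$ equals a polynomial in the $a_i$ plus $c\,w_2$, and replacing $c$ by a polynomial in the $b_j$ congruent to it modulo $(w_1)$ (the difference annihilates $w_2$) exhibits $d$ in the subalgebra generated by the $a_i$, the $b_j$ and $w_2$; hence $D$ is of finite type. Alternatively, keep your decomposition but observe that $\big((w_1)\cap(w_2)\big)^2 \subseteq (w_1 w_2) = 0$, so the last step is an extension by a finitely generated square-zero ideal, where lifting generators does work, and get finite-typeness of the fibre product from the Artin--Tate lemma, since $D/(w_1) \times D/(w_2)$ is generated as a module over it by $(1,1)$ and $(1,0)$. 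With these corrections your argument is a complete, detailed version of the paper's two-line proof.
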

\begin{proof}
Since $\Vc^n_i$ cover $\Vc^n$, it is easy to check (say, using Proposition \ref{QSmCharProp}) that $f$ being quasi-smooth is equivalent to $V_i \to \Vc^n_{Y,i}$ being quasi-smooth for $i=1,2$. Since $\Vc^n_{Y,i} \to Y$ are smooth, the claim follows.
\end{proof}

We then consider the following bivariant ideal.

\begin{defn}\label{SNCIdealAltDef}
Let $A$ be a Noetherian ring of finite Krull dimension, and let $X \to Y$ be a morphism in $\Cc_A$. We will define a bivariant ideal $\Ic_A \subset \PCob^*_A(X \to Y)$. Given a projective morphism $f: V \to X$ with the composition $V \to Y$ quasi-smooth and a quasi-smooth $Y$-morphism $V \to \Vc^n_Y$, we will denote by $V_i$ and $V_{12}$ the derived fibre products $\Vc^n_{Y,i} \times_{\Vc^n_Y} V$ and $\Vc^n_{Y,12} \times_{\Vc^n_Y} V$ respectively, and by $f^{12}$ the induced morphism $V_{12} \to X$. Given such data, consider the element
$$[V \to X] - [V_1 \to X] - [V_2 \to X] - f^{12}_* \Bigg( \sum_{i,j \geq 1} a_{ij} e(\Nc_{V_{12}/V_1})^{i-1} \bullet e(\Nc_{V_{12}/V_2})^{j-1} \bullet 1_{V_{12}/Y}\Bigg)$$
of $\PCob^*_A(X \to Y)$, where $a_{ij}$ are the coefficients of the formal group law $F_A$ of $\PCob^*_A$, and define $\Ic_A(X \to Y)$ as the subgroup of $\PCob^*_A(X \to Y)$ generated by all such elements. The fact that $\Ic_A$ is a bivariant ideal follows trivially from the fact that projective and quasi-smooth morphisms are stable under pullbacks and compositions.
\end{defn}

Our goal is to prove that $\Ic_A = \langle \Rc^\mathrm{snc}_A \rangle$. The following lemma shows that $\langle \Rc^\mathrm{snc}_A \rangle \subset \Ic_A$.

\begin{lem}\label{LSRelationsContainedInILem}
Let $A$ be a Noetherian ring of finite Krull dimension. Suppose $X$ is a quasi-smooth and quasi-projective derived $A$-scheme, and let $D \hook X$ be a virtual Cartier divisor. Then, if $D \simeq n_1 D_1 + \cdots + n_r D_r$ for some virtual Cartier divisors $D_i \hook X$ and $n_i > 0$, the equality
$$1_{D/pt} = \zeta_{X,D,D_1,...,D_r} \in \PCob^A_*(D) / \Ic_A(D \to pt)$$
holds, where $\zeta_{X,D,D_1,...,D_r}$ is as in Definition \ref{ZetaClassDef}.
\end{lem}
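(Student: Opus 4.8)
The plan is to realize an arbitrary snc-type decomposition $D \simeq n_1 D_1 + \cdots + n_r D_r$ on a quasi-smooth quasi-projective $X$ as obtained by pullback from the universal situation encoded in the schemes $\Vc^n$, and then feed this into the defining relation of $\Ic_A$. First I would recall that each virtual Cartier divisor $D_i \hook X$ is cut out by a section of a line bundle $\Ls_i := \Oc(D_i)$, and that (up to twisting by a sufficiently ample line bundle and using quasi-projectivity over $A$) one can choose the $\Ls_i$ and their sections so that they arise by pullback along a map $X \to (\Pb^n)^{\times ?}$ for $n \gg 0$. The point of Construction \ref{VSchemeCons} is precisely that $\Vc^n$ carries a "universal" pair of virtual Cartier divisors $\Vc^n_1, \Vc^n_2$ whose sum is $\Vc^n$ itself, together with the tautological identifications $\Nc_1 \cong \Oc(0,0,1,-1)|_{\Vc^n_{12}}$ and $\Nc_2 \cong \Oc(1,-1,0,0)|_{\Vc^n_{12}}$; so the first genuine task is to produce a quasi-smooth $Y$-morphism (here $Y = pt$) from a model of $X$ to some $\Vc^n_Y = \Vc^n$ under which $D_1$ (with multiplicity) pulls back from $\Vc^n_1$ and $D_2$ from $\Vc^n_2$.

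The main structural issue is that the defining elements of $\Ic_A$ only involve a decomposition into \emph{two} pieces $V_1, V_2$, whereas a general snc divisor has $r$ components with arbitrary multiplicities $n_i$. I would handle this by an induction on $r$ together with the observation that the formal power series identity
$$[n_1]_{F_A}\! \cdot x_1 +_{F_A} \cdots +_{F_A} [n_r]_{F_A}\! \cdot x_r$$
can be built up two summands at a time, so that the combinatorial bookkeeping in the definition of $\zeta_{X,D,D_1,\ldots,D_r}$ (Definition \ref{ZetaClassDef}) — the decomposition into $\textbf{x}^I F^{n_1,\ldots,n_r}_{A,I}$ — is compatible with iterated application of the two-component relation. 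Concretely: group the components as $D' := n_1 D_1$ and $D'' := n_2 D_2 + \cdots + n_r D_r$, apply the $\Vc^n$-relation to the decomposition $D \simeq D' + D''$ to replace $1_{D/pt}$ by a combination of $1_{D'/pt}$, $1_{D''/pt}$ and a correction term supported on $D' \cap D''$ built from Euler classes of the relevant conormal bundles, then recurse on $D''$. At each stage the coefficients $a_{ij}$ of $F_A$ and the power-series manipulation defining $F^{n_1,\ldots,n_r}_{A,I}$ must be matched; this amounts to checking that the $\zeta$-class is exactly the "universal" answer that the $\Vc^n$-relations are designed to produce, which is a formal (if slightly tedious) identity in $\PCob^*_A(pt)[[x_1,\ldots,x_r]]$.

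The hard part, I expect, will be the first step: producing the required quasi-smooth map to $\Vc^n$ realizing the given $D_i$ by pullback, and checking that the derived fibre products $V_i, V_{12}$ really coincide with $D_i$ and the derived intersections $D_i \cap D_j$, with the conormal bundles $\Nc_{V_{12}/V_i}$ matching $\Oc(D_j)|_{D_i \cap D_j}$ on the nose (not just up to isomorphism). Here I would lean on Proposition \ref{BlowUpOfVectBundleSectionProp} and the surrounding discussion of vanishing loci of sections of vector bundles — the vector bundle $E_n = \Oc(1,-1,0,0)\oplus\Oc(0,0,1,-1)$ and the product section $s_1 s_2$ are tailored so that $\Vc^n$ is precisely the universal "$D_1 + D_2$" — together with the fact (used throughout) that quasi-smoothness can be checked locally and is detected on the two-piece cover $\Vc^n_1, \Vc^n_2$ (the Lemma immediately preceding Definition \ref{SNCIdealAltDef}). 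Once the universal map is in hand, the rest is the inductive power-series matching sketched above, and the conclusion $1_{D/pt} = \zeta_{X,D,D_1,\ldots,D_r}$ modulo $\Ic_A(D \to pt)$ follows. This establishes $\langle \Rc^\mathrm{snc}_A \rangle \subset \Ic_A$, since the snc relations are exactly the special case where $X = W$ is smooth and the $D_i$ form an honest snc divisor.
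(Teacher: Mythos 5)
Your proposal matches the paper's proof in essence: the paper likewise uses quasi-projectivity to produce a morphism to $(\Pb^d)^{\times 4}$, hence to the total space $E_d$ via the given sections and a quasi-smooth map $D \to \Vc^d_A$ realizing the two-piece splitting, then applies the defining relation of $\Ic_A$ and carries out the formal-group-law bookkeeping by induction. The only point you leave implicit is the base of your recursion: the paper runs a separate preliminary induction on the multiplicity (writing $(n+1)D_1 \simeq D_1 + nD_1$) before the induction on $r$, which is exactly your ``two summands at a time'' observation applied to a single component, so your handling of the terms $[n_1 D_1 \to D]$ with $n_1 > 1$ should be spelled out in the same way.
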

\begin{proof}
For simplicity we will denote by $x_i$ the Euler class $e(\Oc(D_i))$, by $+_{F_A}$ the formal addition and by $[n]_{F_A} \cdot$ the formal multiplication by $n$. Note that the claim is trivially true for $r=1$ and $n = n_1 = 1$. We will proceed by induction on $n$. If $D \simeq (n+1) D_1$, then $D$ is the derived vanishing locus of $s^{n+1}$, where $s \in \Oc(D_1)$. Since $X$ is quasi-projective over $A$, it is possible to find $d \geq 0$ and a morphism $X \to (\Pb^d)^{\times 4}$ such that $\Oc(1,-1,0,0)$ and $\Oc(0,0,1,-1)$ pull back to $\Oc(D_1)$ and $\Oc(n D_1)$ respectively. The global sections $s$ and $s^{n}$ then induce a morphism 
$$X \to E_n.$$
so that the tautological sections $s_1$ and $s_2$ (see Construction \ref{VSchemeCons}) pull back to $s$ and $s^{n}$ respectively. This also induces a quasi-smooth morphism $D \to \Vc^d_A$ with
$$
\begin{tikzcd}
D_1 \arrow[hook]{r}{\iota^1} \arrow[]{d} & D \arrow[]{d} & \arrow[hook']{l}[swap]{\iota^2} n D_1 \arrow[]{d} \\
\Vc^d_{A,1} \arrow[hook]{r} & \Vc^d_A & \arrow[hook']{l} \Vc^d_{A,2}
\end{tikzcd}
$$
homotopy Cartesian. In particular 
\begin{align*}
[D \to D] &= [D_1 \to D] + [n D_1 \to D] \\
&+ \iota^{12}_* \Bigg( \sum_{i,j \geq 1} a_{ij} e\big(\Oc(D_1)\big)^{i-1} \bullet e\big(\Oc(n D_1)\big)^{j-1} \bullet 1_{D_1 \cap n D_1 / pt} \Bigg)
\end{align*}
in $\Omega^*\big(D \to \Spec(A)\big)$, where $\iota^{12}$ is the embedding $D_1 \cap (n-1) D_1 \hook D$. Applying the inductive assumption, we compute that
$$[n D_1 \to D] = \iota^1_*\big(F^n_{A,\{1\}} (x_1) \bullet 1_{D_1 / pt}\big)$$
and
\begin{align*}
& \iota^{12}_* \Bigg( \sum_{i,j \geq 1} a_{ij} e\big(\Oc(D_1)\big)^{i-1} \bullet e\big(\Oc(n D_1)\big)^{j-1} \bullet 1_{D_1 \cap n D_1 / pt} \Bigg) \\
=& \iota^{2}_* \Bigg( \sum_{i,j \geq 1} a_{ij} x_1^i \bullet ([n]_{F_A} \cdot x_1)^{j-1} \bullet 1_{n D_1 / pt} \Bigg) \\
=& \iota^{1}_* \Bigg( \sum_{i,j \geq 1} a_{ij} x_1^i \bullet ([n]_{F_A} \cdot x_1)^{j-1} \bullet F^n_{A,\{1\}} (x_1) \bullet 1_{D_1/pt} \Bigg).
\end{align*}
Combining the above, we see that
\begin{align*}
[D \to D] &= \iota^{1}_* \Bigg( \Big(1_{D_1/pt} + F^n_{A,\{1\}} (x_1) +  \sum_{i,j \geq 1} a_{ij} x_1^i \bullet ([n]_{F_A} \cdot x_1)^{j-1} \bullet F^n_{A,\{1\}} (x_1)\Big) \bullet 1_{D_1/pt} \Bigg) \\
&= \iota^{1}_* \Big( F^{n+1}_{A,\{1\}} (x_1) \bullet 1_{D_1/pt} \Big),
\end{align*}
where the last equality follows from the fact that $x F^m_{A,\{1\}} (x) = [m]_{F_A} \cdot x$ for all $m > 0$.

Let us then proceed by induction on $r$. Similarly to the argument above, we can find a quasi-smooth morphism $D \to \Vc^d_A$ giving us the derived Cartesian diagram
$$
\begin{tikzcd}
n_1 D_1 \arrow[]{d} \arrow[hook]{r}{\iota^1} & D \arrow[]{d} & \arrow[hook']{l}[swap]{\iota^2} D' \arrow[]{d}  \\
\Vc^d_{A,1} \arrow[hook]{r} & \Vc^d_{A} & \arrow[hook']{l} \Vc^d_{A,2}, 
\end{tikzcd}
$$ 
where $D' = n_2 D_2 + \cdots + n_r D_r$. Therefore 
\begin{align*}
[D \to D] &= [n_1 D_1 \to D] + [D' \to D] \\
&+ \iota^{12}_* \Bigg( \sum_{i,j \geq 1} a_{ij} e\big(\Oc(n_1 D_1)\big)^{i-1} \bullet e\big(\Oc(D')\big)^{j-1} \bullet 1_{n_1 D_1 \cap D'} \Bigg)
\end{align*}
in $\Omega^*\big(D \to \Spec(A) \big)$, where $\iota^{12}$ is the embedding $n_1 D_1 \cap D' \hook D$. We can then inductively compute that
\begin{align*}
[n_1 D_1 \to D] &= \iota^{\{1\}}_*\big(F^{n_1,...,n_r}_{A,\{1\}}(x_1) \bullet 1_{D_1/pt} \big) \\
[D' \to D] &= \sum_{I \subset \{2,...,r\}} \iota^I_*\big(F^{n_1,...,n_r}_{A,I}(x_2,...,x_r) \bullet 1_{D_I / pt} \big)
\end{align*}
and
\begin{align*}
&\iota^{12}_* \Bigg( \sum_{i,j \geq 1} a_{ij} e\big(\Oc(n_1 D_1)\big)^{i-1} \bullet e\big(\Oc(D')\big)^{j-1} \bullet 1_{n_1 D_1 \cap D' / pt} \Bigg) \\
=& \iota^{2}_* \Bigg( \sum_{i,j \geq 1} a_{ij} ([n_1]_{F_A} \cdot x_1)^{i} \bullet e\big(\Oc(D')\big)^{j-1} \bullet 1_{D' / pt} \Bigg) \\
=& \sum_{I \subset \{2,...,r\}} \iota^I_*\Bigg(\sum_{i,j \geq 1} a_{ij} ([n_1]_{F_A} \cdot x_1)^{i} \bullet e\big(\Oc(D')\big)^{j-1} \bullet F^{n_1,...,n_r}_{A,I}(x_2,...,x_r) \bullet 1_{D_I / pt} \Bigg).
\end{align*}
Combining the above lets us compute that
\begin{align*}
&[D \to D] \\
=& \iota^{\{1\}}_*\big(F^{n_1,...,n_r}_{A,\{1\}}(x_1) \bullet 1_{D_1/pt} \big) + \sum_{I \subset \{2,...,r\}} \iota^I_*\big(F^{n_1,...,n_r}_{A,I}(x_2,...,x_r) \bullet 1_{D_I / pt} \big) \\
+& \sum_{I \subset \{2,...,r\}} \iota^I_*\Bigg(\sum_{i,j \geq 1} a_{ij} ([n_1]_{F_A} \cdot x_1)^{i} \bullet e\big(\Oc(D')\big)^{j-1} \bullet F^{n_1,...,n_r}_{A,I}(x_2,...,x_r) \bullet 1_{D_I / pt} \Bigg) \\
=& \sum_{I \subset \{2,...,r\}} \iota^I_*\big(F^{n_1,...,n_r}_{A,I}(x_2,...,x_r) \bullet 1_{D_I / pt} \big) \\
+& \sum_{I \subset \{2,...,r\}} \iota^{\{1\} \cup I}_*\Big( F^{n_1,...,n_r}_{A,\{1\} \cup I}(x_1,...,x_r) \bullet 1_{D_{\{1\} \cup I} / pt} \Big),
\end{align*}
where the last equality follows from the fact that 
$$e\big(\Oc(D')\big) = [n_2]_{F_A} \cdot x_2 +_{F_A} \cdots +_{F_A} [n_r]_{F_A} \cdot x_r$$
and the definition of $F_{A,I}^{n_1,...,n_r}.$ Hence
$$[D \to D] = \sum_{I \subset \{1,...,r\}} \iota^I_*\big(F^{n_1,...,n_r}_{A,I}(x_1,...,x_r) \bullet 1_{D_I / pt} \big)$$
and we are done.
\end{proof}

The following lemma shows that $\Ic_A \subset \langle \Rc^\mathrm{snc}_A \rangle$.

\begin{lem}\label{IContainedInLSRelationsLem}
Let $A$ be a Noetherian ring of finite Krull dimension. Then $\Ic_A = \langle \Rc^{\Vc}_A \rangle$, where $\Rc^{\Vc}_A$ is the bivariant subset of $\PCob^*_A$ consisting of
$$
1_{\Vc^n_A / pt} - [\Vc^n_{A,1} \to \Vc^n_{A}] - [\Vc^n_{A,2} \to \Vc^n_{A}] - \iota^{12}_* \Bigg( \sum_{i,j \geq 1} a_{ij} e(\Nc_1)^{i-1} \bullet e(\Nc_2)^{j-1} \bullet 1_{\Vc^n_{A,12}/pt}\Bigg)
$$
in $\PCob^A_*(\Vc^n_A)$.
\end{lem}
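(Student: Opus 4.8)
The plan is to prove the two inclusions $\langle\Rc^{\Vc}_A\rangle\subseteq\Ic_A$ and $\Ic_A\subseteq\langle\Rc^{\Vc}_A\rangle$ separately; the first is essentially a tautology, and the second requires a short bivariant bookkeeping. For the first, note that $\Vc^n\to\Spec(\Zb)$ is quasi-smooth, being the composition of the derived hypersurface $\Vc^n=V(s_1s_2)\hook E_n$ with the smooth structure map of the vector bundle $E_n$, so $\Vc^n_A\to\Spec(A)$ is quasi-smooth as well. Hence, taking $X=Y=V=\Vc^n_A$ in Definition \ref{SNCIdealAltDef} with $f$ the identity and with the structure morphism $V\to\Vc^n_Y=\Vc^n_A$ also the identity (so that $V_i=\Vc^n_{A,i}$, $V_{12}=\Vc^n_{A,12}$, $\Nc_{V_{12}/V_i}=\Nc_i$ by Construction \ref{VSchemeCons}, and $[V\to X]=1_{\Vc^n_A/pt}$), the corresponding generator of $\Ic_A(\Vc^n_A\to pt)$ is exactly the defining element of $\Rc^{\Vc}_A(\Vc^n_A)$. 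Since $\Ic_A$ is a bivariant ideal, $\langle\Rc^{\Vc}_A\rangle\subseteq\Ic_A$.

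For the reverse inclusion, fix a generator $\alpha$ of $\Ic_A(X\to Y)$ arising from data $f\colon V\to X$ projective, $V\to Y$ quasi-smooth, and a quasi-smooth $Y$-morphism $g\colon V\to\Vc^n_Y$. Writing $\rho_n\in\Rc^{\Vc}_A(\Vc^n_A)$ for the defining element, set $\tilde\rho_n:=\pi_Y^*\rho_n\in\PCob^*_A(\Vc^n_Y\to Y)$, its bivariant pullback along $\pi_Y\colon Y\to\Spec(A)$; this is legitimate because every derived Cartesian square is independent in $\Fs_A$ and $\Vc^n_Y=Y\times_{\Spec(A)}\Vc^n_A$, and since bivariant ideals are stable under pullback we get $\tilde\rho_n\in\langle\Rc^{\Vc}_A\rangle(\Vc^n_Y\to Y)$. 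The key claim is then the identity
$$\alpha=f_*\bigl(1_{V/\Vc^n_Y}\bullet\tilde\rho_n\bigr)\qquad\text{in }\PCob^*_A(X\to Y),$$
after which we are finished: $\langle\Rc^{\Vc}_A\rangle$ is closed under bivariant multiplication and under pushforward along the projective morphism $f$, so the right-hand side lies in $\langle\Rc^{\Vc}_A\rangle(X\to Y)$.

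It remains to verify this identity, which is done term by term using the formulas of Definition \ref{UnivAPrecobDef}. Orientations compose, so $1_{V/\Vc^n_Y}\bullet 1_{\Vc^n_Y/Y}=1_{V/Y}$ and $f_*1_{V/Y}=[V\to X]$; unwinding the bivariant product formula, $1_{V/\Vc^n_Y}\bullet[\Vc^n_{Y,i}\to\Vc^n_Y]=[V_i\to V]$, the relevant derived fibre product being $V\times_{\Vc^n_Y}\Vc^n_{Y,i}=V_i$, and $f_*[V_i\to V]=[V_i\to X]$; finally, for the term involving the formal group law coefficients $a_{ij}$ one uses the compatibility of $\bullet$ with pushforward (projection formula), the identification $\Nc_{V_{12}/V_i}\cong(g|_{V_{12}})^*\Nc_i$ — valid since conormal bundles of quasi-smooth embeddings are stable under derived base change and $V_{12}$ is the base change of $\Vc^n_{Y,12}$ over $V_i\to\Vc^n_{Y,i}$ — and the fact that Euler classes commute with pullback, to rewrite $1_{V/\Vc^n_Y}\bullet\iota^{12}_*\bigl(\sum_{i,j\geq 1}a_{ij}e(\Nc_1)^{i-1}\bullet e(\Nc_2)^{j-1}\bullet 1_{\Vc^n_{Y,12}/Y}\bigr)$ as the pushforward along $V_{12}\hook V$ of $\sum_{i,j\geq 1}a_{ij}e(\Nc_{V_{12}/V_1})^{i-1}\bullet e(\Nc_{V_{12}/V_2})^{j-1}\bullet 1_{V_{12}/Y}$; composing further with $f_*$ produces exactly the term $f^{12}_*(\cdots)$ appearing in $\alpha$. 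The main obstacle in the whole argument is precisely this bookkeeping — carefully matching the outputs of the product and pushforward recipes with the expression defining $\alpha$ — the only genuine geometric inputs being the base-change stability of conormal bundles and the compatibility of Euler classes with pullback; no deeper ingredient is needed.
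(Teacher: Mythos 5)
Your proposal is correct and takes essentially the same route as the paper: the paper's proof is exactly your second half (pull the universal relation back to $\Vc^n_Y$ along $Y \to \Spec(A)$, multiply by $1_{V/\Vc^n_Y}$, and push forward along $f$, using the projection formula, base-change of conormal bundles and compatibility of Euler classes with pullback), while the easy inclusion $\langle \Rc^{\Vc}_A \rangle \subseteq \Ic_A$ that you spell out is left implicit there. The only blemish is the slip ``$X=Y=V=\Vc^n_A$'': you mean $X=V=\Vc^n_A$ and $Y=\Spec(A)$, as the rest of your sentence ($\Vc^n_Y=\Vc^n_A$, a generator of $\Ic_A(\Vc^n_A \to pt)$, $[V\to X]=1_{\Vc^n_A/pt}$) makes clear.
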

\begin{proof}
Clearly $\langle \Rc^{\Vc}_A \rangle$ contains elements 
$$s_Y = 1_{\Vc^n_Y / Y} - [\Vc^n_{Y,1} \to \Vc^n_{Y}] - [\Vc^n_{Y,2} \to \Vc^n_{Y}] - \iota^{12}_* \Bigg( \sum_{i,j \geq 1} a_{ij} e(\Nc_1)^{i-1} \bullet e(\Nc_2)^{j-1} \bullet 1_{\Vc^n_{Y,12}/Y}\Bigg)$$
in $\PCob^*_A(\Vc^n_Y \to Y)$, because bivariant ideals are stable under bivariant pullbacks. Therefore, if we are given a projective morphism $f: V \to X$ so that the composition $V \to Y$ is quasi-smooth, and a quasi-smooth $Y$-morphism $V \to \Vc^n_Y$, we can compute that
\begin{align*}
f_* \big(1_{V / \Vc^n_Y} \bullet s_Y\big) &= [V \to X] - [V_1 \to X] - [V_2 \to X] \\
 &- f^{12}_* \Bigg( \sum_{i,j \geq 1} a_{ij} e(\Nc_{V_{12}/V_1})^{i-1} \bullet e(\Nc_{V_{12}/V_2})^{j-1} \bullet 1_{V_{12}/Y}\Bigg)
\end{align*}
is contained in $\langle \Rc^{\Vc}_A \rangle$, proving the claim.
\end{proof}

Combining Lemmas \ref{LSRelationsContainedInILem} and \ref{IContainedInLSRelationsLem}, we obtain the following result.

\begin{thm}\label{BivACobAltConsThm}
Let $A$ be a Noetherian ring of finite Krull dimension. Then the bivariant ideals $\Ic_A \subset \PCob^*_A$ and $\langle \Rc^\mathrm{snc}_A \rangle \subset \PCob^*_A$ coincide. \qed
\end{thm}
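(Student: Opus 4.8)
The plan is to obtain Theorem \ref{BivACobAltConsThm} as a formal consequence of Lemmas \ref{LSRelationsContainedInILem} and \ref{IContainedInLSRelationsLem}; the only genuine work is to identify the auxiliary set $\Rc^{\Vc}_A$ appearing in Lemma \ref{IContainedInLSRelationsLem} with (the negatives of) certain snc relations.

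First I would prove the inclusion $\langle \Rc^\mathrm{snc}_A \rangle \subset \Ic_A$. Let $D \hookrightarrow W$ be an $A$-snc divisor with $D \simeq n_1 D_1 + \cdots + n_r D_r$. Since $D$ is a Cartier divisor in the smooth $A$-scheme $W$, the structure map $\pi_D$ is quasi-smooth, so $\theta(\pi_D) = 1_{D/pt}$. Lemma \ref{LSRelationsContainedInILem}, applied to the virtual Cartier divisor $D \hookrightarrow W$ (note that $W$ is in particular quasi-smooth and quasi-projective over $A$), then gives $1_{D/pt} = \zeta_{W,D}$ modulo $\Ic_A(D \to pt)$, that is, $\zeta_{W,D} - \theta(\pi_D) \in \Ic_A(D \to pt)$. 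Hence every generator of $\Rc^\mathrm{snc}_A$ lies in $\Ic_A$, and since $\Ic_A$ is a bivariant ideal this yields $\langle \Rc^\mathrm{snc}_A \rangle \subset \Ic_A$.

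For the reverse inclusion, by Lemma \ref{IContainedInLSRelationsLem} it suffices to show $\Rc^{\Vc}_A \subset \langle \Rc^\mathrm{snc}_A \rangle$. I claim that the relevant snc datum is $\Vc^n_A \hookrightarrow E_n$ with components $\Vc^n_{A,1}, \Vc^n_{A,2}$ and multiplicities $n_1 = n_2 = 1$. Indeed $E_n$, the total space of $\Oc(1,-1,0,0) \oplus \Oc(0,0,1,-1)$ over $(\Pb^n)^{\times 4}$, is smooth and quasi-projective over $A$; $\Vc^n_A$ is the vanishing locus of the single section $s_1 s_2$, hence a Cartier divisor equal to $\Vc^n_{A,1} + \Vc^n_{A,2}$; the $\Vc^n_{A,i}$ are total spaces of line bundles over $(\Pb^n)^{\times 4}$, hence smooth, connected and of the expected dimension, and their intersection $\Vc^n_{A,12}$ is the zero section, again smooth of the expected dimension, so Definition \ref{ASNCDivDef} is satisfied. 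For $n_1 = n_2 = 1$ one has $[1]_{F_A}\!\cdot x_1 +_{F_A} [1]_{F_A}\!\cdot x_2 = F_A(x_1,x_2)$, whence $F^{1,1}_{A,\{1\}} = F^{1,1}_{A,\{2\}} = 1$ and $F^{1,1}_{A,\{1,2\}}(x_1,x_2) = \sum_{i,j\geq 1} a_{ij} x_1^{i-1} x_2^{j-1}$; combining this with the identifications $\Oc(\Vc^n_{A,1})\vert_{\Vc^n_{A,12}} \cong \Nc_2$ and $\Oc(\Vc^n_{A,2})\vert_{\Vc^n_{A,12}} \cong \Nc_1$ from Construction \ref{VSchemeCons} together with the symmetry $a_{ij} = a_{ji}$ of the formal group law, Definition \ref{ZetaClassDef} gives
$$\zeta_{E_n, \Vc^n_A} = [\Vc^n_{A,1} \to \Vc^n_A] + [\Vc^n_{A,2} \to \Vc^n_A] + \iota^{12}_*\Bigl( \sum_{i,j \geq 1} a_{ij}\, e(\Nc_1)^{i-1} \bullet e(\Nc_2)^{j-1} \bullet 1_{\Vc^n_{A,12}/pt} \Bigr).$$
As $\pi_{\Vc^n_A}$ is quasi-smooth we have $\theta(\pi_{\Vc^n_A}) = 1_{\Vc^n_A/pt}$, so the displayed generator of $\Rc^{\Vc}_A$ is precisely $\theta(\pi_{\Vc^n_A}) - \zeta_{E_n, \Vc^n_A}$, the negative of an snc relation. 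This gives $\Rc^{\Vc}_A \subset \langle \Rc^\mathrm{snc}_A \rangle$, hence $\Ic_A \subset \langle \Rc^\mathrm{snc}_A \rangle$, and combined with the first step the theorem follows.

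The substantive content of the argument lives in Lemma \ref{LSRelationsContainedInILem} (with its two nested inductions in $n$ and $r$); the theorem itself is essentially bookkeeping. Accordingly, the only step requiring care is the last paragraph: verifying that $(\Vc^n_A \hookrightarrow E_n, \Vc^n_{A,1}, \Vc^n_{A,2})$ meets every clause of Definition \ref{ASNCDivDef} — in particular smoothness over $A$ and the expected dimensions of all the $\Vc^n_{A,I}$ — and matching the combinatorics of the coefficients $F^{1,1}_{A,I}$ and the normal bundles $\Nc_1,\Nc_2$ against the precise shape of the generator of $\Rc^{\Vc}_A$, for which commutativity of $F_A$ is used to absorb the transposition of indices.
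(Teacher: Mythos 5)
Your proposal is correct and follows exactly the route the paper intends: Lemma \ref{LSRelationsContainedInILem} gives $\langle \Rc^\mathrm{snc}_A \rangle \subset \Ic_A$, and Lemma \ref{IContainedInLSRelationsLem} reduces the reverse inclusion to checking that each generator of $\Rc^{\Vc}_A$ is (up to sign) the snc relation attached to the divisor $\Vc^n_A = \Vc^n_{A,1} + \Vc^n_{A,2} \hookrightarrow E_n$ with multiplicities $1,1$, which you verify carefully, including the swap $\Oc(\Vc^n_{A,1})\vert_{\Vc^n_{A,12}} \cong \Nc_2$ absorbed by the symmetry $a_{ij}=a_{ji}$. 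The paper leaves this bookkeeping implicit, so your write-up simply makes the same argument explicit.
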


As a corollary, we get the desired independence of bivariant algebraic cobordism groups from the base ring.

\begin{cor}\label{BivACobIndFromACor}
Let $X \to Y$ be a morphism of derived schemes with $X$ and $Y$ quasi-projective over Noetherian rings $A$ and $B$ of finite Krull dimension. Then
$$\Omega^*_A(X \to Y) = \Omega^*_B(X \to Y).$$
\end{cor}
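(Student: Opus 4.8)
The plan is to derive the statement from the alternative description of the snc ideal obtained in Theorem \ref{BivACobAltConsThm}, together with the observation that the ideal $\Ic_A$ of Definition \ref{SNCIdealAltDef} involves essentially no base-dependent data. Recall first the trivial Lemma at the beginning of this subsection, which identifies $\PCob^*_A(X\to Y)$ and $\PCob^*_B(X\to Y)$ canonically: a symbol $[V\to X]$ only records a projective morphism $V\to X$ whose composite to $Y$ is quasi-smooth, and the double-point relations only refer to $\Pb^1$, virtual Cartier divisors and the bundle $\Pb_{D_1\cap D_2}(\Oc(D_1)\oplus\Oc)$ --- all of which are insensitive to a choice of base ring. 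The same is true of bivariant pushforward, pullback, product and orientations, so this identification is an isomorphism of the two theories on the common subcategory; in particular it matches Euler classes of line bundles, the fibre products with the $\Zb$-schemes $\Vc^n$ of Construction \ref{VSchemeCons}, and the ingredients $f^{12}_*$, $e(\Nc_{V_{12}/V_i})$ and $1_{V_{12}/Y}$ appearing in the generators of $\Ic_A$.

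The one ingredient of Definition \ref{SNCIdealAltDef} that is not manifestly base-independent is the collection $a_{ij}$ of formal group law coefficients, so the heart of the matter is to check that the above identification carries the $a_{ij}$ to the $b_{ij}$ --- more precisely, that $\pi_W^* a_{ij}=\pi_W^* b_{ij}$ in $\PCob^*_A(W)=\PCob^*_B(W)$ for every derived scheme $W$ quasi-projective over both $A$ and $B$. I would argue this by pulling back the defining relation $e(\Ls_1\otimes\Ls_2)=F_A(e(\Ls_1),e(\Ls_2))$ to the scheme $\Pb^n_W\times_W\Pb^n_W$ with $\Ls_1,\Ls_2$ the two tautological line bundles: this exhibits $\pi^*a_{ij}$ (for $i,j\leq n$) as the coefficients of $e(\Ls_1\otimes\Ls_2)$ with respect to the monomial basis $e(\Ls_1)^i e(\Ls_2)^j$ furnished by the projective bundle formula for $\PCob^*_A$, and likewise for $b_{ij}$ on the $B$-side; since the precobordism groups, Euler classes and module structures all agree, comparison of coefficients forces $\pi^*a_{ij}=\pi^*b_{ij}$ on $\Pb^n_W\times_W\Pb^n_W$, hence on $W$ because $\PCob^*_A(W)$ injects there.

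Granting this, for any datum $(f\colon V\to X,\ V\to\Vc^n_Y)$ as in Definition \ref{SNCIdealAltDef} the corresponding generator of $\Ic_A(X\to Y)$ and that of $\Ic_B(X\to Y)$ agree term by term (taking $W=V_{12}$ in the previous paragraph), so $\Ic_A(X\to Y)=\Ic_B(X\to Y)$ as subgroups of $\PCob^*_A(X\to Y)=\PCob^*_B(X\to Y)$. Combining this with Theorem \ref{BivACobAltConsThm} applied over $A$ and over $B$, we get $\langle\Rc^\mathrm{snc}_A\rangle(X\to Y)=\langle\Rc^\mathrm{snc}_B\rangle(X\to Y)$, and passing to quotients yields the asserted equality $\Omega^*_A(X\to Y)=\Omega^*_B(X\to Y)$.

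I expect the matching of the formal group laws to be the only step requiring genuine (if modest) thought; it rests on the projective bundle formula for $\PCob^*_A$, which is already available. Everything else --- the base-independence of the symbols, relations and bivariant operations, and the fact that $\Ic_A$ has been engineered in Definition \ref{SNCIdealAltDef} precisely so as to avoid snc divisors --- is a matter of unwinding definitions. The substantive content of the base-independence theorem thus lies entirely in Theorem \ref{BivACobAltConsThm}.
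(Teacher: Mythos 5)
Your proposal is correct and follows the paper's own route exactly: the paper deduces the corollary from Theorem \ref{BivACobAltConsThm} by asserting that ``clearly'' $\Ic_A(X \to Y) = \Ic_B(X \to Y)$, which is precisely your argument. The only difference is that you spell out, via the projective bundle formula on $\Pb^n_W \times_W \Pb^n_W$, why the pullbacks of the coefficients $a_{ij}$ and $b_{ij}$ of $F_A$ and $F_B$ agree under the canonical identification of precobordism groups --- a legitimate and welcome filling-in of the one point the paper leaves implicit.
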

\begin{proof}
Clearly $\Ic_A(X \to Y) = \Ic_B(X \to Y)$, so the claim follows from Theorem \ref{BivACobAltConsThm}.
\end{proof}

\subsection{Algebraic cobordism for divisorial derived schemes}\label{BivCobConsSubsect}

In this section, we define bivariant algebraic cobordism in great generality. We will denote by $\Fs_a$ the functoriality $(\Cc_a, \Cs_a, \Is_a, \Ss_a)$, where $\Cc_a$ is the full subcategory of the $\infty$-category derived schemes consisting of finite dimensional Noetherian derived schemes having an ample family of line bundles, $\Cs_a$ consists of projective morphisms in $\Cc_a$, $\Is_a$ consists of all derived Cartesian squares in $\Cc_a$ and $\Ss_a$ consists of all morphisms in $\Cc_a$ whose relative cotangent complex has Tor-dimension 1. Consider the following universal theory defined by Yokura in \cite{yokura09} (see also \cite{annala-pre-and-cob} Section 2.2 and the beginning of Section 3).

\begin{defn}
Let us define the \emph{universal additive bivariant theory} $\Mc^\bullet_{\Fc_a, +}$ as the stably oriented bivariant theory so that $\Mc^\bullet_{\Fc_a, +}(X \to Y)$ is the group completion of the Abelian monoid on symbols
$$[V \to X]$$
on equivalence classes of projective morphisms $V \to Y$ with $\Lb_{V/Y}$ of Tor-dimension $1$, and where the monoid operation is given by disjoint union. The bivariant operations and the orientation is defined as in Definition \ref{UnivAPrecobDef}.
\end{defn}

\begin{rem}
The careful reader must have noticed that we did not define a grading on the bivariant groups $\Mc^\bullet_{\Fc_a, +}(X \to Y)$. This is because if $V \to Y$ is a general morphism of Noetherian derived schemes, then $\Lb_{V/Y}$ having Tor-dimension one does not imply that it is perfect, implying that the virtual rank of $\Lb_{V/Y}$ need not to make sense, and therefore we do not have a sensible notion of relative virtual dimension. Of course, if $V \to Y$ happens to be of finite type, then $\Lb_{V/Y}$ is perfect by \ref{QSmCharProp}, so the problem disappears. Hence, if $X \to Y$ is a finite type morphism in $\Cc_a$, we can define a grading on $\Mc^\bullet_{\Fc_a, +}(X \to Y)$
by saying that $[V \to X]$ has degree $d$ if $V \to Y$ has relative virtual dimension $-d$. We will denote the graded Abelian groups thus obtained by $\Mc_{\Fc_a, +}^*(X \to Y)$. In particular, the associated cohomology rings are graded rings. Similar remarks apply to the bivariant theories $\PCob^\bullet$ and $\Omega^\bullet$ defined below.
\end{rem}

Let us recall the following universal property from \cite{yokura09}.

\begin{thm}
$\Mc^\bullet_{\Fc_a, +}$ is the universal stably oriented additive bivariant theory with functoriality $\Fs_a$. In other words, if $\Bb^\bullet$ is a stably oriented additive bivariant theory with functoriality $\Fs_a$, then there exists a unique orientation preserving Grothendieck transformation
$$\Mc^\bullet_{\Fc_a, +} \to \Bb^\bullet.$$
\end{thm}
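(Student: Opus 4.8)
The plan is to construct the Grothendieck transformation explicitly on generators and then verify it is well-defined and unique. Given a stably oriented additive bivariant theory $\Bb^\bullet$ with functoriality $\Fs_a$, I would define
$$
\phi: \Mc^\bullet_{\Fc_a, +}(X \to Y) \to \Bb^\bullet(X \to Y), \qquad [V \stackrel{g} \to X] \mapsto g_* (1_{V/Y}),
$$
where $g_*$ is the bivariant pushforward along the projective (hence confined) morphism $g$ and $1_{V/Y} \in \Bb^\bullet(V \to Y)$ is the distinguished orientation of the specialized morphism $V \to Y$ (whose relative cotangent complex has Tor-dimension $1$, so it lies in $\Ss_a$). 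Since $\Mc^\bullet_{\Fc_a, +}(X \to Y)$ is, by construction, the free abelian group on such symbols modulo the disjoint-union relation, to see that $\phi$ is well-defined it suffices to check that it respects disjoint unions: this is exactly the additivity hypothesis on $\Bb^\bullet$, since $1_{V_1 \coprod V_2 / Y}$ restricts to $1_{V_i/Y}$ on each component and the inclusions $V_i \hook V_1 \coprod V_2$ are confined.

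Next I would check that $\phi$ is a Grothendieck transformation, i.e.\ that it commutes with the three bivariant operations and preserves orientations. Orientation preservation is immediate: the orientation of a specialized morphism $X \to Y$ in $\Mc^\bullet_{\Fc_a, +}$ is $[X \stackrel{\mathrm{Id}} \to X]$, which maps to $\mathrm{Id}_* (1_{X/Y}) = 1_{X/Y}$. Compatibility with pushforward along a confined morphism follows from functoriality of bivariant pushforward and the fact that $1_{V/Y}$ does not change when we enlarge the target-base of the pushforward. Compatibility with pullback follows because both the formation of $[V \to X]$ and the orientation $1_{V/Y}$ are stable under derived base change (the class $\Ss_a$ is pullback-stable, as Tor-dimension of the cotangent complex is preserved under derived pullback). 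Compatibility with the bivariant product reduces, after unwinding the definition of the product on $\Mc^\bullet_{\Fc_a, +}$ from Definition \ref{UnivAPrecobDef}, to the identity $g_*(1_{V'/Y}) \bullet h_*(1_{W/Z}) = (\text{composite pushforward})_*(1_{V''/Z})$ in $\Bb^\bullet$; this is a formal consequence of the axioms of a bivariant theory relating product, pushforward, and pullback, together with the multiplicativity of canonical orientations $1_{V'/Y} \bullet 1_{W/Z} = 1_{V''/Z}$ for composable specialized morphisms (this multiplicativity is part of what ``stably oriented'' supplies).

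For uniqueness, suppose $\psi: \Mc^\bullet_{\Fc_a, +} \to \Bb^\bullet$ is any orientation-preserving Grothendieck transformation. Every generator $[V \stackrel g \to X] \in \Mc^\bullet_{\Fc_a, +}(X \to Y)$ can be written as $g_*(\eta)$ where $\eta = 1_{V/Y} = [V \stackrel{\mathrm{Id}} \to V] \in \Mc^\bullet_{\Fc_a, +}(V \to Y)$ is the canonical orientation. Since $\psi$ commutes with pushforward and preserves orientations, $\psi([V \to X]) = g_*(\psi(\eta)) = g_*(1_{V/Y}) = \phi([V \to X])$, so $\psi = \phi$ on generators, hence everywhere by additivity.

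The main obstacle is the verification of compatibility with the bivariant product: one must carefully match the product formula in $\Mc^\bullet_{\Fc_a,+}$ (defined via a double derived fibre-product square) with the bivariant axioms in $\Bb^\bullet$, and in particular invoke the fact that the canonical orientations multiply correctly under composition of specialized morphisms --- a point where the ``stably oriented'' hypothesis is used in an essential way. Everything else is a routine unwinding of definitions, and indeed the whole argument is parallel to the proof of the universal property of $\PCob^*_A$ in \cite{annala-pre-and-cob}; I would phrase the write-up so as to refer back to that argument wherever the bivariant bookkeeping is identical, rather than reproducing it in full.
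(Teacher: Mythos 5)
Your proof is correct and takes essentially the same route as the paper, which gives no argument of its own and simply cites Yokura: the assignment $[V \xrightarrow{g} X] \mapsto g_*(1_{V/Y})$, the verification of compatibility with pushforward, pullback and product via the bivariant axioms together with stability and multiplicativity of the orientations, and the uniqueness argument from $[V \to X] = g_*(1_{V/Y})$ are exactly the standard proof being invoked there. The only step stated a bit loosely is the well-definedness over disjoint unions: $1_{V_1 \coprod V_2/Y}$ does not literally ``restrict'' to the components, but the needed identity $1_{V_1 \coprod V_2/Y} = \iota_{1*}1_{V_1/Y} + \iota_{2*}1_{V_2/Y}$ follows from additivity combined with the projection formula $A_{123}$, pullback-stability of the orientation, and its multiplicativity (multiply by $1_{V_i/V_1 \coprod V_2}$ and note that the bivariant groups over the empty scheme vanish for an additive theory).
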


We can then define the universal precobordism theory.

\begin{defn}\label{UnivPrecobDef}
We define the \emph{universal precobordism theory} $\PCob^\bullet$ as a quotient of $\Mc^\bullet_{\Fc_a, +}$ by enforcing the derived double point cobordism relations: given a projective morphism $W \to \Pb^1 \times X$ so that $\Lb_{W / \Pb^1 \times Y}$ has Tor-dimension 1, and virtual Cartier divisors $D_1$ and $D_2$ so that the fibre $W_\infty$ of $W \to \Pb^1$ over $\infty$ is the sum of $D_1$ and $D_2$, then
$$[W_0 \to X] = [D_1 \to X] + [D_1 \to X] - [\Pb_{D_1 \cap D_2}(\Oc(D_1) \oplus \Oc) \to X] \in \PCob^\bullet(X \to Y),$$
where $W_0$ is the fibre of $W \to \Pb^1$ over $0$, and where $\cap$ denotes the derived intersection. It is straightforward to check that these relations respect the bivariant operations.
\end{defn}

Recall that the \emph{Euler class} $e(\Ls)$ of a line bundle $\Ls$ on $X \in \Cc_a$ is defined as $[Z_s \hook X] \in \PCob^1(X)$, where $s$ is any global section of $\Ls$ and $Z_s$ is the derived vanishing locus of $s$. Before going further, we need to make the following observation.

\begin{lem}\label{NilpChernLem}
Suppose $X \in \Cc_a$ and let $\Ls$ be a a line bundle on $X$. Then the Euler class $e(\Ls) \in \PCob^1(X)$ is nilpotent.
\end{lem}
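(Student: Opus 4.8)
The plan is to reduce the nilpotence of $e(\Ls)$ to the case where $\Ls$ is generated by finitely many global sections, and then to exhibit $e(\Ls)$ as a pullback of an Euler class on a projective space, where nilpotence can be checked by hand. First I would observe that since $X$ is divisorial and finite-dimensional, one can choose finitely many line bundles $\Ms_1,\dots,\Ms_r$ together with global sections $t_j$ of $\Ms_j$ such that the $X_{t_j}$ form an affine open cover of $X$; by Lemma \ref{LocLem} applied to the coherent sheaf $\Ls$ (more precisely to $\pi_0(\Ls)$ over each $X_{t_j}$), after replacing each $\Ms_j$ by a suitable power we may assume $\Ls \otimes \Ms_j$, hence also $\Ls$ twisted appropriately, is globally generated on $X_{t_j}$. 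Combining these, one sees that there is a single integer $N$ and finitely many global sections $s_0,\dots,s_m$ of $\Ls$ whose derived vanishing loci $Z_{s_k}$ have empty common intersection — equivalently the $s_k$ induce a morphism $\phi: X \to \Pb^m$ with $\phi^* \Oc_{\Pb^m}(1) \cong \Ls$.

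Next I would invoke functoriality: $e(\Ls) = \phi^* e(\Oc_{\Pb^m}(1))$ in $\PCob^\bullet(X)$, where $\phi^*$ is bivariant pullback along $\phi$ viewed as a morphism $X \to X$ over the map to $\Pb^m$ — concretely, $e(\Ls) = 1_{X/X} \bullet \phi^*(e(\Oc(1)))$ using that Euler classes are stable under pullback (this is immediate from the definition $e(\Ls) = [Z_s \hook X]$ and stability of derived vanishing loci under base change). Since bivariant pullback is a ring homomorphism on the cohomology rings $\PCob^\bullet(\Pb^m) \to \PCob^\bullet(X)$, it suffices to show that $e(\Oc_{\Pb^m}(1))^{m+1} = 0$ in $\PCob^\bullet(\Pb^m)$. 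This last vanishing is a standard computation: choosing $m+1$ hyperplane sections $H_0,\dots,H_m$ in general position, the product $e(\Oc(1))^{m+1} = [H_0 \cap \cdots \cap H_m \hook \Pb^m]$ is the class of the empty derived scheme, which is $0$ by additivity; alternatively, one can cite that the projective bundle formula (or the already-established structure of $\PCob^*_A(\Pb^m_A)$ from prior work, pulled back along $\Spec(\Zb) \to \Spec(A)$ appropriately) shows $\PCob^\bullet(\Pb^m)$ is a free module of rank $m+1$ over $\PCob^\bullet(pt)$ with basis $1, e(\Oc(1)), \dots, e(\Oc(1))^m$, and the relation $e(\Oc(1))^{m+1} = \sum a_i e(\Oc(1))^i$ for suitable coefficients combined with the degree grading forces $e(\Oc(1))$ to be nilpotent.

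The main obstacle I anticipate is the first step: producing the finite set of global sections of $\Ls$ with empty common vanishing locus, i.e. showing $\Ls$ is globally generated by finitely many sections. On a general divisorial $X$ the line bundle $\Ls$ itself need not be globally generated — only after twisting by auxiliary line bundles $\Ms_j$ on affine opens — so one cannot directly map to a single projective space via $\Ls$. The fix is that we only need \emph{some} morphism realizing a \emph{power} or a \emph{twist}, and then transport nilpotence back: if $\Ls^{\otimes N} \otimes \Ns^{\otimes -1}$ and various pieces are globally generated, one can still arrange, using the cover by the $X_{t_j}$ and finiteness of $X$, a finite collection of sections of a \emph{single} line bundle (some large power of $\Ls$ tensored appropriately) with empty base locus; then $e$ of that bundle is nilpotent by the $\Pb^m$ argument, and since the formal group law expresses $e(\Ls^{\otimes N})$ as $[N]_{F}\cdot e(\Ls) = e(\Ls)(N + \text{higher order in } e(\Ls))$, nilpotence of $e(\Ls^{\otimes N})$ together with a formal inversion argument gives nilpotence of $e(\Ls)$ — this is where Lemma \ref{LocLem} and the explicit shape of the formal group law do the real work, and it is the step that needs to be written carefully.
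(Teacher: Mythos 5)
Your reduction to a map $X \to \Pb^m$ breaks down at exactly the step you flag, and the proposed repair does not close it. On a divisorial $X$ without an ample line bundle, divisoriality (via Proposition \ref{AmpleFamCharProp} or Lemma \ref{LocLem}) only gives generation of $\Ls$ after twisting by \emph{different} auxiliary bundles $\Ms_j^{\otimes N_j}$ over the different pieces $X_{t_j}$ of the cover; there is no way to merge these into a single line bundle of the form ``a power of $\Ls$ tensored appropriately'' that is globally generated with finitely many sections of empty base locus. That merging is precisely what an ample line bundle buys you, and the paper explicitly notes (beginning of Section \ref{NilpEulerClassSubSect}) that the argument of \cite{annala-chern} along these lines does not extend beyond the ample case. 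Your fallback, deducing nilpotence of $e(\Ls)$ from nilpotence of $e(\Ls^{\otimes N})$, is also not valid: $e(\Ls^{\otimes N}) = [N]_{F}\cdot e(\Ls) = e(\Ls)\bigl(N + (\text{terms involving } e(\Ls))\bigr)$, and the second factor is not a unit in $\PCob^*(X)$ — its ``constant'' part is the integer $N$, which need not be invertible, and the correction terms involve the very class whose nilpotence you are trying to establish — so $e(\Ls^{\otimes N})^k = 0$ gives no bound on $e(\Ls)^k$. (The final $\Pb^m$ computation and the pullback compatibility of Euler classes are fine; the problem is that you never reach that situation.)

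The paper's proof avoids global generation of any single bundle by a support argument. Choose sections $s_i$ of line bundles $\Ls_i$ with the $X_{s_i}$ an affine cover. By Lemma \ref{LocLem}, for $N, n \gg 0$ one finds $n$ global sections of $\Ls_i^{\otimes N} \otimes \Ls$ generating it on $X_{s_i}$, so $e(\Ls_i^{\otimes N} \otimes \Ls)^{n}$ is represented by a cycle supported outside $X_{s_i}$; likewise $e(\Ls_i^{\otimes N})$ is represented by the vanishing locus of $s_i^{N}$, also supported outside $X_{s_i}$. Lemma \ref{GeomFGLLem} gives $e(\Ls) = e(\Ls_i^{\otimes N} \otimes \Ls) + \gamma_i \bullet e(\Ls_i^{\otimes N})$ for each $i$, so expanding
$$e(\Ls)^{rn} = \prod_{i=1}^r \big( e(\Ls_i^{\otimes N} \otimes \Ls) + \gamma_i \bullet e(\Ls_i^{\otimes N}) \big)^n$$
exhibits each $i$-th factor as a sum of monomials supported outside $X_{s_i}$; since the $X_{s_i}$ cover $X$, the bivariant product (a derived fibre product over $X$) is empty and the whole expression vanishes. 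If you want to salvage your write-up, this local-support bookkeeping is the missing idea you would have to substitute for the map to projective space.
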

\begin{proof}
Choose line bundles $\Ls_1, ..., \Ls_r$ and sections $s_i \in \Gamma(X; \Ls_i)$ so that the $X_{s_i}$ give an affine open cover of $X$. Using Lemma \ref{LocLem}, for $N \gg 0$ and $n \gg 0$ it is possible to find $n$ global sections $s'_{i,1}, ..., s'_{i,n}$ of $\Ls_i^{\otimes N} \otimes \Ls$ such that the intersection of the restriction of their derived vanishing loci to $X_{s_i}$ is empty, i.e., $e(\Ls_i^{\otimes N} \otimes \Ls)^{n}$ are represented by cobordism cycles supported outside $X_{s_i}$.

We can use Lemma \ref{GeomFGLLem} below to compute that
$$e(\Ls) =  e(\Ls_i^{\otimes N} \otimes \Ls) + \alpha_i \bullet e(\Ls_i^{\otimes -N})$$ 
and 
$$e(\Ls_i^{\otimes -N}) = \beta_i \bullet e(\Ls_i^{\otimes N})$$
so that
$$e(\Ls) =  e(\Ls_i^{\otimes N} \otimes \Ls) + \gamma_i \bullet e(\Ls_i^{\otimes N}) \in \PCob^*(X).$$
Hence
\begin{align*}
e(\Ls)^{rn} &= \prod_{i=1}^r \big( e(\Ls_i^{\otimes N} \otimes \Ls) + \gamma_i \bullet e(\Ls_i^{\otimes N}) \big)^n \\
&= \prod_{i=1}^r \Bigg( \sum_{j=0}^n e(\Ls_i^{\otimes N} \otimes \Ls)^j \bullet \gamma_i^{n-j} \bullet e(\Ls_i^{\otimes N})^{n-i} \Bigg),
\end{align*} 
and as the $i^{th}$ part of the product can clearly be represented by bordism cycles supported outside $X_{s_i}$, the product must vanish for the simple reason that $X_{s_i}$ cover $X$. This proves that $e(\Ls)$ is nilpotent.
\end{proof}

\begin{lem}\label{GeomFGLLem}
Let $X \in \Cc_a$ and let $\Ls_1$ and $\Ls_2$ be line bundles on $X$. Then
\begin{align*}
e(\Ls_1 \otimes \Ls_2) &= e(\Ls_1) + e(\Ls_2) - e(\Ls_1) \bullet e(\Ls_2) \bullet [\Pb_1 \to X] \\
&- e(\Ls_1) \bullet e(\Ls_2) \bullet e(\Ls_1 \otimes \Ls_2) \bullet \big( [\Pb_2 \to X] - [\Pb_3 \to X] \big)
\end{align*}
in $\PCob^1(X)$, where
\begin{align*}
\Pb_1 &= \Pb_X(\Ls_1 \oplus \Oc); \\
\Pb_2 &= \Pb_X(\Ls_1 \oplus (\Ls_1 \otimes \Ls_2) \oplus \Oc); \\
\Pb_3 &= \Pb_{\Pb_X(\Ls_1 \oplus (\Ls_1 \otimes \Ls_2))}(\Oc(1) \oplus \Oc).
\end{align*}
\end{lem}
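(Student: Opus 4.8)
The plan is to produce an explicit derived scheme $W$ over $\Pb^1 \times X$ witnessing a derived double point degeneration whose special fibres realize the claimed identity, exactly in the spirit of the geometric proof of the formal group law for precobordism in \cite{annala-chern} Theorem 3.4, but carried out directly over $\Cc_a$ so that no quasi-projectivity over a ring is used. First I would recall the standard picture: on $\Pb_X(\Ls_1 \oplus \Ls_2 \oplus \Oc)$ (a $\Pb^2$-bundle over $X$) one has the three coordinate hyperplane divisors $H_1, H_2, H_3$, which are the zero loci of the tautological sections into $\Oc(1) \otimes \pi^*\Ls_1^\vee$, $\Oc(1)\otimes \pi^*\Ls_2^\vee$, $\Oc(1)$ respectively; these are $\Pb^1$-subbundles, $H_i \simeq \Pb_X(\text{the other two summands})$, and the Euler classes of the associated line bundles restrict to $e(\Ls_1), e(\Ls_2), e(\Ls_1\otimes\Ls_2)$ after suitable pushforward. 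I would then build $W \subset \Pb^1 \times \Pb_X(\Ls_1\oplus\Ls_2\oplus\Oc)$ as the family of lines through the three coordinate points degenerating the generic line (whose class represents $e(\Ls_1\otimes\Ls_2)$, or a line representing one of the other classes) into a chain $H_i \cup H_j$ of two of the coordinate $\Pb^1$'s; concretely $W$ is cut out by a single bilinear equation in $[\lambda_0:\lambda_1]\times[\text{coords on } \Pb^2_X]$, so $W \to \Pb^1\times X$ is visibly projective and quasi-smooth (the relative cotangent complex has Tor-amplitude $1$ since $W$ is a divisor in a smooth $\Pb^1\times X$-scheme).

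Next I would compute the three fibres. The general fibre $W_t$ for $t \neq 0,\infty$ is a $\Pb^1$-subbundle whose class in $\PCob^\bullet(X\to X)$ I identify, via the projective bundle formula / the standard relation $[\Pb_X(\Lc\oplus\Oc)\to X]$ together with the double point relations, with the combination on the right-hand side evaluated symbolically; the fibre over $\infty$ splits as $D_1 + D_2$ with $D_1 = H_i$, $D_2 = H_j$ two coordinate $\Pb^1$-bundles, and $D_1 \cap D_2$ a coordinate point-section $\cong X$; the fibre over $0$ is arranged to be a single $\Pb^1$-bundle representing the "linear part" $e(\Ls_1)+e(\Ls_2)$ up to the correction $\Pb_{D_1\cap D_2}(\Oc(D_1)\oplus\Oc)$. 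Applying the double point relation of Definition \ref{UnivPrecobDef} to $W$ gives $[W_0\to X] = [D_1\to X] + [D_2\to X] - [\Pb_{D_1\cap D_2}(\Oc(D_1)\oplus\Oc)\to X]$, and translating each term through the projective-bundle identifications (using that $\Oc(D_i)$ restricts to the relevant tensor of $\Ls_1,\Ls_2$ and their product, and that $e$ of those restrictions are $e(\Ls_1)$, $e(\Ls_2)$, $e(\Ls_1\otimes\Ls_2)$) yields precisely the displayed formula, with $\Pb_1, \Pb_2, \Pb_3$ being exactly the three projective bundles appearing as the "error terms" $\Pb_{D\cap D'}(\Oc(D)\oplus\Oc)$ in the successive applications.

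The main obstacle I anticipate is bookkeeping rather than conceptual: one must choose the degeneration so that \emph{exactly} these three auxiliary bundles $\Pb_1,\Pb_2,\Pb_3$ appear with the stated signs and so that the $W_0$-fibre is the one giving $e(\Ls_1)+e(\Ls_2)$ rather than some other linear combination; getting the twists right (which summand of $\Oc(1)\otimes\pi^*\Ls_k^\vee$ one uses, hence whether $\Oc(D_1)$ pulls back to $\Ls_1$ or $\Ls_1\otimes\Ls_2$ on the intersection) is the delicate point, and it may be cleanest to iterate the double point relation twice — once to degenerate the diagonal line into $H_1 \cup (\text{line in the } \{\Ls_2,\Oc\}\text{-plane})$ and once more to further degenerate the second piece — which is presumably why both $\Pb_2$ and $\Pb_3$ (one a $\Pb^2$-bundle, one a $\Pb^1$-bundle over a $\Pb^1$-bundle) show up. A secondary, purely formal point is that since $\PCob^\bullet$ is not graded in general one should check the identity at the level of the ungraded group $\PCob^1(X)$, but as $X \to X$ is the identity (hence of finite type) the grading is in fact available and all terms are homogeneous of degree $1$, so no subtlety arises there; the nilpotence Lemma \ref{NilpChernLem} is not needed for the proof of this lemma itself (it is the other way around), so it can be ignored here.
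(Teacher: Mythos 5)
The paper does not spell out an argument here at all: its ``proof'' is a citation to \cite{levine-pandharipande} Section 5.2 and to \cite{annala-yokura} Lemma 6.2, so what is really being asked is to reproduce that explicit degeneration. Your general strategy (explicit derived double point degenerations, built only from projective bundles and derived blow ups over $X$, so that no quasi-projectivity over a base ring is used) is the right one, and your remarks about grading and about nilpotence not being needed are correct. But the concrete construction you propose does not work, and the part you defer to ``bookkeeping'' is precisely the content of the lemma. A hypersurface $W \subset \Pb^1 \times \Pb_X(\Ls_1\oplus\Ls_2\oplus\Oc)$ cut out by a single bilinear equation has, over every point of $\Pb^1$, a fibre of degree one in the $\Pb^2$-direction, i.e.\ a single hyperplane subbundle; such a fibre can never decompose as a sum $D_1+D_2$ of two coordinate hyperplanes, so the degeneration you describe (a line breaking into a chain $H_i\cup H_j$) is not realized by this $W$ --- one would need fibrewise degree two, or a different ambient space. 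Moreover, even with a corrected family, its fibres are $\Pb^1$-bundles over $X$, hence classes in $\PCob^{-1}(X)$, while the terms of the asserted identity, $e(\Ls_1)$, $e(\Ls_2)$, $e(\Ls_1\otimes\Ls_2)$, are classes of derived vanishing loci in $X$ lying in $\PCob^1(X)$; the dictionary you invoke to pass between the two is the projective bundle formula, which in this paper is proved only in Section \ref{GeneralPBFSect} and whose proof rests on the formal group law and on nilpotence of Euler classes --- and Lemma \ref{NilpChernLem} itself quotes the present lemma --- so that route is circular.

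The actual arguments in the cited references are of a different shape, and this is visible in the statement itself: it is asymmetric in $\Ls_1$ and $\Ls_2$, which a symmetric pencil-of-lines picture inside $\Pb_X(\Ls_1\oplus\Ls_2\oplus\Oc)$ cannot produce. Roughly, one degenerates the vanishing locus of a section of $\Ls_1\otimes\Ls_2$ over $X$ into $Z(s_1)+Z(s_2)$ via the pencil $\lambda_0\, s_1 s_2 + \lambda_1 t$; if this were already a legitimate double point degeneration, the correction term $\Pb_{D_1\cap D_2}(\Oc(D_1)\oplus\Oc)$ of Definition \ref{UnivPrecobDef} would give exactly $e(\Ls_1)\bullet e(\Ls_2)\bullet[\Pb_1\to X]$, accounting for the first three terms. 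The obstruction is that the fibre over $\infty$ of the naive total space does not split as a sum of virtual Cartier divisors (classically: the total space is singular along the triple intersection), and repairing this by a further derived blow up is what produces the remaining terms $e(\Ls_1)\bullet e(\Ls_2)\bullet e(\Ls_1\otimes\Ls_2)\bullet([\Pb_2\to X]-[\Pb_3\to X])$, with $\Ls_1$ singled out by the choice made in the repair. None of this is reached by your sketch: the witnessing family is not constructed, the identification of its fibres with the stated classes is either missing or circular, and the precise correction terms with their signs --- which are the whole point of the lemma --- are left open. So the proposal has a genuine gap; to fix it you should follow the degeneration over $X$ itself as in \cite{levine-pandharipande} Section 5.2, adapted to the derived setting as in \cite{annala-yokura} Lemma 6.2 using Proposition \ref{BlowUpOfVectBundleSectionProp} and Theorem \ref{BlowUpPropertiesThm} in place of generic transversality.
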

\begin{proof}
See \cite{levine-pandharipande} Section 5.2 for the original proof for algebraic schemes in characteristic 0 or \cite{annala-yokura} Lemma 6.2 for a proof in derived setting.
\end{proof}

Now that we know that the Euler classes of line bundles are nilpotent, we may prove the following result using the arguments of \cite{annala-chern} Section 3. 

\begin{thm}\label{PCobFGLThm}
$\PCob^\bullet$ satisfies the formal group law axiom: in other words Euler classes of line bundles are nilpotent and there exists a formal group law
$$F(x,y) \in \PCob^*\big(\Spec(\Zb)\big)[[x,y]]$$
so that for all $X \in \Cc_a$ and for all line bundles $\Ls_1$ and $\Ls_2$ on $X$, the equality
$$e(\Ls_1 \otimes \Ls_2) = F\big(e(\Ls_1), e(\Ls_2)\big) \in \PCob^1(X)$$
holds. Moreover, since $\Fs_a$ is admissible and $\PCob^\bullet$ satisfies the section axiom (see \cite{annala-pre-and-cob}), the formal group law $F$ is unique. \qed
\end{thm}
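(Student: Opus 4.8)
The plan is to establish the two assertions — nilpotence of Euler classes and existence of the formal group law — as consequences of results already in hand, together with the standard machinery of \cite{annala-chern} Section 3. Nilpotence of Euler classes of \emph{line bundles} is exactly Lemma \ref{NilpChernLem}, already proven above, so that part is free. For the formal group law, I would follow the blueprint of \cite{annala-chern}: first record that $e(\Ls_1 \otimes \Ls_2)$ is expressible as an explicit universal polynomial in $e(\Ls_1)$, $e(\Ls_2)$ and $e(\Ls_1 \otimes \Ls_2)$ via Lemma \ref{GeomFGLLem}, then bootstrap this relation into a genuine power series identity.

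Concretely, the first step is to define the putative formal group law $F(x,y) \in \PCob^*(\Spec(\Zb))[[x,y]]$. This is done, as in \cite{annala-chern}, by considering the universal examples: on $\Pb^n \times \Pb^m$ one has the line bundles $\Oc(1,0)$ and $\Oc(0,1)$, and one reads off the class $e(\Oc(1,1))$ in terms of $e(\Oc(1,0))$ and $e(\Oc(0,1))$; passing to the limit over $n,m$ and using that Euler classes of the universal line bundles are nilpotent (Lemma \ref{NilpChernLem}) produces a well-defined power series $F(x,y)$ with coefficients pushed forward from the $\Pb^n \times \Pb^m$ to $\Spec(\Zb)$. The second step is to verify the formal group law axioms (identity, commutativity, associativity) for $F$; these follow by functoriality from the corresponding geometric identities among line bundles on products of projective spaces, exactly as in the cited reference. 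The third step is the universality statement: given \emph{any} $X \in \Cc_a$ and line bundles $\Ls_1, \Ls_2$ on $X$, one wants $e(\Ls_1 \otimes \Ls_2) = F(e(\Ls_1), e(\Ls_2))$. Here one uses that $X$ is divisorial — so by Proposition \ref{AmpleFamCharProp} one can, Zariski-locally and up to the nilpotence already established, reduce the identity to pullbacks of the universal case; combined with Lemma \ref{GeomFGLLem} and an induction on the nilpotence order of the Euler classes, this gives the desired equality in $\PCob^1(X)$. Finally, uniqueness of $F$ is immediate from the section axiom for $\PCob^\bullet$ (cited from \cite{annala-pre-and-cob}) together with admissibility of $\Fs_a$, since these force any two formal group laws inducing the same addition of Euler classes to have equal coefficients.

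The main obstacle I anticipate is not conceptual but bookkeeping: in \cite{annala-chern} the ambient schemes are quasi-projective over an affine base and carry an ample \emph{line bundle}, whereas here we only have an ample \emph{family}. The geometric inputs (Lemma \ref{GeomFGLLem}, the reduction to $\Pb^n \times \Pb^m$) need the line bundles in question, or suitable twists of them, to have enough global sections cutting out the relevant derived vanishing loci; Lemma \ref{LocLem} and Proposition \ref{AmpleFamCharProp} are precisely the tools that replace "ample line bundle" by "ample family" in these arguments, and the delicate point is to check that the universal polynomial relation of Lemma \ref{GeomFGLLem} still pulls back correctly when the classifying map $X \to \Pb^n \times \Pb^m$ only exists after passing to twists. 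Since the statement is flagged in the paper as provable "using the arguments of \cite{annala-chern} Section 3," and since Lemma \ref{NilpChernLem} has already done the genuinely new work (nilpotence without an ample line bundle), I expect the remaining steps to be a faithful, if somewhat lengthy, transcription of that section with the two cited lemmas substituted at the appropriate places; hence the \qed in the statement, signalling that the proof is left to the reader.
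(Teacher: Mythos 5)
Your proposal follows the same route as the paper, which proves nilpotence of Euler classes of line bundles separately (Lemma \ref{NilpChernLem}, using the ample family via Lemma \ref{LocLem} and Lemma \ref{GeomFGLLem}) and then simply invokes the arguments of \cite{annala-chern} Section 3 for the existence of $F$, with uniqueness coming from admissibility of $\Fs_a$ and the section axiom. Your expanded sketch of how those arguments transfer to the divisorial setting is consistent with what the paper leaves implicit, so the approach is essentially identical.
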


Moreover, following \cite{annala-pre-and-cob} Section 3.2 and \cite{annala-chern} Section 3.2 one proves the following universal property for $\PCob$.

\begin{thm}\label{UnivPropOfPCobThm}
$\PCob^\bullet$ is the universal stably oriented additive bivariant theory with functoriality $\Fs_a$ satisfying the section and the formal group law axioms (as defined in \cite{annala-pre-and-cob}). In other words, if $\Bb^\bullet$ is another such a bivariant theory, then there exists a unique orientation preserving Grothendieck transformation $\PCob^\bullet \to \Bb^{\bullet}.$ \qed
\end{thm}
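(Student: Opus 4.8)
The plan is to follow the proofs of the corresponding universal properties of $\PCob^*_A$ in \cite{annala-pre-and-cob} Section 3.2 and \cite{annala-chern} Section 3.2, replacing each appeal to quasi-projectivity over an affine base by the divisorial hypothesis together with Lemma \ref{LocLem}, Proposition \ref{AmpleCharProp1} and Theorem \ref{ExistenceOfFactorizationsThm}(2). Two things must be established: that $\PCob^\bullet$ itself satisfies the section and formal group law axioms, and that it is universal among stably oriented additive bivariant theories with functoriality $\Fs_a$ having these two properties. The formal group law axiom for $\PCob^\bullet$ is exactly Theorem \ref{PCobFGLThm}, and the required nilpotence of Euler classes of line bundles is Lemma \ref{NilpChernLem}; the section axiom is checked as for $\PCob^*_A$, the only genuinely new input being that a divisorial $X$ carries enough sections of powers of line bundles (Lemma \ref{LocLem}, Proposition \ref{AmpleCharProp1}) to construct the pencils used in that verification.

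For universality, let $\Bb^\bullet$ be a stably oriented additive bivariant theory with functoriality $\Fs_a$ satisfying the section and formal group law axioms. By the universal property of $\Mc^\bullet_{\Fc_a, +}$ recorded above, there is a unique orientation-preserving Grothendieck transformation $\eta \colon \Mc^\bullet_{\Fc_a, +} \to \Bb^\bullet$. Since $\PCob^\bullet$ is, by Definition \ref{UnivPrecobDef}, the quotient of $\Mc^\bullet_{\Fc_a, +}$ by the derived double point relations, it suffices to show that $\eta$ annihilates each such relation; the induced transformation $\PCob^\bullet \to \Bb^\bullet$ is then automatically orientation preserving and compatible with all the bivariant operations, and it is unique because any such transformation pulls back to $\eta$ along the surjection $\Mc^\bullet_{\Fc_a, +} \twoheadrightarrow \PCob^\bullet$.

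Thus the heart of the matter — and the step I expect to be the main obstacle — is to show that, in any $\Bb^\bullet$ satisfying the two axioms, the element
$$[W_0 \to X] - [D_1 \to X] - [D_2 \to X] + [\Pb_{D_1 \cap D_2}(\Oc(D_1) \oplus \Oc) \to X]$$
vanishes, where $W \to \Pb^1 \times X$ is projective with $\Lb_{W/\Pb^1 \times Y}$ of Tor-dimension $1$ and $W_\infty$ is the sum of the virtual Cartier divisors $D_1$ and $D_2$. One argues as in the cited sections: writing $\Lc$ for the pullback to $W$ of $\Oc(1)$ on $\Pb^1$, both $W_0$ and $W_\infty$ are derived vanishing loci of global sections of $\Lc$, so the section axiom gives $[W_0 \to W] = e(\Lc) = [W_\infty \to W]$; since $W_\infty$ is the sum of $D_1$ and $D_2$ one has $\Oc(W_\infty) \simeq \Oc(D_1) \otimes \Oc(D_2)$, so the formal group law axiom rewrites this common class as $F\bigl(e(\Oc(D_1)), e(\Oc(D_2))\bigr)$, a finite sum by nilpotence of Euler classes. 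Pushing forward along $\pi \colon W \to X$ and then expanding $F$ and evaluating the pushforward term by term, using the computation — valid in any stably oriented theory and carried out explicitly in \cite{annala-chern}, cf. Lemma \ref{GeomFGLLem} — of the double-point correction term $[\Pb_{D_1 \cap D_2}(\Oc(D_1) \oplus \Oc) \to X]$ in terms of Euler classes of $\Oc(D_1)$ and $\Oc(D_2)$ restricted to the relevant derived intersections, one identifies the result with the remaining three terms, so that the displayed element is zero. The points demanding care in the divisorial (and ungraded) setting are: the auxiliary factorizations through projective bundles of direct sums of line bundles used to manipulate the correction term must still be available, which is supplied by Theorem \ref{ExistenceOfFactorizationsThm}(2) and the resolution property of divisorial schemes (Proposition \ref{AmpleFamCharProp}); and the entire computation has to be run with the ungraded groups, since $\Mc^\bullet_{\Fc_a, +}$, $\PCob^\bullet$ and $\Bb^\bullet$ need not carry a grading.
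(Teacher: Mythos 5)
Your proposal is correct and takes essentially the same route as the paper, which proves this theorem simply by observing that the arguments of \cite{annala-pre-and-cob} Section 3.2 and \cite{annala-chern} Section 3.2 carry over to $\Fs_a$: verify the section and formal group law axioms for $\PCob^\bullet$ (Theorem \ref{PCobFGLThm}, resting on Lemma \ref{NilpChernLem}), then show the derived double point relations hold in any stably oriented additive $\Bb^\bullet$ satisfying these axioms, so that the canonical transformation from $\Mc^\bullet_{\Fc_a, +}$ descends uniquely through the quotient of Definition \ref{UnivPrecobDef}. One minor quibble that does not affect the argument: your appeal to Proposition \ref{AmpleCharProp1} when checking the section axiom is unnecessary and slightly misplaced (a divisorial scheme need not carry an ample line bundle; the relevant statement would be Proposition \ref{AmpleFamCharProp}), since the independence of $e(\Ls)$ from the chosen section is a pencil argument over $\Pb^1$ requiring no ampleness, and the correction-term identity you attribute to Lemma \ref{GeomFGLLem} is really the abstract computation in any theory with the two axioms carried out in \cite{annala-pre-and-cob}.
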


Before constructing the bivariant theory $\Omega^\bullet$ as a quotient of $\PCob^\bullet$, we consider the following stacky version of Construction \ref{VSchemeCons}.

\begin{cons}\label{VStackCons}
Consider the $\Zb$-stack $[\Ab^1/ \Gb_m] \times [\Ab^1/ \Gb_m]$, i.e., the stack classifying the data $(\Ls_1, s_1, \Ls_2, s_2)$, where $\Ls_i$ are line bundles and $s_i$ are global sections of $\Ls_i$. Then we define the stack $\Vc$ as the derived vanishing locus of $s_1 s_2$ inside $[\Ab^1/ \Gb_m] \times [\Ab^1/ \Gb_m]$. The derived vanishing loci of $s_i$ give (non-regular) closed immersions $\iota^i: \Vc_i \hook \Vc$, and the vanishing locus of $s_1$ and $s_2$ gives a (non-regular) closed embedding $\iota^{12}: \Vc_{12} \hook \Vc$. We will also denote
$$\Nc_1 := \Nc_{\Vc_{12} / \Vc_1}$$
and
$$\Nc_2 := \Nc_{\Vc_{12} / \Vc_2}.$$
Given a derived scheme $Y$, we will denote by $\Vc_Y$, $\Vc_{Y,i}$ and $\Vc_{Y,12}$ the corresponding stacks over $Y$.
\end{cons}

Armed with these stacks, we can make the following definition.

\begin{defn}\label{BivCobDef}
We define \emph{bivariant algebraic cobordism} $\Omega^\bullet$ as a quotient of $\PCob^\bullet$ by enforcing the following relations: let $X \to Y$ be a morphism in $\Cc_a$, let $f: V \to X$ be a projective morphism with $\Lb_{V/Y}$ of Tor-dimension one, and let $V \to \Vc^n_Y$ be a $Y$-morphism with $\Lb_{V / \Vc^n_Y}$ of Tor-dimension 1. We will denote by $V_i$ and $V_{12}$ the derived fibre products $\Vc_{Y,i} \times_{\Vc_Y} V$ and $\Vc_{Y,12} \times_{\Vc_Y} V$ respectively, and by $f^{12}$ the induced morphism $V_{12} \to X$. Given such data, we enforce the relation
$$[V \to X] = [V_1 \to X] + [V_2 \to X] + f^{12}_* \Bigg( \sum_{i,j \geq 1} a_{ij} e(\Nc_{V_{12}/V_1})^{i-1} \bullet e(\Nc_{V_{12}/V_2})^{j-1} \bullet 1_{V_{12}/Y}\Bigg)$$
in $\Omega^\bullet(X \to Y)$, where $a_{ij}$ are the coefficients of the formal group law $F$. It is straightforward to check that these relations respect the bivariant operations.
\end{defn}

Before continuing, we have to make sure that $\Omega^\bullet$ as defined above really extends $\Omega^*_A$ defined earlier.

\begin{lem}
Let $A$ be a Noetherian ring of finite Krull dimension, and let $X \to Y$ be a morphism of quasi-projective derived $A$-schemes. Then
$$\Omega^*(X \to Y) = \Omega^*_A(X \to Y).$$
\end{lem}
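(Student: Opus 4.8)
The plan is to identify both sides of the asserted equality with the quotient of one and the same precobordism group by one and the same bivariant ideal, and then to invoke Theorem \ref{BivACobAltConsThm}.

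\emph{Step 1.} First I would observe that $\Cc_A \subset \Cc_a$: a quasi-projective derived $A$-scheme is Noetherian, of finite Krull dimension, and carries an ample line bundle (a relatively ample bundle over the affine base $\Spec(A)$ is ample, cf.\ the remark following Definition \ref{RelAmpleDef}), hence is divisorial. Since any morphism of quasi-projective derived $A$-schemes is of finite type, Proposition \ref{QSmCharProp} shows that for a projective $V \to X$ over $A$ the cotangent complex $\Lb_{V/Y}$ has Tor-dimension $1$ precisely when $V \to Y$ is quasi-smooth. Consequently the generators and the double point relations of $\PCob^\bullet(X \to Y)$ and of $\PCob^*_A(X \to Y)$ coincide verbatim, yielding a canonical isomorphism $\PCob^*(X \to Y) \cong \PCob^*_A(X \to Y)$ respecting all bivariant operations (this is the analogue for $\PCob^\bullet$ versus $\PCob^*_A$ of the ``trivial'' lemma preceding Construction \ref{VSchemeCons}). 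Specializing to $X = Y = \Spec(A)$ and using the uniqueness of the formal group law (Theorem \ref{PCobFGLThm}), the image of $F$ under $\PCob^\bullet(\Spec(\Zb)) \to \PCob^\bullet(\Spec(A)) \cong \PCob^*_A(pt)$ computes Euler classes of tensor products of line bundles over $\Cc_A$, hence equals $F_A$; so the coefficients $a_{ij}$ occurring in Definitions \ref{SNCIdealAltDef} and \ref{BivCobDef} agree.

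\emph{Step 2.} Let $\Jc \subset \PCob^\bullet$ denote the bivariant ideal generated by the relations of Definition \ref{BivCobDef}. I would then show that, under the isomorphism of Step 1, $\Jc(X \to Y) = \Ic_A(X \to Y)$; combined with Theorem \ref{BivACobAltConsThm} ($\Ic_A = \langle \Rc^\mathrm{snc}_A \rangle$) this gives the Lemma. The bridge between the two ideals is that the schemes $\Vc^n$ of Construction \ref{VSchemeCons} map smoothly to the stack $\Vc$ of Construction \ref{VStackCons}, compatibly with the tautological sections: indeed $E_n$ is the base change of the smooth morphism $(\Pb^n)^{\times 4} \to B\Gb_m \times B\Gb_m$, $(L_1,\dots,L_4) \mapsto (L_1 \otimes L_2^{-1}, L_3 \otimes L_4^{-1})$, along $[\Ab^1/\Gb_m]^{\times 2} \to B\Gb_m \times B\Gb_m$, and restricting to the vanishing locus of $s_1 s_2$ exhibits $\Vc^n \to \Vc$, hence $\Vc^n_Y \to \Vc_Y$, as smooth, with $\Vc^n_i = \Vc^n \times_{\Vc} \Vc_i$ and $\Vc^n_{12} = \Vc^n \times_{\Vc} \Vc_{12}$. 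Granting this, the two inclusions go as follows. Given a quasi-smooth $Y$-morphism $V \to \Vc^n_Y$ as in Definition \ref{SNCIdealAltDef}, the composite $V \to \Vc_Y$ has cotangent complex of Tor-dimension $\le 1$ (an extension of a vector bundle by $\Lb_{V/\Vc^n_Y}$), and the derived fibre products $V_i$, $V_{12}$ and the conormal bundles $\Nc_{V_{12}/V_i}$ are unchanged, so the elements of Definitions \ref{SNCIdealAltDef} and \ref{BivCobDef} attached to this data are literally equal; this gives $\Ic_A \subset \Jc$. Conversely, a $Y$-morphism $V \to \Vc_Y$ with $\Lb_{V/\Vc_Y}$ of Tor-dimension $1$ is the datum of line bundles $\Ls_1, \Ls_2$ on $V$ with global sections $s_1, s_2$ and a nullhomotopy of $s_1 s_2$; choosing an ample line bundle $\Ac$ on $V$ (available since $V$ is quasi-projective over the affine ring $A$), for large $N$ both $\Ac^{\otimes N}$ and $\Ls_i \otimes \Ac^{\otimes N}$ are globally generated by finitely many sections, which produces (for $n$ large) a morphism $V \to (\Pb^n)^{\times 4}$ pulling $\Oc(1,-1,0,0), \Oc(0,0,1,-1)$ back to $\Ls_1, \Ls_2$, and together with $s_1, s_2$ and the nullhomotopy a $Y$-morphism $V \to \Vc^n_Y$ lifting $V \to \Vc_Y$; it is quasi-smooth by Proposition \ref{QSmCharProp}, so the generator it produces lies in $\Ic_A(X \to Y)$ and equals the $\Jc$-generator of $V \to \Vc_Y$. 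Hence $\Jc(X \to Y) = \Ic_A(X \to Y)$ and
$$\Omega^*(X \to Y) = \PCob^*(X \to Y)/\Jc(X \to Y) = \PCob^*_A(X \to Y)/\langle \Rc^\mathrm{snc}_A \rangle(X \to Y) = \Omega^*_A(X \to Y).$$

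The main obstacle is Step 2: translating the ``stacky'' snc-type relations (defined via $\Vc$) into the ``rigidified'' relations (defined via the projective schemes $\Vc^n$) so that Theorem \ref{BivACobAltConsThm} applies. This hinges on the smoothness of $\Vc^n \to \Vc$, on the ability to lift any map to $\Vc$ to a map to some $\Vc^n$ using an ample line bundle on the source, and on the routine but somewhat lengthy check that all derived fibre products and conormal bundles entering the two families of relations coincide. None of these points is difficult, but they have to be carried out with care.
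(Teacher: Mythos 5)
Your proposal is correct and follows essentially the same route as the paper: the paper's (one-sentence) proof likewise identifies the precobordism groups and observes that any morphism to the stack $\Vc_Y$ from a quasi-projective derived scheme factors through $\Vc^n_Y$ for $n \gg 0$, since line bundles there are differences of globally generated ones, so the two sets of relations coincide and Theorem \ref{BivACobAltConsThm} finishes the argument. Your write-up merely makes explicit details the paper leaves implicit (smoothness of $\Vc^n \to \Vc$, agreement of the formal group law coefficients, and the Tor-dimension versus quasi-smoothness bookkeeping).
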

\begin{proof}
The issue is that $\Omega^*(X \to Y)$ was defined using the stack $\Vc_Y$ whereas $\Omega^*_A(X \to Y)$ was defined using the schemes $\Vc^n_Y$. However, since any line bundle on a quasi-projective derived scheme is of form $\Ls_1 \otimes \Ls_2^\vee$ with $\Ls_i$ globally generated, it follows that any morphism $X \to \Vc_Y$ factors through the canonical morphism $\Vc^n_Y \to \Vc_Y$ for $n \gg 0$, and therefore $\Omega^*(X \to Y)$ and $\Omega^*_A(X \to Y)$ have the same relations.
\end{proof}

As in \cite{annala-pre-and-cob}, the following result is an immediate consequence of Theorem \ref{UnivPropOfPCobThm}.

\begin{cor}\label{UnivPropOfCobCor}
$\Omega^\bullet$ is the universal stably oriented additive bivariant theory with functoriality $\Fs_a$ satisfying the section, the formal group law and the snc axioms (as defined in \cite{annala-pre-and-cob}). In other words, if $\Bb^\bullet$ is another such a bivariant theory, then there exists a unique orientation preserving Grothendieck transformation $\PCob^\bullet \to \Bb^\bullet.$ \qed
\end{cor}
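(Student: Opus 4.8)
The plan is to deduce this from the already-established universal property of $\PCob^\bullet$ (Theorem \ref{UnivPropOfPCobThm}), exactly as the analogous statement for $\Omega^*_A$ is deduced from that of $\PCob^*_A$ in \cite{annala-pre-and-cob}. By construction (Definition \ref{BivCobDef}), $\Omega^\bullet$ is the quotient of $\PCob^\bullet$ by the bivariant ideal $\langle \Rc^\bullet\rangle$ generated by the bivariant subset $\Rc^\bullet$ consisting of the elements $[V \to X] - [V_1 \to X] - [V_2 \to X] - f^{12}_*\bigl(\sum_{i,j\geq 1} a_{ij}\,e(\Nc_{V_{12}/V_1})^{i-1} \bullet e(\Nc_{V_{12}/V_2})^{j-1} \bullet 1_{V_{12}/Y}\bigr)$ attached to the morphisms $V \to \Vc_Y$ of Construction \ref{VStackCons}. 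For a stably oriented additive bivariant theory $\Bb^\bullet$ with functoriality $\Fs_a$ satisfying the section and formal group law axioms, "$\Bb^\bullet$ satisfies the snc axiom" means precisely that the images of the elements of $\Rc^\bullet$ vanish in $\Bb^\bullet$; this is the natural reformulation over $\Spec(\Zb)$, using the stacks of Construction \ref{VStackCons}, of the snc axiom of \cite{annala-pre-and-cob}, the two agreeing in the quasi-projective case by the $\Fs_a$-analogue of Theorem \ref{BivACobAltConsThm}.

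Granting this, the argument is formal. First, $\Omega^\bullet$ itself is a stably oriented additive bivariant theory with functoriality $\Fs_a$ satisfying the section, formal group law and snc axioms: additivity and stable orientation are clear for a bivariant quotient, the section and formal group law axioms are inherited from $\PCob^\bullet$ (Theorem \ref{PCobFGLThm}) just as in \cite{annala-pre-and-cob}, and the snc axiom holds by the very definition $\Omega^\bullet = \PCob^\bullet/\langle\Rc^\bullet\rangle$. Next, let $\Bb^\bullet$ be any stably oriented additive bivariant theory with functoriality $\Fs_a$ satisfying the section, formal group law and snc axioms. By Theorem \ref{UnivPropOfPCobThm} there is a unique orientation-preserving Grothendieck transformation $\Phi\colon \PCob^\bullet \to \Bb^\bullet$. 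Since $\Phi$ commutes with bivariant pushforward, pullback and product and preserves Euler classes and orientations, it carries each generator of $\Rc^\bullet$ to the corresponding element of $\Bb^\bullet$, which vanishes because $\Bb^\bullet$ satisfies the snc axiom; hence $\Phi$ annihilates $\langle\Rc^\bullet\rangle$ and factors as $\Phi = \Psi \circ q$ through the quotient map $q\colon \PCob^\bullet \twoheadrightarrow \Omega^\bullet$ for a Grothendieck transformation $\Psi\colon \Omega^\bullet \to \Bb^\bullet$. One checks $\Psi$ is orientation preserving, since $\Psi(1^{\Omega^\bullet}_{X/Y}) = \Phi(1^{\PCob}_{X/Y}) = 1^{\Bb^\bullet}_{X/Y}$, and $\Psi$ is unique with $\Phi = \Psi\circ q$ because $q$ is a levelwise surjection; any other orientation-preserving Grothendieck transformation $\Omega^\bullet \to \Bb^\bullet$ would precompose with $q$ to one $\PCob^\bullet \to \Bb^\bullet$, hence equal $\Phi$ by Theorem \ref{UnivPropOfPCobThm}, hence equal $\Psi$ after cancelling the surjection $q$. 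This is the asserted universal property.

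The only point requiring genuine work is the identification in the first paragraph, i.e. the $\Fs_a$-analogue of Theorem \ref{BivACobAltConsThm}: that in $\PCob^\bullet$ the bivariant ideal generated by the snc relations (in the sense of \cite{annala-pre-and-cob}, adapted to morphisms smooth over $pt = \Spec(\Zb)$) coincides with $\langle\Rc^\bullet\rangle$. I would prove this by repeating the arguments of Lemmas \ref{LSRelationsContainedInILem} and \ref{IContainedInLSRelationsLem} essentially verbatim, with the schemes $\Vc^n_Y$ replaced throughout by the stacks $\Vc_Y$ of Construction \ref{VStackCons}, and with the appeals to quasi-projectivity over $A$ (used there to produce factorizations through $(\Pb^d)^{\times 4}$) replaced by the observation that every divisorial derived scheme admits enough morphisms to $[\Ab^1/\Gb_m]^{\times 2}$, i.e. enough line bundles with a chosen section, by Proposition \ref{AmpleFamCharProp}. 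One inclusion is then immediate from the fact that a morphism $V \to \Vc_Y$ realizes the divisor $\Vc = \Vc_1 + \Vc_2$ with multiplicities $1,1$, so that the elements of $\Rc^\bullet$ are pushforwards of pullbacks of honest snc relations; the reverse inclusion is the induction of Lemma \ref{LSRelationsContainedInILem}. I expect this comparison to be the main technical obstacle, though it is routine given the computations of Section \ref{BaseIndSubSect}.
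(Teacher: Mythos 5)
Your argument is correct and is essentially the paper's: the paper records this corollary as an immediate consequence of Theorem \ref{UnivPropOfPCobThm}, via exactly the factorization-through-the-quotient argument you give (the transformation supplied by the universal property of $\PCob^\bullet$ kills the relations of Definition \ref{BivCobDef} because $\Bb^\bullet$ satisfies the snc axiom, hence descends uniquely to $\Omega^\bullet$). Your additional care in identifying the snc axiom of \cite{annala-pre-and-cob} with the vanishing of the $\Vc_Y$-relations, via the $\Fs_a$-analogue of Theorem \ref{BivACobAltConsThm} with the stacks of Construction \ref{VStackCons} in place of the schemes $\Vc^n_Y$, is a point the paper leaves implicit, and your treatment of it is consistent with how the paper compares the two sets of relations in Section \ref{BaseIndSubSect}.
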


Bivariant algebraic cobordism in this generality allows us to define the corresponding absolute homology theory.

\begin{defn}\label{AlgBordDef}
Let $X \in \Cc_a$. We define the \emph{algebraic bordism} of $X$ as the Abelian group
$$\Omega_\bullet(X) := \Omega^\bullet \big(X \to \Spec(\Zb)\big).$$
We do not define a grading on $\Omega_\bullet(X)$.
\end{defn}

\begin{rem}
By Proposition \ref{DCompIntAltCharProp}, we see that algebraic bordism $\Omega_\bullet(X)$ classifies derived complete intersection schemes projective over $X$ up to ``cobordism''. A natural grading would be given by the virtual dimension of the cycles, but unfortunately this is not well defined in general, as noted in Remark \ref{DimProblemRem}. However, at least when $X$ is of finite type over a field $k$, the grading by virtual dimension makes sense. We will denote the graded Abelian groups obtained this way by $\Omega_*(X)$. If $X$ is quasi-projective over $k$, then these graded groups agree with the ones defined by Lowrey and Schürg in \cite{lowrey-schurg} by the results of \cite{annala-pre-and-cob}. In the special case where $k$ has characteristic 0, $\Omega_*(X)$ is canonically isomorphic to the algebraic bordism group $\Omega_*^\mathrm{LM}(X_\cl)$ of Levine--Morel.
\end{rem}

\subsection{Nilpotency of Euler classes on divisorial schemes and applications}\label{NilpEulerClassSubSect}

The purpose of this section is to analyze the potential failure of Euler classes of vector bundles to be nilpotent in $\PCob^\bullet$, and to construct theories $\PCob'^\bullet$ and $\Omega'^\bullet$ where the desired nilpotence is provably true. Note that nilpotence of Euler classes is of crucial importance in Section \ref{GeneralPBFSect}, and most of the proofs fail without it.

 Let us start by recalling that given $X \in \Cc_a$ and a vector bundle $E$ of rank $r$ on $X$, then the \emph{Euler class} of $E$ is defined as
$$e(E) := [Z_s \hook X] \in \PCob^r(X),$$
where $s$ is an arbitrary global section of $E$, and $Z_s$ is the derived vanishing locus of $s$. We also recall that these classes are multiplicative in exact sequences of vector bundles by \cite{annala-chern} Lemma 4.1. Note that by Lemma \ref{NilpChernLem}, Euler classes of line bundles are nilpotent, but the proof does not seem to generalize to higher rank bundles. Moreover, the proof of the nilpotence of Euler classes given in \cite{annala-chern} Lemma 4.2 does not seem to generalize either, unless we restrict our attention to derived schemes admitting an ample line bundle. Hence we we need to come up with another argument.


Suppose $E$ is a rank $r > 1$ vector bundle. Then, as there exists an exact sequence
$$0 \to \Oc(-1) \to E \to Q \to 0$$
of vector bundles on $\Pb(E)$, the nilpotency of 
$$e(E) \in \PCob^*(\Pb(E))$$
follows from the nilpotency of Euler classes of line bundles. We would be done if we could show that the pullback $\PCob^*(X) \to \PCob^*(\Pb(E))$ is injective, which we will try to prove by constructing a class $\eta \in \PCob^*(\Pb(E))$ pushing forward to $1_X$. We fail to achieve this, but instead we find a family of bivariant classes in $\PCob^\bullet$ which have to be killed.

We will set up the following notation: $Z$ is the derived vanishing locus of a global section of $E$, $\wtil X$ is the derived bow up $\bl_Z(X)$, and  $\Ec \hook \wtil X$ is the exceptional divisor. We begin by recalling the following formula, which is easy to prove using the double point cobordism formula.

\begin{lem}\label{BlowUpFormulaLem}
Let everything be as above. Then
$$1_X = [\wtil X \to X] + e(E) \bullet [\Pb(E \oplus \Oc) \to X] - [\Pb_{\Ec}(\Oc(\Ec) \oplus \Oc) \to X]$$
in $\PCob^*(X)$. \qed
\end{lem}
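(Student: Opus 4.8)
The plan is to realize the right-hand side as a single instance of the derived double point cobordism relation, applied to the deformation to the normal cone of $Z$ in $X$ spread over $\Pb^1$. Since $Z$ is the derived vanishing locus of a global section $s$ of the rank $r$ bundle $E$, it is Zariski locally of the form $\Spec\bigl(A\modmod(s_1,\dots,s_r)\bigr)$, hence a derived regular embedding of codimension $r$; therefore $Z\times\{\infty\}\hook X\times\Pb^1$ is a derived regular embedding, and one may form $W:=\bl_{Z\times\{\infty\}}(X\times\Pb^1)$ with blow-down $\pi\colon W\to X\times\Pb^1$. By Theorem \ref{BlowUpPropertiesThm} the morphism $\pi$ is projective and quasi-smooth of relative virtual dimension $0$, so the composite $W\to X\times\Pb^1\cong\Pb^1\times X$ is projective and quasi-smooth, making $W$ a legitimate source for the double point relation in $\PCob^0(X\to X)$. (If an explicit model is wanted, Proposition \ref{BlowUpOfVectBundleSectionProp} realizes $W$ as the derived vanishing locus, inside $\Pb(\mathrm{pr}_1^*E\oplus\mathrm{pr}_2^*\Oc_{\Pb^1}(\infty))$, of the section of $Q$ induced by $(\mathrm{pr}_1^*s,\sigma_\infty)$, where $\sigma_\infty$ is the section of $\Oc_{\Pb^1}(\infty)$ vanishing at $\infty$.)

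Next I would identify the two special fibres of $W\to\Pb^1$. Since $\pi$ is an equivalence over $X\times(\Pb^1\setminus\{\infty\})$, we get $W_0\simeq X$ with $W_0\to X$ the identity, so $[W_0\to X]=1_X$. Over $\infty$, the divisor $X\times\{\infty\}$ is smooth and contains the centre $Z\times\{\infty\}$ via a quasi-smooth embedding, so its pullback through the blow-up is $\pi^*(X\times\{\infty\})=\wtil X+\Ec_W$ as virtual Cartier divisors on $W$, where $\wtil X=\bl_{Z\times\infty}(X\times\infty)\simeq\bl_Z X$ is the strict transform (Theorem \ref{BlowUpPropertiesThm}(5)), carrying its blow-down to $X$, and $\Ec_W=\Pb_Z(\Nc_{Z\times\infty/X\times\Pb^1})\simeq\Pb_Z(E\vert_Z\oplus\Oc)$ is the exceptional divisor of $W$ (Theorem \ref{BlowUpPropertiesThm}(4), using $\Nc_{Z/X}\simeq E\vert_Z$ and $\Nc_{\infty/\Pb^1}\simeq\Oc$). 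Thus $W_\infty$ is the sum of the virtual Cartier divisors $D_1:=\wtil X$ and $D_2:=\Ec_W$, with derived intersection $D_1\cap D_2$ the exceptional divisor $\Ec=\Pb_Z(E\vert_Z)$ of $\bl_Z X$.

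I would then run the double point relation,
$$1_X=[W_0\to X]=[\wtil X\to X]+[\Ec_W\to X]-\bigl[\Pb_{\Ec}\bigl(\Oc_W(\wtil X)\vert_{\Ec}\oplus\Oc\bigr)\to X\bigr]\in\PCob^0(X),$$
and match the three terms. The first is already as in the statement. For the second, $\Ec_W\to X$ factors as $\Pb_Z(E\vert_Z\oplus\Oc)\to Z\hook X$, and since $Z\times_X\Pb(E\oplus\Oc)\simeq\Pb_Z(E\vert_Z\oplus\Oc)$ the product formula in $\PCob^*(X)$ gives $[\Ec_W\to X]=[Z\hook X]\bullet[\Pb(E\oplus\Oc)\to X]=e(E)\bullet[\Pb(E\oplus\Oc)\to X]$, using $e(E)=[Z_s\hook X]$ and $Z=Z_s$. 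For the third, restricting $\pi^*\Oc(X\times\{\infty\})\simeq\Oc_W(\wtil X)\otimes\Oc_W(\Ec_W)$ to $\wtil X$ — where $\pi^*\Oc(X\times\{\infty\})$ is trivial, being pulled back from $\Oc_{\Pb^1}(\infty)\vert_\infty\simeq\Oc$ — yields $\Oc_W(\wtil X)\vert_{\wtil X}\simeq\Oc_{\wtil X}(\Ec)^\vee$; restricting to $\Ec$ and using $\Pb(L\oplus\Oc)\cong\Pb(L^\vee\oplus\Oc)$ identifies $\Pb_{\Ec}(\Oc_W(\wtil X)\vert_{\Ec}\oplus\Oc)$ with $\Pb_{\Ec}(\Oc(\Ec)\oplus\Oc)$, $\Oc(\Ec)$ being read as $\Oc_{\wtil X}(\Ec)\vert_{\Ec}=\Nc_{\Ec/\wtil X}$. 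This produces exactly the claimed identity.

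The only non-formal inputs are the standard structural facts about derived blow-ups along regular embeddings — the decomposition $\pi^*(X\times\{\infty\})=\wtil X+\Ec_W$, the identification of the strict transform with $\bl_Z X$, and the normal bundle of the exceptional divisor — which are available from \cite{khan-rydh} (cf. Theorem \ref{BlowUpPropertiesThm}) or can be made completely explicit from the $\Pb(\mathrm{pr}_1^*E\oplus\mathrm{pr}_2^*\Oc_{\Pb^1}(\infty))$-model via Propositions \ref{BlowUpOfVectBundleSectionProp} and \ref{BlowUpIsResolutionSchemeProp}; once these are granted the remainder is bookkeeping with the double point relation, which is why the lemma is described as easy. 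The step I expect to require the most care is the normal bundle computation identifying $\Oc_W(\wtil X)\vert_{\Ec}$ with $\Oc(\Ec)^{\vee}$, and more generally checking that all the relevant derived intersections and virtual Cartier divisors behave just as their classical analogues.
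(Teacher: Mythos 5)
Your proof is correct and is exactly the argument the paper intends (the lemma is stated with only the remark that it ``is easy to prove using the double point cobordism formula''): you apply the derived double point relation to the deformation to the normal cone $W=\bl_{Z\times\infty}(X\times\Pb^1)$, identify $W_0\simeq X$, $W_\infty=\wtil X+\Pb_Z(E\vert_Z\oplus\Oc)$ with derived intersection $\Ec$, and match the three terms, the only non-formal inputs being the standard Khan--Rydh facts already recorded in Theorem \ref{BlowUpPropertiesThm} and Propositions \ref{BlowUpOfVectBundleSectionProp}--\ref{BlowUpIsResolutionSchemeProp}. The bookkeeping with $\Oc_W(\wtil X)\vert_{\Ec}\simeq\Oc(\Ec)^\vee$ and $\Pb(\Ls\oplus\Oc)\cong\Pb(\Ls^\vee\oplus\Oc)$ is handled correctly, so there is nothing to add.
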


Since we do not know that $e(E) \in \PCob^*(X)$ is nilpotent, the argument of \cite{annala-chern} Lemma 4.5 gives us only the following truncated version.

\begin{lem}\label{TruncatedBlowUpLiftLem}
Let everything be as above. Then, for all $n \geq 0$, 
\begin{align*}
1_X &= \sum_{i=0}^n e(E)^i \bullet [\Pb(E \oplus \Oc) \to X]^i \bullet \big( [\wtil X \to X] - [\Pb_\Ec(\Oc(\Ec) \oplus \Oc) \to X] \big) \\
&+ e(E)^{n+1} \bullet [\Pb(E \oplus \Oc) \to X]^{n+1}
\end{align*}
in $\PCob^*(X)$. \qed
\end{lem}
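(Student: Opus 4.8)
The plan is to obtain the truncated formula as a purely formal consequence of Lemma~\ref{BlowUpFormulaLem}, iterated $n+1$ times. To keep the bookkeeping light I would abbreviate $\alpha := [\wtil X \to X] - [\Pb_\Ec(\Oc(\Ec) \oplus \Oc) \to X]$ and $\beta := [\Pb(E \oplus \Oc) \to X]$, both regarded as elements of the ring $\PCob^*(X) = \PCob^*(X \to X)$, so that Lemma~\ref{BlowUpFormulaLem} becomes the single relation $1_X = \alpha + e(E) \bullet \beta$. The key structural fact I would invoke is that the product on the cohomology ring is given by $[V \to X] \bullet [W \to X] = [V \times_X W \to X]$ (see Definition~\ref{UnivAPrecobDef}), hence is commutative; this is what allows powers of $e(E)$ and of $\beta$ to be collected on the left in the claimed ordering.

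I would then run an induction on $n$. The base case $n = 0$ is literally Lemma~\ref{BlowUpFormulaLem}. For the inductive step I would assume
\[
1_X = \sum_{i=0}^n e(E)^i \bullet \beta^i \bullet \alpha + e(E)^{n+1} \bullet \beta^{n+1}
\]
and expand only the remainder term: multiplying the relation $1_X = \alpha + e(E) \bullet \beta$ by $e(E)^{n+1} \bullet \beta^{n+1}$ and using commutativity of $\bullet$ gives $e(E)^{n+1} \bullet \beta^{n+1} = e(E)^{n+1} \bullet \beta^{n+1} \bullet \alpha + e(E)^{n+2} \bullet \beta^{n+2}$. Substituting this back into the inductive hypothesis immediately yields the claimed identity with $n$ replaced by $n+1$.

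There is essentially no obstacle here; the only point worth flagging is the one the surrounding text already anticipates, namely that we do \emph{not} know $e(E)$ to be nilpotent, so the iteration does not terminate and we are forced to carry the residual term $e(E)^{n+1} \bullet \beta^{n+1}$ at every stage. This is precisely why the statement is only a ``truncated'' analogue of \cite{annala-chern} Lemma~4.5, rather than the clean lift $1_X = \sum_i e(E)^i \bullet \beta^i \bullet \alpha$ that one would get by passing to the limit once nilpotence is available.
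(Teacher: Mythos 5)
Your proof is correct and matches the paper's intended argument: the paper leaves this lemma with a \qed, the point being exactly that it follows by formally iterating the relation of Lemma \ref{BlowUpFormulaLem} in the commutative ring $\PCob^*(X)$ while carrying the remainder term, since nilpotence of $e(E)$ is unavailable. Your induction, including the remark that commutativity of $\bullet$ justifies collecting the powers of $e(E)$ and $[\Pb(E\oplus\Oc)\to X]$, is precisely this truncated version of the argument of \cite{annala-chern} Lemma 4.5.
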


Let us define
$$\eta_n := \sum_{i=0}^n e(E)^i \bullet [\Pb_{\Pb(E)}(E \oplus \Oc) \to \Pb(E)]^i \bullet \big( [\wtil X \to \Pb(E)] - [\Pb_\Ec(\Oc(\Ec) \oplus \Oc) \to \Pb(E)] \big)$$
in $\PCob^*(\Pb(E))$. Since $e(E) \in \PCob^*(\Pb(E))$ is nilpotent, $\eta_n$ stabilize for $n \gg 0$, and as
$$1_X - \pi_!(\eta_n) = e(E)^{n+1} \bullet [\Pb(E \oplus \Oc) \to X]^{n+1} \in \PCob^*(X),$$
we conclude that also the right hand side stabilizes when $n$ tends to infinity. We can therefore make the following definition.

\begin{defn}
We define
$$\epsilon_{E, X} \in \PCob^0(X)$$
as the limit of $e(E)^n \bullet [\Pb(E \oplus \Oc) \to X]^n$ when $n \to \infty$. Define $n_{X,E} \geq 0$ as the smallest integer $n$ such that $\epsilon_{E, X} = e(E)^n \bullet [\Pb(E \oplus \Oc) \to X]^n$. 
\end{defn}

\begin{defn}
We define the bivariant theory $\PCob'^\bullet$ as the quotient of $\PCob^\bullet$ by the bivariant ideal $\Ic_\epsilon$ generated by all the elements $\epsilon_{E,X}$, where $X \in \Cc_a$ and $E$ is a vector bundle on $X$.
\end{defn}

We record the following expression of $\Ic_\epsilon(X \to Y)$, which might be useful.

\begin{prop}
Let $X \to Y$ be a map in $\Cc_a$. Then $\Ic_\epsilon(X \to Y)$ is the subgroup of $\PCob^\bullet(X \to Y)$ generated by
$$f_* \big( e(E)^n \bullet [\Pb(E \oplus \Oc) \to V]^n \bullet 1_{V/Y}\big)$$
where $f: V \to X$ is any projective morphism so that $\Lb_{V/Y}$ has Tor-dimension 1, $E$ is a vector bundle on $V$ and $n \geq n_{E,V}$.
\end{prop}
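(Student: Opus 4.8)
The plan is to prove that the subgroup $G(X\to Y)\subseteq\PCob^\bullet(X\to Y)$ generated by the listed elements equals $\Ic_\epsilon(X\to Y)$, by a two-sided inclusion. First note that, since $e(E)^n\bullet[\Pb(E\oplus\Oc)\to V]^n$ equals the stable value $\epsilon_{E,V}$ for every $n\geq n_{E,V}$, a generator of $G(X\to Y)$ is the same datum as $f_*\bigl(\epsilon_{E,V}\bullet 1_{V/Y}\bigr)$, with $f\colon V\to X$ projective, $\Lb_{V/Y}$ of Tor-dimension $1$ and $E$ a vector bundle on $V$. The inclusion $G(X\to Y)\subseteq\Ic_\epsilon(X\to Y)$ is then immediate from the bivariant ideal axioms: $\epsilon_{E,V}$ is a defining generator of $\Ic_\epsilon$, so $\epsilon_{E,V}\bullet 1_{V/Y}\in\Ic_\epsilon(V\to Y)$, and pushing forward along the projective (hence confined) morphism $f$ keeps us inside $\Ic_\epsilon(X\to Y)$. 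For the reverse inclusion it suffices to verify that the assignment $(X\to Y)\mapsto G(X\to Y)$ is itself a bivariant ideal containing all the classes $\epsilon_{E,X}$ — which it does, taking $V=X$, $f=\mathrm{id}_X$, $Y=X$, since then $f_*(\epsilon_{E,X}\bullet 1_{X/X})=\epsilon_{E,X}$ — because minimality of $\Ic_\epsilon$ then forces $\Ic_\epsilon\subseteq G$.

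Two elementary observations make the ideal verification routine. The first is that $\epsilon$ is natural under arbitrary pullback: for any $\psi\colon W\to V$ in $\Cc_a$ one has $\psi^*\epsilon_{E,V}=\epsilon_{\psi^*E,W}$. Indeed $\psi^*$ is a ring homomorphism that commutes with Euler classes and with the formation of projective bundles, so $\psi^*\epsilon_{E,V}=e(\psi^*E)^n\bullet[\Pb(\psi^*E\oplus\Oc)\to W]^n$ for every $n\geq n_{E,V}$; comparing this with the stable value $\epsilon_{\psi^*E,W}$ for $n\geq\max(n_{E,V},n_{\psi^*E,W})$ gives the equality. The second is a ``collapsing'' device: every $\nu\in\PCob^\bullet(W\to Y)$ is a $\Zb$-linear combination of classes $[T\to W]=t_*(1_{T/Y})$ with $t\colon T\to W$ projective and $\Lb_{T/Y}$ of Tor-dimension $1$, so the projection formula $c\bullet t_*(\mu)=t_*(t^*c\bullet\mu)$ combined with the first observation shows that any expression $h_*\bigl(\psi^*\epsilon_{E,V}\bullet\nu\bigr)$ reduces, term by term, to a $\Zb$-combination of generators $(ht)_*\bigl(\epsilon_{(\psi t)^*E,T}\bullet 1_{T/Y}\bigr)$ of $G$. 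Throughout one uses Proposition~\ref{StabilityOfQProjProp} and the fact that $\Fs_a$ is a genuine functoriality to know that projective morphisms and morphisms with Tor-dimension-$1$ cotangent complex are stable under the compositions and base changes that arise, so that the resulting symbols are legitimate generators.

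Granting these tools, closure of $G$ under confined pushforward is trivial (compose the projective morphisms); closure under pullback along a Cartesian square follows from the base-change compatibility $k^*f_*=f'_*k'^*$ together with naturality of $\epsilon$ and of orientations; and closure under right multiplication by $\PCob^\bullet(Y\to Z)$ follows from the projection formula, writing $1_{V/Y}\bullet\xi$ as a combination of classes $[T\to V]$ and invoking the collapsing device. The one step that requires more than formal juggling — and where I expect the only (mild) obstacle to lie — is closure under left multiplication: given $\eta\in\PCob^\bullet(X'\to X)$ one first uses the mixed identity $\eta\bullet f_*(\zeta)=\widetilde f_*\bigl(f^*\eta\bullet\zeta\bigr)$ to reduce to $\widetilde f_*\bigl(f^*\eta\bullet\epsilon_{E,V}\bullet 1_{V/Y}\bigr)$, and must then commute the cohomology class $\epsilon_{E,V}$ past $f^*\eta$; this is where commutativity of the bivariant theory $\PCob^\bullet$ enters, via the identity $\mu\bullet c=r^*c\bullet\mu$ for a cohomology class $c$ pulled back along $r$ (immediate on generators, since fibre products are symmetric), after which the collapsing device finishes the case. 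Assembling the four closures shows $G$ is a bivariant ideal containing the $\epsilon_{E,X}$, hence $\Ic_\epsilon=G$, which is the assertion. Overall the argument is bookkeeping once the naturality of the stabilized classes $\epsilon_{E,V}$ under pullback is in place.
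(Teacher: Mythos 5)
Your proposal is correct and follows essentially the same route as the paper: both reduce the statement to checking that the explicitly described groups form a bivariant ideal containing the classes $\epsilon_{E,X}$ (whence equality with $\Ic_\epsilon$ by minimality), and in both the key input is compatibility of the stabilized data with pullback — your naturality $\psi^*\epsilon_{E,V}=\epsilon_{\psi^*E,W}$ is just a repackaging of the paper's inequality $n_{E,W}\le n_{E,V}$. Your treatment of left multiplication via the projection formula, commutativity, and decomposition into classes $[T\to W]$ is only a mild rephrasing of the paper's explicit derived Cartesian diagram computation, so no genuinely different ideas are involved.
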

\begin{proof}
It is enough to show that these groups form a bivariant ideal, i.e., they are stable under pushforwards, pullbacks and left and right multiplication by $\PCob^\bullet$. Stability under pullbacks and pushforwards is shown similarly to \cite{annala-cob} Lemma 3.8 using the fact that for $W \to V$, the inequality
$$n_{E, W} \leq n_{E, V}$$
holds.

To prove stability under left multiplication, it suffices to consider multiplication by 
$$[W \stackrel h \to Z] \in \Omega^\bullet(Z \to X).$$
To investigate this, we form the derived Cartesian diagram
$$
\begin{tikzcd}
V \arrow[]{r}{f} & X \arrow[]{r} & Y \\
Z' \arrow[]{r}{f'} \arrow[]{u}[swap]{g'} & Z \arrow[]{u}[swap]{g} \\
W' \arrow[]{r}{f''} \arrow[]{u}[swap]{h'} & W \arrow[]{u}[swap]{h} 
\end{tikzcd}
$$
and since a direct computation shows that
\begin{align*}
&[W \to Z] \bullet f_* \big( e(E)^n \bullet [\Pb_V(E \oplus \Oc) \to V]^n \bullet 1_{V/Y}\big) \\
=& (h \circ f'')_* \big( e(E)^n \bullet [\Pb_{W'}(E \oplus \Oc) \to W']^n \bullet 1_{W'/Y}\big),
\end{align*}
stability under left multiplication follows from the inequality $n_{E,W'} \leq n_{E,V}$. The proof that these groups are stable under right multiplication is similar.
\end{proof}

Note that by construction $\PCob'^\bullet$ satisfies the following.

\begin{lem}\label{BlowUpLiftLem}
Suppose $X$ is a finite dimensional divisorial Noetherian derived scheme, and let $E$ be a vector bundle on $X$. Then the class
$${1_{\Pb(E)} - e(\Oc(-1)) \bullet [\Pb_{\Pb(E)}(\Oc(-1) \oplus \Oc) \to \Pb(E)] \over 1_{\Pb(E)} - e(E) \bullet [\Pb_{\Pb(E)}(E \oplus \Oc) \to \Pb(E)]} \bullet e(Q) \in \PCob'^\bullet\big(\Pb(E)\big)$$
pushes forward to $1_X$.
\end{lem}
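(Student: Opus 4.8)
The plan is to push the displayed class forward along $\pi\colon\Pb(E)\to X$ by first transporting everything to the derived blow-up $\wtil X=\bl_Z(X)$, where the ``fraction'' collapses to a very simple expression, and then recognizing the result on $X$ as precisely the geometric series of Lemma \ref{TruncatedBlowUpLiftLem}. Throughout, for brevity I write $a:=e(E)\bullet[\Pb(E\oplus\Oc)\to X]\in\PCob'^\bullet(X)$ and $b:=[\wtil X\to X]-[\Pb_\Ec(\Oc(\Ec)\oplus\Oc)\to X]\in\PCob'^\bullet(X)$, so that the claim to be proved is that $\pi_!$ of the displayed class equals $\tfrac{1}{1-a}\bullet b$, which in turn equals $1_X$.

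First I would check that all the fractions involved are legitimate. On $\Pb(E)$ the tautological sequence $0\to\Oc(-1)\to\pi^*E\to Q\to 0$, together with multiplicativity of Euler classes in exact sequences (\cite{annala-chern} Lemma 4.1) and Lemma \ref{NilpChernLem}, shows that $e(\pi^*E)=e(\Oc(-1))\bullet e(Q)$ is nilpotent in $\PCob'^\bullet(\Pb(E))$, hence so is $e(\pi^*E)\bullet[\Pb_{\Pb(E)}(\pi^*E\oplus\Oc)\to\Pb(E)]$, so the denominator is invertible with inverse the finite geometric series. On $X$, the element $a$ is nilpotent \emph{precisely because} $\epsilon_{E,X}=0$ in $\PCob'^\bullet$; this is the only place the passage from $\PCob^\bullet$ to $\PCob'^\bullet$ enters, and it is what makes both $\tfrac{1}{1-a}$ legitimate and the telescoping below collapse.

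Next comes the geometric input. By Proposition \ref{BlowUpOfVectBundleSectionProp}, $\wtil X$ is the derived vanishing locus of the tautological section $\tilde s$ of $Q$ inside $\Pb(E)$; writing $\rho\colon\wtil X\hookrightarrow\Pb(E)$ for this quasi-smooth closed embedding (of codimension $r-1$), we get $e(Q)=\rho_*\big(1_{\wtil X/\Pb(E)}\big)$, an equivalence $\rho^*\Oc(-1)\simeq\Oc(\Ec)$, and the identification $\wtil\pi:=\pi\circ\rho\colon\wtil X\to X$ with the (projective, quasi-smooth) blow-up structure map. Using $e(Q)=\rho_*(1_{\wtil X/\Pb(E)})$, the projection formula, the stable compatibility of orientations $1_{\wtil X/\Pb(E)}\bullet 1_{\Pb(E)/X}=1_{\wtil X/X}$, and functoriality of pushforward, one gets for any $\gamma\in\PCob'^\bullet(\Pb(E))$ the identity $\pi_!\big(\gamma\bullet e(Q)\big)=\wtil\pi_!\big(\rho^*\gamma\big)$. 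Applying this with $\gamma$ the displayed fraction and pulling that fraction back along the ring homomorphism $\rho^*$: base change gives $\rho^*[\Pb_{\Pb(E)}(\pi^*E\oplus\Oc)\to\Pb(E)]=\wtil\pi^*[\Pb(E\oplus\Oc)\to X]$ and $e(\rho^*\pi^*E)=\wtil\pi^*e(E)$, so the denominator pulls back to $\wtil\pi^*(1_X-a)$ and hence $\rho^*\big(\tfrac1D\big)=\wtil\pi^*\big(\tfrac{1}{1-a}\big)$; and using $\rho^*\Oc(-1)\simeq\Oc(\Ec)$ together with $e(\Oc(\Ec))=[\Ec\hookrightarrow\wtil X]$ and the product formula in $\PCob'^\bullet(\wtil X)$, the numerator pulls back to $1_{\wtil X}-[\Pb_\Ec(\Oc(\Ec)\oplus\Oc)\to\wtil X]$. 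Feeding this into $\wtil\pi_!$ and using the projection formula $\wtil\pi_!(\delta\bullet\wtil\pi^*\epsilon)=\wtil\pi_!(\delta)\bullet\epsilon$ together with $\wtil\pi_!(1_{\wtil X})=[\wtil X\to X]$ and $\wtil\pi_!([\Pb_\Ec(\Oc(\Ec)\oplus\Oc)\to\wtil X])=[\Pb_\Ec(\Oc(\Ec)\oplus\Oc)\to X]$ yields
$$\pi_!\Big(\tfrac{1_{\Pb(E)}-e(\Oc(-1))\bullet[\Pb_{\Pb(E)}(\Oc(-1)\oplus\Oc)\to\Pb(E)]}{1_{\Pb(E)}-e(E)\bullet[\Pb_{\Pb(E)}(E\oplus\Oc)\to\Pb(E)]}\bullet e(Q)\Big)=\tfrac{1}{1-a}\bullet b\ \in\ \PCob'^\bullet(X).$$
Finally, since $a$ is nilpotent in $\PCob'^\bullet(X)$, the right-hand side is the finite sum $\sum_{i\ge 0}a^i\bullet b$, and applying Lemma \ref{TruncatedBlowUpLiftLem} with $n$ large enough that $a^{n+1}=\epsilon_{E,X}=0$ gives exactly $1_X$.

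I expect no deep obstacle here: the content is (i) the identification $e(Q)=[\wtil X\hookrightarrow\Pb(E)]$ reducing the pushforward along $\pi$ to one along $\wtil\pi$ with the fraction pulled back to $\wtil X$, where $\Oc(-1)|_{\wtil X}\simeq\Oc(\Ec)$ makes it simplify, and (ii) matching the outcome with Lemma \ref{TruncatedBlowUpLiftLem}. The hard part will simply be keeping the bivariant bookkeeping honest — in particular the two projection-formula manipulations, the compatibility of the formal ``fraction'' with $\rho^*$, and making explicit that the passage to $\PCob'^\bullet$ is used exactly to force $a$ to be nilpotent on $X$.
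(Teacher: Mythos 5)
Your proof is correct and follows essentially the paper's intended argument: the paper's one-line proof likewise combines Proposition \ref{BlowUpOfVectBundleSectionProp} (the derived vanishing locus of the tautological section of $Q$ is $\bl_Z(X)$, with $\Oc(-1)$ restricting to $\Oc(\Ec)$) with Lemma \ref{TruncatedBlowUpLiftLem} and the defining relation $\epsilon_{E,X}=0$ of $\PCob'^\bullet$. The only cosmetic difference is that you expand the geometric series on $X$ after pushing forward through $\wtil X$ (using nilpotence of $e(E)\bullet[\Pb(E\oplus\Oc)\to X]$ in $\PCob'^\bullet(X)$, which indeed follows from $\epsilon_{E,X}=0$), whereas the paper identifies the displayed class with the stabilized class $\eta_n$ directly on $\Pb(E)$, where only the nilpotence of $e(\pi^*E)$ is needed before passing to the quotient.
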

\begin{proof}
Indeed, this follows from Lemma \ref{TruncatedBlowUpLiftLem}, as derived vanishing loci of $Q$ correspond to derived blow ups of $X$.
\end{proof}

We constructed $\PCob'^\bullet$ so that the following result would be true.

\begin{prop}\label{NilpEulerProp}
Let $X \in \Cc_a$, and let $E$ be a vector bundle on $X$. Then
$$e(E) \in \PCob'^*(X)$$
is nilpotent.
\end{prop}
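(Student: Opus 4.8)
The plan is to reduce the nilpotence of $e(E)$ on $X$ to the already-established nilpotence of Euler classes of \emph{line} bundles (Lemma~\ref{NilpChernLem}), by transporting the question to the projective bundle $\pi\colon \Pb(E)\to X$ and then pushing back down using the class produced in Lemma~\ref{BlowUpLiftLem}. First I would observe that on $\Pb(E)$ the tautological exact sequence $0\to\Oc(-1)\to\pi^*E\to Q\to 0$, together with the multiplicativity of Euler classes in short exact sequences (\cite{annala-chern} Lemma~4.1), gives
$$\pi^*e(E)=e(\pi^*E)=e\big(\Oc(-1)\big)\bullet e(Q)\in\PCob'^*\big(\Pb(E)\big).$$
Since $e(\Oc(-1))$ is nilpotent by Lemma~\ref{NilpChernLem}, so is $\pi^*e(E)$; fix an integer $N$ with $\pi^*\big(e(E)^N\big)=e(\pi^*E)^N=0$. (If $\rank E=1$ the proposition is already Lemma~\ref{NilpChernLem}, so one may assume $\rank E\geq 2$, although the computation above in fact goes through verbatim for any rank since $Q$ is then a genuine vector bundle.) Note that $\Pb(E)\in\Cc_a$ by Proposition~\ref{AmpleFamilyInQuasiProjProp}, so all the objects in sight lie in the category on which $\PCob'^\bullet$ is defined.

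Next I would invoke Lemma~\ref{BlowUpLiftLem}, which provides a class $\theta\in\PCob'^\bullet\big(\Pb(E)\big)$ with $\pi_!(\theta)=1_X$; this is precisely the step for which the passage from $\PCob^\bullet$ to $\PCob'^\bullet$ was engineered, since $\theta$ only makes sense after inverting $1_{\Pb(E)}-e(E)\bullet[\Pb_{\Pb(E)}(E\oplus\Oc)\to\Pb(E)]$, which becomes invertible in $\PCob'^\bullet$ because the element $\epsilon_{E,\Pb(E)}$ has been killed. The projection formula for the proper morphism $\pi$ then closes the argument:
$$e(E)^N=e(E)^N\bullet \pi_!(\theta)=\pi_!\big(\pi^*(e(E)^N)\bullet\theta\big)=\pi_!(0)=0,$$
so $e(E)$ is nilpotent in $\PCob'^*(X)$, as claimed.

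I do not anticipate any serious obstacle here: the genuine difficulty — controlling the blow-up lift in the absence of an ample line bundle — was already overcome in the construction of $\PCob'^\bullet$ and in Lemma~\ref{BlowUpLiftLem}, and the remaining ingredients (multiplicativity of Euler classes, nilpotence of Euler classes of line bundles, and the bivariant projection formula) are all standard and recorded above. The only point requiring a little care is the bookkeeping of the bivariant degrees and the precise form of the projection formula — one must pair $\theta$ with the orientation $1_{\Pb(E)/X}$ before pushing forward, which is exactly what is packaged into the notation $\pi_!$ — but this is entirely routine.
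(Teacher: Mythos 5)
Your proposal is correct and is essentially the paper's own proof: the paper likewise obtains nilpotence of $e(E)$ in $\PCob'^*(\Pb(E))$ from the tautological sequence $0 \to \Oc(-1) \to \pi^*E \to Q \to 0$ together with Lemma \ref{NilpChernLem}, and then concludes via Lemma \ref{BlowUpLiftLem} and the projection formula, phrased as injectivity of $\pi^*\colon \PCob'^*(X) \to \PCob'^*(\Pb(E))$ --- which is exactly your computation $e(E)^N = e(E)^N \bullet \pi_!(\theta) = \pi_!\big(\pi^*(e(E)^N) \bullet \theta\big) = 0$ made explicit. The only (immaterial) quibble is your parenthetical on why $\theta$ exists: the denominator in Lemma \ref{BlowUpLiftLem} is already invertible in $\PCob^\bullet(\Pb(E))$ because $e(E)$ pulls back to $e(\Oc(-1)) \bullet e(Q)$ there, and it is the vanishing of $\epsilon_{E,X}$ on the base $X$ (via Lemma \ref{TruncatedBlowUpLiftLem}) that ensures $\pi_!(\theta) = 1_X$.
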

\begin{proof}
We already know that $e(E) \in \PCob'^*(\Pb(E))$ is nilpotent. Moreover, it follows from Lemma \ref{BlowUpLiftLem} and the projection formula that the pullback morphism $\PCob'^*(X) \to \PCob'^*(\Pb(E))$ is an injection, and therefore $e(E)$ is nilpotent in $\PCob'^*(E)$ as well.
\end{proof}

We also record the following useful observation, which also leads to universal properties for $\PCob'^\bullet$ and $\Omega'^\bullet$.

\begin{lem}\label{EquivalentModificationsLem}
Let $\nu: \PCob^\bullet \to \Bb^\bullet$ be a Grothendieck transformation. Then the following conditions are equivalent:
\begin{enumerate}
\item for all $X \in \Cc_a$ and all vector bundles $E$ over $X$, $\nu(e(E)) \in \Bb^*(X)$ is nilpotent; 
\item for all $X \in \Cc_a$ and all vector bundles $E$ over $X$, $\nu(\epsilon_{E,X}) = 0 \in \Bb^*(X)$; 
\item for all $X \in \Cc_a$ and all vector bundles $E$ over $X$, there exists $\eta \in \Bb^*(\Pb(E))$ pushing forward to $1_X \in \Bb^*(X)$;
\item for all $X \in \Cc_a$ and all vector bundles $E$ over $X$, the pullback morphism $\Bb^*(X) \to \Bb^*(\Pb(E))$ is injective.
\end{enumerate}
\end{lem}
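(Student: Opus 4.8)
The strategy is to prove the cycle of implications $(1)\Rightarrow(2)\Rightarrow(3)\Rightarrow(4)\Rightarrow(1)$, exploiting the identities already assembled in this subsection so that each arrow is short.

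\emph{Step 1: $(1)\Rightarrow(2)$.} By definition $\epsilon_{E,X}$ is the eventual value of $e(E)^n\bullet[\Pb(E\oplus\Oc)\to X]^n$ in $\PCob^\bullet(X)$; applying the Grothendieck transformation $\nu$ and using that it preserves the bivariant product and orientations, $\nu(\epsilon_{E,X})=\nu(e(E))^n\bullet\nu([\Pb(E\oplus\Oc)\to X])^n$ for $n\gg0$. If $\nu(e(E))$ is nilpotent then this factor, and hence the whole product, vanishes for $n$ large, so $\nu(\epsilon_{E,X})=0$.

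\emph{Step 2: $(2)\Rightarrow(3)$.} This is where the work of the subsection pays off. Apply $\nu$ to the class
$$\eta_n=\sum_{i=0}^n e(E)^i\bullet[\Pb_{\Pb(E)}(E\oplus\Oc)\to\Pb(E)]^i\bullet\big([\wtil X\to\Pb(E)]-[\Pb_\Ec(\Oc(\Ec)\oplus\Oc)\to\Pb(E)]\big)$$
living in $\PCob^\bullet(\Pb(E))$. From Lemma~\ref{TruncatedBlowUpLiftLem} (pulled back along $\pi\colon\Pb(E)\to X$ and using the projection formula as in the construction of $\epsilon_{E,X}$) one has $1_X-\pi_!(\eta_n)=e(E)^{n+1}\bullet[\Pb(E\oplus\Oc)\to X]^{n+1}$ in $\PCob^\bullet(X)$, so applying $\nu$ and taking $n\geq n_{E,X}$ gives $1_X-\pi_!(\nu(\eta_n))=\nu(\epsilon_{E,X})=0$. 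Thus $\eta:=\nu(\eta_n)\in\Bb^\bullet(\Pb(E))$ pushes forward to $1_X$. (Equivalently one can take the image under $\nu$ of the explicit class of Lemma~\ref{BlowUpLiftLem}, which factors through $\PCob'^\bullet$ precisely when $(2)$ holds.)

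\emph{Step 3: $(3)\Rightarrow(4)$.} This is the projection formula argument. If $\eta\in\Bb^\bullet(\Pb(E))$ satisfies $\pi_!(\eta)=1_X$, then for any $\alpha\in\Bb^\bullet(X\to Y)$ with $\pi^*\alpha=0$ we get $\alpha=1_X\bullet\alpha=\pi_!(\eta)\bullet\alpha=\pi_!(\eta\bullet\pi^*\alpha)=\pi_!(0)=0$, so $\pi^*$ is injective on all bivariant groups, in particular on $\Bb^*(X)$.

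\emph{Step 4: $(4)\Rightarrow(1)$.} On $\Pb(E)$ the tautological exact sequence $0\to\Oc(-1)\to E\to Q\to0$ together with multiplicativity of Euler classes in exact sequences gives $\nu(e(E))=\nu(e(\Oc(-1)))\bullet\nu(e(Q))$ in $\Bb^*(\Pb(E))$; since $\nu(e(\Oc(-1)))$ is nilpotent (apply $\nu$ to Lemma~\ref{NilpChernLem}) and the factors commute, $\pi^*\nu(e(E))=\nu(e(E|_{\Pb(E)}))$ is nilpotent in $\Bb^*(\Pb(E))$. By $(4)$ the map $\pi^*\colon\Bb^*(X)\to\Bb^*(\Pb(E))$ is injective and a ring homomorphism, so $\nu(e(E))$ is nilpotent in $\Bb^*(X)$.

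\emph{Main obstacle.} The only delicate point is Step~2: one must be careful that the push–pull identity $1_X-\pi_!(\eta_n)=e(E)^{n+1}\bullet[\Pb(E\oplus\Oc)\to X]^{n+1}$ really does follow from Lemma~\ref{TruncatedBlowUpLiftLem} by bivariant pullback along $\pi$ and the projection formula, and that the stabilization index can be taken to be $n_{E,X}$ independently of $\nu$ — but this is exactly the computation already performed when $\epsilon_{E,X}$ was defined, so it transports verbatim through the Grothendieck transformation. Everything else is a formal manipulation with the projection formula and multiplicativity of Euler classes.
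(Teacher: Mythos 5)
Your proof is correct and follows essentially the same route as the paper's (terse) argument: $1\Rightarrow 2\Rightarrow 3$ directly from the definition of $\epsilon_{E,X}$ and the identity $1_X-\pi_!(\eta_n)=e(E)^{n+1}\bullet[\Pb(E\oplus\Oc)\to X]^{n+1}$, $3\Rightarrow 4$ by the projection formula (the paper's Lemma \ref{BivInjPullbackLem} is the precise form), and $4\Rightarrow 1$ via the splitting $e(E)=e(\Oc(-1))\bullet e(Q)$ on $\Pb(E)$. The only cosmetic point is in Step 4: rather than invoking commutation of the factors in $\Bb^*(\Pb(E))$ (which the bivariant axioms do not guarantee), it is cleaner to observe, as the paper does, that $e(\Oc(-1))\bullet e(Q)$ is already nilpotent in the commutative ring $\PCob^*(\Pb(E))$ and then apply $\nu$.
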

\begin{proof}
The implications $1 \Rightarrow 2 \Rightarrow 3$ follow directly from the definition of $\epsilon_{E,X}$. The implication $3 \Rightarrow 4$ follows easily from the projection formula, and the implication $4 \Rightarrow 1$ follows from the fact that
$$\nu \big(e(E) \big) = \nu\big(e(\Oc(-1)) \bullet e(Q) \big) \in \Bb^*(\Pb(E)),$$
which is clearly nilpotent.
\end{proof}

We can then prove the following universal properties (compare this to Theorem \ref{UnivPropOfPCobThm} and Corollary \ref{UnivPropOfCobCor}).

\begin{thm}\label{UnivPropOfModiefiedCobThm}
The bivariant theory $\PCob'^\bullet$ ($\Omega'^\bullet$) is the universal stably oriented additive bivariant theory with functoriality $\Fs_a$ satisfying the section and the formal group law (and the snc) axioms where the pullback morphism along $\Pb(E) \to X$ is injective for all $X \in \Cc_a$ and all vector bundles $E$ on $X$. \qed
\end{thm}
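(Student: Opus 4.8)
The plan is to derive this from the universal properties already established for $\PCob^\bullet$ (Theorem \ref{UnivPropOfPCobThm}) and $\Omega^\bullet$ (Corollary \ref{UnivPropOfCobCor}), together with the characterization in Lemma \ref{EquivalentModificationsLem}, using the general principle that the quotient of a bivariant theory by a bivariant ideal inherits the expected universal property. I will write the argument for $\PCob'^\bullet$; the case of $\Omega'^\bullet$ is obtained by the same steps, with ``section and formal group law axioms'' replaced by ``section, formal group law and snc axioms'', with Corollary \ref{UnivPropOfCobCor} in place of Theorem \ref{UnivPropOfPCobThm}, and with $\Omega'^\bullet$ regarded as the quotient of $\PCob'^\bullet$ by the snc relations (equivalently, the quotient of $\Omega^\bullet$ by the image of $\Ic_\epsilon$).

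First I would check that $\PCob'^\bullet$ itself belongs to the class in question. Being a bivariant quotient of $\PCob^\bullet$, it is automatically a stably oriented additive bivariant theory with functoriality $\Fs_a$. The formal group law axiom descends: nilpotence of Euler classes of line bundles is preserved by any quotient, and the image of the power series $F$ from Theorem \ref{PCobFGLThm} is a formal group law over $\PCob'^*\big(\Spec(\Zb)\big)$ computing $e(\Ls_1 \otimes \Ls_2)$; the section axiom descends exactly as the corresponding inheritance statements in \cite{annala-pre-and-cob}. Finally, the pullback along $\Pb(E) \to X$ is injective in $\PCob'^\bullet$ for every $X \in \Cc_a$ and every vector bundle $E$ on $X$: this is Proposition \ref{NilpEulerProp} (alternatively, apply Lemma \ref{EquivalentModificationsLem} to the quotient transformation $\nu_0 \colon \PCob^\bullet \to \PCob'^\bullet$, for which $\nu_0(\epsilon_{E,X}) = 0$ holds since $\epsilon_{E,X}$ lies in $\Ic_\epsilon = \ker(\nu_0)$, so that condition $4$ of that lemma holds).

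Next I would produce the comparison map. Let $\Bb^\bullet$ be any stably oriented additive bivariant theory with functoriality $\Fs_a$ satisfying the section and formal group law axioms and such that pullback along $\Pb(E) \to X$ is injective for all $X \in \Cc_a$ and all vector bundles $E$ on $X$. By Theorem \ref{UnivPropOfPCobThm} there is a unique orientation-preserving Grothendieck transformation $\nu \colon \PCob^\bullet \to \Bb^\bullet$. The injectivity hypothesis on $\Bb^\bullet$ is condition $4$ of Lemma \ref{EquivalentModificationsLem} for $\nu$, so condition $2$ also holds: $\nu(\epsilon_{E,X}) = 0$ for all $X$ and $E$. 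Since a Grothendieck transformation commutes with bivariant pushforward, pullback and products, it annihilates the bivariant ideal generated by the classes $\epsilon_{E,X}$, which is precisely $\Ic_\epsilon$; hence $\nu$ factors through an orientation-preserving Grothendieck transformation $\bar\nu \colon \PCob'^\bullet = \PCob^\bullet / \Ic_\epsilon \to \Bb^\bullet$. Uniqueness of $\bar\nu$ follows because the quotient transformation $\PCob^\bullet \to \PCob'^\bullet$ is surjective on each bivariant group: two lifts $\bar\nu_1, \bar\nu_2$ would pull back to two orientation-preserving Grothendieck transformations $\PCob^\bullet \to \Bb^\bullet$, which coincide by Theorem \ref{UnivPropOfPCobThm}, and surjectivity then forces $\bar\nu_1 = \bar\nu_2$.

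The only real content beyond bookkeeping is the verification that $\PCob'^\bullet$ lies in the restricted class, and this has already been isolated as Proposition \ref{NilpEulerProp}; so I expect no serious obstacle. The one point deserving a sentence of care is the assertion that the section axiom (and, for $\Omega'^\bullet$, the snc axiom) descends along the quotient by $\Ic_\epsilon$, which should be handled exactly as the analogous descent statements in \cite{annala-pre-and-cob}.
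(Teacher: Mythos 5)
Your proposal is correct and follows essentially the route the paper intends: the theorem is stated with only a \qed because it is meant to follow exactly as you argue, from Theorem \ref{UnivPropOfPCobThm} (resp.\ Corollary \ref{UnivPropOfCobCor}) together with Lemma \ref{EquivalentModificationsLem}, which shows any transformation $\nu\colon \PCob^\bullet \to \Bb^\bullet$ with $\Bb^\bullet$ in the restricted class kills the classes $\epsilon_{E,X}$ and hence the bivariant ideal $\Ic_\epsilon$, so it factors uniquely through $\PCob'^\bullet = \PCob^\bullet/\Ic_\epsilon$ (and through $\Omega'^\bullet$ after further imposing the snc relations), while membership of $\PCob'^\bullet$ itself in the class is the content of Lemma \ref{BlowUpLiftLem} and Proposition \ref{NilpEulerProp}. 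Your care about the descent of the section, formal group law, and snc axioms along the quotient is appropriate and handled as in \cite{annala-pre-and-cob}; no gap.
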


In particular both $\PCob'^\bullet$ and $\Omega'^\bullet$ admit a unique orientation preserving Grothendieck transformation to the bivariant algebraic K-theory of derived schemes restricted to $\Fs_a$. We end with the following natural question.

\begin{quest}
Does the bivariant theory $\PCob^\bullet$ coincide with $\PCob'^\bullet$? In other words, does $\PCob^\bullet$ satisfy any of the equivalent conditions of Lemma \ref{EquivalentModificationsLem}?
\end{quest}

\section{Projective bundle formula and applications}\label{GeneralPBFSect}

In this section we prove the projective bundle formula (Theorem \ref{GeneralPBFThm}) for $\PCob'^\bullet$ (in fact for all bivariant theories that are quotients of $\PCob'^\bullet$) and use this to define Chern classes of vector bundles in $\PCob'^*$ satisfying good properties by Theorem \ref{GeneralChernClassThm}. As a standard corollary, we obtain a generalized version of the cohomological Conner--Floyd theorem (Theorem \ref{GeneralCFThm}) and the cohomological Grothendieck--Riemann--Roch theorem (Theorem \ref{GeneralGRRThm}).

The proofs use similar ideas as the proofs of the corresponding claims in \cite{annala-chern}. The two main differences are as follows
\begin{enumerate}
\item since we can no longer assume the existence of an ample line bundle, many of the old proofs had to be modified;
\item we have restructured the arguments from \cite{annala-chern} so that we first prove the projective bundle formula, and only after that construct Chern classes and prove their properties.
\end{enumerate}
The structure of this section is as follows: in Section \ref{FundEmbSubSect} we construct the fundamental embedding, which realizes the bivariant group of $\Pb(E) \to Y$ as a subgroup of the bivariant group of $\Pb^\infty \times X \to Y$. In Section \ref{GenPBFSubSect} we prove the projective bundle formula by first proving it for trivial projective bundles, and then use the fundamental embedding to prove the general case. Finally, in Section \ref{GenChernClassSubSect} we use the projective buldne formula to define and study Chern classes of vector bundles, and prove the standard corollaries (Conner--Floyd, Grothendieck--Riemann--Roch).

\subsection{The fundamental embedding}\label{FundEmbSubSect}

The purpose of this section is to construct an injective homomorphism 
$$\iota_E: \Bb^\bullet (\Pb(E) \to Y) \to \Bb^\bullet (\Pb^\infty \times X \to Y) := \colim_{n \geq 0} \Bb^\bullet (\Pb^n \times X \to Y),$$
called the \emph{fundamental embedding}, where $\Bb^\bullet$ is a quotient of $\PCob^\bullet$, $X \to Y$ is a morphism in $\Cc_a$, $E$ is a vector bundle on $X$, and the colimit on the right hand side is given by bivariant pushforwards along the evident sequence of inclusions
$$\Bb^\bullet(X \to Y) \hook \Bb^\bullet(\Pb^1 \times X \to Y) \hook \Bb^\bullet(\Pb^2 \times X \to Y) \to \cdots$$
Note that the structure morphisms are injections by \cite{annala-chern} Proposition 2.14.

To do so, we consider the diagram
$$
\begin{tikzcd}
&  \Bb^\bullet\big( \Pb^{\infty} \times X \to Y\big) \arrow[hook]{d}{j_E} \\
\Bb^\bullet\big(\Pb(E) \to Y\big) \arrow[hook]{r}{i_E} & \colim_{n \geq 0} \Bb^\bullet\big( \Pb(\Oc^{\oplus n} \oplus E) \to Y\big)
\end{tikzcd}
$$
where all the morphisms (the structure morphisms included) are given by pushing forward along the obvious inclusions, and injectivity of the  transformations follows from \cite{annala-chern} Proposition 2.14. The main step of the construction is the following result.

\begin{lem}\label{JSurjLem}
Let everything be as above. Then $j_E$ is a bijection.
\end{lem}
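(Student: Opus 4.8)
Since injectivity of $j_E$ (and of all the structure maps) is already granted by \cite{annala-chern} Proposition 2.14, only surjectivity has to be proven. Writing $\Pb^n\times X=\Pb(\Oc^{\oplus(n+1)})$ and reindexing, $j_E$ is the colimit of the pushforward maps $\Bb^\bullet(\Pb(\Oc^{\oplus m})\to Y)\to\Bb^\bullet(\Pb(\Oc^{\oplus m}\oplus E)\to Y)$ along the closed embeddings $\Pb(\Oc^{\oplus m})\hook\Pb(\Oc^{\oplus m}\oplus E)$. Working with one generator at a time, it therefore suffices to show that every class $[V\xrightarrow f \Pb(\Oc^{\oplus n}\oplus E)]$ becomes, after pushforward into $\Pb(\Oc^{\oplus N}\oplus E)$ for $N\gg n$, a finite sum of classes each of which factors through $\Pb(\Oc^{\oplus N})\hook\Pb(\Oc^{\oplus N}\oplus E)$ (these being exactly the elements of $\operatorname{im}j_E$ at that finite stage). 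Here $V$ is automatically divisorial and finite dimensional by Proposition \ref{AmpleFamilyInQuasiProjProp}.

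\textbf{Step 1: the horizontal degeneration.} Recall that $\Pb(\Oc^{\oplus n})\hook\Pb(\Oc^{\oplus n}\oplus E)$ is the derived vanishing locus of the tautological section of $\Oc(1)\otimes E$, while the open complement is the total space of a vector bundle over $\Pb(E)$; the $\Gb_m$-action rescaling the $E$-summand contracts that open stratum onto $\Pb(\Oc^{\oplus n})$. Pulling this family back along $f$ yields a rational map $\Pb^1\times V\dashrightarrow\Pb(\Oc^{\oplus n}\oplus E)$ with indeterminacy locus lying over $f^{-1}\Pb(E)$, which is the derived vanishing locus of $f^\ast$ of the tautological $E$-section and hence a quasi-smooth closed subscheme of $V$. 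Resolving this indeterminacy by the derived blow up of $\Pb^1\times V$ along it (Theorem \ref{BlowUpPropertiesThm}, together with Propositions \ref{BlowUpOfVectBundleSectionProp}--\ref{BlowUpIsResolutionSchemeProp}) produces a projective morphism $W\to\Pb^1\times\Pb(\Oc^{\oplus n}\oplus E)$ with $\Lb_{W/\Pb^1\times Y}$ of Tor-dimension $1$, whose fibre over $0$ is $V$ and whose fibre over $\infty$ is the sum of two virtual Cartier divisors: the strict transform $\bl_{f^{-1}\Pb(E)}(V)$, which (its generic locus mapping to the $[a:0]$-stratum) factors through $\Pb(\Oc^{\oplus n})$, and the exceptional divisor, which maps into $\Pb(E)$. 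Feeding this into the derived double point relation of Definition \ref{UnivPrecobDef} — and noting that the projectivized-normal-bundle correction term $\Pb_{D_1\cap D_2}(\Oc(D_1)\oplus\Oc)$ likewise maps into $\Pb(E)$ — gives
$$[V\to\Pb(\Oc^{\oplus n}\oplus E)]\equiv (\text{a sum of cycles factoring through }\Pb(E))\pmod{\operatorname{im}j_E}.$$
(Components of $V$ on which $f$ already lands in $\Pb(E)$ contribute nothing to Step 1 and are handed directly to Step 2 via additivity.)

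\textbf{Step 2: the anti-diagonal stratum.} It remains to show that classes factoring through $\Pb(E)$ also lie in $\operatorname{im}j_E$. For $[V'\xrightarrow g\Pb(E)]$ one uses the tautological embedding $g^\ast\Oc_{\Pb(E)}(-1)\hook g^\ast\pi^\ast E\hook\Oc^{\oplus N}_{V'}\oplus g^\ast\pi^\ast E$ classified by $\Pb(E)\hook\Pb(\Oc^{\oplus N}\oplus E)$ and deforms it, via a $\Pb^1$-family $\Oc(-1)\xrightarrow{(s\beta,\tau)}\Oc^{\oplus N}\oplus g^\ast\pi^\ast E$, towards a sub-line-bundle of $\Oc^{\oplus N}$; when $\beta\colon g^\ast\Oc_{\Pb(E)}(-1)\to\Oc^{\oplus N}$ is nowhere vanishing this gives, with no blow up at all, a cobordism showing $[V'\to\Pb(E)]=[V'\to\Pb(\Oc^{\oplus N})\hook\Pb(\Oc^{\oplus N}\oplus E)]$. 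In general one resolves the indeterminacy of the family along $g^{-1}V(\beta)$ by a derived blow up as in Step 1, so that the class is expressed modulo $\operatorname{im}j_E$ in terms of cycles supported over $g^{-1}V(\beta)$; choosing the defining sections of $\beta$ carefully via the ample family on $\Pb(E)$ and Lemma \ref{LocLem} forces these residual cycles to be of strictly smaller complexity, and a Noetherian induction closes the argument.

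\textbf{Main obstacle.} The real difficulty is Step 2. In \cite{annala-chern}, an ample line bundle lets one trivialize and choose $\beta$ nowhere vanishing at once, so the deformation is immediate; on a merely divisorial $V'$ (equivalently, when $E^\vee$ fails to be globally generated) this is impossible, the map $\beta$ genuinely degenerates, and one must run the deformation intrinsically with its blow-up resolutions, track the loci where the chosen sections fail to generate, and organize the resulting correction cycles so that the induction actually terminates. Verifying that every family constructed along the way is a legitimate cobordism cycle — projective over the target with relative cotangent complex of Tor-dimension $1$, and with all blow-up centres quasi-smooth — is routine given Theorem \ref{BlowUpPropertiesThm}, but the combinatorics of the induction is where the substance of the proof lies.
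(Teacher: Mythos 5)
Your Step 1 is plausible in outline (though not how the paper proceeds — the paper never reduces to cycles over $\Pb(E)$), but the real problem is Step 2, and there your proposal has a genuine gap. The sentence asserting that choosing the sections defining $\beta$ ``carefully via the ample family'' makes the residual cycles ``of strictly smaller complexity'' so that ``a Noetherian induction closes the argument'' is not an argument: no complexity measure is defined, and nothing forces the correction terms produced by resolving the indeterminacy to simplify in any well-founded sense — in these cobordism computations the residual terms are arbitrary projective cycles over the degeneracy locus, so an induction on support or on the locus where $\beta$ fails to generate has no visible termination. Since you yourself flag this as ``where the substance of the proof lies,'' the proposal stops short of proving the lemma precisely at its essential point: handling line bundles that are not globally generated on a merely divisorial scheme.

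The paper's proof avoids any such induction. First, Lemma \ref{PBClassControlledByLLem} shows that a generator $[V \to \Pb(\Oc^{\oplus n}\oplus E)]$ of the colimit depends only on $V$ and on the line bundle $\Ls = f^*\Oc(1)$, not on the chosen surjection $\Oc^{\oplus n}\oplus E^\vee \twoheadrightarrow \Ls$; one may therefore write $[V\to X;\Ls]$, and such a class manifestly lies in the image of $j_E$ whenever $\Ls$ is globally generated. For general $\Ls$ one blows up $\Pb^1\times V$ at $\infty\times D$, with $D$ the derived vanishing locus of a section of $\Ls$, sets $\Ls':=\Ls(H-\Ec)$ on the blow up $W$, and uses Proposition \ref{BlowUpIsResolutionSchemeProp} to produce surjections $\Oc^{\oplus n+2+i}\oplus E^\vee \twoheadrightarrow \Ms_i(W,\Ls')$ along the tower $P_i(W,\Ls')$; the resulting double point relations read
\begin{align*}
[P_i(V,\Ls)\to X;\Ms_i(V,\Ls)] &= [P_i(V,\Oc)\to X;\Ms_i(V,\Oc)] + e(\Ls)\bullet[P_{i+1}(V,\Ls)\to X;\Ms_{i+1}(V,\Ls)] \\
&\quad - e(\Ls)\bullet[\Pb_{P_i(V,\Oc)}(\Ls\oplus\Oc)\to X;\Ms_i(V,\Oc)].
\end{align*}
Iterating this recursion and invoking the nilpotence of $e(\Ls)$ — Lemma \ref{NilpChernLem}, which is exactly where the ample family and Lemma \ref{LocLem} enter, and which already holds in $\PCob^\bullet$ — terminates the process after finitely many steps and expresses $[V\to X;\Ls]$ as a finite combination of classes whose line bundles $\Ms_i(V,\Oc)$ are globally generated, hence visibly in the image of $j_E$. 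In short, the mechanism that replaces your hoped-for induction is the nilpotence of Euler classes of line bundles fed into one explicit degeneration; without identifying that mechanism, your Step 2 does not close.
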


Assuming this result for now, we can make the following definition.

\begin{defn}
Let everything be as above. Then we define the \emph{fundamental embedding} as $\iota_E := j_E^{-1} \circ i_E$.
\end{defn}

This construction has many good properties, of which we are going to record only a fraction below. Recall that for any vector bundle $E$ on $X$, the bivariant group $\Bb^\bullet(\Pb(E)  \to Y)$ has a natural structure of a $\Bb^*(X)$-module by
$$\alpha . \beta := \pi_E^*(\alpha) \bullet \beta,$$
where $\pi_E$ is the natural morphism $\Pb(E) \to X$.

\begin{prop}\label{FundamentalEmbProp}
Let everything be as above. Then
\begin{enumerate}
\item $\iota_E$ is a map of $\Bb^*(X)$-modules;
\item there is a well defined operator $e(\Oc(1)) \bullet -$ on $\Bb^\bullet(\Pb^\infty \times X \to Y)$, and moreover 
$$e(\Oc(1)) \bullet \iota_E(-) = \iota_E\big(e(\Oc(1)) \bullet -\big);$$
\item $\iota_E$ is compatible with bivariant multiplication from right in the sense that if $\alpha \in \Bb^\bullet(Y \to Z)$, then the square
$$
\begin{tikzcd}
\Bb^\bullet(\Pb(E) \to Y) \arrow[]{d}{- \bullet \alpha} \arrow[]{r}{\iota_E} & \Bb^\bullet(\Pb^\infty \times X \to Y) \arrow[]{d}{- \bullet \alpha}  \\
\Bb^\bullet(\Pb(E) \to Z) \arrow[hook]{r}{\iota_E} & \Bb^\bullet(\Pb^\infty \times X \to Z) 
\end{tikzcd}
$$
commutes;
\item $\iota_E$ is compatible with bivariant pullbacks;
\item if $\psi: E \hook F$ is an inclusion of vector bundles, then
$$\iota_E = \iota_F \circ \Pb(\psi).$$
\end{enumerate}
\end{prop}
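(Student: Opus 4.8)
The plan is to deduce all five assertions formally, using only the definition $\iota_E := j_E^{-1}\circ i_E$, the bijectivity of $j_E$ from Lemma \ref{JSurjLem}, and the standard bivariant axioms (the projection formula and compatibility of pushforward with pullback along independent squares). The unifying point is that $i_E$, $j_E$, and every structure map in the two colimits occurring in the definition of $\iota_E$ is a \emph{bivariant pushforward along a closed immersion}: each is obtained by applying $\Pb(-)$ to an inclusion of vector bundles such as $E\hook\Oc^{\oplus n}\oplus E$ or $\Oc^{\oplus m}\hook\Oc^{\oplus m}\oplus E$, and a closed immersion between objects of $\Cc_a$ lies in $\Cs_a$: it is proper, of finite type, and $\Oc$ is trivially relatively ample for it, while its source is again divisorial (restrict an affine-producing section from the target). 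Since $j_E$ is a bijection, any property of these pushforwards that is stable under composition and colimit passes to $\iota_E$.

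For (1): each closed immersion $\iota$ in question sits over $X$, so $\iota^*\pi_X^* = \pi_X^*$, and the projection formula $\iota_*(\iota^*\gamma\bullet\beta) = \gamma\bullet\iota_*(\beta)$ shows $\iota_*$ is $\Bb^*(X)$-linear; hence $i_E$, $j_E$ and the transition maps are, and therefore so is $\iota_E = j_E^{-1}\circ i_E$. For (3): the projection formula $\iota_*(\beta\bullet\alpha) = \iota_*(\beta)\bullet\alpha$ shows each $\iota_*$ (for both base morphisms $X\to Y$ and $X\to Z$) commutes with right multiplication by $\alpha\in\Bb^\bullet(Y\to Z)$, so the same holds after passing to colimits and composing with the bijections $j_E$. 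For (4): pushforward commutes with bivariant pullback along independent squares, all squares in sight are derived Cartesian, the projective bundles $\Pb(E)$, the products $\Pb^n\times X$ and the colimit $\Pb^\infty\times X$ base-change as expected, and the relevant pullbacks commute with the filtered colimits involved; combining these, $i_E$, $j_E$ and the transition maps are compatible with pullbacks, hence so is $\iota_E$.

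Property (2) needs one small additional input: if $\iota\colon P\hook P'$ is a closed immersion obtained by applying $\Pb(-)$ to a subbundle inclusion over $X$, then $\Oc_{P'}(1)$ restricts to $\Oc_P(1)$ — in the convention of the Conventions section, $\Pb(-)$ classifies rank-one subbundles, so the tautological subbundle of the smaller bundle is the restriction of that of the larger — and therefore, by naturality of Euler classes under derived pullback, $\iota^* e(\Oc_{P'}(1)) = e(\Oc_P(1))$, whence the projection formula gives $\iota_*\big(e(\Oc_P(1))\bullet\beta\big) = e(\Oc_{P'}(1))\bullet\iota_*(\beta)$. Applied to the transition maps $\Pb^n\times X\hook\Pb^{n+1}\times X$, this shows $e(\Oc(1))\bullet-$ descends to a well-defined operator on $\Bb^\bullet(\Pb^\infty\times X\to Y)$; applied to the closed immersions defining $i_E$ and $j_E$, it shows that both intertwine this operator, so $\iota_E = j_E^{-1}\circ i_E$ does as well.

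For (5): $\psi\colon E\hook F$ induces $\Oc^{\oplus n}\oplus E\hook\Oc^{\oplus n}\oplus F$, hence closed immersions $\Pb(\Oc^{\oplus n}\oplus E)\hook\Pb(\Oc^{\oplus n}\oplus F)$, which assemble to a pushforward $k_\psi$ between the colimits $\colim_n\Bb^\bullet(\Pb(\Oc^{\oplus n}\oplus E)\to Y)$ and $\colim_n\Bb^\bullet(\Pb(\Oc^{\oplus n}\oplus F)\to Y)$. The closed immersion $\Pb(E)\hook\Pb(\Oc^{\oplus n}\oplus F)$ factors both as $\Pb(E)\hook\Pb(F)\hook\Pb(\Oc^{\oplus n}\oplus F)$ and as $\Pb(E)\hook\Pb(\Oc^{\oplus n}\oplus E)\hook\Pb(\Oc^{\oplus n}\oplus F)$, and the two factorizations coincide since both come from $E\hook\Oc^{\oplus n}\oplus F$; functoriality of pushforward then gives $k_\psi\circ i_E = i_F\circ\Pb(\psi)$, and the analogous argument for $\Pb^n\times X\hook\Pb(\Oc^{\oplus(n+1)}\oplus F)$ gives $k_\psi\circ j_E = j_F$. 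Since $j_E$ and $j_F$ are bijective by Lemma \ref{JSurjLem}, we get $j_F^{-1}\circ k_\psi = j_E^{-1}$, and hence $\iota_F\circ\Pb(\psi) = j_F^{-1}\circ i_F\circ\Pb(\psi) = j_F^{-1}\circ k_\psi\circ i_E = j_E^{-1}\circ i_E = \iota_E$. None of this is deep; the only real work is the bookkeeping — pinning down which copy of $\Oc(1)$ is meant in (2), and checking the claimed factorizations of closed immersions in (5) — and I expect (5) to be the most error-prone step, with the base-change claim for $\Pb^\infty\times X$ used in (4) a close second.
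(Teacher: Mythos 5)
Your proof is correct and follows essentially the same route as the paper: items (1)--(4) are exactly the bivariant axioms (projection formula $A_{123}$, then $A_{12}$ and $A_{23}$) applied to the pushforwards defining $i_E$, $j_E$ and the transition maps, and your argument for (5) is the paper's commutative square with $k_\psi = \Pb(\psi')_*$ together with $k_\psi \circ j_E = j_F$ and Lemma \ref{JSurjLem}. The extra bookkeeping you supply (the restriction of $\Oc(1)$ in (2), the two factorizations of the closed immersion in (5)) is precisely what the paper leaves implicit, and it checks out.
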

\begin{proof}
The first and the second claim follow easily from bivariant projection formula $A_{123}$. The third and the fourth claim follow from $A_{12}$ and $A_{23}$ respectively. To prove the fifth claim, we consider the diagram
$$
\begin{tikzcd}
& \Bb^\bullet\big( \Pb^{\infty} \times X \to Y\big) \arrow[hook]{d}{j_E} \\
\Bb^\bullet\big(\Pb(E) \to Y\big) \arrow[hook]{r}{i_E} \arrow[hook]{d}{\Pb(\psi)_*} & \colim_{n \geq 0} \Bb^\bullet\big( \Pb(\Oc^{\oplus n} \oplus E) \to Y\big) \arrow[hook]{d}{\Pb(\psi')_*}\\
\Bb^\bullet\big(\Pb(F) \to Y\big) \arrow[hook]{r}{i_F} & \colim_{n \geq 0} \Bb^\bullet\big( \Pb(\Oc^{\oplus n} \oplus F) \to Y\big),
\end{tikzcd}
$$
where the square commutes by construction. Since $\Pb(\psi')_* \circ j_E$ is an isomorphism by Lemma \ref{JSurjLem}, it follows that also $\Pb(\psi')_*$ is an isomorphism, and the claim follows from the fact that $j_F = \Pb(\psi')_* \circ j_E$.
\end{proof}

Our next goal is to prove Lemma \ref{JSurjLem}. However, we need some preliminary results and definitions before giving the proof. The following observation shows how the hyperplane bundle $\Oc(1)$ on $\Pb(E)$ controls the bivariant cobordism classes.

\begin{lem}\label{PBClassControlledByLLem}
Let $X \to Y$ be a morphism in $\Cc_a$ and let $E$ a vector bundle on $X$. Let $V \to X$ be a projective morphism with $\Lb_{V/Y}$ of Tor-dimension 1, and let $s_1,s_2: E^\vee \to \Ls$ be two surjections of vector bundles on $V$ ($\Ls$ is a line bundle on $V$) inducing projective morphisms $f_1, f_2: V \to \Pb(E)$ over $X$. Then
$$[V \stackrel{f_1}{\to} \Pb(E)] = [V \stackrel{f_2}{\to} \Pb(E)]$$
in $\PCob^\bullet(\Pb(E) \to Y)$.
\end{lem}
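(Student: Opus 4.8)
The plan is to interpolate between $f_1$ and $f_2$ by a single cobordism over $\mathbb P^1$. First I would form the line bundle and sections that realize both maps simultaneously: on $V \times \mathbb P^1$ consider the sheaf $\mathcal L \boxtimes \mathcal O(1)$ and the section $t_0 s_1 + t_1 s_2 : E^\vee \boxtimes \mathcal O \to \mathcal L \boxtimes \mathcal O(1)$, where $[t_0:t_1]$ are the homogeneous coordinates. Over the point $0 = [1:0]$ this restricts to $s_1$ and over $\infty = [0:1]$ it restricts to $s_2$, but at a general point of $\mathbb P^1$ the combination $t_0 s_1 + t_1 s_2$ need no longer be surjective onto its target, so it does not globally define a map to $\mathbb P(E)$. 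The standard fix is to pass to the blow up: let $W \to V \times \mathbb P^1$ be the locus where $t_0 s_1 + t_1 s_2$ becomes a genuine surjection after twisting down by the ideal sheaf of its degeneracy locus — concretely, use Proposition~\ref{BlowUpOfVectBundleSectionProp} applied to the vector bundle $\underline{\mathrm{Hom}}(E^\vee, \mathcal L)\boxtimes\mathcal O(1)$ (or its relevant quotient) on $V \times \mathbb P^1$ with the section determined by $(s_1, s_2)$, so that $W$ is the derived blow up of $V \times \mathbb P^1$ along the closed locus where $s_1$ and $s_2$ ``disagree'', equipped with a map $g : W \to \mathbb P(E)$ extending $f_1$ and $f_2$ on the two special fibres.

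Next I would check the hypotheses of the derived double point relation of Definition~\ref{UnivPrecobDef} for the composite $W \to \mathbb P^1 \times X$ and $W \to \mathbb P^1 \times Y$. The blow up structure morphism is projective and quasi-smooth by Theorem~\ref{BlowUpPropertiesThm}(2), hence $\Lb_{W/\mathbb P^1\times Y}$ has Tor-dimension $1$ since $\Lb_{V/Y}$ does and quasi-smooth morphisms compose well (Proposition~\ref{QSmCharProp} and additivity of Tor-dimension). The fibre $W_0$ over $0$ is just $V$ with $g|_{W_0} = f_1$ (the blow up is an isomorphism away from the degeneracy locus and near $t_1 = 0$ the section is already $s_1$, up to the standard check that the center meets the fibre over $0$ in a virtual Cartier divisor; arranging $s_1$ to be a surjection globally, this center misses the fibre over $0$ entirely, so $W_0 \simeq V$), and likewise $W_\infty \simeq V$ with $g|_{W_\infty} = f_2$. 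Since $W_\infty$ is irreducible (a single copy of $V$, not a sum of two virtual Cartier divisors), the double point relation degenerates: $[W_0 \to \mathbb P(E)] = [W_\infty \to \mathbb P(E)]$, i.e. $[V \xrightarrow{f_1} \mathbb P(E)] = [V \xrightarrow{f_2} \mathbb P(E)]$ in $\PCob^\bullet(\mathbb P(E) \to Y)$, as desired.

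The main obstacle is the bookkeeping in the blow up step: one must verify that the two special fibres $W_0$ and $W_\infty$ are genuinely isomorphic to $V$ (rather than to $V$ blown up in some subscheme), which hinges on choosing the perturbation $t_0 s_1 + t_1 s_2$ so that its ``bad locus'' — the place where it fails to be the given kind of surjection — is disjoint from both $t_0 = 0$ and $t_1 = 0$. Because $s_1$ is surjective, the bad locus of $t_0 s_1 + t_1 s_2$ is automatically contained in $\{t_0 = 0\}$; and it is likewise contained in $\{t_1 = 0\}$ since $s_2$ is surjective; so it is contained in $\{t_0 = 0\} \cap \{t_1 = 0\} = \emptyset$. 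Hence the section is surjective everywhere and already defines a morphism $V \times \mathbb P^1 \to \mathbb P(E)$ over $X$ without any blow up at all, whose restrictions to $0$ and $\infty$ are $f_1$ and $f_2$; the double point relation applied to $W = V \times \mathbb P^1$ with its trivial fibre over $\infty$ then immediately gives the claim. (If one prefers not to rely on global surjectivity, the blow up version above still works, with the verification that the exceptional divisor misses both special fibres being the one nontrivial point.)
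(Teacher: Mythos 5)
Your blow-up construction (paragraphs one and two) is essentially the paper's own proof: the paper forms $s := x_0 s_1 + x_1 s_2 : E^\vee \to \Ls(1)$ on $\Pb^1 \times V$, observes that the derived vanishing locus $Z$ of the corresponding section is disjoint from the fibres over $0$ and $\infty$ precisely because $s_1$ and $s_2$ are surjective, and then uses Proposition \ref{BlowUpIsResolutionSchemeProp} to produce a surjection $E^\vee \to \Ls(1-\Ec)$ on $\bl_Z(\Pb^1\times V)$, hence a projective morphism to $\Pb^1 \times \Pb(E)$ restricting to $f_1$ over $0$ and $f_2$ over $\infty$; the (degenerate) double point relation of Definition \ref{UnivPrecobDef} then gives the claim. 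Your verification that the centre misses the two special fibres, and the Tor-dimension bookkeeping for $\Lb_{W/\Pb^1\times Y}$, are exactly the points the paper relies on. (One wording slip: the degeneracy locus sits over the locus where $f_1$ and $f_2$ \emph{agree} -- where $s_1$ and $s_2$ are proportional -- not where they ``disagree''.)

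However, the final paragraph, which you present as the actual conclusion, is wrong. Surjectivity of $s_1$ does not confine the bad locus of $t_0 s_1 + t_1 s_2$ to $\{t_0=0\}$: surjectivity onto a line bundle means nonvanishing, and a linear combination of two nonvanishing maps $E^\vee \to \Ls$ can vanish by cancellation. Concretely, take $E = \Oc^{\oplus 2}$, $\Ls = \Oc_V$, $s_1$ the first projection and $s_2 = -s_1$; both are surjections, $f_1 = f_2$, and $t_0 s_1 + t_1 s_2 = (t_0 - t_1)s_1$ vanishes on the entire fibre $V \times \{[1:1]\}$. More generally, at any point $v$ where $f_1(v) = f_2(v)$, i.e.\ $s_2(v) = c\,s_1(v)$ with $c \neq 0$, the combination vanishes at $(v,[-c:1])$, which lies over neither $0$ nor $\infty$. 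So the correct statement is only that the degeneracy locus is disjoint from the two special fibres -- which is all the argument needs, and which your second paragraph already establishes -- not that it is empty; the blow up is genuinely necessary whenever $f_1$ and $f_2$ agree somewhere, and should not be relegated to a parenthetical fallback. Deleting the shortcut and keeping the blow-up argument yields a correct proof, and it is the paper's proof.
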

\begin{proof}
Let us define a morphism
$$s := x_0 s_1 + x_1 s_2: E^\vee \to \Ls(1)$$
of vector bundles on $\Pb^1 \times V$. As the derived vanishing locus $Z$ of $s^\vee$ is disjoint from the fibres over $0$ and $\infty$, we can use Proposition \ref{BlowUpIsResolutionSchemeProp} to find a surjection 
$$E^\vee \to \Ls(1 - \Ec)$$
of vector bundles on $\bl_Z(\Pb^1 \times V)$ inducing  a projective morphism $\bl_Z(\Pb^1 \times V) \to \Pb^1 \times \Pb(E)$ which realizes the desired relation.
\end{proof}

The following notation will be useful in the proof.

\begin{defn}
Given a derived scheme $X$ and a line bundle $\Ls$ on $X$, we recursively define the following pairs:
$$\big(P_0(X,\Ls), \Ms_0(X,\Ls) \big) := \big(X, \Ls \big)$$
and 
$$\big(P_{i+1}(X,\Ls), \Ms_{i+1}(X,\Ls) \big) := \big(\Pb_{P_i(X,\Ls)} (\Ms_i(X,\Ls) \oplus \Oc), \Ms_i(X,\Ls)(H_{i+1}) \big),$$
where $\Oc(H_{i+1})$ is the hyperplane bundle on $\Pb_{P_i(X,\Ls)} \big(\Ms_i(X,\Ls) \oplus \Oc\big)$.
\end{defn}

\begin{proof}[Proof of Lemma \ref{JSurjLem}]
To simplify notation, we set
$$\Bb^\bullet( \Pb(\Oc^{\oplus \infty} \oplus E) \to Y) := \colim_{n \geq 0} \Bb^\bullet( \Pb(\Oc^{\oplus n} \oplus E) \to Y).$$
Moreover, by Lemma \ref{PBClassControlledByLLem}, given a projective morphism $V \to X$ with $\Lb_{V/Y}$ of Tor-dimension 1 and a line bundle $\Ls$ on $V$ that admits a surjection from $E^\vee \oplus \Oc^{\oplus n}$ for some $n$, there exists a unique class
$$[V \to X; \Ls] := [X \stackrel f \to \Pb(\Oc^{\oplus \infty} \oplus E)] \in \Bb^\bullet( \Pb(\Oc^{\oplus \infty} \oplus E) \to Y),$$
where $f^* (\Oc(1)) \simeq \Ls$. As $\Bb^\bullet$ was assumed to be a quotient of $\PCob^\bullet$, these classes generate, so it is enough to show that all $[V \to X; \Ls]$ lie in the image of $j_E$. Note that this is clear if $\Ls$ is globally generated.

Our argument is a variation of the arguments of Section 6.2 of \cite{annala-yokura}. Let $D$ be the derived vanishing locus of a global section of $\Ls$, $W$ the derived blow up of $\Pb^1 \times V$ at $\infty \times D$, and consider the line bundle $\Ls' := \Ls(H - \Ec)$ on $W$, where $H$ is the pullback of the hyperplane class on $\Pb^1$. Since the center of the blow up giving $W$ is the derived vanishing locus of a global section of $\Ls \oplus \Oc(H)$, there exists by Proposition \ref{BlowUpIsResolutionSchemeProp} a surjection 
$$\Oc(H) \oplus \Ls \twoheadrightarrow \Ms_0$$
of vector bundles on $W$. Hence, there exists a surjection of vector bundles
$$\Oc^{\oplus n + 2} \oplus E^\vee \twoheadrightarrow \Ms_0.$$
Moreover, since there exists a natural surjection
$$\Oc \oplus \Ms_i(W, \Ls') \twoheadrightarrow \Ms_{i+1}(W, \Ls')$$
on $P_{i+1}(W, \Ls')$, we can recursively find surjections
$$\Oc^{\oplus n + 2 + i + 1} \oplus E^\vee \twoheadrightarrow \Ms_{i+1}(W, \Ls'),$$
which give rise to projective morphisms $P_i(W,\Ls') \to \Pb^1 \times \Pb(\Oc^{\oplus n + 2 + i} \oplus E)$ for all $i \geq 0$.

Since the line bundle $\Ls'$ on $W$ restricts to
\begin{enumerate}
\item $\Ls$ on $0 \times V$;
\item $\Oc$ on the strict transform of $\infty \times V$;
\item $\Ms_1(V,\Ls)\vert_\Ec$ on the exceptional divisor $\Ec \simeq P_1(V, \Ls) \vert_D$,
\end{enumerate} 
we see that the morphisms $P_i(W,\Ls') \to \Pb^1 \times \Pb(\Oc^{\oplus n + 2 + i} \oplus E)$ give rise to the relations
\begin{align*}
[P_i(V, \Ls) \to X; \Ms_i(V, \Ls)] &= [P_i(V, \Oc) \to X; \Ms_i(V, \Oc)] \\
&+ e(\Ls) \bullet [P_{i+1}(V, \Ls) \to X; \Ms_{i+1}(V, \Ls)] \\
&- e(\Ls) \bullet [\Pb_{P_i(V, \Oc)}(\Ls \oplus \Oc) \to X; \Ms_i(V, \Oc)]
\end{align*}
in $\Bb^\bullet\big( \Pb(\Oc^{\oplus \infty} \oplus E) \to Y\big)$, and since $e(\Ls)$ is nilpotent, we compute that
\begin{align*}
[V \to X; \Ls] &= [V \to X; \Oc] + \sum_{i=1}^\infty e(\Ls)^i \bullet \Big([P_i(V, \Oc) \to X; \Ms_i(V, \Oc)] \\
&-[\Pb_{P_{i-1}(V, \Oc)}(\Ls \oplus \Oc) \to X; \Ms_{i-1}(V, \Oc)] \Big) \in \Bb^\bullet\big( \Pb(\Oc^{\oplus \infty} \oplus E) \to Y\big).
\end{align*}
As $\Ms_i(V, \Oc)$ are globally generated, the right hand side of the above equation is evidently in the image of $j_E$, and therefore so is $[V \to X; \Ls]$. This proves the claim.
\end{proof}

\subsection{Proof of the projective bundle formula}\label{GenPBFSubSect}

The goal of this section is to prove the following result.

\begin{thm}\label{GeneralPBFThm}
Let $\Bb^\bullet$ be a quotient of $\PCob'^\bullet$, let $X \to Y$ be a morphism in $\Cc_a$ and let $E$ be a vector bundle of rank $r$ on $X$. Then the morphism
$$\Pc roj: \bigoplus_{i=0}^{r-1} \Bb^\bullet(X \to Y) \to \Bb^\bullet(\Pb(E) \to Y)$$
given by
$$(\alpha_0, \alpha_1, ..., \alpha_r) \mapsto \sum_{i=0}^{r-1} e(\Oc(1))^i \bullet 1_{\Pb(E)/X} \bullet \alpha_i$$
is an isomorphism.
\end{thm}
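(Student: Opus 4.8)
The plan is to reduce the statement to the two assertions that $\Pc roj$ is injective and that it is surjective, to establish both first for a \emph{trivial} projective bundle $\Pb^n\times X$ (so $E=\Oc^{\oplus n+1}$, $r=n+1$), and then to transport the result to an arbitrary $E$ by means of the fundamental embedding $\iota_E$ of Section \ref{FundEmbSubSect}.

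\textbf{Step 1: the trivial bundle.} First I would treat $\Pb^n\times X$. For surjectivity, a generator $[V\xrightarrow{f}\Pb^n\times X]$ of $\Bb^\bullet(\Pb^n\times X\to Y)$ is, by Lemma \ref{PBClassControlledByLLem}, determined by the pair $(V\to X,\ f^*\Oc(1))$; degenerating a section of $f^*\Oc(1)$ over $\Pb^1$, passing to the derived blow up and invoking Proposition \ref{BlowUpIsResolutionSchemeProp}, and using the derived double point relation --- exactly as in the proof of Lemma \ref{JSurjLem} --- rewrites such a class as $\sum_{i=0}^{n} e(\Oc(1))^i\bullet 1_{\Pb^n\times X/X}\bullet\alpha_i$ with each $\alpha_i$ a bivariant pullback from $X$; here one uses that $e(\Oc(1))^{n+1}=0$ in $\Bb^*(\Pb^n\times X)$, which holds because $e(\Oc(1))$ may be computed from any hyperplane and $n+1$ general hyperplanes in $\Pb^n$ have empty derived intersection. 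For injectivity, I would introduce the operators $T_j:=\pi_*\big(e(\Oc(1))^j\bullet-\big)$, $0\le j\le n$, where $\pi\colon\Pb^n\times X\to X$; the computation $\pi_*\big(e(\Oc(1))^i\big)=\pi_X^*(b_{n-i})$, with $b_m=[\Pb^m\to\Spec(\Zb)]$ (so $b_0=1$, $b_m=0$ for $m<0$), shows that the matrix of $(T_j)_j\circ\Pc roj$ is triangular with units on the diagonal, hence $\Pc roj$ is injective. Passing to the colimit over $n$ and using that the structure maps are injective (\cite{annala-chern} Proposition 2.14) identifies $\Bb^\bullet(\Pb^\infty\times X\to Y)$ as a colimit in which both the operator $e(\Oc(1))\bullet-$ and the classes $1_{\Pb^n\times X/X}$ acquire transparent descriptions.

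\textbf{Step 2: the general case.} For $E$ of rank $r$, consider $\iota_E\colon\Bb^\bullet(\Pb(E)\to Y)\hookrightarrow\Bb^\bullet(\Pb^\infty\times X\to Y)$. By Proposition \ref{FundamentalEmbProp} it is injective, is a map of $\Bb^*(X)$-modules, intertwines $e(\Oc(1))\bullet-$ on the two sides, and is compatible with bivariant pullbacks and with right multiplication; consequently $\iota_E\circ\Pc roj$ is completely determined by $\iota_E(1_{\Pb(E)/X})$ and the operator $e(\Oc(1))\bullet-$, and the argument of Step 1 transported through $\iota_E$ shows $\iota_E\circ\Pc roj$ is injective, so $\Pc roj$ is injective. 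For surjectivity the goal is to show that $\iota_E$ identifies $\Bb^\bullet(\Pb(E)\to Y)$ with the image of $\Bb^\bullet(\Pb^{r-1}\times X\to Y)$ inside the colimit, compatibly with the two maps $\Pc roj$, after which the trivial-bundle formula of Step 1 (with $n=r-1$) yields the theorem for $\Pb(E)$. The delicate point is that $\iota_E\big(\Bb^\bullet(\Pb(E)\to Y)\big)$ really sits at levels $<r$: every generator $[V\xrightarrow{f}\Pb(E)]$ corresponds, via Lemma \ref{PBClassControlledByLLem} and the definition of $\iota_E$, to a class $[V\to X;\Ls]$ in which $\Ls$ is a quotient of the \emph{rank $r$} bundle $E^\vee|_V$, and the moving formula from the proof of Lemma \ref{JSurjLem}, combined with the relation
$$e(\Oc(1))^r\ \in\ \sum_{i=0}^{r-1}\pi_E^*\big(\Bb^*(X)\big)\bullet e(\Oc(1))^i\qquad\text{in }\Bb^*(\Pb(E)),$$
confines all contributions to the first $r$ levels. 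The relation itself I would obtain from the vanishing of the Euler class of the rank $r$ bundle $\pi_E^*(E)\otimes\Oc(1)$, which carries the nowhere-vanishing tautological section, by pulling back to the flag bundle of $E$, expanding $\prod_{j}F\big(e(\Ls_j),e(\Oc(1))\big)$ via the formal group law of Theorem \ref{PCobFGLThm} (the coefficients being nilpotent by Lemma \ref{NilpChernLem} and Proposition \ref{NilpEulerProp}), and descending along the flag bundle using the projective bundle formula in ranks $<r$ --- so the whole general case runs by induction on $\rank E$.

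\textbf{The main obstacle.} I expect the difficulty to lie precisely in this last point: proving $\iota_E\big(\Bb^\bullet(\Pb(E)\to Y)\big)$ lies at levels $<r$, equivalently the degree-$r$ relation for $e(\Oc(1))$ and hence the surjectivity of $\Pc roj$. A naive moving argument only bounds the relevant level by the rank of an ambient sum of line bundles containing $E$, which on a general divisorial base exceeds $r$, so the sharp bound genuinely requires the flag-bundle computation and the inductive structure above; by contrast, once Step 1 and the formal properties of the fundamental embedding are in hand, the injectivity of $\Pc roj$ is essentially automatic.
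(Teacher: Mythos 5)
Your architecture --- the fundamental embedding into $\Bb^\bullet(\Pb^\infty \times X \to Y)$, the trivial case first, then transport to general $E$ --- is the same as the paper's, but the claim on which your Step 2 rests is false, and it is precisely where the real content of the proof lies. You assert that $\iota_E\big(\Bb^\bullet(\Pb(E)\to Y)\big)$ ``sits at levels $<r$'', i.e.\ that the coefficients $u_i$ with respect to the basis classes $[\Pb^i\times X\hook\Pb^\infty\times X]$ vanish for $i\ge r$ on the image. This already fails for $r=1$: for a nontrivial line bundle $\Ls$ the class $\iota_\Ls(1_{X/X})=j_\Ls^{-1}\big([\Pb(\Ls)\hook\Pb(\Oc^{\oplus\infty}\oplus\Ls)]\big)$ has, by the split-bundle computation in the proof of Lemma \ref{CoefficientLem1}, level-$1$ coefficient equal to $-e(\Ls)$ up to higher-order terms, which is nonzero in general (it is merely nilpotent); the same computation shows $u_i(1_{\Pb(E)/X})\neq 0$ for some $i\ge r$ for general split $E$. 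The degree-$r$ relation $e(\Oc(1))^r=\sum_{i=1}^{r}d_i(E)\bullet e(\Oc(1))^{r-i}$ (Lemma \ref{ChernClassFormulaLem}) does not confine classes to the first $r$ levels; it only gives the recursion $u_{r+i}(\alpha)=\sum_{j=0}^{r-1}d_{r-j}(E)\bullet u_{j+i}(\alpha)$, i.e.\ that elements in the image of $\iota_E$ are \emph{determined} by their first $r$ coefficients (Lemma \ref{RCoeffLem}). You have conflated ``determined by the first $r$ coefficients'' with ``supported in the first $r$ levels'', and with only the correct, weaker statement your proposed identification of the image of $\iota_E$ with the image of $\Bb^\bullet(\Pb^{r-1}\times X\to Y)$, and hence your surjectivity argument, collapses.

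What is missing is the computation of $\iota_E(1_{\Pb(E)/X})$ itself: one must show that $u_{r-1}(1_{\Pb(E)/X})$ is a unit and all other coefficients are nilpotent (Lemma \ref{CoefficientLem1}), proved by an explicit formal-group-law computation for split $E$ and descent along the flag bundle, which uses the $\PCob'$-specific inputs (classes $\eta$ with $\pi_!(\eta)=1$ coming from Lemma \ref{BlowUpLiftLem}, together with Lemma \ref{BivInjPullbackLem}). This makes the matrix $A(E)_{j,i}=u_{i+j}(1_{\Pb(E)/X})$ invertible (Lemma \ref{CoefficientLem2}), which is what allows one to realize any prescribed first $r$ coefficients by an element of the form $\Pc roj(\alpha_0,\dots,\alpha_{r-1})$ (Lemma \ref{CoefficientLem3}); surjectivity then follows by comparing a given class with the constructed one sharing its first $r$ coefficients, via Lemma \ref{RCoeffLem}. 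The same computation is what injectivity rests on for nontrivial $E$: your triangularity argument for $\Pb^n\times X$ does not transport through $\iota_E$ unless you know the unit/nilpotent structure of the coefficients of $1_{\Pb(E)/X}$, so injectivity is not ``essentially automatic'' either. Finally, your finite-level surjectivity in Step 1 is not ``exactly as in Lemma \ref{JSurjLem}'': the moving argument there produces cycles mapping to ever larger projective bundles and only yields generation in the colimit $\Pb^\infty\times X$; the finite statement is recovered afterwards from the coefficient formalism (for trivial $E$ the coefficients of $1_{\Pb^n\times X/X}$ are $\delta_{i,n}$, which is what makes that case easy).
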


Throughout the section, $\Bb^\bullet$ is a quotient of $\PCob'^\bullet$, $X \to Y$ is a morphism in $\Cc_a$, and $E$ is a rank $r$ vector bundle on $X$, unless otherwise mentioned. We start by proving the result for trivial vector bundles, which follows from the next result.

\begin{lem}\label{GeneralTrivPBFLem}
Let $X \to Y$ be a morphism in $\Cc_a$, and let $\Bb^\bullet$ be a quotient of $\PCob^\bullet$. Then the morphism
$$\bigoplus_{i=0}^{\infty} \Bb^\bullet(X \to Y) \to \Bb^\bullet(\Pb^\infty \times X \to Y),$$
where the $i^{th}$ morphism is given by 
$$[\Pb^i \times X \hook \Pb^\infty \times X] \bullet -:  \Bb^\bullet(X \to Y) \to \Bb^\bullet(\Pb^\infty \times X \to Y),$$
is an isomorphism.
\end{lem}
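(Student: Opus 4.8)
The plan is to establish the statement at each finite stage $n$ and then pass to the colimit. For $0\le i\le n$ write $\xi^n_i := [\Pb^i\times X\hook\Pb^n\times X]\in\Bb^\bullet(\Pb^n\times X\to X)$ for the class of a linearly embedded $\Pb^i$. Since $\Pb^i\times X$ is the derived vanishing locus of $n-i$ generic coordinate sections of $\Oc(1)$, we have $\xi^n_i = e(\Oc(1))^{n-i}\bullet 1_{\Pb^n\times X/X}$, so $\{\xi^n_i\}_{i=0}^n$ and $\{e(\Oc(1))^k\bullet 1_{\Pb^n\times X/X}\}_{k=0}^n$ are the same subset. The structure maps of $\colim_n\Bb^\bullet(\Pb^n\times X\to Y)$ are pushforwards along the linear embeddings $\Pb^n\times X\hook\Pb^{n+1}\times X$; they are injective by \cite{annala-chern} Proposition 2.14, and by compatibility of pushforward with right multiplication they carry $\xi^n_i\bullet\alpha$ to $\xi^{n+1}_i\bullet\alpha$. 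Thus it suffices to prove, for every $n$, that the map $\bigoplus_{i=0}^n\Bb^\bullet(X\to Y)\to\Bb^\bullet(\Pb^n\times X\to Y)$ sending $(\alpha_i)$ to $\sum_{i=0}^n\xi^n_i\bullet\alpha_i$ is an isomorphism — this is the projective bundle formula for the trivial bundle $\Oc^{\oplus n+1}$ on $X$ — and taking the colimit of these isomorphisms gives exactly the map of the statement, free on $\{[\Pb^i\times X\hook\Pb^\infty\times X]\}_{i\ge0}$.

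Injectivity at stage $n$ is the usual triangular argument. We have $e(\Oc(1))^{n+1}\bullet 1_{\Pb^n\times X/X} = 0$, since $n+1$ generic hyperplanes of $\Pb^n$ have empty derived intersection, and for the projection $\pi:\Pb^n\times X\to X$ one has $\pi_*\bigl(e(\Oc(1))^k\bullet 1_{\Pb^n\times X/X}\bigr) = [\Pb^{n-k}\times X\to X]$ when $0\le k\le n$, with $[\Pb^0\times X\to X] = 1_X$. Given a relation $\sum_{i=0}^n e(\Oc(1))^i\bullet 1_{\Pb^n\times X/X}\bullet\alpha_i = 0$, multiplying by $e(\Oc(1))^m$, pushing forward by $\pi_*$, and using compatibility of $\pi_*$ with right multiplication yields $\sum_{i=0}^{n-m}[\Pb^{n-m-i}\times X\to X]\bullet\alpha_i = 0$ for $m = n, n-1,\dots, 0$; reading off $m = n$ gives $\alpha_0 = 0$, and descending induction on $m$ gives $\alpha_i = 0$ for all $i$.

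For surjectivity I would proceed as in the proof of Lemma \ref{JSurjLem} (see also \cite{annala-yokura} Section 6.2). By Lemma \ref{PBClassControlledByLLem} every generator of $\Bb^\bullet(\Pb^n\times X\to Y)$ is of the form $[V\to X;\Ls]$ with $g:V\to X$ projective, $\Lb_{V/Y}$ of Tor-dimension $1$, and $\Ls$ globally generated. Degenerating via the derived blow up of $\Pb^1\times V$ along $\infty\times D$ for a divisor $D\in|\Ls|$, and using that $e(\Ls)$ is nilpotent (Lemma \ref{NilpChernLem}), one rewrites $[V\to X;\Ls]$ as a finite sum of classes $e(\Ls)^i\bullet[\,\cdot\,]$ whose cycles are of two kinds: those carrying the trivial line bundle, which by Lemma \ref{PBClassControlledByLLem} are nothing but $[\Pb^0\times X\hook\Pb^\infty\times X]\bullet[V'\to X]$; and those living on iterated $\Pb^1$-bundle towers whose defining line bundles are pulled back from the universal tower over $\Spec(\Zb)$, which one expands in the standard basis using the projective bundle formula over $\Spec(\Zb)$ — available from \cite{annala-chern}, as $\Spec(\Zb)$ has an ample line bundle — and then base changes back to $X$. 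Either way the generator is exhibited as an element of $\sum_i[\Pb^i\times X\hook\Pb^\infty\times X]\bullet\Bb^\bullet(X\to Y)$, proving surjectivity.

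I expect this surjectivity step to be the main obstacle: one must verify that every cycle produced by the degeneration — in particular the auxiliary bundles $\Pb(\Ls\oplus\Oc)$ over the towers, which genuinely involve $\Ls$ and are not base changed from $\Spec(\Zb)$ — still reduces to the standard form after the formal-group-law bookkeeping (Lemma \ref{GeomFGLLem}) and the point-level formula. This is exactly where the absence of an ample line bundle on $X$ forces one to reorganize the argument of \cite{annala-chern}; once it is in place, assembling the colimit and matching the resulting isomorphism with the stated map is routine.
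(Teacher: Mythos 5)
Your proposal is, in substance, the paper's own argument: uniqueness/injectivity via multiplication by powers of $e(\Oc(1))$ followed by pushforward along the projection, and generation via the degeneration from the proof of Lemma \ref{JSurjLem}. The one genuine deviation is your treatment of the tower classes, and it is legitimate: since $P_i(X,\Oc) \simeq P_i(\Spec(\Zb),\Oc) \times X$ with $\Ms_i(X,\Oc)$ pulled back from the $\Spec(\Zb)$-factor, the classes $[P_i(X,\Oc) \to \Pb^\infty \times X] \in \Bb^\bullet(\Pb^\infty \times X \to X)$ are bivariant pullbacks along $X \to \Spec(\Zb)$ of classes over the point, where the projective bundle formula of \cite{annala-chern} applies (and descends to the quotient $\Bb^\bullet$); pulling the expansion back and absorbing the coefficients into the $\Bb^\bullet(X \to Y)$-factor replaces the computation the paper instead cites from \cite{annala-yokura} (Lemmas 6.20 and 6.21). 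The same observation settles the worry in your last paragraph: the auxiliary cycles decompose as $[P_{i-1}(X,\Oc) \to \Pb^\infty \times X] \bullet [\Pb_V(\Ls \oplus \Oc) \to X]$, so all $\Ls$-dependence sits in the right-hand factor in $\Bb^\bullet(X \to Y)$ and needs no further bookkeeping over the tower.

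The one flaw is organizational: your claim that ``it suffices to prove, for every $n$, that $\bigoplus_{i=0}^n \Bb^\bullet(X\to Y) \to \Bb^\bullet(\Pb^n\times X\to Y)$ is an isomorphism'' is only half usable. The injectivity half is fine — your triangular argument at stage $n$ is correct, and since the structure maps are injective and filtered colimits of Abelian groups are exact, it yields injectivity of the colimit map. But your surjectivity argument does not stay at stage $n$: the towers $P_i(V,\Oc)$ and the bundles $\Pb_{P_{i-1}(V,\Oc)}(\Ls\oplus\Oc)$ require surjections from ever larger trivial bundles onto $\Ms_i$, so a stage-$n$ generator is rewritten only after pushing into higher stages, i.e.\ in $\Bb^\bullet(\Pb^\infty\times X\to Y)$ — which is why the paper states and proves surjectivity at the colimit level, and why your own sketch already writes its output in terms of $\Pb^\infty$. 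This is harmless for the lemma (colimit injectivity plus colimit surjectivity is all that is needed), but you should either drop the finite-stage framing for surjectivity or note that finite-stage surjectivity is deduced \emph{afterwards} from the colimit statement, using $e(\Oc(1))^{n+1} = 0$ on $\Pb^n\times X$ together with injectivity of the structure maps, rather than proved first.
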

\begin{proof}
The proof from Sections 6.2 and 6.3 of \cite{annala-yokura} goes through without essential modifications. We outline the argument here as the proof of Lemma \ref{JSurjLem} got us almost there. Indeed, by that proof, $\Bb^\bullet(\Pb^{\infty} \times X \to Y)$ is generated as an Abelian group by elements of form
$$[\Pb_i(X, \Oc) \stackrel{f_i}{\to} \Pb^\infty \times X] \bullet \alpha,$$
where $f_i^*(\Oc(1)) \simeq \Ms_i(X,\Oc)$ and $\alpha \in \Bb^\bullet(X \to Y)$. It then follows from a straightforward computation (see \cite{annala-yokura} Lemmas 6.20 and 6.21) that every element of $\Bb^\bullet(\Pb^{\infty} \times X \to Y)$ is an integral combination of 
$$[\Pb^i \times X \hook \Pb^\infty \times X] \bullet \alpha,$$
and using the operator $e(\Oc(1)) \bullet -$ and the ``bivariant pushforward'' 
$$\Bb^\bullet(\Pb^\infty \times X \to Y) \to \Bb^\bullet(X \to Y)$$
one checks that such an expression is unique, proving the claim.
\end{proof}

Combining this with the fundamental embedding constructed in the previous section, we see that
$$\iota_E(\alpha) = \sum_{i = 0}^\infty [\Pb^i \times X \hook \Pb^\infty \times X] \bullet u_i(\alpha) \in \Bb^\bullet(\Pb^\infty \times X \to Y)$$
for some uniquely determined $u_i(\alpha) \in \Bb^\bullet(X \to Y)$.

\begin{defn}\label{CoeffDef}
Given $\alpha \in \Bb^\bullet(\Pb(E) \to Y)$, we will call $u_i(\alpha)$ the \emph{$i^{th}$ coefficient} of $\alpha$.
\end{defn}

Our first goal is to show that the first $r$ coefficients completely determine an element of $\Bb^\bullet(\Pb(E) \to Y)$. To do so, we start with the following observation.

\begin{lem}\label{ChernClassFormulaLem}
Let $X \in \Cc_a$, and let $E$ be a vector bundle of rank $r$ on $X$. Then there exists $d_i(E) \in \Bb^*(X)$ so that
$$e(\Oc(1))^r = \sum_{i=1}^{r} d_i(E) \bullet e(\Oc(1))^{r-i} \in \Bb^*(\Pb(E)).$$
\end{lem}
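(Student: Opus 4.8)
The statement asserts that the top power $e(\Oc(1))^r$ is expressible as a $\Bb^*(X)$-linear combination of the lower powers $e(\Oc(1))^{r-1},\dots,e(\Oc(1))^0$ inside $\Bb^*(\Pb(E))$. The natural way to produce such a relation is to exhibit $\Pb(E)$ as (a derived vanishing locus inside) a projective bundle of a sum of line bundles, where the analogous statement can be checked by hand, and then transport the relation along the fundamental embedding machinery already set up.

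First I would reduce to the universal precobordism theory: since $\Bb^\bullet$ is a quotient of $\PCob'^\bullet$ (hence of $\PCob^\bullet$) and the Grothendieck transformation is a ring map commuting with pullbacks, it suffices to produce the $d_i(E)\in\PCob^*(X)$; their images then work for $\Bb^\bullet$. Next, the cleanest route is to use the fundamental embedding $\iota_E\colon\PCob^\bullet(\Pb(E)\to X)\hook\PCob^\bullet(\Pb^\infty\times X\to X)$ from Section \ref{FundEmbSubSect} together with Lemma \ref{GeneralTrivPBFLem}: the latter identifies $\PCob^\bullet(\Pb^\infty\times X\to X)$ with $\bigoplus_{i\ge0}\PCob^\bullet(X\to X)$ via the classes $[\Pb^i\times X\hook\Pb^\infty\times X]$, and by Proposition \ref{FundamentalEmbProp}(2) the operator $e(\Oc(1))\bullet-$ is compatible with $\iota_E$ and with the operator $e(\Oc(1))\bullet-$ on $\Pb^\infty\times X$. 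So the point is to show that on $\Pb^\infty\times X$ the element $\iota_E\bigl(e(\Oc(1))^r\bullet 1_{\Pb(E)/X}\bigr)$ — equivalently $e(\Oc(1))^r$ applied to the image of $1_{\Pb(E)/X}$ — lies in the $\PCob^*(X)$-span of the images of $1,e(\Oc(1)),\dots,e(\Oc(1))^{r-1}$ applied to $1_{\Pb(E)/X}$, which are exactly $\iota_E$ of the first $r$ ``basis'' elements. For this I would compute $\iota_E(1_{\Pb(E)/X})$, i.e. its coefficients $u_i$ in the sense of Definition \ref{CoeffDef}; the class $[\Pb(E)\to X]$ pushed into $\Pb^\infty\times X$ via a chosen surjection $E^\vee\oplus\Oc^{\oplus n}\twoheadrightarrow$ (line bundles), using Lemma \ref{PBClassControlledByLLem} to see the class is independent of the chosen surjection, and the recursive $P_i$-bundle relations derived in the proof of Lemma \ref{JSurjLem} to expand everything in terms of the globally generated model $[\Pb^i\times X\hook\Pb^\infty\times X]$. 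One finds that only finitely many coefficients $u_i(1_{\Pb(E)/X})$ are nonzero — in fact $u_i=0$ for $i\ge r$ (up to lower-order corrections controlled by Euler classes of line bundles, which are nilpotent), reflecting the rank-$r$ nature of $E$ — and that the leading term $u_{r-1}$ is a unit (namely $1_X$). This finitely-supported expansion, together with the triangular action of $e(\Oc(1))\bullet-$ on the classes $[\Pb^i\times X\hook\Pb^\infty\times X]$ (multiplication by $e(\Oc(1))$ shifts $i\mapsto i+1$ modulo the defining relation $e(\Oc(1))$ on $\Pb^\infty$ having no relation — rather, the relation comes from $\Pb(E)$ being cut out inside $\Pb(\Oc^{\oplus\infty}\oplus E)$), forces $e(\Oc(1))^r\bullet 1_{\Pb(E)/X}$ back into the span of the lower powers, with the coefficients $d_i(E)$ read off from the expansion.

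Concretely, I would argue as follows. Choose a surjection $\varphi\colon\Oc^{\oplus n}\oplus E^\vee\twoheadrightarrow L$ onto a line bundle is not available in general, so instead observe that we only ever need to work one step at a time: consider the section of $\Oc(1)^{\oplus r}$ on $\Pb(E)$ coming from the tautological surjection $E^\vee\twoheadrightarrow\Oc(1)$ composed with a choice of trivialization locally; globally the point is that $\Oc(1)$ on $\Pb(E)$ is a quotient of $\pi_E^*E^\vee$, hence $\Oc(1)^{\otimes r}$ is a subsheaf of $\wedge^r\pi_E^*E^\vee\otimes(\text{something})$, giving a section $s$ of $\pi_E^*(\det E^\vee)^\vee\otimes\Oc(1)^{\otimes r}\cong\pi_E^*(\det E)\otimes\Oc(r)$ whose derived vanishing locus is empty. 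Actually the cleanest incarnation: on $\Pb(E)$ there is the tautological exact sequence $0\to\Oc(-1)\to\pi_E^*E\to Q\to0$, so $\Oc(-1)\hook\pi_E^*E$ is nowhere-vanishing, equivalently the dual surjection $\pi_E^*E^\vee\twoheadrightarrow\Oc(1)$ has no common zero, so for any splitting-free choice one gets that $e(\pi_E^*E\otimes\Oc(1))=0$ by the normalization property of the Euler class of the rank-$r$ bundle $\pi_E^*E\otimes\Oc(1)$ which has a nowhere-vanishing section (the composite $\Oc\to\Oc(1)^\vee\otimes\pi_E^*E\otimes\Oc(1)=\pi_E^*E$... up to twist). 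Expanding $e\bigl(\pi_E^*E\otimes\Oc(1)\bigr)=0$ via the Whitney-type multiplicativity of Euler classes in exact sequences (\cite{annala-chern} Lemma 4.1) applied to a filtration of $E$ by line bundles after passing to the flag bundle, combined with the formal group law of Theorem \ref{PCobFGLThm}, yields precisely a monic degree-$r$ relation
$$e(\Oc(1))^r + (\text{lower-order terms with coefficients in }\Bb^*(X))=0,$$
which is the claim with $d_i(E)$ equal to minus those coefficients.

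The main obstacle I anticipate is the absence of a global splitting of $E$ (there is no ample line bundle, so $X$ need not be quasi-projective), which blocks the naive ``Chern roots'' computation. The fix is to pass to the full flag bundle $\Fl(E)\to X$ — on which $E$ does acquire a complete flag by line bundles — carry out the formal-group-law expansion of $e(\pi^*E\otimes\Oc(1))=0$ there to get the relation with coefficients that are symmetric in the Chern roots, hence descend to $\Bb^*(X)$, and then note that the flag bundle pullback $\Bb^*(\Pb(E))\to\Bb^*(\Fl_{\Pb(E)})$ is injective (it is a composite of projective-bundle pullbacks, injective by the trivial-bundle case Lemma \ref{GeneralTrivPBFLem} and Proposition \ref{NilpEulerProp}/Lemma \ref{BlowUpLiftLem}, or directly by \cite{annala-chern} Proposition 2.14-type arguments) to transport the relation back down. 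Verifying that the coefficients produced on $\Fl(E)$ genuinely descend — i.e. are pulled back from $\Bb^*(X)$ — is the one genuinely technical point, and it follows from the fact that they are the elementary-symmetric-type combinations dictated by the formal group law, together with the projective bundle formula for the (trivial-fiber, already-established) intermediate bundles in the flag, applied inductively.
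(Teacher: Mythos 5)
Your geometric starting point is the same as the paper's: the tautological inclusion $\Oc(-1)\hook\pi_E^*E$ gives a nowhere-vanishing section of $E(1)$, hence $e(E(1))=0$ in $\Bb^*(\Pb(E))$, and after pulling back to the flag bundle one expands this by multiplicativity of Euler classes and the formal group law. The genuine gap is in how you get the relation back downstairs with coefficients in $\Bb^*(X)$. You propose to show that the symmetric-function coefficients on $\Fl(E)$ descend, ``by the projective bundle formula for the intermediate bundles in the flag, applied inductively'' -- but those intermediate bundles (e.g.\ the universal quotient on $\Pb(E)$) are not trivial, so the only case available at this stage, Lemma \ref{GeneralTrivPBFLem}, does not apply; the general projective bundle formula (Theorem \ref{GeneralPBFThm}) is proved \emph{after} and \emph{by means of} the present lemma (through Lemma \ref{RCoeffLem}), and the identification of symmetric functions of the roots with classes coming from $X$ is exactly the theory of Chern classes, which is only set up after the projective bundle formula (Definition \ref{GeneralChernClassDefn}). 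So your descent step is circular. The paper avoids descent altogether: since $\Bb^\bullet$ is a quotient of $\PCob'^\bullet$, the flag bundle projection $\pi$ admits a class $\eta$ with $\pi_!(\eta)=1$ (Lemma \ref{BlowUpLiftLem}); one multiplies the monic relation upstairs by $\eta$ and pushes forward, and the projection formula pulls the powers of $e(\Oc(1))$ out, so the coefficients $d_i(E)$ are \emph{defined} as $\pm\pi_!\bigl(s_i\bigl(e(\Ls_1^\vee),\dots,e(\Ls_r^\vee)\bigr)\bullet\eta\bigr)$; no claim that the symmetric functions themselves are pulled back is ever needed. Your observation that the flag pullback is injective is correct (it follows from the same $\eta$ via the projection formula), but injectivity alone cannot manufacture the coefficients downstairs.

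A secondary point: expanding $\prod_i e(\Ls_i(1))=\prod_i F\bigl(e(\Ls_i),e(\Oc(1))\bigr)=0$ does not ``precisely'' give a monic degree-$r$ relation -- the product is a power series containing all powers of $e(\Oc(1))$, and extracting a monic degree-$r$ polynomial requires an extra argument (nilpotence of $e(\Oc(1))$, the fact that the degree-$r$ coefficient is a unit, and a recursion), during which one must also keep control of what ring the coefficients lie in. The paper's proof has a cleaner device: from $\prod_i e(\Ls_i(1))=0$ it deduces $\prod_i e(\Ls_i^\vee(-1))=\prod_i\inv_F\bigl(e(\Ls_i(1))\bigr)=0$, and since $F(x,y)-y$ is divisible by $x$, the literally monic polynomial $\prod_i\bigl(e(\Ls_i^\vee)-e(\Oc(1))\bigr)$ vanishes, with coefficients the elementary symmetric functions of the $e(\Ls_i^\vee)$. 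Finally, your first sketch via the fundamental embedding and the coefficients $u_i$ runs the logic backwards: the unit/nilpotence statements about $u_i(1_{\Pb(E)/X})$ and the ``first $r$ coefficients determine everything'' principle are Lemmas \ref{CoefficientLem1} and \ref{RCoeffLem}, and the latter is itself a consequence of the lemma you are trying to prove.
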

\begin{proof}
Let $\pi$ be the natural projection $\Fl(E) \to \Pb(E)$ and let $\eta \in \Bb^*(\Fl(E))$ be such that $\pi_!(\eta) = 1_{\Pb(E)}$.
Let 
$$0 = E_0 \subset E_1 \subset \cdots \subset E_{r-1} \subset E_r = E$$
be a filtration of $E$ on $\Fl(E)$ with $\Ls_i := E_{i}/E_{i-1}$ line bundles. Then, since
\begin{align*}
e(\Ls_1(1)) \bullet \cdots \bullet e(\Ls_r(1)) &= e(E(1)) \\
&= 0
\end{align*} 
it follows that also
\begin{align*}
e(\Ls_1^\vee(-1)) \bullet \cdots \bullet e(\Ls^\vee_r(-1)) &= \inv_F \big(e(\Ls_1(1))\big) \bullet \cdots \bullet \inv_F \big( e(\Ls_r(1))\big) \\
&= 0
\end{align*}
and therefore
$$\prod_{i=1}^r \big( F(e(\Ls_i^\vee(-1)), e(\Oc(1))) - e(\Oc(1)) \big) = \prod_{i=1}^r \big( e(\Ls_i^\vee) - e(\Oc(1)) \big)$$
vanishes. In conclusion,
$$\sum_{i=0}^r (-1)^{r-i} s_i\big(e(\Ls_1^\vee),...,e(\Ls_r^\vee)\big) \bullet e(\Oc(1))^{r-i} = 0 \in \PCob^*(\Fl(E)),$$
where $s_i$ are the elementary symmetric polynomials, and the claim follows with 
$$d_i(E) := (-1)^{r-i+1} \pi_!\Big( s_i\big(e(\Ls_1^\vee),...,e(\Ls_r^\vee)\big) \bullet \eta \Big)$$
from the projection formula.
\end{proof}

It is then easy to prove that the first $r$ coefficients of $\alpha \in \Bb^\bullet(\Pb(E) \to Y)$ completely determine $\alpha$.

\begin{lem}\label{RCoeffLem}
Suppose $\alpha \in \Bb^\bullet(\Pb(E) \to Y)$ is such that $u_i(\alpha) = 0$ for all $i < r$. Then $\alpha = 0$.
\end{lem}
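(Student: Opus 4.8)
The plan is to use the fundamental embedding $\iota_E$ together with the relation of Lemma \ref{ChernClassFormulaLem} to set up a linear recursion among the coefficients $u_i(\alpha)$ of Definition \ref{CoeffDef}, and then to argue that the hypothesis forces all of them to vanish; the conclusion is then immediate since $\iota_E = j_E^{-1}\circ i_E$ is injective.

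First I would record how the operator $e(\Oc(1))\bullet-$ and the $\Bb^*(X)$-module action act on the basis of $\Bb^\bullet(\Pb^\infty\times X\to Y)$ furnished by Lemma \ref{GeneralTrivPBFLem}. Since the hyperplane $\Pb^{i-1}\times X\hook\Pb^i\times X$ is the derived vanishing locus of a section of $\Oc(1)\vert_{\Pb^i\times X}$ (and $\Oc(1)\vert_{\Pb^0\times X}$ is trivial), the projection formula gives
$$e(\Oc(1))\bullet\big([\Pb^i\times X\hook\Pb^\infty\times X]\bullet\beta\big)=[\Pb^{i-1}\times X\hook\Pb^\infty\times X]\bullet\beta$$
for $i\geq1$, and $0$ for $i=0$; likewise, for $\gamma\in\Bb^*(X)$ the module action satisfies $\gamma.\big([\Pb^i\times X\hook\Pb^\infty\times X]\bullet\beta\big)=[\Pb^i\times X\hook\Pb^\infty\times X]\bullet(\gamma\bullet\beta)$, a routine consequence of the projection formula and the compatibility of orientations with pullback.

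Next I would feed the identity of Lemma \ref{ChernClassFormulaLem} through $\iota_E$. Multiplying that identity by $\alpha$ (and recalling that $d_j(E)\bullet(-)$ in $\Bb^*(\Pb(E))$ is $\pi_E^\ast d_j(E)\bullet(-)$, i.e.\ the module action $d_j(E).(-)$) gives, with $x:=e(\Oc(1))$,
$$x^r\bullet\alpha=\sum_{j=1}^r d_j(E).(x^{r-j}\bullet\alpha)\in\Bb^\bullet(\Pb(E)\to Y).$$
Applying $\iota_E$, which by Proposition \ref{FundamentalEmbProp} is a map of $\Bb^*(X)$-modules intertwining $e(\Oc(1))\bullet(-)$, and writing $\iota_E(\alpha)=\sum_{i\geq0}[\Pb^i\times X\hook\Pb^\infty\times X]\bullet u_i(\alpha)$, the computations of the previous paragraph turn this into an identity of expansions whose coefficients, by the uniqueness in Lemma \ref{GeneralTrivPBFLem}, must agree term by term. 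Comparing them yields the recursion
$$u_{\ell+r}(\alpha)=\sum_{j=1}^r d_j(E)\bullet u_{\ell+r-j}(\alpha)\qquad\text{for all }\ell\geq0.$$

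Finally, I would conclude by induction on $i\geq r$. The hypothesis gives $u_i(\alpha)=0$ for $i<r$; if $u_i(\alpha)=0$ for all $i<r+\ell$, then the recursion expresses $u_{r+\ell}(\alpha)$ in terms of $u_{r+\ell-j}(\alpha)$ with $1\leq j\leq r$, whose indices lie in $\{\ell,\dots,\ell+r-1\}\subset\{0,\dots,r+\ell-1\}$ and hence all vanish, so $u_{r+\ell}(\alpha)=0$. Thus $u_i(\alpha)=0$ for every $i\geq0$, i.e.\ $\iota_E(\alpha)=0$, and the injectivity of $\iota_E$ (Lemma \ref{JSurjLem} together with \cite{annala-chern} Proposition 2.14) forces $\alpha=0$. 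The only delicate point is the bookkeeping in the second and third paragraphs — verifying that $e(\Oc(1))\bullet(-)$ and $d_j(E).(-)$ act on the distinguished basis exactly as claimed — but this requires no new idea beyond the bivariant projection formula and the cited Lemmas.
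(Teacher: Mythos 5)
Your argument is correct and follows essentially the same route as the paper: push the relation of Lemma \ref{ChernClassFormulaLem} through the fundamental embedding, compare coefficients in the basis of Lemma \ref{GeneralTrivPBFLem} to get the recursion $u_{r+\ell}(\alpha)=\sum_j d_j(E)\bullet u_{r+\ell-j}(\alpha)$, and conclude by induction plus injectivity of $\iota_E$. The bookkeeping you flag (how $e(\Oc(1))\bullet-$ and the $\Bb^*(X)$-action move the basis classes) is exactly what the paper treats as the ``clear'' step, and your verification of it via the projection formula is fine.
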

\begin{proof}
Clearly
$$e(\Oc(1))^r \bullet \iota_E(\alpha) = \sum_{i=0}^\infty [\Pb^i \times X \hook \Pb^\infty \times X] \bullet u_{r+i}(\alpha).$$
On the other hand, by Lemma \ref{ChernClassFormulaLem}, we can also compute that
\begin{align*}
e(\Oc(1))^r \bullet \iota_E(\alpha) &= \sum_{i=1}^r d_i(E) \bullet e(\Oc(1))^{r-i} \bullet \iota_E(\alpha)
\end{align*}
so that
$$u_{r+i}(\alpha) = \sum_{j=0}^{r-1} d_{r-j}(E) \bullet u_{j+i}(\alpha),$$
and therefore $u_i(\alpha) = 0$ for all $i$. Since the fundamental embedding is an injection, the claim follows.
\end{proof}

In order to finish the proof of the projective bundle formula, we need to show that given $u_0,...,u_r \in \Bb^\bullet(X \to Y)$, there exists 
$$\alpha := \sum_{i=0}^{r-1} e(\Oc(1))^i \bullet 1_{\Pb(E)/X} \bullet \alpha_i \in \Bb^\bullet(\Pb(E) \to Y)$$
so that $u_i(\alpha) = u_i$ for all $i < r$. To achieve this, we start by investigating the image of the fundamental class under $\iota_E$.

\begin{lem}\label{CoefficientLem1}
Suppose $X \in \Cc_a$ and let $E$ be a vector bundle of rank $r$ on $X$. Then $u_{r-1}(1_{\Pb(E)/X})$ is a unit and $u_{i}(1_{\Pb(E)/X})$ is nilpotent for all other $i$.
\end{lem}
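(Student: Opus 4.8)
The plan is to compute $\iota_E(1_{\Pb(E)/X})$ explicitly, or at least compute enough of its coefficients. The key input is the last displayed relation established in the proof of Lemma \ref{JSurjLem}: for a globally generated line bundle $\Ls$ on $V \to X$ (here $V = X$) one has an expansion of $[V \to X; \Ls]$ in terms of the classes $[\Pb^i \times X \hook \Pb^\infty \times X] \bullet u_i$ with $u_i \in \Bb^\bullet(X \to Y)$, and the point is to apply this when $\Ls = \Oc_{\Pb(E)}(1)$ is the hyperplane bundle of $\Pb(E)$ itself. Concretely, $1_{\Pb(E)/X}$ is the class $[\Pb(E) \stackrel{\mathrm{id}}{\to} \Pb(E)]$, which under $i_E$ followed by $j_E^{-1}$ is identified with $[\Pb(E) \to X; \Oc(1)]$, where $\Oc(1)$ here denotes the line bundle on $\Pb(E)$ classifying its own tautological quotient. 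Since $\Oc(1)$ on $\Pb(E)$ is globally generated (it is the pullback of the hyperplane class under the natural closed embedding $\Pb(E) \hook \Pb(E \oplus \Oc^{\oplus n})$ for $n$ large, or more elementarily it is relatively very ample), we may use Lemma \ref{JSurjLem}'s computation directly.

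**Key steps.** First I would recall that for $V = X$ and $\Ls$ globally generated, the proof of Lemma \ref{JSurjLem} shows $[X \to X;\Ls] = [X \to X; \Oc]$ plus a sum of correction terms each divisible by a positive power of $e(\Ls)$. Applying this with the roles played by $\Pb(E) \to X$ and $\Ls = \Oc_{\Pb(E)}(1)$: since $e(\Oc_{\Pb(E)}(1))$ is \emph{not} nilpotent on $\Pb(E)$ in general, I cannot directly truncate; instead I would unwind what "$[V \to X;\Ls]$ lies in the image of $j_E$'' means coefficient-by-coefficient. The cleaner route: observe that $\iota_E(1_{\Pb(E)/X})$ is characterized by the identity $\sum_{i\geq 0}[\Pb^i\times X\hook \Pb^\infty\times X]\bullet u_i(1_{\Pb(E)/X}) = j_E^{-1}i_E(1_{\Pb(E)/X})$, and then compute the left side modulo the ideal generated by $e(\Oc(1))$ acting via the operator of Proposition \ref{FundamentalEmbProp}(2). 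Reducing modulo that operator kills all terms with $i\geq 1$, leaving $u_0(1_{\Pb(E)/X}) \equiv 1_X$ up to the image of $e(\Oc(1))\bullet -$; a symmetric argument reducing by the complementary filtration (``top coefficient'') isolates $u_{r-1}$. Here is where Lemma \ref{ChernClassFormulaLem} enters: the relation $e(\Oc(1))^r = \sum_i d_i(E)\bullet e(\Oc(1))^{r-i}$ forces the coefficient sequence $(u_i(1_{\Pb(E)/X}))_{i\geq 0}$ to satisfy a linear recurrence with coefficients $d_i(E)$, and by Lemma \ref{NilpChernLem}/\ref{NilpEulerProp} the $d_i(E)$ are nilpotent (they are push-forwards of polynomials in Euler classes of line bundles); so once we know $u_{r-1}(1_{\Pb(E)/X})$ is a unit mod nilpotents, all the $u_i$ for $i\neq r-1$ that we need are nilpotent. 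To pin down that $u_{r-1}$ is a unit, I would specialize to the split case $E = \Ls_1\oplus\cdots\oplus\Ls_r$ (which we may, since $d_i(E)$ and the $u_i$ are computed in $\Bb^*(X)$ and are natural; alternatively pass to $\Fl(E)$ where the pullback is injective by the fundamental embedding applied repeatedly), compute the coefficients of $1_{\Pb(E)/X}$ by iterating Lemma \ref{LMClassOfPBLem}-type double-point relations, and check that the leading coefficient $u_{r-1}$ equals $1_X$ modulo nilpotents.

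**Main obstacle.** The hard part will be extracting $u_{r-1}(1_{\Pb(E)/X})$ precisely enough to see it is a unit: the expansion of $\iota_E(1_{\Pb(E)/X})$ is infinite and only conditionally convergent (it terminates only after quotienting to make $e(\Oc(1))$-type classes act nilpotently), so one must argue carefully that the "top'' coefficient is well defined and extract it via the linear recurrence of Lemma \ref{ChernClassFormulaLem} rather than by naive truncation. I expect the bookkeeping to run: project $\iota_E(1_{\Pb(E)/X})$ to the associated graded of the $\Pb^\infty$-filtration, use that $u_i = \sum_{j=0}^{r-1} d_{r-j}(E)\bullet u_{j+i-r}$ for $i\geq r$ together with nilpotence of the $d_k(E)$ to see that the ``tail'' is nilpotent, and identify $u_{r-1}$ by comparing with the computation over $\Fl(E)$, where everything is an explicit product of line-bundle Euler classes and the leading term is manifestly $1$. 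Once $u_{r-1}(1_{\Pb(E)/X})$ is shown to be a unit and the other $u_i(1_{\Pb(E)/X})$ nilpotent, the lemma follows.
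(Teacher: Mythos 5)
Your overall direction (expand $\iota_E(1_{\Pb(E)/X})$ and control its coefficients after passing to a flag/split situation, using nilpotence of Euler classes of line bundles) is the right one, but several of your concrete steps fail. First, the premise that $e(\Oc_{\Pb(E)}(1))$ is not nilpotent is false: $\Pb(E)$ is again a finite dimensional divisorial Noetherian derived scheme, so Lemma \ref{NilpChernLem} applies to every line bundle on it; the only non-nilpotent object here is the shift operator $e(\Oc(1))\bullet -$ on $\Bb^\bullet(\Pb^\infty\times X\to Y)$ from Proposition \ref{FundamentalEmbProp}. Second, your coefficient-extraction device is incoherent: as in the proof of Lemma \ref{RCoeffLem}, $e(\Oc(1))\bullet[\Pb^{i}\times X\hook\Pb^\infty\times X]=[\Pb^{i-1}\times X\hook\Pb^\infty\times X]$, so the operator shifts coefficients \emph{down}; its image contains every generator, hence reducing modulo it kills everything and cannot isolate $u_0$. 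Moreover $u_0(1_{\Pb(E)/X})$ is nilpotent for $r\geq 2$ (it vanishes for $E=\Oc^{\oplus r}$), not congruent to $1_X$, and the ``complementary filtration/top coefficient'' has no meaning, since the expansion has no top term. Third, the recurrence from Lemma \ref{ChernClassFormulaLem} only yields nilpotence of the $u_i$ with $i\geq r$ if the $d_i(E)$ are nilpotent, and your justification (they are pushforwards of constant-term-free polynomials in Euler classes) is invalid: $\pi_!$ is not a ring homomorphism and does not preserve nilpotence, and at this point of the development, before the projective bundle formula and Chern classes exist, there is no identification of $d_i(E)$ with anything known to be nilpotent.

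The step that actually carries the content is also missing. Pulling back to $\Fl(E)$ (whose pullback is injective by Lemma \ref{BivInjPullbackLem} combined with Lemma \ref{BlowUpLiftLem}, not by the fundamental embedding itself) only makes $E$ filtered with line bundle quotients, not split, so ``everything is an explicit product of line-bundle Euler classes'' is not yet available; the paper bridges this by comparing $\Pb(E)$ and $\Pb(E')$, with $E'$ the associated graded, inside $\Pb(E\oplus E')$, using multiplicativity of Euler classes to get $e(E(1))=e(\Ls_1(1))\bullet\cdots\bullet e(\Ls_r(1))=e(E'(1))$ and hence $\iota_E(1_{\Pb(E)/X})=\iota_{E'}(1_{\Pb(E')/X})$. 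In the split case the paper then performs an explicit computation inside $\Bb^\bullet(\Pb(\Oc^{\oplus\infty}\oplus E)\to Y)$: the formal group law relation among the linear subbundles $\Pb(\Oc\oplus E')$, $\Pb(\Ls_1\oplus E')$, $\Pb(\Oc\oplus\Ls_1\oplus E')$ is solved by a power series $H(t,x)$, and iterating over the $\Ls_k$ exhibits every coefficient $u_i(1_{\Pb(E)/X})$ as a universal symmetric power series in the nilpotent classes $e(\Ls_1),\dots,e(\Ls_r)$, with constant term $1$ exactly when $i=r-1$. That single computation gives simultaneously that $u_{r-1}$ is $1$ plus a nilpotent element and that all other $u_i$, including those with $i\geq r$ and with no recurrence needed, are nilpotent. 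Your proposal asserts the conclusion (``the leading term is manifestly $1$'') but does not supply this computation, which is precisely where the lemma is proved; the relation you invoke in its place concerns pushed-forward classes in $\Bb^*(X)$ and forgets the $\Oc(1)$-decoration needed to read off the $u_i$.
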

\begin{proof}
We start with the case of a split vector bundle, i.e.,
$$E \simeq \bigoplus_{i=1}^r \Ls_i.$$
By construction of $\iota_E$, it is enough to understand the image of $1_{\Pb(E)/X}$ inside
$$\Bb^*(\Pb(\Oc^\infty \oplus E) \to X) :=  \colim_{n \geq 0} \Bb^*(\Pb(\Oc^n \oplus E) \to X).$$
If $E' = \bigoplus_{i \in I} \Ls_i$ for some $I \subset \{1,...,r\}$ and $n \geq 0$, we will denote by 
$$[\Pb(\Oc^{\oplus n} \oplus E') \hook \Pb(\Oc^\infty \oplus E)]$$
the class of the obvious linear embedding. 

Let $E' := \bigoplus_{i=2}^r \Ls_i$ and consider $\Pb(\Oc \oplus \Ls_1 \oplus E')$. As $\Ls_1(1)$ has a section with derived vanishing locus $\Pb(\Oc \oplus E')$ and $\Oc(1)$ has a section with derived vanishing locus $\Pb(\Ls_1 \oplus E')$ we conclude that
\begin{align*}
&[\Pb(\Oc \oplus E') \hook \Pb(\Oc^\infty \oplus E)] \\
=& [\Pb(\Ls_1 \oplus E') \hook \Pb(\Oc^\infty \oplus E)] \\
+& e(\Ls_1) \bullet [\Pb(\Oc \oplus \Ls_1 \oplus E') \hook \Pb(\Oc^{\oplus\infty} \oplus E)] \\
+& \sum_{i,j \geq 1} a_{ij} e(\Oc(1))^{i-1} \bullet e(\Ls_1)^j \bullet [\Pb(\Ls_1 \oplus E') \hook \Pb(\Oc^{\oplus \infty} \oplus E)]
\end{align*}
or in other words
\begin{align*}
[\Pb(\Ls_1 \oplus E') \hook \Pb(\Oc^\infty \oplus E)]  &= H\big(e(\Oc(1)), e(\Ls_1)\big) \bullet \Big( [\Pb(\Oc \oplus E') \hook \Pb(\Oc^\infty \oplus E)]  \\
&- e(\Ls_1) \bullet [\Pb(\Oc \oplus \Ls_1 \oplus E') \hook \Pb(\Oc^{\oplus\infty} \oplus E)]\Big),
\end{align*}
where 
$$H(t,x) := {1 \over 1 +  \sum_{i,j \geq 1} a_{ij} t^{i-1} \bullet x^j} \in \Lb[[t,x]].$$
Hence 
\begin{align*}
&[\Pb(\Ls_1 \oplus E') \hook \Pb(\Oc^\infty \oplus E)] \\
=& (-1)^{i_1-1} \sum_{i_1 \geq 1} H\big(e(\Oc(1)), e(\Ls_1)\big)^{i_1} \bullet e(\Ls_1)^{i_1 - 1} \bullet [\Pb(\Oc^{\oplus i_1} \oplus E') \hook \Pb(\Oc^{\oplus\infty} \oplus E)],
\end{align*}
and dealing with the other $\Ls_i$ similarly, we compute that $[\Pb(E) \hook \Pb(\Oc^\infty \oplus E)]$ equals to
\begin{align*}
\sum_{i_1 \cdots i_r \geq 1}  \prod_{k=1}^r (-1)^{i_k-1} \Big( H\big(e(\Oc(1)), e(\Ls_k)\big)^{i_k} \bullet e(\Ls_k)^{i_k - 1} \Big) \bullet [\Pb(\Oc^{\oplus i_1 + \cdots + i_r}) \hook \Pb(\Oc^{\oplus\infty} \oplus E)].
\end{align*}
Therefore, by inspection 
$$j_E^{-1}\big([\Pb(E) \hook \Pb(\Oc^\infty \oplus E)]\big) = \sum_{i=0}^\infty  [\Pb^i \times X \hook \Pb^\infty \times X] \bullet H_i\big(e(\Ls_1), ..., e(\Ls_r)\big),$$
where 
$$H_i(x_1,...,x_r) \in \Lb[[x_1,...,x_r]]$$
are universal symmetric power series, $H_{r-1}$ has constant term $1$ and the other $H_i$ do not have a constant term at all. Hence we are done in the split case.

We are left with the general case. Replacing $X$ by the full flag bundle $\Fl(E)$ of $E$ we can use Lemma \ref{BivInjPullbackLem} below to reduce to the case where $E$ admits a filtration 
$$0 = E_0 \subset E_1 \subset \cdots \subset E_{r-1} \subset E_{r} = E$$
with $\Ls_i := E_{i}/E_{i-1}$ line bundles. Denoting by $E'$ the sum $\bigoplus_{i=1}^r \Ls_i$ and by $\psi_1$ and $\psi_2$ the natural inclusions $\Pb(E) \hook \Pb(E \oplus E')$ and $\Pb(E') \hook \Pb(E \oplus E')$ respectively, we may compute that
\begin{align*}
\iota_E(1_{\Pb(E)/X}) &= \iota_{E \oplus E'} \big(\psi_{1*}(1_{\Pb(E)/X})\big) \\
&= \iota_{E \oplus E'} \big(e(E'(1))\big) \\
&= \iota_{E \oplus E'} \big(e(\Ls_1(1)) \bullet \cdots \bullet e(\Ls_r(1))\big) \\
&= \iota_{E \oplus E'} \big(e(E(1))\big) \\
&= \iota_{E \oplus E'} \big(\psi_{2*}(1_{\Pb(E')/X})\big) \\
&= \iota_{E'}(1_{\Pb(E')/X})
\end{align*}
so the general case follows from the split case.
\end{proof}

We still need to prove the following simple lemma used in the above proof.

\begin{lem}\label{BivInjPullbackLem}
Let $g: Y' \to Y$ be a quasi-smooth projective morphism and suppose there exists $\eta \in \Bb^*(Y')$ such that $g_!(\eta) = 1_Y$. Then, for any morphism $X \to Y$ in $\Cc_a$, the bivariant pullback
$$g^*: \Bb^\bullet(X \to Y) \to \Bb^\bullet(X' \to Y')$$
is injective.
\end{lem}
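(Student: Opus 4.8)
The plan is to exhibit an explicit left inverse of $g^*$, built from $\eta$ and the orientation of the quasi-smooth morphism $g$. First I would form the derived Cartesian square
$$
\begin{tikzcd}
X' \arrow[]{r}{g'} \arrow[]{d}{f'} & X \arrow[]{d}{f} \\
Y' \arrow[]{r}{g} & Y
\end{tikzcd}
$$
and note that, by Proposition \ref{StabilityOfQProjProp} together with stability of quasi-smoothness under derived base change, $g'$ is again quasi-smooth and projective; in particular $g'$ is confined, so the bivariant pushforward $g'_*$ along it is available, and $X' \in \Cc_a$ by Proposition \ref{AmpleFamilyInQuasiProjProp}. Since $g$ is quasi-smooth it carries an orientation $1_{Y'/Y} \in \Bb^\bullet(Y' \to Y)$, so I may set $\omega := \eta \bullet 1_{Y'/Y} \in \Bb^\bullet(Y' \to Y)$; by the definition of the Gysin pushforward and the hypothesis on $\eta$, $g_*(\omega) = g_!(\eta) = 1_Y$.

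Next I would define $r: \Bb^\bullet(X' \to Y') \to \Bb^\bullet(X \to Y)$ by $r(\beta) := g'_*(\beta \bullet \omega)$ --- which is legitimate because the composite $X' \to Y' \to Y$ factors through $g'$ --- and then verify that $r \circ g^* = \mathrm{id}$, which is exactly what is needed for injectivity. The required identity $r(g^*(\alpha)) = \alpha$ is the bivariant base-change projection formula
$$
g'_*\big( g^*(\alpha) \bullet \omega \big) = \alpha \bullet g_*(\omega) = \alpha \bullet 1_Y = \alpha,
$$
valid for all $\alpha \in \Bb^\bullet(X \to Y)$ and $\omega \in \Bb^\bullet(Y' \to Y)$. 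To unwind it from the bivariant axioms of \cite{annala-pre-and-cob} Section 2.1 (as is done implicitly in the proof of Proposition \ref{FundamentalEmbProp}), one pulls $\omega$ back along $f$ to $f^*\omega \in \Bb^\bullet(X' \to X)$, uses the ``exterior product over a fibre square'' identity $g^*(\alpha) \bullet \omega = f^*(\omega) \bullet \alpha$ (a consequence of $A_{23}$ and the compatibility of bivariant pullback with orientations), and then applies the projection formula $A_{12}$ for the confined morphism $g'$ followed by the compatibility $A_{13}$ of pushforward with pullback to conclude
$$
g'_*\big(f^*(\omega) \bullet \alpha\big) = g'_*\big(f^*\omega\big) \bullet \alpha = f^*\big(g_*\omega\big) \bullet \alpha = f^*(1_Y) \bullet \alpha = \alpha.
$$

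The only step carrying content beyond bookkeeping is the exterior-product identity $g^*(\alpha) \bullet \omega = f^*(\omega) \bullet \alpha$, i.e. the statement that the two natural ways of multiplying a class along the top row of the square against a class along the right-hand column agree; everything else is a formal manipulation of the bivariant operations with no geometric input. Given the level at which the paper invokes these axioms elsewhere, I would be comfortable citing the base-change projection formula directly and leaving this verification to the reader.
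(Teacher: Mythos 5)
Your proof is correct and is essentially the paper's argument: the paper likewise takes $\alpha \mapsto g'_*\big(g^*(\alpha) \bullet \eta \bullet 1_{Y'/Y}\big)$ as a left inverse of $g^*$ and concludes by the projection formula (bivariant axiom $A_{123}$) together with $g_*(\eta \bullet 1_{Y'/Y}) = g_!(\eta) = 1_Y$. The only superfluous part is your attempted unwinding of the projection formula via the identity $g^*(\alpha)\bullet\omega = f^*(\omega)\bullet\alpha$, which is a commutativity property rather than a consequence of the axioms; since $A_{123}$ is itself an axiom in the framework of \cite{annala-pre-and-cob}, citing it directly --- as you say you would --- is exactly what the paper does.
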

\begin{proof}
Let $g': X' \to X$ be the pullback of $g$, and let $\alpha \in \Bb^\bullet(X \to Y)$. Then we can compute that
\begin{align*}
g'_*\big(g^*(\alpha) \bullet \eta \bullet 1_{Y' / Y} \big) &= \alpha \bullet g_*(\eta \bullet 1_{Y' / Y} ) & (\text{bivariant axiom $A_{123}$}) \\
&= \alpha \bullet g_!(\eta) \\
&= \alpha
\end{align*}
proving the claim.
\end{proof}

Next step is to make the following observation.

\begin{lem}\label{CoefficientLem2}
Suppose $X \in \Cc_a$ and let $E$ be a vector bundle of rank $r$ on $X$. Then for each $i \leq r-1$ and $j \leq r-1$ there exist unique $\alpha_{j,i}(E) \in \Bb^*(X)$ so that
\begin{align*}
u_k \Bigg(\sum_{j=0}^{r-1} e(\Oc(1))^j \bullet 1_{\Pb(E)/X} \bullet \alpha_{j,i}(E) \Bigg) &= 
\begin{cases}
1_X & \text{if k=i;} \\
0 & \text{if $k<r$ and $k \not = i$.}
\end{cases}
\end{align*} 
Moreover, $[\alpha_{j,i}(E)]$ is an invertible matrix with coefficients in $\Bb^*(X)$.
\end{lem}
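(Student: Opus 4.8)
The plan is to bootstrap from the split case and the computation of $\iota_E(1_{\Pb(E)/X})$ in Lemma \ref{CoefficientLem1}. First I would record what that lemma gives: the coefficients $u_i(1_{\Pb(E)/X})$ depend only on the flag bundle data, with $u_{r-1}(1_{\Pb(E)/X})$ a unit and all other $u_i$ nilpotent. The key auxiliary observation is that for any $\beta \in \Bb^\bullet(X \to Y)$ one has $u_i\bigl(1_{\Pb(E)/X} \bullet \beta\bigr) = u_i(1_{\Pb(E)/X}) \bullet \beta$, since $\iota_E$ is a map of $\Bb^*(X)$-modules (Proposition \ref{FundamentalEmbProp}(1)) compatible with right multiplication (Proposition \ref{FundamentalEmbProp}(3)), and under the identification of Lemma \ref{GeneralTrivPBFLem} the classes $[\Pb^i \times X \hook \Pb^\infty \times X] \bullet -$ form a free basis. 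Likewise, multiplying by $e(\Oc(1))^j$ shifts coefficients up to a correction governed by the relation of Lemma \ref{ChernClassFormulaLem}; more precisely, using $e(\Oc(1)) \bullet \iota_E(-) = \iota_E(e(\Oc(1)) \bullet -)$ from Proposition \ref{FundamentalEmbProp}(2), together with the identity $e(\Oc(1)) \bullet [\Pb^i \times X \hook \Pb^\infty \times X] = [\Pb^{i+1} \times X \hook \Pb^\infty \times X] \bmod (\text{lower terms})$ — or just the fact that $e(\Oc(1)) \bullet -$ raises the top nonzero coefficient's index by exactly one — I would conclude that the map
$$
(\alpha_0, \ldots, \alpha_{r-1}) \longmapsto \Bigl(u_k\bigl(\textstyle\sum_{j} e(\Oc(1))^j \bullet 1_{\Pb(E)/X} \bullet \alpha_j\bigr)\Bigr)_{k < r}
$$
is given by an $r \times r$ matrix $M(E)$ over $\Bb^*(X)$ whose entries are the $u_k(e(\Oc(1))^j \bullet 1_{\Pb(E)/X})$.

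The heart of the argument is to show $M(E)$ is invertible, which I would do by a triangularity-plus-nilpotent-correction argument. By Lemma \ref{CoefficientLem1}, $u_{j+i}(e(\Oc(1))^j \bullet 1_{\Pb(E)/X})$ — read off by shifting the coefficients of $1_{\Pb(E)/X}$ up by $j$ and folding back via Lemma \ref{ChernClassFormulaLem} — equals the unit $u_{r-1}(1_{\Pb(E)/X})$ when $j + i = r-1$, i.e. $i = r-1-j$, and is nilpotent when $j + i < r-1$; the entries with $j + i > r-1$ are also expressible (via Lemma \ref{ChernClassFormulaLem}) in terms of the lower ones times the $d_\ell(E)$. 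So after reordering rows/columns, $M(E)$ has units along the anti-diagonal position $i = r-1-j$ and nilpotent entries strictly "below" it, hence is upper-triangular-up-to-nilpotents with unit diagonal, therefore invertible in $\mathrm{Mat}_r(\Bb^*(X))$. (Here I use that $\Bb^*(X)$ is a ring in which the relevant Euler-class-type elements are nilpotent, which holds because $\Bb^\bullet$ is a quotient of $\PCob'^\bullet$, via Proposition \ref{NilpEulerProp}; and a matrix over a commutative ring which is triangular with unit diagonal modulo a nil ideal is invertible.) Define $[\alpha_{j,i}(E)]$ to be the inverse matrix; then by construction $u_k\bigl(\sum_j e(\Oc(1))^j \bullet 1_{\Pb(E)/X} \bullet \alpha_{j,i}(E)\bigr) = \delta_{ki}$ for $k < r$, and uniqueness of the $\alpha_{j,i}(E)$ follows from injectivity of the fundamental embedding together with Lemma \ref{RCoeffLem}: any two choices differ by an element whose first $r$ coefficients vanish, hence which is zero.

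The main obstacle I anticipate is the bookkeeping for the "fold-back" step: expressing $u_k(e(\Oc(1))^j \bullet 1_{\Pb(E)/X})$ for $j+i$ exceeding $r-1$ cleanly enough to see the anti-diagonal unit / nilpotent pattern, and making sure the reduction to the split (then filtered, then general) case propagates the matrix invertibility. For the split/filtered reduction I would invoke Lemma \ref{BivInjPullbackLem} exactly as in the proof of Lemma \ref{CoefficientLem1}: pulling back along $\Fl(E) \to X$ is injective and compatible with all the structure, $\iota_E$ is compatible with pullbacks (Proposition \ref{FundamentalEmbProp}(4)), and $e(\Oc(1))$ pulls back to $e(\Oc(1))$, so $M(E)$ pulls back to $M(E|_{\Fl(E)})$; since a matrix over $\Bb^*(X)$ that becomes invertible over $\Bb^*(\Fl(E))$ need not a priori be invertible over $\Bb^*(X)$, I would instead construct the inverse $[\alpha_{j,i}(E)]$ directly over $\Bb^*(X)$ using the triangularity argument above (which only needs $M(E)$ itself, not its pullback), and use the $\Fl(E)$ reduction only to identify the diagonal entry $u_{r-1}(1_{\Pb(E)/X})$ as a unit — which is precisely the content of Lemma \ref{CoefficientLem1}. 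This keeps everything over $\Bb^*(X)$ and sidesteps the descent issue.
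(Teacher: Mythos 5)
Your argument is essentially the paper's proof: one forms the matrix with entries $u_k\big(e(\Oc(1))^j \bullet 1_{\Pb(E)/X}\big) = u_{k+j}\big(1_{\Pb(E)/X}\big)$, observes via Lemma \ref{CoefficientLem1} that it has units on the anti-diagonal and nilpotent entries everywhere else, and inverts it by elementary linear algebra over the commutative ring $\Bb^*(X)$, the inverse being the unique matrix $[\alpha_{j,i}(E)]$. Two small slips that do not affect your conclusion: multiplication by $e(\Oc(1))$ \emph{lowers} the index of the basis classes (one has $e(\Oc(1)) \bullet [\Pb^i \times X \hook \Pb^\infty \times X] = [\Pb^{i-1} \times X \hook \Pb^\infty \times X]$, hence $u_k\big(e(\Oc(1))^j \bullet \alpha\big) = u_{k+j}(\alpha)$), not raises it; and the ``fold-back'' via Lemma \ref{ChernClassFormulaLem} is unnecessary, since Lemma \ref{CoefficientLem1} already asserts that $u_m\big(1_{\Pb(E)/X}\big)$ is nilpotent for every $m \neq r-1$, including all $m \geq r$, which gives the anti-diagonal-unit-plus-nilpotent shape directly.
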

\begin{proof}
Consider the matrix $A(E)$, whose entries are
\begin{align*}
A(E)_{j,i} :=& u_j\big(e(\Oc(1))^i \bullet 1_{\Pb(E)/X} \big)  \\
=& u_{i+j}\big(1_{\Pb(E)/X} \big).
\end{align*}
Then, by construction, given
$$\overline{\alpha} = 
\begin{bmatrix}
\alpha_0 \\
\vdots \\
\alpha_{r-1}
\end{bmatrix} 
\in \Bb^*(X)^{\oplus r}
$$
we see that $A(E) \overline{\alpha}$ is the vector of first $r$ coefficients of $\sum_{i=0}^{r-1} e(\Oc(1))^i \bullet 1_{\Pb(E)/X} \bullet \alpha_i$. By Lemma \ref{CoefficientLem1} $A(E)$ has units on the anti-diagonal and nilpotent elements everywhere else, so $A(E)$ is invertible by basic linear algebra. Moreover, $[\alpha_{j,i}(E)] := A(E)^{-1}$ is clearly the unique matrix with the desired property.
\end{proof}

Finally, we can prove the result we are after.

\begin{lem}\label{CoefficientLem3}
Let $X \to Y$ be a morphism in $\Cc_a$, let $u_0,...,u_{r-1} \in \Bb^\bullet(X \to Y)$, and let
$$\alpha := \sum_{i=0}^{r-1} \sum_{j=0}^{r-1} e(\Oc(1))^j \bullet 1_{\Pb(E)/X} \bullet \alpha_{j,i}(E) \bullet u_i.$$
Then $u_i(\alpha) = u_i$ for all $i < r$.
\end{lem}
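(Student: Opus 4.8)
The plan is to transport the whole identity through the fundamental embedding $\iota_E$, which is injective and, by Proposition \ref{FundamentalEmbProp}(3), compatible with bivariant multiplication from the right; then the statement becomes a routine bookkeeping computation inside $\Bb^\bullet(\Pb^\infty \times X \to Y)$, where coefficients are well-defined and unique by Lemma \ref{GeneralTrivPBFLem}.

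First I would group the double sum defining $\alpha$: set
$$\beta_i := \sum_{j=0}^{r-1} e(\Oc(1))^j \bullet 1_{\Pb(E)/X} \bullet \alpha_{j,i}(E) \in \Bb^\bullet(\Pb(E) \to X),$$
so that associativity of the bivariant product gives $\alpha = \sum_{i=0}^{r-1} \beta_i \bullet u_i$. By Lemma \ref{CoefficientLem2}, applied to the classes $\beta_i$ which live over $X$, the coefficients of Definition \ref{CoeffDef} satisfy $u_k(\beta_i) = 1_X$ if $k = i$ and $u_k(\beta_i) = 0$ for the remaining $k < r$.

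Next I would apply $\iota_E$ and use bilinearity of the bivariant product together with Proposition \ref{FundamentalEmbProp}(3) --- with the role of the ``$\alpha$'' there played by $u_i \in \Bb^\bullet(X \to Y)$ and the two ``targets'' being $X$ and $Y$ --- to obtain $\iota_E(\alpha) = \sum_i \iota_E(\beta_i) \bullet u_i$. Expanding $\iota_E(\beta_i) = \sum_k [\Pb^k \times X \hook \Pb^\infty \times X] \bullet u_k(\beta_i)$ by the very definition of the coefficients and reassociating so that $u_i$ slides past the inclusion classes, this becomes
$$\iota_E(\alpha) = \sum_{k \geq 0} [\Pb^k \times X \hook \Pb^\infty \times X] \bullet \Big( \sum_{i=0}^{r-1} u_k(\beta_i) \bullet u_i \Big).$$
By the uniqueness of such an expansion (Lemma \ref{GeneralTrivPBFLem}) one reads off $u_k(\alpha) = \sum_{i=0}^{r-1} u_k(\beta_i) \bullet u_i$ for every $k$. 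Specializing to $k < r$ and inserting $u_k(\beta_i) = \delta_{ki}\, 1_X$ leaves only the $i = k$ term, so $u_k(\alpha) = 1_X \bullet u_k = u_k$ since $1_X = 1_{X/X}$ is a unit for the bivariant product, which is exactly the claim.

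There is no real obstacle here; the only care needed is to keep straight the source and target schemes of each bivariant class --- in particular that the coefficients $u_k(\beta_i)$ computed for the $\beta_i \in \Bb^\bullet(\Pb(E) \to X)$ are precisely the $\Bb^*(X)$-valued quantities controlled by Lemma \ref{CoefficientLem2}, and that Proposition \ref{FundamentalEmbProp}(3) applies verbatim to right-multiplication by $u_i$.
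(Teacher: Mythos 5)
Your proof is correct and follows the same route the paper intends: the paper's proof is just ``immediate from Lemma \ref{CoefficientLem2}'', and what you have written out --- grouping $\alpha = \sum_i \beta_i \bullet u_i$, using Proposition \ref{FundamentalEmbProp}(3) to get $\iota_E(\beta_i \bullet u_i) = \iota_E(\beta_i)\bullet u_i$, and reading off $u_k(\alpha) = \sum_i u_k(\beta_i)\bullet u_i$ from the uniqueness in Lemma \ref{GeneralTrivPBFLem} --- is exactly the bookkeeping that makes this immediate. Your care about where the $\beta_i$ live (over $X$, so that Lemma \ref{CoefficientLem2} applies to their coefficients) is the right point to check, and it is handled correctly.
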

\begin{proof}
This is immediate from \ref{CoefficientLem2}.
\end{proof}

We are then finally ready to prove the projective bundle formula.

\begin{proof}[Proof of Theorem \ref{GeneralPBFThm}]
By Lemma \ref{CoefficientLem3}, for each $u_0,...,u_{r-1}$, the element 
$$\alpha := \sum_{j=0}^{r-1} e(\Oc(1))^j \bullet 1_{\Pb(E)/X} \bullet \Bigg( \sum_{i=0}^{r-1} \alpha_{j,i}(E) \bullet u_i \Bigg)$$
has $u_i(\alpha) = u_i$ for $i < r$, and since $[\alpha(E)_{j,i}]$ is an invertible matrix by Lemma \ref{CoefficientLem2}, it follows that the map
$$\Pc roj: (\alpha_0,...,\alpha_{r-1}) \mapsto \sum_{j=0}^{r-1} e(\Oc(1))^j \bullet 1_{\Pb(E)/X} \bullet \alpha_j$$
is injective. On the other hand, by Lemma \ref{RCoeffLem}, there exists at most one element of $\Bb^\bullet(\Pb(E) \to Y)$ with first $r$ coefficients $u_0,...,u_{r-1}$, so the surjectivity of $\Pc roj$ follows from Lemma \ref{CoefficientLem3} as well. 
\end{proof}

\subsection{Chern classes and applications}\label{GenChernClassSubSect}

The purpose of this section is to define Chern classes of vector bundles, and to apply them to generalizing Conner--Floyd theorem and Grothendieck--Riemann--Roch theorem to all divisorial Noetherian derived schemes. We start with the fundamental definition.

\begin{defn}\label{GeneralChernClassDefn}
Let $X \in \Cc_a$ and let $E$ be a vector bundle of rank $r$ on $X$. Then by the projective bundle formula (Theorem \ref{GeneralPBFThm}) the equation
$$\sum_{i=0}^r (-1)^i e(\Oc(1))^{i} \bullet c_{r-i}(E) = 0 \in \PCob'^r(\Pb(E^\vee))$$
holds for unique $c_i(E) \in \PCob'^i(E)$ with $c_0(E)=1_X$. The element $c_i(E)$ is called the \emph{$i^{th}$ Chern class} of $E$. 
\end{defn} 

These Chern classes satisfy the expected properties, as the following result shows.

\begin{thm}\label{GeneralChernClassThm}
Define the \emph{total Chern class} of a rank $r$ vector bundle $E$ as
$$c(E) = 1 + c_1(E) + \cdots + c_r(E).$$
Then Chern classes satisfy the following properties:
\begin{enumerate}
\item \emph{naturality:} $f^* c_i(E) = c_i(f^*E)$ for $f: Y \to X$ in $\Cc_a$;
\item \emph{Whitney sum formula:} given a short exact sequence 
$$0 \to E' \to E \to E'' \to 0$$
of vector bundles on $X$, we have
$$c(E) = c(E') \bullet c(E'') \in \PCob'^*(X);$$ 
\item \emph{normalization:} if $E$ is a rank $r$ vector bundle on $X \in \Cc_a$, then $c_r(E) = e(E)$.
\end{enumerate}
\end{thm}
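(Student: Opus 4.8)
The plan is to deduce all three properties from the projective bundle formula (Theorem \ref{GeneralPBFThm}), the splitting principle supplied by Lemma \ref{BivInjPullbackLem}, and the formula for Euler classes of line bundles built into the construction. First I would establish \emph{naturality}. Given $f: Y \to X$ in $\Cc_a$, pull back the defining relation $\sum_{i=0}^r (-1)^i e(\Oc(1))^i \bullet c_{r-i}(E) = 0$ in $\PCob'^r(\Pb(E^\vee))$ along the induced morphism $\Pb(f^*E^\vee) \to \Pb(E^\vee)$. Since bivariant pullback is a ring homomorphism, commutes with Euler classes of the tautological bundle ($g^*\Oc(1) \simeq \Oc(1)$ on the pulled-back projective bundle), and the projective bundle formula gives a \emph{unique} such relation, we conclude $f^* c_i(E) = c_i(f^*E)$.

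Next, the \emph{splitting principle}: I would show that for the flag bundle $\pi: \Fl(E) \to X$, the pullback $\pi^*: \PCob'^*(X) \to \PCob'^*(\Fl(E))$ is injective. This follows from Lemma \ref{BivInjPullbackLem} once we know there is $\eta \in \PCob'^*(\Fl(E))$ with $\pi_!(\eta) = 1_X$; iterating the projective bundle formula along the tower of projective bundles that builds $\Fl(E)$ produces such an $\eta$ (as in Lemma \ref{ChernClassFormulaLem}, where this $\eta$ is already used). On $\Fl(E)$ the bundle $E$ acquires a filtration with line bundle quotients $\Ls_1, \ldots, \Ls_r$, and I would prove that $c(E)$ pulls back to $\prod_{i=1}^r (1 + e(\Ls_i^\vee))$ — equivalently that the Chern roots of $E$ are the $e(\Ls_i^\vee)$. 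This is done exactly as in Lemma \ref{ChernClassFormulaLem}: the identity $\prod_{i=1}^r\big(e(\Ls_i^\vee) - e(\Oc(1))\big) = 0$ holds on $\Fl(E^\vee)$ because $e(\Oc(1)) = e\big(\bigoplus \Ls_i(1)\big)$-type reasoning forces $e(\Ls_1(1)) \bullet \cdots \bullet e(\Ls_r(1)) = e(E(1)) = 0$, and expanding the product in elementary symmetric polynomials of the $e(\Ls_i^\vee)$ identifies them with the $c_i(E)$ by uniqueness in the projective bundle formula. \emph{Whitney sum} then follows: given $0 \to E' \to E \to E'' \to 0$, pass to a common flag bundle where $E'$, $E''$ (hence $E$) are filtered with line-bundle quotients; the Chern roots of $E$ are the union of those of $E'$ and $E''$, so $c(E) = c(E') \bullet c(E'')$ holds after the injective pullback $\pi^*$, hence on $X$.

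For \emph{normalization} I would argue that $c_r(E) = e(E)$. By naturality and the injectivity of the flag-bundle pullback it suffices to check this on $\Fl(E)$, where $E$ is filtered with line-bundle quotients $\Ls_i$; there $c_r(E) = \prod_{i=1}^r e(\Ls_i^\vee)$ by the splitting computation, while $e(E) = \prod_{i=1}^r e(\Ls_i^\vee)$ by multiplicativity of Euler classes of vector bundles in short exact sequences (\cite{annala-chern} Lemma 4.1, recalled in Section \ref{NilpEulerClassSubSect}). Hence the two agree on $\Fl(E)$ and therefore on $X$. The main obstacle is the splitting principle step: one must make sure that the auxiliary class $\eta$ with $\pi_!(\eta)=1_X$ genuinely exists for the full flag bundle — this is where nilpotency of Euler classes in $\PCob'^\bullet$ (Proposition \ref{NilpEulerProp}), and hence the passage to the primed theory, is essential — and that the filtration-to-Chern-roots dictionary is set up consistently with the sign and duality conventions in Definition \ref{GeneralChernClassDefn}. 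Everything else is a formal consequence of Theorem \ref{GeneralPBFThm} and uniqueness.
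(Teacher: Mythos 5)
Your overall strategy is essentially the paper's: naturality from uniqueness in the projective bundle formula, the splitting principle via the flag bundle (injectivity of $\pi^*$ from Lemma \ref{BivInjPullbackLem} together with a class $\eta$ satisfying $\pi_!(\eta)=1$, available in the primed theory), identification of the Chern classes with elementary symmetric polynomials in Euler classes of the graded pieces as in Lemma \ref{ChernClassFormulaLem}, and then the Whitney formula; the only structural difference is that the paper deduces normalization from Whitney plus the line bundle case, while you deduce it from the root identification plus multiplicativity of Euler classes, which is an inessential variation.

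However, your duality bookkeeping is off, and two intermediate claims are false as written. Definition \ref{GeneralChernClassDefn} imposes the relation on $\Pb(E^\vee)$, where the tautological datum is a surjection $E \twoheadrightarrow \Oc(1)$, i.e.\ a nowhere-vanishing section of $E^\vee(1)$; thus the vanishing identity available there is $e(E^\vee(1))=0$, not $e(E(1))=0$ as you assert. Running the argument of Lemma \ref{ChernClassFormulaLem} with $E^\vee$ in place of $E$ then gives $\prod_{i=1}^r\big(e(\Ls_i)-e(\Oc(1))\big)=0$ on (the flag bundle over) $\Pb(E^\vee)$, so by uniqueness the Chern roots are $e(\Ls_i)$, not $e(\Ls_i^\vee)$ — consistently, the definition with $r=1$ gives $c_1(\Ls)=e(\Ls)$ for a line bundle, since $\Oc(1)\simeq\Ls$ on $\Pb(\Ls^\vee)\simeq X$. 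Correspondingly, in your normalization step the identity $e(E)=\prod_i e(\Ls_i^\vee)$ is wrong: multiplicativity of Euler classes in short exact sequences gives $e(E)=\prod_i e(\Ls_i)$, whereas $\prod_i e(\Ls_i^\vee)=e(E^\vee)$, and these differ in general (each factor is the formal inverse of the other). Your two dual-twisting slips cancel, so the conclusions you state are the intended ones, but as written both intermediate steps fail; once the dualities are corrected, your argument coincides with the paper's proof.
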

\begin{proof}
Naturality is trivial. To prove the Whitney sum formula, we can pull back to flag bundles to reduce to the case where both $E'$ and $E''$ admit filtrations
$$0 = E'_0 \subset E'_1 \subset \cdots \subset E'_{r'} = E'$$
and
$$0 = E''_0 \subset E''_1 \subset \cdots \subset E''_{r''} = E''$$
with graded pieces line bundles $\Ls'_i := E'_i/E'_{i-1}$ and $\Ls''_i := E''_i / E''_{i-1}$ respectively. One then argues as in Lemma \ref{ChernClassFormulaLem} to show that
$$c_i(E') = s_i\big(e(\Ls'_1), ..., e(\Ls'_{r'}) \big),$$
$$c_i(E'') = s_i\big(e(\Ls''_1), ..., e(\Ls''_{r''}) \big)$$
and 
$$c_i(E) = s_i\big(e(\Ls'_1), ..., e(\Ls'_{r'}), e(\Ls''_1), ..., e(\Ls''_{r''}) \big)$$
proving the Whitney sum formula. Normalization is trivial for line bundles, and follows for general vector bundles from the Whitney sum formula.
\end{proof}

Our next goal is to prove the Conner--Floyd theorem.
\begin{thm}\label{GeneralCFThm}
Let $\Zb_m$ be the integers considered as an $\Lb$-algebra via the multiplicative formal group law $x + y - xy$, and suppose $X \in \Cc_a$. Then the morphisms
$$\Zb_m \otimes_\Lb \PCob^*(X) \to K^0(X)$$
given by
$$[V \stackrel f \to X] \mapsto [f_* \Oc_V]$$
are isomorphisms of rings that commute with pullbacks and Gysin pushforwards. The theorem remains true if we replace $\PCob'^*(X)$ with $\Omega'^*(X)$.
\end{thm}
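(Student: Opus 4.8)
The plan is to exhibit the map as the $X\xrightarrow{\mathrm{id}}X$ component of a Grothendieck transformation, to build an explicit two-sided inverse out of the Chern classes of Theorem \ref{GeneralChernClassThm}, and to check that the two composites are the identity; this is the strategy of \cite{annala-chern} Section 5, the one genuinely new ingredient being that, with no ample line bundle available, the factorization step in the proof that the inverse commutes with Gysin maps must be redone.

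First I would observe that bivariant algebraic $K$-theory $K^\bullet$, restricted to the functoriality $\Fs_a$, is a stably oriented additive bivariant theory satisfying the section, formal group law, and snc axioms, its formal group law being the multiplicative one $x+y-xy$ (the Euler class of a line bundle being $1-[\Ls^\vee]$, whose tensor-additivity is the Koszul relation), and that the pullback $K^0(X)\to K^0(\Pb(E))$ is split injective by the $K$-theoretic projective bundle theorem. Thus $K^\bullet$ meets the hypotheses of Theorem \ref{UnivPropOfModiefiedCobThm}, so there is a unique orientation-preserving Grothendieck transformation $\PCob'^\bullet\to K^\bullet$; as $K^\bullet$ carries the multiplicative formal group law, this transformation is linear over the classifying map $\Lb\to\Zb_m$ and hence factors through $\Zb_m\otimes_\Lb\PCob'^\bullet$, and as $K^\bullet$ also kills the snc relations it factors further through $\Omega'^\bullet$. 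Its $X\xrightarrow{\mathrm{id}}X$ component is the homomorphism $[V\xrightarrow f X]\mapsto[Rf_*\Oc_V]$ of the statement (the orientation of $K^\bullet$ is $[\Oc]$ and bivariant pushforward is $Rf_*$); being a component of a Grothendieck transformation it is automatically a ring homomorphism commuting with bivariant pullbacks and pushforwards, hence with ordinary pullbacks and Gysin maps. This settles everything but bijectivity, for which it suffices to treat $\PCob'^*$: the surjection $\Zb_m\otimes_\Lb\PCob'^*(X)\twoheadrightarrow\Zb_m\otimes_\Lb\Omega'^*(X)$ lies over $K^0(X)$, so bijectivity of the source map forces that of the target map and an isomorphism between the two.

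The inverse $ch\colon K^0(X)\to\Zb_m\otimes_\Lb\PCob'^*(X)$ is built as in \cite{annala-chern}: on line bundles set $ch([\Ls]):=1-e(\Ls^\vee)$, a homomorphism $\mathrm{Pic}(X)\to(\Zb_m\otimes_\Lb\PCob'^*(X))^\times$ by the multiplicative formal group law, and extend it to a ring homomorphism on all of $K^0(X)$ by the splitting principle: pull a vector bundle $E$ back to its full flag bundle, where $\PCob'^*(X)\to\PCob'^*(\Fl(E))$ is injective by Theorem \ref{GeneralPBFThm} and Lemma \ref{BivInjPullbackLem} and the resulting expression is symmetric in the Chern roots, so descent holds; here one uses that $X$ is divisorial, hence has the resolution property, so $K^0(X)$ is generated by classes of vector bundles. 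Since $ch$ commutes with pullbacks, flag-bundle injectivity reduces $CF\circ ch=\mathrm{id}_{K^0(X)}$ to the line bundle case, where it is the Koszul computation $CF(e(\Ls^\vee))=[i_*\Oc_Z]=1-[\Ls]$ with $Z$ the derived vanishing locus of a section of $\Ls^\vee$. For the other composite, a generator $[V\xrightarrow f X]$ of $\PCob'^*(X)$ has $f$ quasi-smooth projective (Proposition \ref{QSmCharProp}) and equals $f_!(1_V)$, while $CF([V\xrightarrow f X])=[Rf_*\Oc_V]$ is the $K$-theoretic Gysin pushforward of $[\Oc_V]$ and $ch([\Oc_V])=1_V$; hence $ch\circ CF=\mathrm{id}$ is equivalent to the assertion that $ch$ commutes with Gysin pushforwards along quasi-smooth projective morphisms.

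That assertion is the crux, and is where the argument genuinely departs from \cite{annala-chern}. By the factorization theorem \ref{ExistenceOfFactorizationsThm}(2), available since the source of $f$ is divisorial, $f$ factors as a derived regular embedding followed by the canonical projection $\pi\colon\Pb(\Ls_1\oplus\cdots\oplus\Ls_r)\to X$. For the regular embedding the argument of \cite{annala-cob} Lemma 4.4 carries over verbatim. For $\pi$ — precisely the place where \cite{annala-chern} used an ample line bundle and a new argument is required — I would combine the $K$-theoretic projective bundle formula with the cobordism identity $\pi_!\big(e(\Oc(1))^m\big)=1_X$ in $\Zb_m\otimes_\Lb\PCob'^*(X)$ for all $m<r$, proved as in \cite{annala-chern} (equivalently, from \cite{Levine:2007} Lemma 4.2.3 and the double point relation) using only the nilpotence of Euler classes of line bundles (Lemma \ref{NilpChernLem}); this forces $ch\circ\pi_!=\pi_!\circ ch$ on the classes $e(\Oc(1))^i\bullet\pi^*(\alpha)$, which span $\PCob'^\bullet(\Pb(\Ls_1\oplus\cdots\oplus\Ls_r)\to Y)$ by the projective bundle formula. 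Granting this, both composites are the identity, proving the theorem for $\PCob'^*$, and for $\Omega'^*$ by the reduction noted above. The only real difficulty is the projection step; the rest is formal or a routine transcription of \cite{annala-chern} Section 5.
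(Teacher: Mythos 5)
Your proposal follows essentially the same route as the paper: exhibit the Conner--Floyd map as (a component of) the canonical Grothendieck transformation to bivariant $K$-theory, build the inverse Chern character from the newly available Chern classes, and reduce the only genuinely new point to the commutation of $ch$ with pushforward along $\pi\colon \Pb(\Ls_1\oplus\cdots\oplus\Ls_r)\to X$ (via Theorem \ref{ExistenceOfFactorizationsThm}), which both you and the paper settle by combining the $K$-theoretic projective bundle formula with the identity $\pi_!\big(e(\Oc(1))^i\big)=1_X$ for $i<r$ in $\Zb_m\otimes_\Lb\PCob'^*(X)$, proved by the same split-bundle/Levine--Morel-type computation as the paper's Lemma \ref{ProjBundlesPushToUnitLem}. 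One small correction: the factorization of Theorem \ref{ExistenceOfFactorizationsThm}(2) requires the \emph{target} of $f$ to be divisorial (which holds here since $X\in\Cc_a$), not the source as you wrote.
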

\begin{proof}
Now that we are armed with a good theory of Chern classes, this result can be proved in almost the same way as in \cite{annala-chern} Section 5.1 and \cite{annala-cob} Section 4.1 with one difference: we have to show that the \emph{Chern character morphisms}
$$ch: K^0(X) \to \Zb_m \otimes_\Lb \PCob^*(X)$$
defined by
$$[E] \mapsto \rank(E) - c_1(E^\vee)$$
commute with pushforwards along natural projections
$$\pi: \Pb\big(\Ls_1 \oplus \cdots \oplus \Ls_r \big) \to X$$
(this is part of \cite{annala-cob} Lemma 4.2, but the proof of the other part generalizes immediately). Applying the $K$-theoretic projective bundle formula, and the projection formula, this reduces to showing that for all $i < r$
\begin{align*}
1_X &=  ch \big(1_X \big) \\
&= ch \Big( \pi_!\big(c_1(\Oc(1))^i \big) \Big) \\
&= \pi_! \Big(ch \big(c_1(\Oc(1))^i \big) \Big)  \\
&= \pi_! \big(c_1(\Oc(1))^i \big),
\end{align*}
where the second equality follows from a simple $K$-theory computation, and the last equality follows from the fact that $ch$ preserves Chern classes and multiplications (see \cite{annala-cob} Lemma 4.1 and the discussion preceding it).  But this is easy since
\begin{align*}
c_1(\Oc(1))^i &= \prod_{k=1}^i \big(c_1(\Ls_k(1)) + c_1(\Ls_k^\vee) - c_1(\Ls_k(1)) \bullet  c_1(\Ls_k^\vee)\big) \\
&= \prod_{k=1}^i \big((1 - c_1(\Ls_k^\vee)) \bullet c_1(\Ls_k(1)) + c_1(\Ls_k^\vee)\big)
\end{align*}
which pushes forward to $1_X$ by Lemma \ref{ProjBundlesPushToUnitLem} below. The same proof goes through for $\Zb_m \otimes_\Lb \Omega^*(X)$ because all the relevant maps stay well defined.
\end{proof}

\begin{lem}\label{ProjBundlesPushToUnitLem}
Let $E$ be a vector bundle of rank $r$ on $X \in \Cc_a$ and let $\pi: \Pb(E) \to X$. Then
$$\pi_!(1_{\Pb(E)}) = 1_X \in \Zb_m \otimes_\Lb \PCob'^*(X).$$
\end{lem}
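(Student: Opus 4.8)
The plan is to reduce the statement to the split case, and then to carry out an induction. Concretely, I would first use the flag-bundle trick, as in the proof of Lemma~\ref{CoefficientLem1}: by Lemma~\ref{BivInjPullbackLem}, after pulling back along the full flag bundle $\Fl(E) \to X$ (which admits a class pushing forward to $1_X$ by the projective bundle formula applied iteratively), it suffices to treat the case where $E$ admits a filtration with line-bundle graded pieces $\Ls_1, \dots, \Ls_r$. Since the statement concerns an equality in $\Zb_m \otimes_\Lb \PCob'^*(X)$ (equivalently in $\Zb_m\otimes_\Lb\PCob^*(X)$, the relevant relations being the double point relations), and all the maps in question are pulled back injectively, this reduction is legitimate.

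Next, in the split (even filtered) case I would induct on the rank $r$, or more precisely on the number of nontrivial $\Ls_i$, exactly as in Lemma~\ref{ProjBundlesPushToUnitLem} of the (commented-out) earlier draft and as in \cite{Levine:2007} Lemma 4.2.3. The base case is $E = \Oc^{\oplus r}$, where one must show $[\Pb^{r-1}_X \to X] = 1_X \in \Zb_m\otimes_\Lb\PCob'^*(X)$; this is the classical computation: over the multiplicative formal group law the class of $\Pb^{r-1}$ becomes trivial, which one verifies using the standard double-point degeneration of $\Pb^{r-1}$ together with the fact that $e(\Oc(1))$ is nilpotent (Lemma~\ref{NilpChernLem}), so the geometric series in $e(\Oc(1))$ that appears actually terminates. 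For the inductive step, writing $E = \Ls_1 \oplus E'$ with $\Ls_1$ nontrivial, I would apply the analogue of Lemma~\ref{LMClassOfPBLem} (Levine--Morel's Lemma 4.2.2, whose proof goes through unchanged in the derived setting) to express $[\Pb(\Ls_1 \oplus E') \to X]$ in terms of $[\Pb(\Oc \oplus E') \to X]$ and $[\Pb(\Ls_1 \oplus \Oc \oplus E') \to X]$; iterating this and using the inductive hypothesis on $\Pb(\Oc^{\oplus n} \oplus E')$, one obtains
$$\pi_!(1_{\Pb(E)}) = 1_X - e(\Ls_1)^n + e(\Ls_1)^n \bullet [\Pb(\Ls_1 \oplus \Oc^{\oplus n} \oplus E') \to X]$$
for every $n \geq 0$, and since $e(\Ls_1)$ is nilpotent in $\PCob'^*(X)$ (Lemma~\ref{NilpChernLem} again), taking $n$ large kills the error terms and gives $\pi_!(1_{\Pb(E)}) = 1_X$.

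The main obstacle I anticipate is bookkeeping rather than a conceptual difficulty: one has to make sure the flag-bundle reduction is compatible with passing to $\Zb_m \otimes_\Lb (-)$ and with the Gysin pushforward $\pi_!$ (as opposed to the naive bivariant pushforward), and that the various identities of Levine--Morel type, which were originally stated for the precobordism of quasi-projective schemes over a field, genuinely transport to $\PCob'^\bullet$ on $\Cc_a$ — but this is exactly the kind of transport already used repeatedly in \cite{annala-chern} and in the present paper, so no new input is needed. A secondary point to be careful about is that the nilpotence of Euler classes of \emph{line} bundles (Lemma~\ref{NilpChernLem}) already suffices here — we never need the nilpotence of Euler classes of higher-rank bundles — so in fact this particular lemma would hold verbatim with $\PCob^\bullet$ in place of $\PCob'^\bullet$; I would state and prove it for $\PCob'^\bullet$ only because that is the theory in which the surrounding results (Chern classes, Conner--Floyd) live.
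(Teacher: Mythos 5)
Your two-step strategy (flag-bundle reduction, then an induction over line-bundle summands using nilpotence of Euler classes of line bundles) is close in spirit to the paper's argument, but there is a genuine gap at the hand-off between the two steps. Pulling back along $\Fl(E) \to X$ is legitimate (by Lemma \ref{BlowUpLiftLem} iterated up the tower and Lemma \ref{BivInjPullbackLem} the pullback is split injective, and the splitting survives $\Zb_m \otimes_\Lb (-)$), but it only produces a bundle carrying a \emph{filtration} with line-bundle graded pieces; it does not split $E$. Your induction, however, genuinely needs a direct-sum decomposition $E \simeq \Ls_1 \oplus E'$: the Levine--Morel type relation you invoke (their Lemma 4.2.2) compares $\Pb\big((\Ms\otimes\Ls)\oplus E\big)$ with $\Pb(\Ls\oplus E)$ and $\Pb\big((\Ms\otimes\Ls)\oplus\Ls\oplus E\big)$, all honest direct sums, as does the iteration producing $1_X - e(\Ls_1)^n + e(\Ls_1)^n \bullet [\Pb(\Ls_1\oplus\Oc^{\oplus n}\oplus E') \to X]$; your parenthetical ``(even filtered)'' is not substantiated. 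The paper bridges exactly this point with a short extra argument: letting $E'$ be the associated graded bundle $\bigoplus_i \Ls_i$, multiplicativity of Euler classes in short exact sequences gives $e(E(1)) = e(\Ls_1(1))\bullet\cdots\bullet e(\Ls_r(1)) = e(E'(1))$ on $\Pb(E\oplus E')$, whence $[\Pb(E)\hook\Pb(E\oplus E')] = [\Pb(E')\hook\Pb(E\oplus E')]$ and therefore $[\Pb(E)\to X] = [\Pb(E')\to X]$, so one may indeed assume $E$ split. You need this (or an equivalent deformation argument) before your induction can start. A second, smaller point: your closing remark that the lemma holds verbatim for $\PCob^\bullet$ is not justified by your own proof --- for a general (non-split) $E$ the flag-bundle reduction uses injectivity of pullback along $\Fl(E)\to X$, i.e.\ a class pushing forward to $1_X$, and the existence of such a class in the unprimed theory is precisely the open question that motivates $\PCob'^\bullet$ (Lemma \ref{EquivalentModificationsLem}); nilpotence of Euler classes of line bundles alone does not supply it.

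Beyond the gap, your treatment of the split case also takes a different route from the paper's: instead of the inductive argument via the Levine--Morel relation (which is nowhere proved in the paper, so you would have to transport its double-point-degeneration proof to divisorial derived schemes, as you acknowledge), the paper reuses the explicit expansion of $[\Pb(\Ls_1\oplus\cdots\oplus\Ls_r)\to X]$ in terms of the classes $[\Pb^n\times X\to X]$ already obtained in the proof of Lemma \ref{CoefficientLem1}, specialized to the multiplicative formal group law, where the coefficient series telescope via $\sum_{i\ge 0}(-x)^i/(1-x)^i = 1-x$. Both routes terminate in the same base case $[\Pb^n\times X\to X]=1_X$, proved as in Levine--Morel Lemma 4.2.3, and your route is viable once the filtered-versus-split step and the auxiliary relation are actually supplied; as written, though, the reduction to the split case is missing.
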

\begin{proof}
Pulling back to the flag bundle, we may assume without loss of generality that $E$ has a filtration
$$0 = E_0 \subset E_1 \subset \cdots \subset E_{r} = E$$
with $\Ls_i := E_i / E_{i-1}$ line bundles. Letting $E' = \bigoplus_{i=1}^r \Ls_i$, and noticing that
\begin{align*}
[\Pb(E) \hook \Pb(E \oplus E')]  &= e(E'(1)) \bullet 1_{\Pb(E \oplus E')} \\
&= e(\Ls_1(1)) \bullet \cdots \bullet e(\Ls_r(1)) \bullet 1_{\Pb(E \oplus E')} \\
&= e(E(1)) \bullet 1_{\Pb(E \oplus E')} \\
&= [\Pb(E') \hook \Pb(E \oplus E')]
\end{align*} 
we are reduced to the case where $E$ is a direct sum of line bundles $\Ls_1,...,\Ls_r$. But it was computed in Lemma \ref{CoefficientLem1} (use multiplicative formal group law in the computations) that
$$[\Pb(\Ls_1 \oplus \cdots \oplus \Ls_r) \to X] = \sum_{i_1,...,i_r \geq 1} \prod_{k=1}^r {\big(-c_1(\Ls_k)\big)^{i_k - 1} \over \big(1 - c_1(\Ls_k)\big)^{i_k}} \bullet [\Pb^{i_1 + \cdots + i_k - 1} \times X \to X]$$
and since
$$\sum_{i=0}^\infty {(-x)^i \over (1-x)^i} = 1-x,$$
we are reduced to showing that $[\Pb^n \times X \to X] = 1_X$ for all $n \geq 0$. But this can be proven as in the proof of \cite{Levine:2007} Lemma 4.2.3.
\end{proof}

We end by the following formal corollary of Theorem \ref{GeneralCFThm}. We will denote by $\Zb_a$ ($\Qb_a$) the integers (rationals) considered as an $\Lb$-algebra via the additive formal group law $x + y$. 

\begin{thm}\label{GeneralGRRThm}
For each $X \in \Cc_a$, there exists a Chern character morphism
$$ch_a: K^0(X) \to \Qb_a \otimes \PCob'^*(X)$$
which is a homomorphism of rings and commutes with pullbacks. Moreover, given $f: X \to Y$ projective and quasi-smooth, we have that
$$
f_! \bigl( ch_a(\alpha) \bullet \Td(\Lb_{X/Y}) \bigr) =  ch_a(f_!(\alpha))
$$
for all $\alpha \in K^0(X)$, where $\Td(\Lb_{X/Y})$ denotes the Todd class of $\Lb_{X/Y}$. Moreover, the induced map
$$ch_a: \Qb \otimes K^0(X) \to \Qb \otimes \PCob_a^*(X)$$
is an isomorphism. The result remains true if we replace $\PCob'^*(X)$ with $\Omega'^*(X)$.
\end{thm}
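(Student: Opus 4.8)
The plan is to run the argument of \cite{annala-chern} Section 5.2, feeding in Theorem \ref{GeneralCFThm} (Conner--Floyd), Theorem \ref{GeneralChernClassThm} (Chern classes) and Theorem \ref{GeneralPBFThm} (projective bundle formula) in place of the corresponding results of \emph{loc.\ cit.}; the only points that really need attention are the construction of $ch_a$ without an ample line bundle and the reduction of the Riemann--Roch identity to the two basic kinds of projective quasi-smooth morphisms. Throughout I would work in the additively reparametrised theory $\Qb_a \otimes_\Lb \PCob'^*$, in which the Euler class is additive (Theorem \ref{PCobFGLThm}) and, on every $X \in \Cc_a$, nilpotent on line and vector bundles alike (Lemma \ref{NilpChernLem}, Proposition \ref{NilpEulerProp}).

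First I would construct $ch_a$. On a line bundle $\Ls$ over $X \in \Cc_a$ set $ch_a(\Ls) := \exp\big(e(\Ls)\big) = \sum_{n \geq 0} e(\Ls)^n/n!$, which is a finite sum by nilpotence and which is multiplicative in $\Ls$ because $e$ is additive. For a rank $r$ vector bundle $E$, pull back along the flag bundle $\Fl(E) \to X$ --- whose bivariant pullback is injective by Lemma \ref{BivInjPullbackLem} --- to obtain a filtration of $E$ with line bundle graded pieces $\Ls_1, \dots, \Ls_r$, and put $ch_a(E) := \sum_{i=1}^r \exp\big(e(\Ls_i)\big)$. This is a symmetric power series in the $e(\Ls_i)$, hence (by the splitting computation behind Lemma \ref{ChernClassFormulaLem} and the Whitney formula of Theorem \ref{GeneralChernClassThm}) a polynomial in the nilpotent classes $c_1(E), \dots, c_r(E)$, and therefore descends uniquely to an element of $\Qb_a \otimes_\Lb \PCob'^*(X)$. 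Since divisorial schemes have the resolution property (Proposition \ref{AmpleFamCharProp}), $K^0(X)$ is generated by classes of vector bundles, and the Whitney formula gives $ch_a(E) = ch_a(E') + ch_a(E'')$ for every short exact sequence, so $ch_a$ extends to a group homomorphism $K^0(X) \to \Qb_a \otimes \PCob'^*(X)$; it is a ring homomorphism by the splitting principle together with multiplicativity on line bundles, and naturality under pullbacks in $\Cc_a$ is immediate.

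Next I would prove the Riemann--Roch identity. Given $f: X \to Y$ projective and quasi-smooth with $X, Y \in \Cc_a$, Theorem \ref{ExistenceOfFactorizationsThm}(2) factors $f$ as $X \stackrel i \hook P \stackrel \pi \to Y$ with $P := \Pb(\Ls_1 \oplus \cdots \oplus \Ls_r) \in \Cc_a$ (Proposition \ref{AmpleFamilyInQuasiProjProp}), $\pi$ the projection and $i$ a closed embedding; the fundamental triangle $i^* \Lb_{P/Y} \to \Lb_{X/Y} \to \Lb_{X/P}$ together with Propositions \ref{QSmCharProp} and \ref{ModifiedKhanRydh} shows that $i$ is a derived regular embedding. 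As the Todd class is multiplicative in cofibre sequences of perfect complexes of Tor-amplitude $1$, and bivariant pushforward, the projection formula and the ring structure of $ch_a$ are available, the identity for $f$ follows formally once it is known for $i$ and for $\pi$. For the derived regular embedding $i$ one argues exactly as in \cite{annala-cob} Lemma 4.4 and \cite{annala-chern}, via the self-intersection formula $i^* i_!(-) = e(\Nc_{X/P}) \bullet (-)$ and the standard identity relating $\Td(\Nc_{X/P})$ to the exponential; working rationally removes the usual denominator issues. For the projection $\pi$ one combines the $K$-theoretic projective bundle formula with the projection formula to reduce to computing $\pi_!\big(c_1(\Oc(1))^i\big)$ for $i < r$, which is handled just as in the proof of Theorem \ref{GeneralCFThm}, using Lemma \ref{ProjBundlesPushToUnitLem}.

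Finally, for the isomorphism after inverting the prime: Theorem \ref{GeneralCFThm} yields a ring isomorphism $\Qb \otimes K^0(X) \cong \Qb \otimes_\Lb \PCob'^*(X)$ with $\Lb$ acting through the multiplicative formal group law $x+y-xy$, and reparametrising that formal group law by its logarithm $\ell(x) = -\log(1-x) \in \Qb[[x]]$ produces a ring isomorphism onto $\Qb_a \otimes \PCob'^*(X)$; comparing the two constructions on line bundles identifies the composite with $ch_a \otimes \Qb$, which is therefore an isomorphism. Every step above passes verbatim to $\Omega'^*$ in place of $\PCob'^*$: the Conner--Floyd statement for $\Omega'^*$ is part of Theorem \ref{GeneralCFThm}, and all of the maps involved descend to the quotient. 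I expect the main obstacle to be the Riemann--Roch identity for the two building blocks, and within it the Todd-class bookkeeping in the derived regular embedding case; but this is precisely the computation imported from \cite{annala-cob} and \cite{annala-chern}, and passing to divisorial $X$ costs nothing, since those arguments used only the resolution property, the projective bundle formula and Conner--Floyd, all now available in this generality.
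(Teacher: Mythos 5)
Your plan is essentially correct, but it takes a genuinely different route from the paper. The paper's proof of Theorem \ref{GeneralGRRThm} is a one-liner: the whole statement is deduced formally from the Conner--Floyd theorem (Theorem \ref{GeneralCFThm}) exactly as in \cite{annala-chern} Section 5.2, i.e. $ch_a$ is the composite of the inverse of the Conner--Floyd isomorphism with the standard twisting (change-of-orientation) transformation from the multiplicatively based theory to $\Qb_a \otimes \PCob'^*$; the Todd-class correction in the Riemann--Roch identity is then precisely the general formula for how twisting alters Gysin pushforwards, and the rational isomorphism is automatic because the twist is invertible over $\Qb$. You instead construct $ch_a$ by hand via the splitting principle (exponentials of Euler classes in the additively reparametrised theory) and prove the Riemann--Roch identity Grothendieck-style, factoring $f$ through $\Pb(\Ls_1 \oplus \cdots \oplus \Ls_r)$ by Theorem \ref{ExistenceOfFactorizationsThm} and treating the derived regular embedding and the projection separately. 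Both routes are viable; the paper's is much shorter because no geometric case analysis is needed once Conner--Floyd and the twisting formalism are available, whereas yours re-verifies compatibility with the two kinds of Gysin maps, much of which is already embedded in the paper's proof of Theorem \ref{GeneralCFThm} (the projection case via Lemma \ref{ProjBundlesPushToUnitLem}, the embedding case imported from \cite{annala-cob}).

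Two caveats on your version. First, in the embedding case you lean on a self-intersection formula $i^* i_!(-) = e(\Nc_{X/P}) \bullet (-)$, which is not proved in this paper; the mechanism actually available (and the one the paper tacitly uses inside Theorem \ref{GeneralCFThm}) is derived deformation to the normal bundle as in \cite{annala-cob}, so the embedding case should be run through that deformation argument rather than through a self-intersection identity asserted as standard. Second, your closing step --- ``reparametrising the multiplicative formal group law by its logarithm produces a ring isomorphism onto $\Qb_a \otimes \PCob'^*(X)$'' --- is not a mere change of variables: it is exactly the twisting construction (multiplying classes and Gysin maps by invertible Todd-type series), which is the formal content of \cite{annala-chern} Section 5.2 that constitutes the paper's entire proof. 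Once you make that twist explicit, your identification of the composite with $ch_a \otimes \Qb$ is fine, but at that point you have reproduced the paper's argument, which suggests adopting the twisting viewpoint from the start would collapse most of your case analysis. The remaining points (well-definedness of $ch_a$ on $K^0(X)$ via the resolution property, passage to $\Omega'^*$) match the paper's own reliance on these facts and are not additional gaps.
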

\begin{proof}
As in Section 5.2 of \cite{annala-chern}, this is a formal consequence of the Conner--Floyd theorem (Theorem \ref{GeneralCFThm}).
\end{proof}

\bibliographystyle{alphamod}
\bibliography{references}{}

\providecommand{\bysame}{\leavevmode\hbox to3em{\hrulefill}\thinspace}
\providecommand{\MR}{\relax\ifhmode\unskip\space\fi MR }
\providecommand{\MRhref}[2]{%
  \href{http://www.ams.org/mathscinet-getitem?mr=#1}{#2}
}
\providecommand{\href}[2]{#2}
\begin{thebibliography}{Ann19b}
\providecommand{\url}[1]{\href{#1}{{\def~{\textasciitilde}\tt #1}}}

\bibitem[Ann18]{annala-cob}
T.~Annala, \emph{Bivariant derived algebraic cobordism}, To appear in Journal
  of Algebraic Geometry, 2018,
  \href{http://arxiv.org/abs/arXiv:1807.04989}{{\sf arXiv:1807.04989}}

\bibitem[Ann19a]{annala-qpnote}
\bysame, \emph{Ample line bundles, global generation and $K_0$ on
  quasi-projective derived schemes}, 2019,
  \href{http://arxiv.org/abs/arXiv:1902.08331}{{\sf arXiv:1902.08331}}

\bibitem[Ann19b]{annala-chern}
\bysame, \emph{Chern classes in precobordism theories}, Submitted, 2019,
  \href{http://arxiv.org/abs/arXiv:1911.12493}{{\sf arXiv:1911.12493}}

\bibitem[Ann20]{annala-pre-and-cob}
\bysame, \emph{Precobordism and cobordism}, Submitted, 2020,
  \href{http://arxiv.org/abs/arXiv:2006.11723}{{\sf arXiv:2006.11723}}

\bibitem[AY19]{annala-yokura}
T.~Annala and S.~Yokura, \emph{Bivariant algebraic cobordism with bundles},
  Submitted, 2019, \href{http://arxiv.org/abs/arXiv:1911.12484}{{\sf
  arXiv:1911.12484}}

\bibitem[KR18]{khan-rydh}
A.~Khan and D.~Rydh, \emph{Virtual Cartier divisors and blow-ups}, 2018,
  \href{http://arxiv.org/abs/arXiv:1802.05702}{{\sf arXiv:1802.05702}}

\bibitem[LM07]{Levine:2007}
M.~Levine and F.~Morel, \emph{Algebraic Cobordism}, Springer, 2007

\bibitem[LP09]{levine-pandharipande}
M.~Levine and R.~Pandharipande, \emph{Algebraic cobordism revisited},
  Inventiones Mathematicae \textbf{176} (2009), no.~1, p.~63–130

\bibitem[LS16]{lowrey-schurg}
P.~E. Lowrey and T.~Schürg, \emph{Derived Algebraic Cobordism}, Journal of the
  Institute of Mathematics of Jussieu \textbf{15} (2016), no.~2, p.~407–443

\bibitem[Lur17a]{HA}
J.~Lurie, \emph{Higher Algebra}, September 2017,
  \url{http://www.math.harvard.edu/~lurie/papers/HA.pdf}

\bibitem[Lur17b]{HTT}
\bysame, \emph{Higher Topos Theory}, April 2017,
  \url{http://www.math.harvard.edu/~lurie/papers/HTT.pdf}

\bibitem[Lur18]{SAG}
\bysame, \emph{Spectral Algebraic Geometry}, February 2018,
  \url{http://www.math.harvard.edu/~lurie/papers/SAG-rootfile.pdf}

\bibitem[Sec18]{sechin-chern}
P.~Sechin, \emph{Chern classes from Morava K-theories to $p^n$-typical oriented
  theories}, 2018, \href{http://arxiv.org/abs/arXiv:1805.09050}{{\sf
  arXiv:1805.09050}}

\bibitem[SS18]{sechin-semenov}
P.~Sechin and N.~Semenov, \emph{Applications of the Morava K-theory to
  algebraic groups}, 2018, \href{http://arxiv.org/abs/arXiv:1805.09059}{{\sf
  arXiv:1805.09059}}

\bibitem[Stacks]{Stacks}
{The Stacks Project Authors}, \emph{The Stacks Project}, 2019,
  \url{http://stacks.math.columbia.edu}

\bibitem[TV02]{HAG1}
B.~Toen and G.~Vezzosi, \emph{Homotopical algebraic geometry. I. Topos
  theory.}, Advances in Mathematics \textbf{193} (2002), pp.~257--372

\bibitem[TV04]{HAG2}
\bysame, \emph{Homotopical Algebraic Geometry II: geometric stacks and
  applications}, Memoirs of the American Mathematical Society \textbf{193}
  (2004)

\bibitem[Yok09]{yokura09}
S.~Yokura, \emph{Oriented Bivariant Theories, I}, International Journal of
  Mathematics \textbf{20} (2009), no.~10, pp.~1305--1334, preprint
  \href{http://arxiv.org/abs/https://doi.org/10.1142/S0129167X09005777}{{\sf
  https://doi.org/10.1142/S0129167X09005777}}

\end{thebibliography}

\end{document}